\documentclass[a4paper,12pt]{article}
\usepackage{tikz}
\usetikzlibrary{arrows}
\usetikzlibrary{calc}
\usetikzlibrary{positioning}
\usetikzlibrary{cd}
\usepackage{graphicx,amsmath,amssymb,amsthm,amscd,mathrsfs,mathabx,mathtools,authblk}
\usepackage[%
 setpagesize=false,%
 bookmarks=true,%
 bookmarksdepth=tocdepth,%
 bookmarksnumbered=true,%
 colorlinks=true,%
 hidelinks=true,%
 pdftitle={},%
 pdfsubject={},%
 pdfauthor={},%
 pdfkeywords={}%
]{hyperref}
\usepackage[margin=2.5cm]{geometry}

\newcommand{\C}{\mathbb{C}}

\newtheorem{thm}{Theorem}[section]
\newtheorem{prop}[thm]{Proposition}
\newtheorem{lem}[thm]{Lemma}
\newtheorem{cor}[thm]{Corollary}

\theoremstyle{definition}
\newtheorem{df}[thm]{Definition}

\theoremstyle{remark}
\newtheorem{rem}[thm]{Remark}

\title{Middle convolution for Lie algebra representations 
}
\date{}

\author{Kazuki Hiroe\\
Department of Mathematics and Informatics, Chiba University\\
1-33, Yayoi-cho, Inage-ku, Chiba-shi, Chiba, 263-8522 JAPAN\\
email: {\tt kazuki@math.s.chiba-u.ac.jp}
}
\begin{document}
\maketitle
\begin{abstract}
This paper introduces a Lie algebra analogue of the middle convolution functor, 
which is defined on the category of modules over certain Lie algebras, 
including, as typical motivating examples, free Lie algebras, Drinfeld-Kohno Lie algebras, 
and the holonomy Lie algebras of complements of hyperplane arrangements. 
First, we demonstrate that the middle convolution for Lie algebra representations can be regarded as 
a natural generalization of the infinitesimal analogue of the Long-Moody functor for Drinfeld-Kohno Lie algebras. 
Second, we show that our middle convolution recovers the Dettweiler-Reiter additive middle convolution 
for Fuchsian systems on the punctured Riemann sphere as a special case. 
Furthermore, we show that when applied to the holonomy Lie algebra of the complement of a hyperplane arrangement, 
our functor is compatible with Haraoka's middle convolution for logarithmic connections on such complements. 
Finally, we establish a Riemann-Hilbert correspondence between the middle convolution 
for the holonomy Lie algebra and the middle convolution for local systems on complements of hyperplane arrangements.
\end{abstract}
\footnotetext{\emph{Key words} : Middle convolution, Lie algebra homology, Drinfeld-Kohno Lie algebra, 
Hyperplane arrangements, Logarithmic connections, Local systems}   
\footnotetext{\emph{Mathematics Subject Classification} :Primary 17B10, 34M35; Secondary 32S22, 17B55, 17B35.}   
\tableofcontents
\section{Introduction}
The middle convolution functor was originally introduced by Katz \cite{Katz} for  
local systems over the punctured Riemann sphere, and 
his theory of rigid local systems and the middle convolution has become a powerful tool in the study of ordinary differential equations.
Through the work of Dettweiler and Reiter \cite{DR00,DR07}, 
the middle convolution was successfully translated into an algebraic operation 
on Fuchsian systems on the punctured Riemann sphere. 
Subsequently, Crawley-Boevey \cite{CB03} established a connection to 
the representation theory of quivers, applying it to solve the additive 
Deligne-Simpson problem. 
Furthermore, Arinkin \cite{Ari10} extended the theory of rigid local 
systems to differential equations with irregular singularities,
and algebraic operations corresponding to middle convolution for such equations were also introduced by 
Kawakami \cite{Kaw10}, Takemura \cite{Tak11}, and Yamakawa \cite{Yam11}.
As a generalization to the case of several variables, Haraoka \cite{Har1} 
constructed the middle convolution functor for logarithmic connections 
on complements of hyperplane arrangements. 
Subsequently, a natural generalization of middle convolution 
for local systems on such complements was given 
by Adachi and the author \cite{AdaHir}. 
More recently, Adachi \cite{Ada25} introduced the middle convolution functor 
for irregular singular connections in this setting.

The main purpose of this paper is to introduce a Lie algebra analogue of 
the middle convolution functor, which is defined on the category of 
modules over certain Lie algebras. 
The Lie algebras appearing in this context include, 
as typical examples, free Lie algebras, Drinfeld-Kohno Lie algebras, 
and more generally, the holonomy Lie algebras of complements 
of hyperplane arrangements.

We then show that this middle convolution for Lie algebra representations 
is closely related to several known versions of middle convolution. 
These include Dettweiler and Reiter's middle convolution for Fuchsian systems 
on the punctured Riemann sphere, Haraoka's middle convolution for logarithmic 
connections on complements of hyperplane arrangements, 
as well as Katz's middle convolution for local systems on the punctured Riemann 
sphere and its generalization to complements of hyperplane arrangements 
by Adachi and the author.

Furthermore, we demonstrate that the middle convolution for Lie algebra 
representations can be regarded as a natural generalization of the Long-Moody 
functor for braid group representations, 
introduced by Long and Moody \cite{Lo94}. 
This functor serves as a higher-dimensional analogue of the classical 
Burau representation. 
In recent years, the categorical aspects of this construction 
have been extensively studied; 
in particular, Souli\'e \cite{Sou19} investigated it 
from the perspective of functor category theory.

Below, we provide a more detailed overview of the contents of this paper.
\subsection{Middle convolution for Lie algebra representations}
First, we introduce the definition of the middle convolution for Lie algebra representations.
The Drinfeld-Kohno Lie algebra $\mathfrak{P}_{n}$ is an infinitesimal analogue of the pure braid group $P_{n}$,
which is defined as the Lie algebra over a field $K$ generated by $A_{i,j}$ for $1\le i\neq j\le n$ 
with the infinitesimal braid relation:
\begin{align*}
A_{i,j}-A_{j,i}&=0, \\
[A_{i,k},A_{i,j}+A_{j,k}]&=0,\\
[A_{i,j},A_{k,l}]&=0,
\end{align*}
for mutually distinct $i,j,k,l$.
As an infinitesimal analogue of the decomposition of the pure braid group $P_{n+1}$,
\(
	P_{n+1}=F_{n}\rtimes P_{n},	
\)
the Drinfeld-Kohno Lie algebra $\mathfrak{P}_{n+1}$ also has the semidirect sum decomposition
\[
	\mathfrak{P}_{n+1}=\mathbb{L}(A_{1,n+1},\ldots,A_{n,n+1})\oplus \mathfrak{P}_{n}.
\]
Here we denote the free Lie algebra generated by $x_{1},\ldots,x_{n}$
by $\mathbb{L}(x_{1},\ldots,x_{n})$.
Then as a generalization of the Drinfeld-Kohno Lie algebra,
we 
consider the semidirect sum of the free Lie algebra $\mathbb{L}_{n}=\mathbb{L}(x_{1},\ldots,x_{n})$
and a Lie algebra $\mathfrak{g}$,
\begin{equation}\label{eq:generalizedDK}
\mathbb{L}_{n}\oplus_{\theta_{g}} \mathfrak{g},
\end{equation}
associated with the following homomorphism $\theta_{g}\colon \mathfrak{g}\rightarrow \mathrm{Der}(\mathbb{L}_{n})$.
The adjoint action of $\mathfrak{P}_{n}$ on $\mathbb{L}(A_{1,n+1},\ldots,A_{n,n+1})$ gives
the Lie algebra homomorphism $\theta\colon \mathfrak{P}_{n}\to \mathrm{Der}(\mathbb{L}_{n})$ to the derivation Lie algebra $\mathrm{Der}(\mathbb{L}_{n})$
of $\mathbb{L}_{n}$.
Given a Lie algebra $\mathfrak{g}$ with a Lie algebra homomorphism $g\colon \mathfrak{g}\to \mathfrak{P}_{n}$,
the composition map $\theta_{g}=\theta\circ g\colon \mathfrak{g}\to \mathrm{Der}(\mathbb{L}_{n})$ determines 
the semidirect sum Lie algebra $\mathbb{L}_{n}\oplus_{\theta_{g}} \mathfrak{g}$.

Notice that this Lie algebra recovers the Drinfeld-Kohno Lie algebra $\mathfrak{P}_{n+1}$ if we take $\mathfrak{g}=\mathfrak{P}_{n}$ and $g=\mathrm{id}$.

Then the middle convolution functor is defined as a functor on the category 
$U(\mathbb{L}_{n}\oplus_{\theta_{g}} \mathfrak{g})\text{-}\mathbf{mod}$
of finite dimensional modules over the universal enveloping algebra $U(\mathbb{L}_{n}\oplus_{\theta_{g}} \mathfrak{g})$,
in terms of Lie algebra homology.
\begin{df}[Middle convolution for $U(\mathbb{L}_{n}\oplus_{\theta_{g}} \mathfrak{g})$-modules]\label{df:liemc0}
	For $\lambda\in K\setminus\{0\}$,
	the \emph{middle convolution functor} is defined as follows: 
	\begin{align*}
	\mathfrak{mc}_{\lambda}\colon& U(\mathbb{L}_{n}\oplus_{\theta_{g}} \mathfrak{g})\text{-}\mathbf{mod}\longrightarrow 
	U(\mathbb{L}_{n}\oplus_{\theta_{g}} \mathfrak{g})\text{-}\mathbf{mod}\\
	& V\longmapsto 
	\mathrm{Coker}\left(
		\bigoplus_{i=0}^{n}H_{1}(L(x_{i}),V\otimes_{K}K_{\lambda})
		\to H_{1}(\mathbb{L}_{n+1},V\otimes_{K}K_{\lambda}) 
	\right).
	\end{align*}
	Here $K_{\lambda}$
is the one-dimensional module over the Lie algebra $\mathbb{L}(x_{n+1})$ defined by $x_{n+1}\cdot 1=\lambda\cdot 1$,
and we put $x_{0}=-\sum_{i=1}^{n+1} x_i$.
We regard $V\otimes_{K}K_{\lambda}$ as a module over $\mathbb{L}_{n+1}$
by the projections $\mathbb{L}_{n+1}\rightarrow \mathbb{L}_{n}$ and 
$\mathbb{L}_{n+1}\to \mathbb{L}(x_{n+1})$.
\end{df}
It will be explained in Section \ref{sec:midconv} 
how $\mathfrak{mc}_{\lambda}(V)$
has a structure of a $U(\mathbb{L}_{n}\oplus_{\theta_{g}} \mathfrak{g})$-module, and 
moreover
we will give a modified definition of $\mathfrak{mc}_{\lambda}$
which is valid even for $\lambda=0$.

The composition law of the middle convolution is a fundamental 
property which was first established by Katz \cite{Katz},
and plays a crucial role in the applications of middle convolution to the study of ordinary differential equations.
In Theorems \ref{thm:mc0} and \ref{thm:compmc2}, we will show that $\mathfrak{mc}_{\lambda}$ 
behaves as an invertible functor, 
and preserves irreducibility for generic modules. 
That is, for modules satisfying certain conditions, we have
\[
	\mathfrak{mc}_{\lambda}\circ\mathfrak{mc}_{\mu}(V)\cong \mathfrak{mc}_{\lambda+\mu}(V),\quad \mathfrak{mc}_{-\lambda}\circ\mathfrak{mc}_{\lambda}(V)\cong V.
\]
Moreover if $V$ is irreducible as a $U(\mathbb{L}_{n})$-module, then $\mathfrak{mc}_{\lambda}(V)$ is again irreducible as a $U(\mathbb{L}_{n})$-module and hence also irreducible as a $U(\mathbb{L}_{n}\oplus_{\theta_{g}}\mathfrak{g})$-module,
see Corollary \ref{cor:irred} for details.

\subsection{Long-Moody functor for Drinfeld-Kohno Lie algebra representations}
The Long-Moody functor is a procedure to construct representations of braid groups from representations of larger braid groups,
which was introduced by Long and Moody in \cite{Lo94} as a higher dimensional analogue of the classical Burau representation of braid groups.
Let $K[B_{n}]$ be the group ring generated by the braid group $B_{n}$ of $n$ strands over $K$.
Then the Long-Moody functor is defined by
\[
	\mathcal{LM}\colon K[B_{n+1}]\text{-}\mathrm{\mathbf{mod}}\to K[B_{n}]\text{-}\mathrm{\mathbf{mod}}; \quad V\mapsto \mathcal{I}_{F_{n}}\otimes_{K[F_{n}]}V,
\]
where $\mathcal{I}_{F_{n}}$ is the augmentation ideal of the group ring $K[F_{n}]$ generated by the free group $F_{n}$ with $n$ generators over $K$.
Then as a natural generalization of it, we consider the infinitesimal version of the Long-Moody functor given by 
\[
	\mathfrak{LM}\colon U(\mathfrak{P}_{n+1})\text{-}\mathrm{\mathbf{mod}}\to U(\mathfrak{P}_{n})\text{-}\mathrm{\mathbf{mod}}; \quad V\mapsto \mathcal{I}_{\mathbb{L}_{n}}\otimes_{U(\mathbb{L}_{n})}V,
\]
where $\mathcal{I}_{\mathbb{L}_{n}}$ is the augmentation ideal of the universal enveloping algebra $U(\mathbb{L}_{n})$.

In Section \ref{sec:longmoody}, we will introduce a one parameter deformation $\mathfrak{LM}_{\lambda}^{\mathrm{df}}$ of the infinitesimal version of the Long-Moody functor.
\begin{df}[Deformed Long-Moody functor]
For $\lambda\in K\setminus\{0\}$, the deformed Long-Moody functor $\mathfrak{LM}_{\lambda}^{\mathrm{df}}$ is defined by
\[
	\mathfrak{LM}^{\mathrm{df}}_{\lambda}\colon U(\mathbb{L}_{n}\oplus_{\theta_{g}} \mathfrak{g})\text{-}\mathbf{mod}\to 
	U(\mathbb{L}_{n}\oplus_{\theta_{g}} \mathfrak{g})\text{-}\mathbf{mod}; \quad V\mapsto H_{1}(\mathbb{L}_{n+1},V\otimes_{K}K_{\lambda}).
\]
\end{df}
Although we supposed that $\lambda\neq 0$ here, we will give a modified definition of $\mathfrak{LM}_{\lambda}^{\mathrm{df}}$ which is valid even for $\lambda=0$.
Then we will show that $\mathfrak{LM}^{\mathrm{df}}_{\lambda}$ is a natural generalization of the infinitesimal version of the Long-Moody functor $\mathfrak{LM}$
as follows.
\begin{prop}[Proposition \ref{prop:compLM}]
Suppose that $\mathfrak{g}=\mathfrak{P}_{n}$ and $g=\mathrm{id}\colon \mathfrak{P}_{n}\to \mathfrak{P}_{n}$.
Then there exists a natural isomorphism of functors
\[
	\mathrm{Res}_{U(\mathfrak{P}_{n})}^{U(\mathfrak{P}_{n+1})}\circ\mathfrak{LM}^{\mathrm{df}}_{0}\cong \mathfrak{LM}.
\]
\end{prop}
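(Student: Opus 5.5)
The plan is to reduce both sides to the same explicit model, namely $n$ copies of $V$ with an explicit $U(\mathfrak{P}_{n})$-action, and then to compare the actions. First I will make the left-hand side explicit. For a free Lie algebra $\mathbb{L}_{m}=\mathbb{L}(x_{1},\ldots,x_{m})$, the augmentation ideal $\mathcal{I}_{\mathbb{L}_{m}}$ is a free right $U(\mathbb{L}_{m})$-module on $x_{1},\ldots,x_{m}$. This gives the free resolution $0\to \mathcal{I}_{\mathbb{L}_{m}}\to U(\mathbb{L}_{m})\to K\to 0$, and hence for any module $W$
\[
H_{1}(\mathbb{L}_{m},W)=\mathrm{Tor}_{1}^{U(\mathbb{L}_{m})}(K,W)\cong \mathrm{Ker}\Bigl(W^{\oplus m}\to W,\ (w_{i})\mapsto \textstyle\sum_{i}x_{i}w_{i}\Bigr).
\]
For $W=V\otimes K_{\lambda}$ with $m=n+1$ and $\lambda\neq 0$, the last coordinate is determined by the others, $w_{n+1}=-\lambda^{-1}\sum_{i\le n}x_{i}w_{i}$. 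So $H_{1}(\mathbb{L}_{n+1},V\otimes K_{\lambda})\cong V^{\oplus n}$ via projection to the first $n$ coordinates. I expect the modified definition of $\mathfrak{LM}^{\mathrm{df}}_{\lambda}$ in Section \ref{sec:longmoody} to be exactly the version of this identification that makes sense uniformly in $\lambda$, including $\lambda=0$. That version should be $V^{\oplus n}$, with the $\mathbb{L}_{n}\oplus_{\theta_{g}}\mathfrak{g}$-action written polynomially in $\lambda$. I will take that presentation as the definition of $\mathfrak{LM}^{\mathrm{df}}_{0}$.

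Second, I make $\mathfrak{LM}(V)$ explicit. Since $\mathcal{I}_{\mathbb{L}_{n}}=\bigoplus_{i=1}^{n}x_{i}U(\mathbb{L}_{n})$ as right modules, there is an isomorphism of vector spaces
\[
\Phi\colon \mathcal{I}_{\mathbb{L}_{n}}\otimes_{U(\mathbb{L}_{n})}V\xrightarrow{\ \sim\ } V^{\oplus n},\qquad x_{i}u\otimes v\mapsto (0,\ldots,uv,\ldots,0)\ \ (\text{$i$-th slot}).
\]
This map is natural in $V$, because morphisms of $U(\mathfrak{P}_{n+1})$-modules act diagonally on both sides. The $U(\mathfrak{P}_{n})$-action on the source is $a\cdot(y\otimes v)=[\,a,y\,]\otimes v+y\otimes av$ for $a\in\mathfrak{P}_{n}$ and $y\in\mathcal{I}_{\mathbb{L}_{n}}$. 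Here the bracket $[\,a,y\,]$ is the derivation $\theta(a)$ extended to $U(\mathbb{L}_{n})$, and the action is well defined because $\theta(a)$ is a derivation. Transporting this through $\Phi$ requires computing $\theta(A_{j,k})(x_{i})$ for $j<k\le n$ from the infinitesimal braid relations. The relevant brackets are $[A_{j,k},A_{i,n+1}]=0$ for $i\notin\{j,k\}$, and $[A_{j,k},A_{j,n+1}]=[A_{j,n+1},A_{k,n+1}]=-[A_{j,k},A_{k,n+1}]$. Each image is again of the form $x_{p}\cdot(\text{element of }\mathbb{L}_{n})$, so $\Phi$ carries the action to an explicit matrix of operators on $V^{\oplus n}$ built from the actions of $x_{j},x_{k},A_{j,k}$ on $V$.

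Third, I compare this with the $\mathfrak{P}_{n}$-action on $\mathfrak{LM}^{\mathrm{df}}_{0}(V)$. On $H_{1}(\mathbb{L}_{n+1},V\otimes K_{\lambda})$, the action of $a\in\mathfrak{g}$ is induced from the action on the Chevalley–Eilenberg chains $\mathbb{L}_{n+1}\otimes W$, namely $a\cdot(y\otimes w)=[a,y]\otimes w+y\otimes aw$. Restricting to the $\mathfrak{P}_{n}$ summand and evaluating at $\lambda=0$ should reproduce precisely the operators found in the second step. The key point is that the cycle condition $\sum x_{i}w_{i}=0$ used to eliminate $w_{n+1}$ and the relation $x_{i}u\otimes v=x_{i}\otimes uv$ in the tensor product play the same role. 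Naturality in $V$ is immediate on both sides.

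The main obstacle is this last comparison: checking that the modified $\lambda=0$ formula really matches the derivation action on the augmentation ideal, including the correction terms that come from the eliminated coordinate. These terms involve $x_{n+1}$, and hence $\lambda$, and they must vanish or cancel at $\lambda=0$. I would verify this on the generators $A_{j,k}$ of $\mathfrak{P}_{n}$ only, using the three types of bracket above. It suffices to treat generators because both actions are Lie algebra actions.
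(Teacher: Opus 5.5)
Your setup is the paper's: identify both sides with $n$ copies of $V$ and compare the $\mathfrak{P}_{n}$-actions. Your guessed model of $\mathfrak{LM}^{\mathrm{df}}_{0}$ is also consistent with the paper's definition. The paper uses the formal basis $[x_{i},x_{n+1}]\otimes v$, and $\sum_{i}[x_{i},x_{n+1}]\otimes v_{i}=\sum_{i}x_{i}\otimes \lambda v_{i}-x_{n+1}\otimes\sum_{i}x_{i}v_{i}$. So your coordinates are $w_{i}=\lambda v_{i}$, a global rescaling that does not change the transported action.

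However, the proposal has a genuine gap: it stops where the proof begins. The comparison in your third step is the entire content of the proposition, and you leave it unperformed as ``the main obstacle''. You also misdiagnose it: you expect $\lambda$-dependent correction terms from the eliminated coordinate $w_{n+1}$ that must cancel at $\lambda=0$. No such terms exist.

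The missing observation is this. For $X\in\mathfrak{P}_{n}$ one has $\theta(X)(x_{n+1})=0$. Moreover, $\theta(X)$ preserves $\mathbb{L}(x_{1},\ldots,x_{n})$ (Lemma \ref{lem:minpurebraid}, or directly from the formula for $\theta(A_{i,j})$ with $i,j\le n$), acting there by the same derivation as in $\mathfrak{LM}$. Write $\theta(X)(x_{i})=\sum_{j\le n}x_{j}u_{ij}$ with $u_{ij}\in U(\mathbb{L}_{n})$. Then
\[
X\cdot\Bigl(\sum_{i=1}^{n+1}x_{i}\otimes w_{i}\Bigr)=\sum_{j=1}^{n}x_{j}\otimes\Bigl(Xw_{j}+\sum_{i=1}^{n}u_{ij}w_{i}\Bigr)+x_{n+1}\otimes Xw_{n+1}.
\]
Hence $\mathcal{I}_{\mathbb{L}_{n+1}}\otimes_{U(\mathbb{L}_{n+1})}(V\otimes_{K}K_{\lambda})$ splits as $\mathfrak{P}_{n}$-modules into $\mathcal{I}_{\mathbb{L}_{n}}\otimes_{U(\mathbb{L}_{n})}V=\mathfrak{LM}(V)$ and $x_{n+1}\otimes(V\otimes_{K}K_{\lambda})$. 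The first $n$ coordinates never see $w_{n+1}$ or $\lambda$, and the formula is literally the $\mathfrak{LM}$-action. Projection to the first summand is therefore a natural $\mathfrak{P}_{n}$-isomorphism on $H_{1}$ for every $\lambda\neq 0$. Because the formula is independent of $\lambda$, it persists at $\lambda=0$.

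Consequently, no generator-by-generator check with the braid brackets is needed, and the cycle condition plays no role in the $\mathfrak{P}_{n}$-comparison. The paper records the same fact in its basis. Using $\theta(X)(x_{n+1})=0$, it obtains $X\cdot([x_{i},x_{n+1}]\otimes v)=[\theta(X)(x_{i}),x_{n+1}]\otimes v+[x_{i},x_{n+1}]\otimes Xv$. It then checks that $[x_{i},x_{n+1}]\otimes v\mapsto x_{i}\otimes v$ intertwines the two actions.
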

Comparing with Definition \ref{df:liemc0},
we see that 
the middle convolution $\mathfrak{mc}_{\lambda}$ factors through the deformed Long-Moody functor $\mathfrak{LM}^{\mathrm{df}}_{\lambda}$.
Based on this background, we can regard the middle convolution $\mathfrak{mc}_{\lambda}$ as a natural generalization of the infinitesimal Long-Moody functor $\mathfrak{LM}$.

\subsection{Dettweiler-Reiter additive middle convolution}
The Dettweiler-Reiter additive middle convolution is an algebraic operation on Fuchsian systems on the punctured Riemann sphere, which was introduced by Dettweiler and Reiter \cite{DR00,DR07} as an algebraic analogue of Katz's middle convolution for local systems on the punctured Riemann sphere.
This operation, however, can be 
regarded as a functor on the category of modules over the free associative algebra
generated by $x_{1},\ldots,x_{n}$ over a field $K$:
\[
	\mathrm{mc}_{\lambda}^{\mathrm{DR}}\colon K\{x_{1},\ldots,x_{n}\}\text{-}\mathrm{\mathbf{mod}}\to K\{x_{1},\ldots,x_{n}\}\text{-}\mathrm{\mathbf{mod}},
\]
see Section 7,5 in \cite{Harbook} for details.
Therefore, the identification $U(\mathbb{L}_{n})\cong K\{x_{1},\ldots,x_{n}\}$
allows us to show that the middle convolution $\mathfrak{mc}_{\lambda}$ for $U(\mathbb{L}_{n}\oplus_{\theta_{g}} \mathfrak{g})$-modules
recovers the Dettweiler-Reiter additive middle convolution $\mathrm{mc}_{\lambda}^{\mathrm{DR}}$
when we specialize $\mathfrak{g}=\{0\}$.
We will moreover show the following compatibility of the middle convolution $\mathfrak{mc}_{\lambda}$ with the 
Dettweiler-Reiter additive middle convolution $\mathrm{mc}_{\lambda}^{\mathrm{DR}}$.
\begin{thm}[Theorem \ref{thm:compmc}]
For $\lambda\in K$, there exists a natural isomorphism of functors
\[
	\mathrm{Res}_{\mathbb{L}_{n}}^{\mathbb{L}_{n}\oplus_{\theta_{g}}\mathfrak{g}}\circ \mathfrak{mc}_{\lambda}
	\cong \mathrm{mc}^{\mathrm{DR}}_{\lambda}\circ \mathrm{Res}_{\mathbb{L}_{n}}^{\mathbb{L}_{n}\oplus_{\theta_{g}}\mathfrak{g}}.
\]
\end{thm}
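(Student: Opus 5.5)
The plan is to compute both sides explicitly as quotients of $V^{\oplus n}$ and to match them, including the module structures. Throughout, let $A_i$ denote the action of $x_i$ on $V$, and let $W=V\otimes_K K_\lambda$ as an $\mathbb{L}_{n+1}$-module. For a free Lie algebra $\mathbb{L}_{m}$ the augmentation ideal of $U(\mathbb{L}_m)$ is a free left module on $x_1,\dots,x_m$. This gives the short free resolution
\[
0\to U(\mathbb{L}_m)\otimes_K \mathrm{span}(x_1,\ldots,x_m)\to U(\mathbb{L}_m)\to K\to 0 .
\]
Tensoring it with $W$ yields
\[
H_1(\mathbb{L}_{n+1},W)\cong \Bigl\{(w_1,\ldots,w_{n+1})\in W^{\oplus(n+1)} \;:\; \textstyle\sum_{i=1}^{n}A_iw_i+\lambda w_{n+1}=0\Bigr\}.
\]
For $\lambda\neq 0$, projection onto the first $n$ coordinates identifies this space with $V^{\oplus n}$. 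The same resolution for the one-generator algebra $L(x_i)$ gives $H_1(L(x_i),W)=\ker(x_i|_W)$. The map to $H_1(\mathbb{L}_{n+1},W)$ is induced by the inclusion $L(x_i)\hookrightarrow\mathbb{L}_{n+1}$, so on the chain level it sends $w\mapsto w\otimes x_i$.

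Next I identify the images. For $1\le i\le n$ the image is $\ker A_i$, placed in the $i$-th slot; the $(n+1)$-st slot is $0$, which is consistent with the defining equation. For $i=n+1$ the kernel of $\lambda$ on $W$ is zero, so that summand contributes nothing. For $i=0$ we have $x_0=-\sum_{j=1}^{n+1}x_j$, so $H_1(L(x_0),W)=\ker(A_1+\cdots+A_n+\lambda)$. Its image is the diagonal $\{(w,\ldots,w)\}$ inside $V^{\oplus n}$ after projection. These are exactly the Dettweiler--Reiter subspaces $\mathcal{K}=\bigoplus_i\ker A_i$ and $\mathcal{L}=\ker(A_1+\cdots+A_n+\lambda)$ (diagonal) in $c_\lambda(V)=V^{\oplus n}$. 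Hence, as vector spaces,
\[
\mathfrak{mc}_\lambda(V)\cong V^{\oplus n}/(\mathcal{K}+\mathcal{L})=\mathrm{mc}^{\mathrm{DR}}_\lambda(V).
\]

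The key step, and the one I expect to be the main obstacle, is checking that the $\mathbb{L}_n$-action on $H_1(\mathbb{L}_{n+1},W)$ transported to $V^{\oplus n}$ is the Dettweiler--Reiter action. In the Dettweiler--Reiter convolution, $x_k$ acts by the block matrix $B_k$ whose only nonzero row is the $k$-th, equal to $(A_1,\ldots,A_k+\lambda,\ldots,A_n)$. The action on $H_1$ comes from the construction in Section \ref{sec:midconv}: $\mathbb{L}_n\oplus_{\theta_g}\mathfrak{g}$ acts on $\mathbb{L}_{n+1}$ by derivations of Drinfeld--Kohno type, namely $x_k\mapsto[x_{k},\cdot]$-type derivations that fix $x_{n+1}$ up to the relation, and simultaneously acts on $W$. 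On chains $w\otimes x_j$ the action is $x_k\cdot(w\otimes x_j)=x_kw\otimes x_j+w\otimes \delta_k(x_j)$. The derivation term, after reduction modulo boundaries, must produce the coefficients $A_j$ in row $k$ together with the shift by $\lambda$. I would compute this on the small complex directly, using the relation $\sum A_iw_i=-\lambda w_{n+1}$ to eliminate $w_{n+1}$. Then I would check that the result agrees with $B_k$ modulo $\mathcal{K}+\mathcal{L}$; possibly the agreement is already exact on $V^{\oplus n}$.

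Naturality in $V$ is clear, since every identification above is functorial in $V$ and uses only the $\mathbb{L}_n$-structure. This also explains why the statement involves restriction to $\mathbb{L}_n$: the $\mathfrak{g}$-action simply plays no role. Finally, for $\lambda=0$ I would use the modified definition from Section \ref{sec:midconv}. There I expect the extra coordinate $w_{n+1}$ to be handled by that modification, for instance by replacing $H_1$ with a relative or quotient version. I would then repeat the computation, comparing with the Dettweiler--Reiter $\lambda=0$ convention, where $\mathcal{L}=V$ is embedded diagonally.
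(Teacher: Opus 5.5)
\textbf{There is a genuine gap, in the step you yourself flag as the main obstacle.} Your vector-space identifications for $\lambda\neq0$ are fine: they are Lemmas \ref{lem:Linvariant} and \ref{lem:K} written in coordinates. Note that your projection sends the paper's basis vector $[x_i,x_{n+1}]\otimes v=x_i\otimes\lambda v-x_{n+1}\otimes A_iv$ to $\lambda v e_i$.

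The problem is that the key step is not merely unexecuted: the action formula you propose is wrong. In the paper only $\mathfrak{g}$ acts diagonally. The $\mathbb{L}_n$-action on $\mathcal{I}_{\mathbb{L}_{n+1}}\otimes_{U(\mathbb{L}_{n+1})}(V\otimes_K K_\lambda)$ is $x\cdot(i\otimes v)=\theta_{\mathrm{id}\oplus g}(x)(i)\otimes v$, with no term $i\otimes xv$.

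Also, $\theta_{\mathrm{id}\oplus g}(x_k)=\theta(A_{k,n+1})$ does not fix $x_{n+1}$. It sends $x_k\mapsto[x_k,x_{n+1}]$ and $x_{n+1}\mapsto[x_{n+1},x_k]$, and kills the other generators.

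Your diagonal formula $x_k\cdot(x_j\otimes w)=x_j\otimes A_kw+\theta(x_k)(x_j)\otimes w$ does not even preserve $H_1$. Applied to a cycle $(w_1,\dots,w_{n+1})$, it produces a chain whose boundary is $\sum_{j\le n}[A_j,A_k]w_j$. Its first $n$ coordinates are $(B_k+A_k\otimes\mathrm{Id}_{K^n})(w)$, not $B_k(w)$. So the computation you plan would not close up, either exactly or modulo $\mathcal{K}+\mathcal{L}$.

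With the correct action the check is one line:
\[
x_k\cdot\sum_{j=1}^{n+1}x_j\otimes w_j=[x_k,x_{n+1}]\otimes(w_k-w_{n+1})=x_k\otimes\lambda(w_k-w_{n+1})-x_{n+1}\otimes A_k(w_k-w_{n+1}).
\]
By the cycle condition, $\lambda(w_k-w_{n+1})=\lambda w_k+\sum_{j=1}^nA_jw_j$, and all other slots $j\le n$ vanish. This is exactly $B_k$, already on $V^{\oplus n}$, with no need to pass to the quotient. It is the content of Proposition \ref{prop:compconv}: $x_k\cdot([x_j,x_{n+1}]\otimes v)=[x_k,x_{n+1}]\otimes(A_j+\delta_{jk}\lambda)v$, via \eqref{eq:DRformula}. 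Once this is fixed, your argument for $\lambda\neq0$ is essentially the paper's.

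\textbf{Your treatment of $\lambda=0$ is also off.}

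First, $\mathfrak{mc}_0(V)$ is not a relative or modified homology group. It is defined as the quotient of $\mathfrak{LM}^{\mathrm{df}}_0(V)=\bigoplus_{i}K\cdot[x_i,x_{n+1}]\otimes_K V$ (same action formula with $\lambda=0$) by $\sum_iK\cdot[x_i,x_{n+1}]\otimes_KV^{x_i}+\mathfrak{LM}^{\mathrm{df}}_0(V)^{\mathbb{L}_n}$.

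Second, your coordinate projection is $\lambda$ times the paper's map, so it degenerates at $\lambda=0$. Use instead $\Phi([x_i,x_{n+1}]\otimes v)=v\otimes e_i$. This intertwines the actions for every $\lambda$ and carries the two subspaces onto $\mathfrak{k}$ and $\mathfrak{l}$, using Lemma \ref{lem:Linvariant2}.

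Third, the Dettweiler--Reiter subspace at $\lambda=0$ is not a diagonal copy of $V$. It is $c^{\mathrm{DR}}_0(V)^{\mathbb{L}_n}=\{(v_1,\dots,v_n)\mid\sum_iA_iv_i=0\}$.
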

Here $\mathrm{Res}_{\mathbb{L}_{n}}^{\mathbb{L}_{n}\oplus_{\theta_{g}}\mathfrak{g}}$ is the 
restriction functor from $U(\mathbb{L}_{n}\oplus_{\theta_{g}} \mathfrak{g})$-modules 
to $U(\mathbb{L}_{n})$-modules.
Therefore, the middle convolution $\mathfrak{mc}_{\lambda}$ can be 
regarded as a natural generalization of the Dettweiler-Reiter 
additive middle convolution $\mathrm{mc}_{\lambda}^{\mathrm{DR}}$.
\subsection{Holonomy Lie algebras of complements of hyperplane arrangements}
\label{sec:holonomy}
In Sections \ref{sec:holonomy} and \ref{sec:mcPf},
we will give a geometric interpretation of $\mathfrak{mc}_{\lambda}$
in terms of the middle convolution for logarithmic connections on complements of hyperplane arrangements.

Let $\mathcal{A}$ be an affine complex hyperplane arrangement in $\C^{l}$, 
and $M(\mathcal{A})$ be the complement of $\mathcal{A}$ in $\C^{l}$.
Let us denote the intersection poset 
by $L(\mathcal{A})$. Also $L_{k}(\mathcal{A})$ denotes
the subset of $L(\mathcal{A})$ consisting of codimension $k$ elements.

Then in Section \ref{sec:logPf}, we will 
consider the category $\mathrm{Pf}(\mathrm{log}(\mathcal{A}))$
consisting of flat connections on a trivial bundle over $M(\mathcal{A})$ with logarithmic poles along $\mathcal{A}$.
Then we will recall that
the middle convolution functor can be defined  
on $\mathrm{Pf}(\mathrm{log}(\mathcal{A}))$
which was introduced by Haraoka \cite{Har1} 
as a generalization of the Dettweiler-Reiter additive middle 
convolution for Fuchsian systems on the punctured Riemann sphere.
Haraoka's middle convolution is a functor between the following categories
\[
	\mathrm{mc}_{\lambda}\colon \mathrm{Pf}(\mathrm{log}(\mathcal{A}))\to 
	\mathrm{Pf}(\mathrm{log}(\overline{\mathcal{A}}^{Y})),
\]
which depends on the choice of a line $Y$ in $\C^{l}$ passing through the origin,
and the target category is associated with the hyperplane arrangement 
$\overline{\mathcal{A}}^{Y}$ defined by
\[ 
\overline{\mathcal{A}}^{Y}=\mathcal{A}\cup (Y+L_{2}(\mathcal{A})),
\]
which is called the $Y$-closure of $\mathcal{A}$.

The main purpose of Sections \ref{sec:holonomy} and \ref{sec:mcPf}
is to show that the middle convolution $\mathrm{mc}_{\lambda}$ on 
$\mathrm{Pf}(\mathrm{log}(\mathcal{A}))$ 
is compatible with the middle convolution $\mathfrak{mc}_{\lambda}$ for 
the holonomy Lie algebra $\mathfrak{h}(M(\mathcal{A}))$ of $M(\mathcal{A})$.
Let us give a more precise explanation.
Owing to the explicit description of  
the holonomy Lie algebra $\mathfrak{h}(M(\mathcal{A}))$ of $M(\mathcal{A})$
by Kohno \cite{Koh83},
we can construct an equivalence of categories
\[
	\Xi_{\mathcal{A}}\colon \mathrm{Pf}(\mathrm{log}(\mathcal{A}))
	\xrightarrow{\simeq} U(\mathfrak{h}(M(\mathcal{A})))\text{-}\mathbf{mod}.
\]
Thus we can compare the middle convolutions $\mathrm{mc}_{\lambda}$
and $\mathfrak{mc}_{\lambda}$ by the equivalence $\Xi_{\mathcal{A}}$.
One remarkable point here is that 
the holonomy Lie algebra $\mathfrak{h}(M(\mathcal{A}))$ of $M(\mathcal{A})$ 
does not have the semidirect sum decomposition of the form 
\eqref{eq:generalizedDK} in general, 
however, the result of Cohen, Cohen, and Xicot\'encatl \cite{CCX03} implies 
that the holonomy Lie algebra $\mathfrak{h}(M(\overline{\mathcal{A}}^{Y}))$ 
associated with the $Y$-closure $\overline{\mathcal{A}}^{Y}$
always has such a decomposition,
\[	
	\mathfrak{h}(M(\overline{\mathcal{A}}^{Y}))=
	\mathbb{L}_{n}\oplus_{\theta_{g}} \mathfrak{g},
\]
see Sections \ref{sec:CCX} and \ref{sec:Yclosure} for details.
Therefore, we can apply the middle convolution 
$\mathfrak{mc}_{\lambda}$ for $U(\mathfrak{h}(M(\overline{\mathcal{A}}^{Y})))$-modules,
and obtain the following  geometric realization of 
$\mathfrak{mc}_{\lambda}$ as the middle convolution $\mathrm{mc}_{\lambda}$ 
on $\mathrm{Pf}(\mathrm{log}(\mathcal{A}))$.
\begin{thm}[Theorem \ref{thm:commPfmod}]
We have the following commutative diagram
up to natural isomorphisms of functors:
\[\begin{tikzcd}[column sep=4cm]
\mathrm{Pf}(\mathrm{log}(\mathcal{A}))\arrow[r,"\mathrm{mc}_{\lambda}"]\arrow[d,"\simeq"',"\Xi_{\mathcal{A}}"]&
\mathrm{Pf}(\mathrm{log}(\overline{\mathcal{A}}^{Y}))\arrow[d,"\simeq"', "\Xi_{\overline{\mathcal{A}}^{Y}}"]\\
U(\mathfrak{h}(M(\mathcal{A})))\text{-}\mathrm{mod}\arrow[r,"\mathfrak{mc}_{\lambda}\circ \mathrm{Res}_{\mathfrak{h}(M(\overline{\mathcal{A}}^{Y}))}^{\mathfrak{h}(M(\mathcal{A}))}"]&
U(\mathfrak{h}(M(\overline{\mathcal{A}}^{Y})))\text{-}\mathrm{mod}
\end{tikzcd}.
\]
\end{thm}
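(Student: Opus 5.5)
Note first that the restriction in the bottom arrow is really a pullback along the natural surjection $\mathfrak{h}(M(\overline{\mathcal{A}}^{Y}))\to\mathfrak{h}(M(\mathcal{A}))$, which sends generators of the added hyperplanes $Y+L_{2}(\mathcal{A})$ to zero. Through it, a $U(\mathfrak{h}(M(\mathcal{A})))$-module becomes a $U(\mathfrak{h}(M(\overline{\mathcal{A}}^{Y})))$-module, to which $\mathfrak{mc}_{\lambda}$ applies via the decomposition $\mathfrak{h}(M(\overline{\mathcal{A}}^{Y}))=\mathbb{L}_{n}\oplus_{\theta_{g}}\mathfrak{g}$. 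Our plan is to reduce the comparison to the one-variable compatibility of Theorem \ref{thm:compmc}, and then check separately that the remaining residue matrices agree.

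\textbf{Step 1: coordinates.} Choose coordinates $(x,t)$ on $\C^{l}$ with $Y$ the $x$-axis. The hyperplanes of $\overline{\mathcal{A}}^{Y}$ not parallel to $Y$ are then $x=a_{i}(t)$, $i=1,\dots,n$, where the $a_{i}$ are affine in $t$. The CCX decomposition identifies the free factor $\mathbb{L}_{n}$ with the residues $A_{H}$ along these $n$ hyperplanes. The factor $\mathfrak{g}$ is identified with the holonomy Lie algebra of the base arrangement in the $t$-space, lifted to residues along the hyperplanes containing $Y$-direction.

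\textbf{Step 2: Haraoka's construction.} For $\nabla=d-\sum_{H}A_{H}\,d\log f_{H}$ in $\mathrm{Pf}(\log(\mathcal{A}))$, Haraoka's $\mathrm{mc}_{\lambda}$ is built in two stages. Restricting to fibres $t=\mathrm{const}$ gives a Fuchsian system in $x$ with residues $A_{1},\dots,A_{n}$ (residues of hyperplanes absent from $\mathcal{A}$ are $0$). The convolution space is then the Dettweiler--Reiter quotient $V^{n}/(\mathcal{K}+\mathcal{L})$. The new residues along all hyperplanes, including those parallel to $Y$ and those in $Y+L_{2}(\mathcal{A})$, are given by explicit block matrices built from $A_{H}$, $\lambda$ and sums over $H$ containing a given codimension-two flat.

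\textbf{Step 3: the $\mathbb{L}_{n}$-part.} By Theorem \ref{thm:compmc}, $\mathrm{Res}_{\mathbb{L}_{n}}\circ\mathfrak{mc}_{\lambda}\cong\mathrm{mc}^{\mathrm{DR}}_{\lambda}\circ\mathrm{Res}_{\mathbb{L}_{n}}$. This identifies the underlying vector spaces and the actions of the residues $A_{1},\dots,A_{n}$ on both sides, naturally in the module.

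\textbf{Step 4: the $\mathfrak{g}$-part (the main obstacle).} It remains to show that, under the identification of Step 3, the action of each generator of $\mathfrak{g}$ on $\mathfrak{mc}_{\lambda}(V)$ coincides with Haraoka's residue matrices. On $H_{1}(\mathbb{L}_{n+1},V\otimes K_{\lambda})$ the $\mathfrak{g}$-action is induced by $\theta_{g}$ acting on chains. Using the standard resolution, $H_{1}(\mathbb{L}_{n+1},W)$ is a quotient of $W^{n+1}$. A generator $B\in\mathfrak{g}$ then acts by $(v_{j})\mapsto(Bv_{j})+(\text{terms from }\theta_{g}(B)x_{j})$. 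Since $\theta_{g}(B)x_{j}$ is given by the infinitesimal braid / CCX relations, namely $[B,x_{j}]$ equals $\pm[x_{j},x_{k}]$ or $0$ according to incidences at codimension-two flats, we will compute this action explicitly. We then compare it, generator by generator, with Haraoka's formulas, which involve exactly the same incidence data.

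For generators $B$ coming from hyperplanes in $Y+L_{2}(\mathcal{A})$, the input $V$ has $B$ acting by zero. The check then reduces to verifying that the Fox-derivative-type correction term reproduces Haraoka's matrix. We expect this case bookkeeping to be the hardest part, and would first verify it on a single codimension-two flat, where the computation reduces to $\mathfrak{P}_{3}$.

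Finally, naturality in morphisms holds because every identification above is built functorially from $V$. Compatibility with $\Xi$ holds because $\Xi$ simply sends a connection to its residue tuple.
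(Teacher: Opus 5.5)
Your route is the paper's. Added hyperplanes act by zero, so you may work with the $Y$-closed arrangement. The $\mathbb{L}_n$-part and the subspaces $K$, $L$ come from the Dettweiler--Reiter comparison (Proposition \ref{prop:compconv}, Theorem \ref{thm:compmc}). What remains is to match, for each hyperplane $H$ of $\overline{\mathcal{A}}^{Y}$ parallel to $Y$, the action of $H$ with Haraoka's $c_{\lambda}(A)_{H}$ under the explicit map $\Phi\colon[x_i,x_{n+1}]\otimes v\mapsto v\otimes e_{H_i}$, and then pass to quotients.

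The gap is that this last step is the whole content of the paper's proof beyond Theorem \ref{thm:compmc}, and you only announce it. Two of the ingredients you list for it are also wrong.

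\emph{First}, $H_{1}(\mathbb{L}_{n+1},W)$ is not a quotient of $W^{n+1}$. It is the kernel of $W^{n+1}\to W$, $(w_j)\mapsto\sum_j x_jw_j$. Moreover, for $\lambda=0$, which the theorem also covers, $H_1$ is not the relevant object: $\mathfrak{LM}^{\mathrm{df}}_{0}$ is defined as the formal span $\bigoplus_i K[x_i,x_{n+1}]\otimes V$, and $\mathfrak{mc}_0$ as a quotient of it.

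\emph{Second}, $[H,x_{i_0}]$ is not a single $\pm[x_{i_0},x_k]$. Let $\{H,H_{i_0},\dots,H_{i_k}\}$ be the maximal family through the flat $H\cap H_{i_0}$. You must check that no other hyperplane $H'$ parallel to $Y$ contains this flat. It cannot: otherwise $H\cap H_{i_0}=H\cap H'$ would be stable under translation by $Y$, forcing $H_{i_0}\parallel Y$. The holonomy relation then gives $[H,x_{i_0}]=[x_{i_0},x_{i_1}+\cdots+x_{i_k}]$, a sum over the whole flat. With a single bracket, your comparison would fail whenever three or more non-parallel hyperplanes meet $H$ in the same flat, because Haraoka's matrix contains the full sum.

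Once these are corrected, you need no Fox calculus and no reduction to $\mathfrak{P}_3$. Work in the basis $[x_i,x_{n+1}]\otimes v$ and use:
\begin{itemize}
\item $\theta(H)(x_{n+1})=0$;
\item the Jacobi identity $[[x_{i_0},x_{i_j}],x_{n+1}]=[[x_{i_0},x_{n+1}],x_{i_j}]-[[x_{i_j},x_{n+1}],x_{i_0}]$;
\item $[x_a,x_{n+1}]\cdot v=0$, so that $[[x_a,x_{n+1}],y]\otimes v=[x_a,x_{n+1}]\otimes y\cdot v$.
\end{itemize}
These give
\[
H\cdot([x_{i_0},x_{n+1}]\otimes v)=[x_{i_0},x_{n+1}]\otimes\Bigl(H+\sum_{j=1}^{k}H_{i_j}\Bigr)v-\sum_{j=1}^{k}[x_{i_j},x_{n+1}]\otimes H_{i_0}v .
\]
Under $\Phi$ this is exactly Haraoka's block $A_{H}\otimes\mathrm{Id}+\sum_{j}A_{H_{i_j}}\otimes\sum_{h}(I_{H_{i_h},H_{i_h}}-I_{H_{i_h},H_{i_j}})$, where $j$ and $h$ range over the whole flat $\{i_0,\dots,i_k\}$. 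It holds uniformly in $\lambda$, including $\lambda=0$. For hyperplanes in $Y+L_2(\mathcal{A})$ the same computation applies with $A_H=0$.
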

\subsection{Riemann-Hilbert correspondence}
Finally, we will compare the middle convolution $\mathfrak{mc}_{\lambda}$ for
Lie algebra representations with the middle convolution $\mathrm{MC}_{\chi}$ for local systems.
The middle convolution 
was originally introduced by Katz \cite{Katz} for 
local systems over the punctured Riemann sphere.
In \cite{AdaHir}, a natural generalization of the middle convolution 
functor for local systems over the complement $M(\mathcal{A})$ 
of a hyperplane arrangement $\mathcal{A}$ is given:
\[
	\mathrm{MC}_{\chi}\colon \mathrm{Loc}(M(\mathcal{A}))\to \mathrm{Loc}(M(\overline{\mathcal{A}}^{Y})),
\]
where $\chi\colon \mathbb{Z}\to \mathbb{C}^{\times}$ is a multiplicative 
character, see Section \ref{sec:MCforLoc} for details.

Then in Section \ref{sec:RH},
we will provide a correspondence between the middle convolution 
$\mathfrak{mc}_{\lambda}$
for the holonomy Lie algebra $\mathfrak{h}(M(\mathcal{A}))$
and the middle convolution $\mathrm{MC}_{\chi}$ for local systems 
on $M(\mathcal{A})$.
For this purpose, 
we define a functor from 
the category of modules over the holonomy Lie algebra 
$\mathfrak{h}(M(\mathcal{A}))$ to the category of local systems 
on $M(\mathcal{A})$
\[
	\mathfrak{DR}\colon U(\mathfrak{h}(M(\mathcal{A})))\text{-}\mathbf{mod}\to \mathrm{Loc}(M(\mathcal{A})).
\]
by combining the de Rham functor 
\[
\mathrm{DR}\colon \mathrm{Pf}(\mathrm{log}(\mathcal{A}))\to \mathrm{Loc}(M(\mathcal{A})); \quad \nabla \mapsto \mathrm{Ker}(\nabla)
\]
with the equivalence $\Xi_{\mathcal{A}}^{-1}\colon U(\mathfrak{h}(M(\mathcal{A})))\text{-}\mathbf{mod}
\xrightarrow{\simeq} \mathrm{Pf}(\mathrm{log}(\mathcal{A}))$.
Then we will show the following compatibility of the middle convolution
$\mathfrak{mc}_{\lambda}$ for Lie algebra representations 
with the middle convolution $\mathrm{MC}_{\chi}$ for local systems 
on $M(\mathcal{A})$.
\begin{thm}[Theorem \ref{thm:RHmc}]
	Let $\mathcal{A}$ be an affine hyperplane arrangement in $\C^{l}$ and $Y$ a line in $\C^{l}$ passing through the origin.
	Let us take $\lambda\in K\setminus\{0\}$ and let $\chi\colon \mathbb{Z}\to \mathbb{C}^{\times}$ be the character defined by $\chi(1)=\exp(2\pi i\lambda)$. 
	Let $V$ be a $U(\mathfrak{h}(M(\mathcal{A})))$-module satisfying the following conditions:
	\begin{itemize}
		\item For any $H\in \mathcal{A}_{\parallel Y}^{c}=\{H\in \mathcal{A}\mid H\not\parallel Y\}$, the eigenvalues of $H$ on $V$ are not in $\mathbb{Z}\setminus\{0\}$.
		\item The eigenvalues of $\left(\sum_{H\in \mathcal{A}_{\parallel Y}^{c}}H\right)+\lambda$ on $V$ are not in $\mathbb{Z}\setminus\{0\}$. 
	\end{itemize}
Then there exists an isomorphism of local systems on $M(\mathcal{A})$,
\[
	\mathfrak{DR}_{\overline{\mathcal{A}}^{Y}}\circ \mathfrak{mc}_{\lambda}\circ \mathrm{Res}_{\mathfrak{h}(M(\overline{\mathcal{A}}^{Y}))}^{\mathfrak{h}(M(\mathcal{A}))}(V)\simeq \mathrm{MC}_{\chi}\circ\mathfrak{DR}_{\mathcal{A}}(V).
\]
\end{thm}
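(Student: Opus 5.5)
Since the paper has already established (as stated in the introduction and proved later as Theorem \ref{thm:commPfmod}) that
\[
\Xi_{\overline{\mathcal{A}}^{Y}}\circ \mathrm{mc}_{\lambda}\cong \mathfrak{mc}_{\lambda}\circ \mathrm{Res}^{\mathfrak{h}(M(\mathcal{A}))}_{\mathfrak{h}(M(\overline{\mathcal{A}}^{Y}))}\circ \Xi_{\mathcal{A}},
\]
the plan is to reduce the statement to a Riemann--Hilbert comparison for Haraoka's middle convolution. Put $\nabla=\Xi_{\mathcal{A}}^{-1}(V)\in \mathrm{Pf}(\mathrm{log}(\mathcal{A}))$. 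By definition of $\mathfrak{DR}$ the left-hand side is $\mathrm{DR}(\mathrm{mc}_{\lambda}(\nabla))$, and the right-hand side is $\mathrm{MC}_{\chi}(\mathrm{DR}(\nabla))$. It therefore suffices to show
\[
\mathrm{Ker}\,\mathrm{mc}_{\lambda}(\nabla)\cong \mathrm{MC}_{\chi}(\mathrm{Ker}\,\nabla)
\]
as local systems on $M(\overline{\mathcal{A}}^{Y})$. The two eigenvalue hypotheses are exactly the non-resonance conditions needed here, once translated through $\Xi_{\mathcal{A}}$: the element $H\in\mathfrak{h}(M(\mathcal{A}))$ acts as the residue of $\nabla$ along $H$, and the element $\sum_{H\not\parallel Y}H+\lambda$ acts as the residue at infinity along the fibres in the $Y$-direction.

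Next I reduce to one variable by working fibrewise. Choose coordinates so that $Y$ is the $x$-axis and project $\pi\colon \C^{l}\to \C^{l}/Y$. Over a point $b\in M(\overline{\mathcal{A}}^{Y})/Y$, the fibre $\pi^{-1}(b)\cap M(\mathcal{A})$ is $\mathbb{P}^1$ minus the points $t_H(b)$ for $H\in\mathcal{A}^{c}_{\parallel Y}$ and minus $\infty$. Both constructions are then one-variable middle convolutions in families:
\begin{itemize}
\item Haraoka's $\mathrm{mc}_{\lambda}$ restricted to a fibre line is the Dettweiler--Reiter $\mathrm{mc}^{\mathrm{DR}}_{\lambda}$ of the Fuchsian system with residues $A_H$. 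Via Theorem \ref{thm:compmc} this matches the restriction to $\mathbb{L}_n$.
\item $\mathrm{MC}_{\chi}$ of \cite{AdaHir} is, fibrewise, Katz's $\mathrm{MC}_{\chi}$: the image of compactly supported cohomology in ordinary cohomology of $\mathrm{Ker}\,\nabla\otimes \mathcal{L}_{\chi}(x-y)$ along the fibre.
\end{itemize}
The fibrewise statement is then the known Riemann--Hilbert compatibility of Dettweiler--Reiter: the solution sheaf of $\mathrm{mc}^{\mathrm{DR}}_{\lambda}$ is $\mathrm{MC}_{\chi}$ of the solution sheaf when $\chi(1)=e^{2\pi i\lambda}$. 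This holds under the non-resonance condition that no residue, and no residue at infinity shifted by $\lambda$, has eigenvalues in $\mathbb{Z}\setminus\{0\}$. Concretely, the solutions of $\mathrm{mc}_{\lambda}(\nabla)$ are Euler-type integrals $\int_{\gamma}(x-y)^{\lambda}\,\Phi(y)\,\frac{dy}{y-t_H}$ over twisted cycles $\gamma$. Their classes give the map from twisted homology, that is, compactly supported cohomology, onto the middle part.

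The main obstacle is globalising: showing that the fibrewise isomorphisms glue into an isomorphism of local systems over all of $M(\overline{\mathcal{A}}^{Y})$, i.e.\ that they are compatible with monodromy in the base directions. My approach is to construct the map globally by the Euler integral above. It is defined on the relative twisted homology local system over $M(\overline{\mathcal{A}}^{Y})$ obtained from the fibration $M(\mathcal{A}\cup\{x=y\})\to M(\overline{\mathcal{A}}^{Y})$; this fibration is locally trivial precisely because of the $Y$-closure. The map is flat by differentiation under the integral, since the integrands satisfy Haraoka's Pfaffian system. It is therefore a morphism of local systems, and it is an isomorphism because it is an isomorphism on one fibre by the one-variable case. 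Finally I would check that the kernel of this global map is the image of the cycles around the individual points, and of the cycles at infinity. The hypotheses ensure these are exactly the contributions killed on both sides: on the Lie side these are the $H_1(L(x_i),\cdot)$ terms removed in $\mathfrak{mc}_{\lambda}$, and on the topological side they are the difference between ordinary and compactly supported cohomology.
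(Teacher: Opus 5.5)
Your first step is the same reduction the paper makes. Theorem \ref{thm:commPfmod}, together with the compatibility of $\mathrm{DR}_{\mathrm{Pf}}$ with $\iota_{Y}^{-1}$ and $\iota_{Y}^{*}$ (which you use silently when you drop $\iota_{Y}^{*}$), turns the claim into $\mathrm{Ker}\,\mathrm{mc}_{\lambda}(\nabla)\cong\mathrm{MC}_{\chi}(\iota_{Y}^{*}\mathrm{Ker}\,\nabla)$ on $M(\overline{\mathcal{A}}^{Y})$.

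The paper stops there. This comparison is exactly Theorem 0.3 of \cite{AdaHir}, whose hypotheses are the two eigenvalue conditions, so the paper's proof is just a formal chain of natural isomorphisms. You instead reprove that cited theorem. You restrict to the fibre lines of $\pi_{Y}$, where Haraoka's $\mathrm{mc}_{\lambda}$ becomes $\mathrm{mc}^{\mathrm{DR}}_{\lambda}$ and, by base change for the locally trivial $\mathrm{pr}_{2}$, $\mathrm{MC}_{\chi}$ becomes Katz's. You then glue with a global Euler-integral morphism of local systems. Your route explains where the hypotheses come from (non-resonance at the points $t_{H}(b)$ and at $\infty$ on each fibre) and where $Y$-closedness enters, at the cost of length. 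The paper's route is short but treats the several-variable comparison as a black box.

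Your gluing step needs tightening to be correct as written. Set $\mathcal{L}=\iota_{Y}^{*}\mathrm{Ker}\,\nabla$ and $n=|\mathcal{A}_{\parallel Y}^{c}|$.
\begin{itemize}
\item The Euler map goes from $R^{1}\mathrm{pr}_{2\,!}(\mathcal{L}\boxtimes\chi)$, of rank $n\dim V$, to $\mathrm{Ker}\,\nabla_{\mathrm{mc}_{\lambda}(A)}$. So it is not itself an isomorphism when Haraoka's $K+L$ is nonzero. What you need is that it is surjective with kernel $\mathrm{Ker}(R^{1}\mathrm{pr}_{2\,!}\to R^{1}\mathrm{pr}_{2\,*})$.
\item Both kernels are sub-local systems of one local system on the connected base $M(\overline{\mathcal{A}}^{Y})$, so a single stalk check suffices. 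Your separate final kernel check then becomes automatic.
\item That stalk check needs the one-variable theorem in the strong form: this particular integral map induces $\mathrm{MC}_{\chi}\xrightarrow{\sim}\mathrm{Ker}\,\mathrm{mc}^{\mathrm{DR}}_{\lambda}$. An abstract isomorphism of the fibre local systems would say nothing about your global map.
\end{itemize}

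Finally, both your argument and the paper's tacitly need $\lambda\notin\mathbb{Z}$, so that $\chi$ is nontrivial as Definition \ref{df:MC} requires. The hypothesis $\lambda\neq 0$ alone does not ensure this.
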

\subsection{Acknowledgements}
The author is supported by JSPS KAKENHI Grant Number 25K07043.
\section{Long-Moody functor for Lie algebra representations}\label{sec:longmoody}
The Long-Moody functor is a procedure to construct representations of braid groups from representations of larger braid groups,
which was introduced by Long and Moody in \cite{Lo94} as a higher dimensional analogue of the classical Burau representation of braid groups.
In this section, we introduce a Lie algebra analogue of the Long-Moody functor, as a functor between modules over Drinfeld-Kohno Lie algebras, which is an infinitesimal analogue of the pure braid groups.
We moreover consider a one parameter deformation of the Lie algebra analogue of the Long-Moody functor.

Let us fix $K$ as a field. Let us denote by $\mathbb{L}(x_{1},\ldots,x_{n})$
the free Lie algebra generated by $x_{1},\ldots,x_{n}$ over $K$,
which is frequently written as $\mathbb{L}_{n}$ for simplicity.
Also for a vector space $V$ over $K$, we denote by $\mathbb{L}(V)$ 
the free Lie algebra generated by $V$ over $K$.
\subsection{Long-Moody functor}
Let us  recall the definition of the Long-Moody functor.
Let $B_{n}$ denote the \emph{braid group of $n$-strands} 
with $\sigma_{i}$, $i=1,\ldots,n-1$ as the standard generators
satisfying the braid relation:
\begin{align*}
	\sigma_{i}\sigma_{j}&=\sigma_{j}\sigma_{i} \quad \text{if $|i-j|\ge 2$},\\
	\sigma_{i}\sigma_{i+1}\sigma_{i}&=\sigma_{i+1}\sigma_{i}\sigma_{i+1} \quad \text{for $i=1,\ldots,n-2$}.
\end{align*}
The \emph{pure braid group of $n$-strands} $P_{n}$ is defined as the kernel of the natural homomorphism $B_{n}\to \mathfrak{S}_{n}$ to
the symmetric group $\mathfrak{S}_{n}$ on $n$-letters.

As is well-known, $B_{n}$
is isomorphic to a subgroup of the automorphism group $\mathrm{Aut}(F_{n})$ of the free group $F_{n}$ with the generators $x_{1},\ldots,x_{n}$, where the isomorphism is given by
\[
	\theta_{\mathrm{Art}}\colon B_{n}\hookrightarrow \mathrm{Aut}(F_{n}), \quad \sigma_{i}\mapsto \begin{cases}
		x_{i}\mapsto x_{i+1},\\
	x_{i+1}\mapsto x_{i+1}^{-1}x_{i}x_{i+1},\\
	x_{j}\mapsto x_{j} \quad \text{for $j\neq i,i+1$}.
	\end{cases}
\]
This inclusion $\theta_{\mathrm{Art}}$ is called the \emph{Artin representation} of $B_{n}$
on the free group $F_{n}$.

For the braid group $B_{n+1}$ of $n+1$ strands
with the standard generators $\sigma_{0},\sigma_{1},\ldots,\sigma_{n-1}$,
we consider the inclusion $\iota\colon B_{n}\hookrightarrow B_{n+1}$ defined by 
$\iota(\sigma_{i})=\sigma_{i}$ for $i=1,\ldots,n-1$, and 
then the subgroup $B_{1,n}$ of $B_{n+1}$
generated by $\sigma_{0}^{2},\sigma_{1},\ldots,\sigma_{n-1}$ has 
the 
semidirect product decomposition 
\[
	B_{1,n} = F_{n}\rtimes_{\theta_{\mathrm{Art}}}B_{n}
\]
associated with the Artin representation $\theta_{\mathrm{Art}}$.
Namely, the elements
\[
x_{1}:=\sigma_{0}^{2}, x_{2}:=\sigma_{1}x_{1}\sigma_{1}^{-1},
\ldots, x_{n}:=\sigma_{n-1}x_{n-1}\sigma_{n-1}^{-1}
\]
generate the free group $F_{n}$ and 
we have the semidirect product decomposition.

Let $K[B_{n}]$ denote the group ring generated by $B_{n}$ over $K$
and take a finite dimensional module $V$ over $K[B_{n+1}]$. 
Then we construct a $K[B_{n}]$-module $\mathcal{LM}(V)$ by the following procedure.
Let $\mathcal{I}_{F_{n}}$ be the augmentation ideal of the group ring $K[F_{n}]$ generated by $F_{n}$ over $K$. Namely,
\[
	\mathcal{I}_{F_{n}} := \ker\left(\epsilon\colon K[F_{n}]\to K; \sum_{g\in F_{n}}a_{g}g\mapsto \sum_{g\in F_{n}}a_{g}\right).
\]
Then $\mathcal{I}_{F_{n}}$ is a free $K[F_{n}]$-module 
generated by $x_{1}-1,\ldots,x_{n}-1$, and 
$\mathcal{I}_{F_{n}}$ is closed under 
the action of $B_{n}$ on $K[F_{n}]$ induced	by the Artin representation $\theta_{\mathrm{Art}}$.

Let us regard $V$ as a $K[F_{n}]$-module
through the inclusion 
\[F_{n}\hookrightarrow 
F_{n}\rtimes_{\theta_{\mathrm{Art}}}B_{n}=B_{1,n}\subset B_{n+1}\]
and
then we consider the tensor product 
\[
\mathcal{LM}(V):=\mathcal{I}_{F_{n}}\otimes_{K[F_{n}]}V
\]
which has a $K[B_{n}]$-module structure defined  by
\[
	b\cdot (i\otimes v):=\theta_{\mathrm{Art}}(b)(i)\otimes (b\cdot v)
	\quad \text{for $b\in B_{n}, i\in \mathcal{I}_{F_{n}}, v\in V$}.
\]
\begin{df}[Long-Moody functor]\label{df:longmoody}
	The \emph{Long-Moody functor} is
	the functor defined by 
	\[
		\mathcal{LM}\colon K[B_{n+1}]\text{-}\mathrm{\mathbf{mod}}\to K[B_{n}]\text{-}\mathrm{\mathbf{mod}}; \quad V\mapsto \mathcal{I}_{F_{n}}\otimes_{K[F_{n}]}V.
	\] 
	Here $R\text{-}\mathrm{\mathbf{mod}}$ is the category of finitely generated left modules over a ring $R$.
\end{df}	
Since the augmentation ideal $\mathcal{I}_{F_{n}}$ is a free $K[F_{n}]$-module,
the Long-Moody functor $\mathcal{LM}$ is an exact functor.

\subsection{Long-Moody functor for Drinfeld-Kohno Lie algebras}
We now introduce 
a Lie algebra analogue of the Long-Moody functor,
as a functor between modules over Drinfeld-Kohno Lie algebras,
which are infinitesimal analogues of the pure braid groups. 

We first recall the definition of Drinfeld-Kohno Lie algebras.
\begin{df}[Drinfeld-Kohno Lie algebra]
	Let $\mathbb{L}(A_{i,j})$ be the free Lie algebra over $K$ 
	generated by $A_{i,j}$ for $1\le i\neq j\le n$.
	Then the \emph{Drinfeld-Kohno Lie algebra $\mathfrak{P}_{n}$}  
	is defined as the quotient 
	of $\mathbb{L}(A_{i,j})$ by the ideal generated by the \emph{infinitesimal braid relation}:
	\begin{align*}
	A_{i,j}-A_{j,i}&=0, \\
	[A_{i,k},A_{i,j}+A_{j,k}]&=0,\\
	[A_{i,j},A_{k,l}]&=0, 
\end{align*}
for mutually distinct $i,j,k,l$.
\end{df}
As an analogue of the decomposition of the pure braid group $P_{n+1}$,
\[
	P_{n+1}=F_{n}\rtimes P_{n},
\]
associated with the Artin representation $P_{n}\hookrightarrow B_{n}\xrightarrow{\theta_{\mathrm{Art}}}
\mathrm{Aut}(F_{n})$,
the infinitesimal braid relation tells us that 
$\mathfrak{P}_{n+1}$ also has the semidirect sum decomposition
\[
	\mathfrak{P}_{n+1}=\mathbb{L}(A_{1,n+1},\ldots,A_{n,n+1})\oplus \mathfrak{P}_{n}.
\]

For later reference, let us give a name to
the adjoint action of $\mathfrak{P}_{n}$ on the free Lie algebra $\mathbb{L}(A_{1,n+1},\ldots,A_{n,n+1})$,
which is an infinitesimal analogue of the Artin representation restricted to $P_{n}$.
\begin{df}[Infinitesimal braid action]\label{Infbraidaction}
	Let $\mathbb{L}_{n}=\mathbb{L}(x_{1},\ldots,x_{n})$ be the free Lie algebra generated by $x_{1},\ldots,x_{n}$
	over $K$. Let $\mathrm{Der}(\mathbb{L}_{n})$ be the Lie algebra of derivations of $\mathbb{L}_{n}$.
	
	We call the Lie algebra homomorphism $\theta\colon \mathfrak{P}_{n}\to \mathrm{Der}(\mathbb{L}_{n})$
	defined by 
	\[
	\theta(A_{i,j})(x_{k}):=
	\begin{cases}
		[x_{i},x_{j}] & \text{if $i=k$},\\
		[x_{j},x_{i}] & \text{if $j=k$},\\
		0 & \text{otherwise},
	\end{cases}
\]
	the \emph{infinitesimal braid action} of $\mathfrak{P}_{n}$ on the free Lie algebra $\mathbb{L}_{n}$.
\end{df}
This immediately tells us that 
\[
	\theta(\sigma)(x_{1}+\ldots+x_{n})=0\quad \text{for $\sigma\in \mathfrak{P}_{n}$}.
\] 

The free Lie algebra $\mathbb{L}_{n}$
is closed under the
infinitesimal braid action of $\mathfrak{P}_{n}$,
and moreover, free generators $x_{1},\ldots,x_{n}$
are translated to the adjoint of themselves by the infinitesimal braid action.
\begin{lem}\label{lem:purebraid}
For $x_{i}\in \{x_{1},\ldots,x_{n}\}$ and $\sigma\in \mathfrak{P}_{n}$,
there exists $v\in \mathbb{L}(x_{1},\ldots,x_{n})$ such that 
\[
	\theta(\sigma)(x_{i})=[x_{i},v].
\]
\end{lem}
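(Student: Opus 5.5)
The plan is to show that the set
\[
\mathfrak{s}:=\{\sigma\in \mathfrak{P}_{n}\mid \text{for every } i \text{ there exists } v_{i}\in \mathbb{L}_{n} \text{ with } \theta(\sigma)(x_{i})=[x_{i},v_{i}]\}
\]
is a Lie subalgebra of $\mathfrak{P}_{n}$ that contains the generators $A_{i,j}$. Then $\mathfrak{s}=\mathfrak{P}_{n}$, which is the statement of the lemma.

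First, the generators lie in $\mathfrak{s}$. This is immediate from Definition \ref{Infbraidaction}. We have $\theta(A_{i,j})(x_{i})=[x_{i},x_{j}]$, so $v=x_{j}$ works. Likewise $\theta(A_{i,j})(x_{j})=[x_{j},x_{i}]$, so $v=x_{i}$ works. For $k\neq i,j$ we have $\theta(A_{i,j})(x_{k})=0=[x_{k},0]$.

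Next, $\mathfrak{s}$ is a linear subspace. For fixed $i$, the set $[x_{i},\mathbb{L}_{n}]$ is the image of the linear map $\mathrm{ad}(x_{i})$, so it is a subspace. Since $\theta$ is linear, $\theta(a\sigma+b\tau)(x_{i})=[x_{i},av+bw]$ whenever $\theta(\sigma)(x_{i})=[x_{i},v]$ and $\theta(\tau)(x_{i})=[x_{i},w]$.

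The main step is closure under brackets. Suppose $\theta(\sigma)(x_{i})=[x_{i},v]$ and $\theta(\tau)(x_{i})=[x_{i},w]$. Since $\theta$ is a Lie algebra homomorphism into $\mathrm{Der}(\mathbb{L}_{n})$, we have $\theta([\sigma,\tau])=[\theta(\sigma),\theta(\tau)]$. The derivation property then gives
\[
\theta(\sigma)\theta(\tau)(x_{i})=\theta(\sigma)[x_{i},w]=[[x_{i},v],w]+[x_{i},\theta(\sigma)(w)].
\]
Symmetrically,
\[
\theta(\tau)\theta(\sigma)(x_{i})=[[x_{i},w],v]+[x_{i},\theta(\tau)(v)].
\]
Subtract the second from the first and apply the Jacobi identity in the form $[[x_{i},v],w]-[[x_{i},w],v]=[x_{i},[v,w]]$. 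This yields
\[
\theta([\sigma,\tau])(x_{i})=[x_{i},\,[v,w]+\theta(\sigma)(w)-\theta(\tau)(v)].
\]
The element in the second slot lies in $\mathbb{L}_{n}$ because $\mathbb{L}_{n}$ is preserved by derivations. Hence $[\sigma,\tau]\in\mathfrak{s}$.

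The only real point of care is this Jacobi rearrangement, and specifically getting its signs right. Once it is checked, $\mathfrak{s}$ is a subalgebra containing a generating set of $\mathfrak{P}_{n}$, so $\mathfrak{s}=\mathfrak{P}_{n}$ and the lemma follows.
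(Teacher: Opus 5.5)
Your proof is correct and is essentially the paper's argument. The paper inducts on left-normed brackets $[A_{i_k,j_k},\sigma_{k-1}]$ using the derivation rule and the same Jacobi rearrangement, and your identity $\theta([\sigma,\tau])(x_i)=[x_i,[v,w]+\theta(\sigma)(w)-\theta(\tau)(v)]$ reduces exactly to the paper's formula when $\sigma=A_{i_k,j_k}$ (so $v=x_{j_k}$) and $\tau=\sigma_{k-1}$. Your subalgebra formulation is slightly cleaner, since it avoids both the reduction to left-normed brackets and the case split on the value of $\theta(A_{i_k,j_k})(x_i)$.
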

\begin{proof}
Since elements of $\mathfrak{P}_{n}$
are linear combinations of such elements,
we may assume 
$\sigma=\sigma_{k}=[A_{i_{k},j_{k}},[A_{i_{k-1},j_{k-1}},[\cdots,[A_{i_{2},j_{2}},A_{i_{1},j_{1}}]\cdots]]]$
for $k\ge 1$ (where 
we set $\sigma=A_{i_{1},j_{1}}$ if $k=1$).
We proceed by induction on $k$.
If $k=1$, then the claim directly follows from the definition of $\theta$.
We assume that the claim holds for all $\sigma_{k-1}$ of the form $\sigma_{k-1}=[A_{i_{k-1},j_{k-1}},[\cdots,[A_{i_{2},j_{2}},A_{i_{1},j_{1}}]\cdots]]$.
Then there exists $v\in \mathbb{L}(x_{1},\ldots,x_{n})$
such that $\theta(\sigma_{k-1})(x_{i})=[x_{i},v]$
and we have
\begin{align*}
\theta(\sigma)(x_{i})&=\theta([A_{i_{k},j_{k}},\sigma_{k-1}])(x_{i})\\
&=\theta(A_{i_{k},j_{k}})(\theta(\sigma_{k-1})(x_{i}))-\theta(\sigma_{k-1})(\theta(A_{i_{k},j_{k}})(x_{i}))\\
&=\theta(A_{i_{k},j_{k}})([x_{i},v])-\theta(\sigma_{k-1})(\theta(A_{i_{k},j_{k}})(x_{i}))\\
&=[\theta(A_{i_{k},j_{k}})(x_{i}),v]+[x_{i},\theta(A_{i_{k},j_{k}})(v)]-\theta(\sigma_{k-1})(\theta(A_{i_{k},j_{k}})(x_{i})).
\end{align*}
The infinitesimal braid relation tells us that 
\[
	\theta(A_{i_{k},j_{k}})(x_{i})=\pm[x_{i_{k}},x_{j_{k}}]\text{ or }0.
\]
If $\theta(A_{i_{k},j_{k}})(x_{i})=0$, the claim follows from the above equation,
therefore, we may assume $\theta(A_{i_{k},j_{k}})(x_{i})=[x_{i_{k}},x_{j_{k}}]$
and $i=i_{k}$. 
The case $-[x_{i_{k}},x_{j_{k}}]$ is similar.
Then the above computation proceeds as follows, 
\begin{align*}
\theta(\sigma)(x_{i})&=[\theta(A_{i_{k},j_{k}})(x_{i}),v]+[x_{i},\theta(A_{i_{k},j_{k}})(v)]-\theta(\sigma_{k-1})(\theta(A_{i_{k},j_{k}})(x_{i}))\\
&=[[x_{i},x_{j_{k}}],v]	+[x_{i},\theta(A_{i_{k},j_{k}})(v)]-\theta(\sigma_{k-1})([x_{i},x_{j_{k}}])\\
&=[[x_{i},x_{j_{k}}],v]	+[x_{i},\theta(A_{i_{k},j_{k}})(v)]-[\theta(\sigma_{k-1})(x_{i}),x_{j_{k}}]
-[x_{i},\theta(\sigma_{k-1})(x_{j_{k}})]\\
&=[[x_{i},x_{j_{k}}],v]	+[x_{i},\theta(A_{i_{k},j_{k}})(v)]-[[x_{i},v],x_{j_{k}}]
-[x_{i},\theta(\sigma_{k-1})(x_{j_{k}})]\\
&=[x_{i},[x_{j_{k}},v]]
+[x_{i},\theta(A_{i_{k},j_{k}})(v)]
-[x_{i},\theta(\sigma_{k-1})(x_{j_{k}})].
\end{align*}
Here for the last equality, we used the Jacobi identity
\[
[[x_{i},x_{j_{k}}],v]+[[x_{j_{k}},v],x_{i}]+[[v,x_{i}],x_{j_{k}}]=0.
\]
This shows the claim for $\sigma_{k}$.
\end{proof}
The following basic property of the infinitesimal braid action will be referred to in later sections.
\begin{lem}\label{lem:minpurebraid}
	The Lie subalgebra $\mathbb{L}(x_{1},\ldots,x_{n-1})
	\subset \mathbb{L}(x_{1},\ldots,x_{n})$ 
	is closed under the infinitesimal braid action of 
	the Lie subalgebra $\mathfrak{P}_{n-1}$ of 
	$\mathfrak{P}_{n}$.
\end{lem}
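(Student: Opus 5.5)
Let me identify $\mathfrak{P}_{n-1}$ with the Lie subalgebra of $\mathfrak{P}_{n}$ generated by $A_{i,j}$ with $1\le i\neq j\le n-1$, which is the image of the natural map; this is the inclusion appearing in the semidirect decomposition $\mathfrak{P}_{n}=\mathbb{L}(A_{1,n},\ldots,A_{n-1,n})\oplus\mathfrak{P}_{n-1}$. The claim is then that $\theta(\sigma)(\mathbb{L}(x_{1},\ldots,x_{n-1}))\subset\mathbb{L}(x_{1},\ldots,x_{n-1})$ for every $\sigma$ in this subalgebra. The strategy has two reductions: first reduce to generators of $\mathbb{L}(x_{1},\ldots,x_{n-1})$, then reduce to generators of $\mathfrak{P}_{n-1}$.

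\emph{First reduction.} Let $\mathfrak{s}=\{\sigma\in\mathfrak{P}_{n}\mid \theta(\sigma)(\mathbb{L}(x_{1},\ldots,x_{n-1}))\subset \mathbb{L}(x_{1},\ldots,x_{n-1})\}$. I will show that $\mathfrak{s}$ is a Lie subalgebra of $\mathfrak{P}_{n}$. Linearity is clear. For $\sigma,\tau\in\mathfrak{s}$, we have $\theta([\sigma,\tau])=\theta(\sigma)\theta(\tau)-\theta(\tau)\theta(\sigma)$ because $\theta$ is a Lie algebra homomorphism, and each composite preserves $\mathbb{L}(x_{1},\ldots,x_{n-1})$. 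Hence it suffices to show that every generator $A_{i,j}$ with $i,j\le n-1$ lies in $\mathfrak{s}$.

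\emph{Second reduction.} Since $\theta(A_{i,j})$ is a derivation of $\mathbb{L}_{n}$ and $\mathbb{L}(x_{1},\ldots,x_{n-1})$ is the Lie subalgebra generated by $x_{1},\ldots,x_{n-1}$, it suffices to check $\theta(A_{i,j})(x_{k})\in\mathbb{L}(x_{1},\ldots,x_{n-1})$ for $k\le n-1$. Indeed, the set of elements $u$ with $\theta(A_{i,j})(u)\in\mathbb{L}(x_{1},\ldots,x_{n-1})$ is closed under brackets within $\mathbb{L}(x_{1},\ldots,x_{n-1})$ by the Leibniz rule. By Definition \ref{Infbraidaction}, $\theta(A_{i,j})(x_{k})$ is one of $[x_{i},x_{j}]$, $[x_{j},x_{i}]$, or $0$. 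With $i,j\le n-1$, each of these lies in $\mathbb{L}(x_{1},\ldots,x_{n-1})$, which finishes the argument.

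There is no real obstacle in this argument. The only point needing care is making explicit which subalgebra of $\mathfrak{P}_{n}$ is meant by $\mathfrak{P}_{n-1}$, namely the one generated by the $A_{i,j}$ with indices at most $n-1$. The argument does not depend on whether the natural map $\mathfrak{P}_{n-1}\to\mathfrak{P}_{n}$ is injective, although it is, by the semidirect decomposition.
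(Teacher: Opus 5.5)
Your proof is correct, but it takes a different route from the paper's.

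\textbf{Your route.} You verify the claim directly. The set of $\sigma$ with $\theta(\sigma)(\mathbb{L}(x_{1},\ldots,x_{n-1}))\subset\mathbb{L}(x_{1},\ldots,x_{n-1})$ is a Lie subalgebra, because $\theta$ is a homomorphism into $\mathrm{Der}(\mathbb{L}_{n})$. A derivation sending $x_{1},\ldots,x_{n-1}$ into $\mathbb{L}(x_{1},\ldots,x_{n-1})$ preserves that subalgebra by the Leibniz rule. So everything reduces to the explicit formula for $\theta(A_{i,j})(x_{k})$ with $i,j,k\le n-1$.

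\textbf{The paper's route.} The paper argues by transfer. It works inside $\mathfrak{P}_{n+1}$, writes $x_{l}=A_{l,n+1}$ and $y_{l}=A_{l,n}$, and observes that the infinitesimal braid relations between $A_{i,j}$ ($i,j\le n-1$) and the $x_{l}$ coincide with those between $A_{i,j}$ and the $y_{l}$. It then uses that $\mathbb{L}(y_{1},\ldots,y_{n-1})=\mathbb{L}(A_{1,n},\ldots,A_{n-1,n})$ is an ideal of $\mathfrak{P}_{n}$, by the semidirect decomposition $\mathfrak{P}_{n}=\mathbb{L}(A_{1,n},\ldots,A_{n-1,n})\oplus\mathfrak{P}_{n-1}$. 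Hence that ideal is stable under $\mathrm{ad}(\mathfrak{P}_{n-1})$, and the paper concludes the same for the $x_{l}$.

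\textbf{Comparison.} The paper's version is more conceptual: it presents the lemma as the mirror image of a structural fact about $\mathfrak{P}_{n}$. However, making the ``same relations'' step rigorous means checking that $y_{l}\mapsto x_{l}$ intertwines the two $\mathfrak{P}_{n-1}$-actions on generators. That is essentially your computation (for instance, $[A_{i,j},A_{i,n}]=[A_{i,n},A_{j,n}]$ mirrors $\theta(A_{i,j})(x_{i})=[x_{i},x_{j}]$). Your argument is self-contained and never uses the semidirect decomposition of $\mathfrak{P}_{n}$. As you note, it also does not need injectivity of $\mathfrak{P}_{n-1}\to\mathfrak{P}_{n}$.
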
 
\begin{proof}
Let us consider $\mathfrak{P}_{n+1}$
and set $A_{i,n+1}=x_{i}$ and $A_{i,n}=y_{i}$ for $i=1,\ldots,n-1$
and recall that $\mathfrak{P}_{n-1}$ is generated by $A_{i,j}$ for $1\le i\neq j\le n-1$.
Then we can see that the infinitesimal braid relation of 
$A_{i,j}$ and $x_{l}$
for $1\le i\neq j\le n-1$ and $l=1,\ldots,n-1$
are the same as those of 
$A_{i,j}$ and $y_{l}$ for $1\le i\neq j\le n-1$ and $l=1,\ldots,n-1$.
Therefore, since $\mathbb{L}(A_{1,n},\ldots,A_{n-1,n})=\mathbb{L}(y_{1},\ldots,y_{n-1})$
is an ideal of $\mathfrak{P}_{n}$ and thus
is closed under the adjoint action of $\mathfrak{P}_{n-1}$,
$\mathbb{L}(x_{1},\ldots,x_{n-1})$
is also closed under the adjoint action of $\mathfrak{P}_{n-1}$, and thus the claim follows.
\end{proof}
Let us now construct a Lie algebra analogue of the Long-Moody functor for modules over the Drinfeld-Kohno Lie algebra.
Let $V$ be a finite dimensional module over the universal enveloping algebra $U(\mathfrak{P}_{n+1})$ of $\mathfrak{P}_{n+1}$.
We denote the augmentation ideal of 
the universal enveloping algebra $U(\mathbb{L}_{n})$ of the free Lie algebra $\mathbb{L}_{n}$
by $\mathcal{I}_{\mathbb{L}_{n}}$, namely,
since $U(\mathbb{L}_{n})$ can be indentified with the free associative algebra $K\{x_{1},\ldots,x_{n}\}$ generated by $x_{1},\ldots,x_{n}$,
the augmentation ideal is given by
\[
	\mathcal{I}_{\mathbb{L}_{n}}:= \ker\left(\epsilon\colon K\{x_{1},\ldots,x_{n}\}\to K; f(x_{1},\ldots,x_{n})\mapsto f(0,\ldots,0)\right).	
\]
Therefore $\mathcal{I}_{\mathbb{L}_{n}}$ is a left or right free $U(\mathbb{L}_{n})$-module
generated by $x_{1},\ldots,x_{n}$, and
closed under
the infinitesimal braid action of $\mathfrak{P}_{n}$.

Regarding $V$ as a $U(\mathbb{L}_{n})$-module
through the inclusion 
\[
	\mathbb{L}_{n}\hookrightarrow \mathbb{L}_{n}\oplus \mathfrak{P}_{n}=\mathfrak{P}_{n+1},
\]
we can consider the tensor product
\[
	\mathfrak{LM}(V):=\mathcal{I}_{\mathbb{L}_{n}}\otimes_{U(\mathbb{L}_{n})}V
\]
with a $U(\mathfrak{P}_{n})$-module structure defined by
\[
	X\cdot (i\otimes v):=\theta(X)(i)\otimes v
	+i\otimes (X\cdot v)
	\quad \text{for $X\in \mathfrak{P}_{n}, i\in \mathcal{I}_{\mathbb{L}_{n}}, v\in V$}.	
\]
This action is actually well-defined since we have
\begin{align*}
X\cdot ((i\cdot l)\otimes v)&=\theta(X)(i\cdot l)\otimes v+i\cdot l\otimes (X\cdot v)\\
&=(\theta(X)(i)\cdot l+i\cdot \theta(X)(l))\otimes v+i\cdot l\otimes (X\cdot v)\\
&=\theta(X)(i)\otimes (l\cdot v)+i\otimes (\theta(X)(l))\cdot v+i\otimes ((l\cdot X)\cdot v)\\
&=\theta(X)(i)\otimes (l\cdot v)+i\otimes ((\theta(X)(l)+(l\cdot X))\cdot v)\\
&=\theta(X)(i)\otimes (l\cdot v)+i\otimes ((X\cdot l)\cdot v)\\
&=X\cdot (i\otimes (l\cdot v)),
\end{align*}
for $X\in \mathfrak{P}_{n}, i\in \mathcal{I}_{\mathbb{L}_{n}}, l\in U(\mathbb{L}_{n}), v\in V$.

\begin{df}[Long-Moody functor for modules over Drinfeld-Kohno Lie algebras]
	Let us define a 
	functor by 
	\[
		\mathfrak{LM}\colon U(\mathfrak{P}_{n+1})\text{-}\mathbf{mod}\to U(\mathfrak{P}_{n})\text{-}\mathbf{mod}; \quad V\mapsto \mathcal{I}_{\mathbb{L}_{n}}\otimes_{U(\mathbb{L}_{n})}V,
	\]
	which is called the \emph{Long-Moody functor} as well.
\end{df}
The Long-Moody functor $\mathfrak{LM}$ is an exact functor since $\mathcal{I}_{\mathbb{L}_{n}}$ is a free $U(\mathbb{L}_{n})$-module.
\subsection{Deformed Long-Moody functor for Lie algebra representations}
We moreover consider a one parameter deformation of the above Lie algebra analogue of the Long-Moody functor.
Let us take a Lie algebra $\mathfrak{g}$ over $K$ with 
a Lie algebra homomorphism $g\colon \mathfrak{g}\to \mathfrak{P}_{n}$.
\begin{df}[Infinitesimal braid action of $\mathfrak{g}$]
	We call the composition map $\theta\circ g\colon \mathfrak{g}\to \mathrm{Der}(\mathbb{L}_{n})$ 
	the \emph{infinitesimal braid action}
	associated with $g$. The pair $(\mathfrak{g}, g)$
	is referred to as the infinitesimal braid action.
\end{df}	
The composition map 
\[
	\theta_{g}:=\theta\circ g\colon \mathfrak{g}\to \mathrm{Der}(\mathbb{L}_{n})
\]
allows us to define the semidirect sum Lie algebra
\[
	\mathbb{L}_{n}\oplus_{\theta_{g}} \mathfrak{g}.
\]
Then the Lie algebra homomorphism
\[
	\mathbb{L}_{n}\oplus_{\theta_{g}} \mathfrak{g}
	\xrightarrow[]{\mathrm{id}_{\mathbb{L}_{n}}\oplus g}
	\mathbb{L}_{n}\oplus_{\theta} \mathfrak{P}_{n}=\mathfrak{P}_{n+1},
\]
gives an infinitesimal braid action
on the free Lie algebra $\mathbb{L}_{n+1}$ as well, 
which is denoted by 
\[
	\theta_{\mathrm{id}\oplus g}\colon \mathbb{L}_{n}\oplus_{\theta_{g}} \mathfrak{g}
	\longrightarrow \mathrm{Der}(\mathbb{L}_{n+1}).
\]

Let us take a finite dimensional module $V$ over  
$\mathbb{L}_{n}\oplus_{\theta_{g}} \mathfrak{g}$.
Also for $\lambda\in K\backslash\{0\}$, 
consider the one-dimensional module $K_{\lambda}$ over $\mathbb{L}(x_{n+1})$ defined by
\[
	x_{n+1}\cdot 1 = \lambda\cdot 1, \quad (1\in K=K_{\lambda}).
\]
Then
we construct an
$\mathbb{L}_{n}\oplus_{\theta_{g}} \mathfrak{g}$-module 
$\mathfrak{LM}^{\mathrm{df}}_{\lambda}(V)$ by the following procedure.
The projection maps
\begin{align*}
	\mathrm{pr}_{1,\ldots,n}&\colon \bigoplus_{i=1}^{n+1}K\cdot x_{i}
	\rightarrow \bigoplus_{i=1}^{n}K\cdot x_{i},&
	\mathrm{pr}_{n+1}&\colon \bigoplus_{i=1}^{n+1}K\cdot x_{i}
	\rightarrow K\cdot x_{n+1}
\end{align*}
as $K$-vector spaces
induce the Lie algebra homomorphisms
\[
	\mathrm{pr}_{1,\ldots,n}\colon \mathbb{L}_{n+1}\to \mathbb{L}_{n}, \quad
	\mathrm{pr}_{n+1}\colon \mathbb{L}_{n+1}\to \mathbb{L}(x_{n+1}).
\]
By regarding $V$ and $K_{\lambda}$
as modules over $\mathbb{L}_{n+1}$ through 
these homomorphisms $\mathrm{pr}_{1,\ldots,n}$ and $\mathrm{pr}_{n+1}$, respectively,
we consider the tensor product
\[
	V\otimes_{K} K_{\lambda}
\]
as the module over $\mathbb{L}_{n+1}$ and its universal enveloping algebra $U(\mathbb{L}_{n+1})$.
Then we will define $\mathfrak{LM}^{\mathrm{df}}_{\lambda}(V)$ as the 1st homology group of 
the free Lie algebra $\mathbb{L}_{n+1}$, 
\[
	H_1(\mathbb{L}_{n+1}, V\otimes_{K} K_{\lambda}).
\]
For this purpose, let us
equip $H_1(\mathbb{L}_{n+1}, V\otimes_{K} K_{\lambda})$
with an $\mathbb{L}_{n}\oplus_{\theta_{g}} \mathfrak{g}$-module structure.
Let us recall that the short exact sequence 
\[
	0\to \mathcal{I}_{\mathbb{L}_{n+1}}\to U(\mathbb{L}_{n+1})\to K\to 0
\]
induces the following long exact sequence:
\begin{multline}\label{eq:longexact}
	0\to H_1(\mathbb{L}_{n+1}, V\otimes_{K} K_{\lambda})
	\to 
	\mathcal{I}_{\mathbb{L}_{n+1}}\otimes_{U(\mathbb{L}_{n+1})} (V\otimes_{K} K_{\lambda})\\
	\to V\otimes_{K} K_{\lambda}\to H_0(\mathbb{L}_{n+1}, V\otimes_{K} K_{\lambda})\to 0,
\end{multline}
where the third arrow is given by $i\otimes v\mapsto i\cdot v$ for $i\in \mathcal{I}_{\mathbb{L}_{n+1}}$ and $v\in V\otimes_{K} K_{\lambda}$,
see \cite{Wei94} for example.
Let us notice that 
the assumption $\lambda\neq 0$ implies that
$\mathbb{L}_{n+1}\cdot (V\otimes_{K} K_{\lambda})
=V\otimes_{K} K_{\lambda}$, and therefore 
the vanishing of the $0$-th homology group follows:
\[
H_{0}(\mathbb{L}_{n+1}, V\otimes_{K} K_{\lambda})
\cong V\otimes_{K} K_{\lambda}/\left(\mathbb{L}_{n+1}\cdot (V\otimes_{K} K_{\lambda})\right)
=0.
\]

Now let us 
equip $\mathcal{I}_{\mathbb{L}_{n+1}}\otimes_{U(\mathbb{L}_{n+1})} (V\otimes_{K} K_{\lambda})$ with an $\mathbb{L}_{n}\oplus_{\theta_{g}} \mathfrak{g}$-module structure.
First we see the $\mathfrak{g}$-action.
As well as for the Long-Moody functor, 
\[
	\mathcal{I}_{\mathbb{L}_{n+1}}\otimes_{U(\mathbb{L}_{n+1})} (V\otimes_{K} K_{\lambda})
\]
is equipped with the well-defined $\mathfrak{g}$-action given
by
\[
	X\cdot (i\otimes v):=
	\theta_{\mathrm{id}\oplus g}(X)(i)\otimes v+i\otimes (X\cdot v),
	\quad\text{for }
	X\in \mathfrak{g},
i\in \mathcal{I}_{\mathbb{L}_{n+1}}, v\in V\otimes_{K}K_{\lambda}.
\]
Here  
$K_{\lambda}$ is regarded as the trivial 
$\mathfrak{g}$-module.
Furthermore, the $K$-linear map
\[
\mathcal{I}_{\mathbb{L}_{n+1}}\otimes_{U(\mathbb{L}_{n+1})} (V\otimes_{K} K_{\lambda})
\rightarrow 
V\otimes_{K} K_{\lambda};\quad
i\otimes v\mapsto i\cdot v 
\]
intertwines the $\mathfrak{g}$-actions, since we have
\begin{align*}
(\theta(X)_{\mathrm{id}\oplus g}(i))\cdot v+i\cdot (X\cdot v)=X\cdot (i\cdot v).
\end{align*}
This shows that $H_{1}(\mathbb{L}_{n+1}, V\otimes_{K} K_{\lambda})$ 
which is the kernel of the above map,
is closed under the  $\mathfrak{g}$-action.

Next, the $\mathbb{L}_{n}$-action on $\mathcal{I}_{\mathbb{L}_{n+1}}\otimes_{U(\mathbb{L}_{n+1})} (V\otimes_{K} K_{\lambda})$
is introduced by the following way:
\[
x\cdot (i\otimes v):=\theta_{\mathrm{id}\oplus g}(x)(i)	\otimes v \quad \text{for $x\in \mathbb{L}_{n}, i\in \mathcal{I}_{\mathbb{L}_{n+1}}, v\in V\otimes_{K}K_{\lambda}$}.
\]
The well-definedness of this action is not trivial, and we need the following lemma to show it.
\begin{lem}
We have 
\[
	\theta_{\mathrm{id}\oplus g}(x)(i\cdot l)\otimes v
	=\theta_{\mathrm{id}\oplus g}(x)(i)\cdot l\otimes v 
\]
for $x\in \mathbb{L}_{n}$, $i\in \mathcal{I}_{\mathbb{L}_{n+1}}$, $l\in U(\mathbb{L}_{n+1})$, and $v\in V\otimes_{K}K_{\lambda}$.
\end{lem}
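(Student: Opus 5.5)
The statement reduces to showing that, for $x\in\mathbb{L}_{n}$ and $l\in U(\mathbb{L}_{n+1})$, the element $\theta_{\mathrm{id}\oplus g}(x)(l)$ acts by zero on $V\otimes_{K}K_{\lambda}$. Indeed, $\theta_{\mathrm{id}\oplus g}(x)$ is a derivation of $\mathbb{L}_{n+1}$, so it extends uniquely to a derivation of $U(\mathbb{L}_{n+1})$. This derivation preserves $\mathcal{I}_{\mathbb{L}_{n+1}}$, and the Leibniz rule gives
\[
\theta_{\mathrm{id}\oplus g}(x)(i\cdot l)\otimes v=\theta_{\mathrm{id}\oplus g}(x)(i)\cdot l\otimes v+i\otimes \theta_{\mathrm{id}\oplus g}(x)(l)\cdot v .
\]
The claim is therefore equivalent to $\theta_{\mathrm{id}\oplus g}(x)(l)\cdot v=0$ for all such $x$, $l$ and $v$.

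To organise this, I introduce the Lie ideal $J\subset\mathbb{L}_{n+1}$ generated by the elements $[x_{n+1},y]$ for $y\in\mathbb{L}_{n+1}$. The action of $\mathbb{L}_{n+1}$ on $V\otimes_{K}K_{\lambda}$ factors through $\mathrm{pr}_{1,\ldots,n}$ and $\mathrm{pr}_{n+1}$, and $x_{n+1}$ acts by the scalar $\lambda$. Hence each $[x_{n+1},y]$ acts as the commutator of a scalar with an operator, which is zero. So $J$ acts by zero, and consequently the two-sided ideal $U(\mathbb{L}_{n+1})JU(\mathbb{L}_{n+1})$ also acts by zero. It therefore suffices to show two things for every $x\in\mathbb{L}_{n}$: first, $\theta_{\mathrm{id}\oplus g}(x)(\mathbb{L}_{n+1})\subset J$; second, $\theta_{\mathrm{id}\oplus g}(x)(J)\subset J$. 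Given these, the Leibniz rule shows inductively on word length that $\theta_{\mathrm{id}\oplus g}(x)$ maps $U(\mathbb{L}_{n+1})$ into $U(\mathbb{L}_{n+1})JU(\mathbb{L}_{n+1})$, which finishes the proof.

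Let $\mathcal{D}$ be the set of derivations $\delta$ of $\mathbb{L}_{n+1}$ with $\delta(\mathbb{L}_{n+1})\subset J$ and $\delta(J)\subset J$. Then $\mathcal{D}$ is a linear subspace. It is also closed under brackets: for $\delta_{1},\delta_{2}\in\mathcal{D}$ we have $\delta_{1}\delta_{2}(y)\in\delta_{1}(J)\subset J$, and similarly for $\delta_{2}\delta_{1}(y)$, so $[\delta_{1},\delta_{2}](y)\in J$; likewise $[\delta_{1},\delta_{2}]$ preserves $J$. Thus $\mathcal{D}$ is a Lie subalgebra of $\mathrm{Der}(\mathbb{L}_{n+1})$. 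Since $\theta_{\mathrm{id}\oplus g}|_{\mathbb{L}_{n}}$ is a Lie algebra homomorphism, it suffices to check that $\theta_{\mathrm{id}\oplus g}(x_{j})\in\mathcal{D}$ for the generators $x_{j}=A_{j,n+1}$, $j=1,\ldots,n$.

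By Definition~\ref{Infbraidaction} applied to $\mathfrak{P}_{n+1}$, the derivation $\theta(A_{j,n+1})$ sends $x_{j}\mapsto[x_{j},x_{n+1}]$, sends $x_{n+1}\mapsto[x_{n+1},x_{j}]$, and kills every other $x_{k}$. All these images lie in $J$, so $\theta(A_{j,n+1})$ maps $\mathbb{L}_{n+1}$ into $J$, because a derivation sending the generators into an ideal sends the whole Lie algebra into that ideal. For stability of $J$, it is enough to treat the generators of $J$, since $\theta(A_{j,n+1})[a,b]=[\theta(A_{j,n+1})a,b]+[a,\theta(A_{j,n+1})b]$ and $J$ is an ideal. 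For a generator we get
\[
\theta(A_{j,n+1})[x_{n+1},y]=[\theta(A_{j,n+1})x_{n+1},y]+[x_{n+1},\theta(A_{j,n+1})y].
\]
The first term lies in $J$ because $J$ is an ideal and $\theta(A_{j,n+1})x_{n+1}\in J$. The second term lies in $J$ by the definition of $J$. The only delicate point in the whole argument is this closure of $\mathcal{D}$ under brackets, which is what allows passing from generators to all of $\mathbb{L}_{n}$; the rest is routine bookkeeping.
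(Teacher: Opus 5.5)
Your proof is correct and follows essentially the same route as the paper. Both use the Leibniz rule to reduce the claim to showing that $\theta_{\mathrm{id}\oplus g}(x)(l)$ annihilates $V\otimes_{K}K_{\lambda}$. Both then use that $\theta(A_{j,n+1})$ sends each generator to $\pm[x_{j},x_{n+1}]$ or $0$, which acts by zero, and propagate this through brackets: the paper by induction on the length of Lie monomials $l$, you via the trivially acting ideal $J$.

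Your subalgebra $\mathcal{D}$ makes explicit the passage from the generators $x_{j}$ to arbitrary $x\in\mathbb{L}_{n}$, which the paper leaves implicit. The condition $\delta(J)\subset J$ is superfluous, though, since $\delta_{1}\delta_{2}(y)\in\delta_{1}(\mathbb{L}_{n+1})\subset J$ already.
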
 
\begin{proof}
	It is enough to show that 
	\[
		\theta_{\mathrm{id}\oplus g}(x)(l)\cdot v=0.
	\]
Since elements of $U(\mathbb{L}_{n+1})$ are linear combinations of 
products 
of elements of $\mathbb{L}_{n+1}$,
it suffices to show the above equality for 
$l\in \mathbb{L}_{n+1}$.
Moreover since $\theta_{\mathrm{id}\oplus g}|_{\mathbb{L}_{n}}$ coincides with the natural inclusion 
$
\mathbb{L}_{n}\hookrightarrow\mathfrak{P}_{n+1},
$
the equation 
\begin{equation}\label{braidvanish}
	\theta(A_{i,n+1})(l)\cdot v=0
\end{equation}
deduces the above required equation. 

We may assume that 
$l$ is of the form 
$l=[x_{i_{k}},[x_{i_{k-1}},[\ldots,[x_{i_2},x_{i_1}]\ldots]]]$
for $k\ge 1$ where we set $l=x_{i_1}$ if $k=1$,
and show the equation \eqref{braidvanish}
by induction on $k$. 

If $k=1$, we have
\[
\theta(A_{i,n+1})(l)=\theta(A_{i,n+1})(x_{i_1})=
\begin{cases}
\pm [x_{i},x_{n+1}] & \text{if $i_{1}=i$ or $i_{1}=n+1$},\\
0 & \text{otherwise}.
\end{cases}
\]
Since $x_{n+1}\cdot v=\lambda v$, 
we have $[x_{i},x_{n+1}]\cdot v=x_{i}\cdot (\lambda v)-\lambda (x_{i}\cdot v)=0$,
which shows the equation \eqref{braidvanish} in the case $k=1$.

Suppose that the equation \eqref{braidvanish}
holds for $l_{k-1}=[x_{i_{k-1}},[\ldots,[x_{i_2},x_{i_1}]\ldots]]$,
namely, $\theta(A_{i,n+1})(l_{k-1})\cdot v=0$
for any $v\in V\otimes_{K}K_{\lambda}$.
Then we have
\begin{align*}
\theta(A_{i,n+1})(l)\cdot v&=\theta(A_{i,n+1})([x_{i_{k}},l_{k-1}])\cdot v\\
&=
[\theta(A_{i,n+1})(x_{i_{k}}),l_{k-1}]\cdot v
+ [x_{i_{k}},\theta(A_{i,n+1})(l_{k-1})]\cdot v.
\end{align*}
By the induction hypothesis,
$[x_{i_{k}},\theta(A_{i,n+1})(l_{k-1})]\cdot v=0$.
Also the equation 
\[
\theta(A_{i,n+1})(x_{i_k})=
\begin{cases}
\pm [x_{i},x_{n+1}] & \text{if $i_{k}=i$ or $i_{k}=n+1$},\\
0 & \text{otherwise}
\end{cases}
\]
implies 
the first term $[\theta(A_{i,n+1})(x_{i_{k}}),l_{k-1}]\cdot v$ is also zero, 
since $[x_{i},x_{n+1}]\cdot v=0$ for all $v\in V\otimes_{K}K_{\lambda}$.
Thus we are done.
\end{proof}
This lemma ensures the well-definedness of the $\mathbb{L}_{n}$-action
since we have 
\[
	x\cdot ((i\cdot l)\otimes v)=\theta_{\mathrm{id}\oplus g}(x)(i\cdot l)\otimes v
=\theta_{\mathrm{id}\oplus g}(x)(i)\cdot l\otimes v
=\theta_{\mathrm{id}\oplus g}(x)(i)\otimes (l\cdot v)
=x\cdot (i\otimes (l\cdot v)).
\]
Let us also check that the $\mathbb{L}_{n}$-action and the $\mathfrak{g}$-action are compatible, namely, the equation
\[
	[x,X]\cdot (i\otimes v)=\theta_{g}(X)(x)\cdot (i\otimes v)
\]
holds for $x\in \mathbb{L}_{n}, X\in \mathfrak{g}, i\in \mathcal{I}_{\mathbb{L}_{n+1}}$, and $v\in V\otimes_{K}K_{\lambda}$.
Indeed, we have 
\begin{align*}
[x,X]\cdot (i\otimes v)&=x\cdot (X\cdot (i\otimes v))-X\cdot (x\cdot (i\otimes v))\\
&=x\cdot (\theta_{\mathrm{id}\oplus g}(X)(i)\otimes v+i\otimes (X\cdot v))-X\cdot (\theta_{\mathrm{id}\oplus g}(x)(i)\otimes v)\\
&=\theta_{\mathrm{id}\oplus g}(x)(\theta_{\mathrm{id}\oplus g}(X)(i))\otimes v+\theta_{\mathrm{id}\oplus g}(x)(i)\otimes (X\cdot v)\\
&\quad -\theta_{\mathrm{id}\oplus g}(X)(\theta_{\mathrm{id}\oplus g}(x)(i))\otimes v
-\theta_{\mathrm{id}\oplus g}(x)(i)\otimes (X\cdot v)
\\
&=\theta_{g}(X)(x)\cdot (i\otimes v).
\end{align*}

Finally, let us see that $H_1(\mathbb{L}_{n+1}, V\otimes_{K} K_{\lambda})$ is closed under the $\mathbb{L}_{n}$-action.
By recalling that 
\[
	\theta_{\mathrm{id}\oplus g}(x)(l)\cdot v=0
\]
for $x\in \mathbb{L}_{n}, l\in U(\mathbb{L}_{n+1})$, and $v\in V\otimes_{K}K_{\lambda}$, we can see that 
the kernel of the map
$\mathcal{I}_{\mathbb{L}_{n+1}}\otimes_{U(\mathbb{L}_{n+1})} (V\otimes_{K} K_{\lambda})\rightarrow V\otimes_{K} K_{\lambda}$
is closed under the $\mathbb{L}_{n}$-action, and it means that  $H_1(\mathbb{L}_{n+1}, V\otimes_{K} K_{\lambda})$ is closed under the $\mathbb{L}_{n}$-action.

As a consequence, we obtain a well-defined action of 
$U(\mathbb{L}_{n}\oplus_{\theta_{g}}\mathfrak{g})$ 
 on $H_1(\mathbb{L}_{n+1}, V\otimes_{K} K_{\lambda})$.
 Moreover we can check that the correspondence between $V$ and $H_1(\mathbb{L}_{n+1}, V\otimes_{K} K_{\lambda})$
 is functorial, namely, 
for a morphism $f\colon V\to W$ in $U(\mathbb{L}_{n}\oplus_{\theta_{g}} \mathfrak{g})\text{-}\mathbf{mod}$, 
the induced map for homology groups
\[
	H_{f}(f)\colon H_{1}(\mathbb{L}_{n+1}, V\otimes_{K} K_{\lambda})\to H_{1}(\mathbb{L}_{n+1}, W\otimes_{K} K_{\lambda})
\]	
is a morphism in $U(\mathbb{L}_{n}\oplus_{\theta_{g}} \mathfrak{g})\text{-}\mathbf{mod}$ as well. 
\begin{df}[Deformed Long-Moody functor]
	Let $(\mathfrak{g},g)$ be a infinitesimal braid action on $\mathbb{L}_{n}$,
	and $\lambda\in K\backslash\{0\}$.
	Let us define an endofunctor of $U(\mathbb{L}_{n}\oplus_{\theta_{g}} \mathfrak{g})\text{-\textbf{mod}}$ 
	by 
	\[
		\begin{array}{cccc}
			\mathfrak{LM}^{\mathrm{df}}_{\lambda}\colon & U(\mathbb{L}_{n}\oplus_{\theta_{g}} \mathfrak{g})\text{-}\mathbf{mod} & \to & U(\mathbb{L}_{n}\oplus_{\theta_{g}} \mathfrak{g})\text{-}\mathbf{mod}\\
			& V & \mapsto & H_1(\mathbb{L}_{n+1}, V\otimes_{K} K_{\lambda})
		\end{array},
	\]
	which is called the \emph{deformed Long-Moody functor} with respect to $\lambda$.
\end{df}
\begin{prop}\label{prop:exact}
The deformed Long-Moody functor $\mathfrak{LM}^{\mathrm{df}}_{\lambda}$ is an exact functor.
\end{prop}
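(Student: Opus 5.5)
Since the $U(\mathbb{L}_{n}\oplus_{\theta_{g}}\mathfrak{g})$-module structure on $\mathfrak{LM}^{\mathrm{df}}_{\lambda}(V)$ is defined compatibly with the induced maps $H_{1}(f)$, exactness can be checked on the underlying $K$-vector spaces. So the plan is to show that for a short exact sequence $0\to V'\to V\to V''\to 0$ in $U(\mathbb{L}_{n}\oplus_{\theta_{g}}\mathfrak{g})\text{-}\mathbf{mod}$, the induced sequence
\[
0\to H_{1}(\mathbb{L}_{n+1},V'\otimes_{K}K_{\lambda})\to H_{1}(\mathbb{L}_{n+1},V\otimes_{K}K_{\lambda})\to H_{1}(\mathbb{L}_{n+1},V''\otimes_{K}K_{\lambda})\to 0
\]
is exact as a sequence of $K$-vector spaces.

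First, tensoring with the one-dimensional space $K_{\lambda}$ over $K$ is exact. Also, regarding modules as $\mathbb{L}_{n+1}$-modules via $\mathrm{pr}_{1,\ldots,n}$ and $\mathrm{pr}_{n+1}$ does not change underlying vector spaces. Hence $0\to V'\otimes_{K}K_{\lambda}\to V\otimes_{K}K_{\lambda}\to V''\otimes_{K}K_{\lambda}\to 0$ is a short exact sequence of $U(\mathbb{L}_{n+1})$-modules.

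Next I apply the long exact sequence of Lie algebra homology $H_{\bullet}(\mathbb{L}_{n+1},-)=\mathrm{Tor}^{U(\mathbb{L}_{n+1})}_{\bullet}(K,-)$. Since $\mathcal{I}_{\mathbb{L}_{n+1}}$ is a free $U(\mathbb{L}_{n+1})$-module, the sequence $0\to\mathcal{I}_{\mathbb{L}_{n+1}}\to U(\mathbb{L}_{n+1})\to K\to 0$ is a free resolution of length one. Therefore $H_{2}(\mathbb{L}_{n+1},-)=0$, and the long exact sequence reduces to
\[
0\to H_{1}(V'_{\lambda})\to H_{1}(V_{\lambda})\to H_{1}(V''_{\lambda})\to H_{0}(V'_{\lambda})\to\cdots,
\]
where $V_{\lambda}=V\otimes_{K}K_{\lambda}$. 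As observed before the definition, $\lambda\neq0$ implies that $x_{n+1}$ acts by the invertible scalar $\lambda$. Hence $H_{0}(\mathbb{L}_{n+1},V'_{\lambda})=0$, and the desired short exact sequence follows.

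Finally, I need the connecting-free maps $H_{1}(f)$ to be the maps used in the functor. This holds because of the description \eqref{eq:longexact}: $H_{1}$ is the kernel of $\mathcal{I}_{\mathbb{L}_{n+1}}\otimes_{U(\mathbb{L}_{n+1})}V_{\lambda}\to V_{\lambda}$, and $H_{1}(f)$ is the restriction of $\mathrm{id}\otimes f$. These maps are $U(\mathbb{L}_{n}\oplus_{\theta_{g}}\mathfrak{g})$-linear, as already noted in the text.

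The main point to watch is the vanishing of $H_{2}$, which relies on freeness of the augmentation ideal of the free Lie algebra. Alternatively, one can argue directly that $\mathcal{I}_{\mathbb{L}_{n+1}}\otimes_{U(\mathbb{L}_{n+1})}-$ is exact by freeness, and then apply the snake lemma to the two-row diagram given by the maps $\mathcal{I}\otimes V_{\lambda}\to V_{\lambda}$. Their cokernels $H_{0}$ vanish, so the kernels form a short exact sequence.
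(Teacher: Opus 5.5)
Your proof is correct and follows the paper's argument: the long exact sequence for $H_{\bullet}(\mathbb{L}_{n+1},-)$, with $H_{2}=0$ because $\mathbb{L}_{n+1}$ is free (equivalently, the length-one free resolution $0\to\mathcal{I}_{\mathbb{L}_{n+1}}\to U(\mathbb{L}_{n+1})\to K\to 0$) and $H_{0}=0$ because $x_{n+1}$ acts by the scalar $\lambda\neq 0$. Your placement of the boundary term $H_{1}(V''_{\lambda})\to H_{0}(V'_{\lambda})$ is in fact the correct one, where the paper loosely writes $H_{2}(U)$ and $H_{0}(W)$, and your snake-lemma variant is the same computation made explicit.
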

\begin{proof}
Let $0\to U \to V \to W \to 0$ be an exact sequence in $U(\mathbb{L}_{n}\oplus_{\theta_{g}} \mathfrak{g})\text{-\textbf{mod}}$.
Then we obtain the long exact sequence 
\begin{multline*}
			H_{2}(\mathbb{L}_{n+1}, U\otimes_{K} K_{\lambda}) \to H_{1}(\mathbb{L}_{n+1}, U\otimes_{K} K_{\lambda}) \to H_{1}(\mathbb{L}_{n+1}, V\otimes_{K} K_{\lambda})\\
			\to H_{1}(\mathbb{L}_{n+1}, W\otimes_{K} K_{\lambda}) \to H_{0}(\mathbb{L}_{n+1}, W\otimes_{K} K_{\lambda}).
\end{multline*}
As we noted above, we have $H_{0}(\mathbb{L}_{n+1}, W\otimes_{K} K_{\lambda})=0$ and 
moreover $H_{2}(\mathbb{L}_{n+1}, U\otimes_{K} K_{\lambda})=0$ since $\mathbb{L}_{n+1}$ is a free Lie algebra, see Corollary 7.2.5 in \cite{Wei94} for example.
Thus the desired short exact sequence follows:
\[
0\to H_{1}(\mathbb{L}_{n+1}, U\otimes_{K} K_{\lambda}) \to H_{1}(\mathbb{L}_{n+1}, V\otimes_{K} K_{\lambda}) \to H_{1}(\mathbb{L}_{n+1}, W\otimes_{K} K_{\lambda}) \to 0.
\]	
\end{proof}

\subsection{The limit $\lambda\to 0$ of the deformed Long-Moody functor and the original Long-Moody functor}
We explain that the deformed Long-Moody functor $\mathfrak{LM}^{\mathrm{df}}_{\lambda}$ can be actually seen as 
a deformation of the original Long-Moody functor $\mathfrak{LM}$.

Let us see the homology group $H_1(\mathbb{L}_{n+1}, V\otimes_{K} K_{\lambda})$ has a 
standard $K$-basis, and by which 
we can relate the deformed Long-Moody functor $\mathfrak{LM}^{\mathrm{df}}_{\lambda}$ to the original Long-Moody functor $\mathfrak{LM}$.
\begin{prop}
The homology group
$H_1(\mathbb{L}_{n+1}, V\otimes_{K} K_{\lambda})$
is generated over $K$ by  
\[
	[x_{i},x_{n+1}]\otimes v\in \mathcal{I}_{\mathbb{L}_{n+1}}\otimes_{U(\mathbb{L}_{n+1})}
	(V\otimes_{K}K_{\lambda})
\]
for $i=1,\ldots,n$ and $v\in V\otimes_{K}K_{\lambda}$.
In particular, 
\[
	\mathrm{dim}_{K}H_1(\mathbb{L}_{n+1}, V\otimes_{K} K_{\lambda})
	=n\cdot \mathrm{dim}_{K}V.
\]
\end{prop}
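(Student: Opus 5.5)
We will compute $H_1(\mathbb{L}_{n+1},M)$, $M:=V\otimes_K K_\lambda$, through the exact sequence \eqref{eq:longexact}, using that $H_0=0$ because $\lambda\neq0$. Since $\mathcal{I}_{\mathbb{L}_{n+1}}$ is a free right (and left) $U(\mathbb{L}_{n+1})$-module on $x_1,\dots,x_{n+1}$, every element of $\mathcal{I}_{\mathbb{L}_{n+1}}\otimes_{U(\mathbb{L}_{n+1})}M$ can be written uniquely as $\sum_{j=1}^{n+1}x_j\otimes v_j$ with $v_j\in M$. Hence this space is isomorphic to $M^{\oplus(n+1)}$ and has dimension $(n+1)\dim V$. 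The augmentation map becomes $(v_1,\dots,v_{n+1})\mapsto\sum_j x_j\cdot v_j$. It is surjective: given $w$, take $v_{n+1}=\lambda^{-1}w$ and $v_j=0$ otherwise. Its kernel therefore has dimension $(n+1)\dim V-\dim V=n\dim V$, which gives the dimension formula.

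Next we identify the elements $[x_i,x_{n+1}]\otimes v$ in these coordinates. We have $[x_i,x_{n+1}]=x_ix_{n+1}-x_{n+1}x_i$ in $U(\mathbb{L}_{n+1})$, so
\[
[x_i,x_{n+1}]\otimes v=x_i\otimes (x_{n+1}\cdot v)-x_{n+1}\otimes (x_i\cdot v)=x_i\otimes \lambda v-x_{n+1}\otimes(x_i\cdot v).
\]
These lie in the kernel, since $x_i\cdot\lambda v-x_{n+1}\cdot(x_i\cdot v)=\lambda x_iv-\lambda x_iv=0$; here $x_{n+1}$ acts on $M$ by the scalar $\lambda$ and $x_i$ acts only through $V$.

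To show they span the kernel, take any kernel element $\sum_j x_j\otimes v_j$ and subtract $\sum_{i=1}^n\lambda^{-1}[x_i,x_{n+1}]\otimes v_i$. The result has the form $x_{n+1}\otimes u$, and it still lies in the kernel, so $\lambda u=0$, hence $u=0$. This proves spanning.

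For the dimension count, the map $\bigoplus_{i=1}^n M\to H_1$, $(v_i)\mapsto\sum_i[x_i,x_{n+1}]\otimes v_i$, is injective: its $x_i$-coordinates are $\lambda v_i$, and these determine the $v_i$. Alternatively, injectivity follows from the dimension computed in the first paragraph.

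The only delicate step is the uniqueness of coordinates, that is, the freeness of the augmentation ideal as a right module, which gives $\mathcal{I}\otimes_U M\cong M^{n+1}$. This is standard for the free algebra $K\{x_1,\dots,x_{n+1}\}$: every element of positive degree is uniquely $\sum_j x_j f_j$. No other obstacle is expected.
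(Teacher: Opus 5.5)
Your proposal is correct and follows the paper's proof: both check that $[x_i,x_{n+1}]\otimes v$ lies in the kernel of the augmentation map, and both obtain $\dim_K H_1=(n+1)\dim_K V-\dim_K V$ from the exact sequence \eqref{eq:longexact} using $H_0=0$. In addition, your coordinate formula $[x_i,x_{n+1}]\otimes v=x_i\otimes\lambda v-x_{n+1}\otimes(x_i\cdot v)$ proves that these elements span a space of dimension $n\cdot\dim_K V$, which the paper asserts without justification.
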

\begin{proof}
Recalling that $x_{n+1}$ acts on $V\otimes_{K} K_{\lambda}$ as the scalar multiplication by $\lambda$,
we have
\[
	[x_{i},x_{n+1}]\cdot v
	=x_{i}\cdot (x_{n+1}\cdot v)-x_{n+1}\cdot (x_{i}\cdot v)
	=\lambda (x_{i}\cdot v)-\lambda (x_{i}\cdot v)=0,
\]
namely,
\[
	 [x_{i},x_{n+1}]\otimes v
	 \in  \mathrm{Ker}\left(\mathcal{I}_{\mathbb{L}_{n+1}}\otimes_{U(\mathbb{L}_{n+1})} (V\otimes_{K} K_{\lambda})\rightarrow V\otimes_{K} K_{\lambda}\right)
\]
for $i=1,\ldots,n$ and $v\in V\otimes_{K}K_{\lambda}$.
 
Since 
\[
	\mathrm{dim}_{K}K\text{-span}\langle [x_{i},x_{n+1}]\otimes v\mid  
	i=1,\ldots,n, v\in V\otimes_{K}K_{\lambda}\rangle
	=n\cdot \mathrm{dim}_{K}V,
\]
it suffices to show that the dimension of 
$H_1(\mathbb{L}_{n+1}, V\otimes_{K} K_{\lambda})$
is $n\cdot \mathrm{dim}_{K}V$ as well.

The exact sequence \eqref{eq:longexact} gives us the dimension formula
\begin{multline*}
	\mathrm{dim}_{K}H_1(\mathbb{L}_{n+1}, V\otimes_{K} K_{\lambda})
	\\=\mathrm{dim}_{K}(\mathcal{I}_{\mathbb{L}_{n+1}}\otimes_{U(\mathbb{L}_{n+1})} (V\otimes_{K} K_{\lambda}))
	-\mathrm{dim}_{K}(V\otimes_{K} K_{\lambda})
	+\mathrm{dim}_{K}H_{0}(\mathbb{L}_{n+1}, V\otimes_{K} K_{\lambda}).
\end{multline*}
Then since $\mathcal{I}_{\mathbb{L}_{n+1}}$ is a free $U(\mathbb{L}_{n+1})$-module generated by $x_{1},\ldots,x_{n+1}$, 
and also since $H_{0}(\mathbb{L}_{n+1}, V\otimes_{K} K_{\lambda})=0$, we have
\[
\mathrm{dim}_{K}H_1(\mathbb{L}_{n+1}, V\otimes_{K} K_{\lambda})
	=(n+1)\cdot \mathrm{dim}_{K}V-\mathrm{dim}_{K}V
	=n\cdot \mathrm{dim}_{K}V.
\]
\end{proof}

This proposition leads to the following identification,
\[
	\mathfrak{LM}^{\mathrm{df}}_{\lambda}(V)\cong \bigoplus_{i=1}^{n} K\cdot  [x_{i},x_{n+1}] \otimes_{K} (V\otimes_{K} K_{\lambda}),
\]
and the right hand side is well-defined even for $\lambda=0$.
Thus we can 
define 
\[
	\mathfrak{LM}^{\mathrm{df}}_{0}(V):=\bigoplus_{i=1}^{n} K\cdot  [x_{i},x_{n+1}] \otimes_{K} V,
\]
with the $\mathbb{L}_{n}\oplus_{\theta_{g}} \mathfrak{g}$-module structure 
defined by the same formula as the $\mathbb{L}_{n}\oplus_{\theta_{g}} \mathfrak{g}$-module structure on $\mathfrak{LM}^{\mathrm{df}}_{\lambda}(V)$
with putting $\lambda=0$.

Then we have the following relation between the functors $\mathfrak{LM}^{\mathrm{df}}_{0}$ and $\mathfrak{LM}$.
\begin{prop}\label{prop:compLM}
Suppose that $\lambda=0$ and moreover suppose $\mathfrak{g}=\mathfrak{P}_{n}$
and $g\colon \mathfrak{P}_{n}\rightarrow \mathfrak{P}_{n}$ is the 
identity map, which implies that 
$\mathbb{L}_{n}\oplus_{\theta_{g}} \mathfrak{g}$
is naturally isomorphic to $\mathfrak{P}_{n+1}$.
Let us take the inclusion 
$\mathfrak{P}_{n}\hookrightarrow \mathfrak{P}_{n+1}$
along the decomposition 
$\mathfrak{P}_{n+1}=\mathbb{L}_{n}\oplus_{\theta}\mathfrak{P}_{n}$,
and consider the restriction functor 
\[
	\mathrm{Res}_{\mathfrak{P}_{n}}^{\mathfrak{P}_{n+1}}
	\colon U(\mathfrak{P}_{n+1})\text{-}\mathbf{mod}
	\longrightarrow U(\mathfrak{P}_{n})\text{-}\mathbf{mod}
\]
with respect to this inclusion. 

Then there exists a natural isomorphism of functors
\[	
\mathrm{Res}^{\mathfrak{P}_{n}}_{\mathfrak{P}_{n+1}}\circ \mathfrak{LM}^{\mathrm{df}}_{0}\cong \mathfrak{LM}.
\]
\end{prop}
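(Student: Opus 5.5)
The plan is to write down an explicit map and check three things in turn: that it is bijective, that it commutes with the $\mathfrak{P}_{n}$-action, and that it is natural in $V$.

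\textbf{The map.} By definition,
\[
\mathfrak{LM}^{\mathrm{df}}_{0}(V)=\bigoplus_{i=1}^{n}K\cdot[x_{i},x_{n+1}]\otimes_{K}V .
\]
Here the $\mathfrak{g}=\mathfrak{P}_{n}$-action is the one induced from $\mathcal{I}_{\mathbb{L}_{n+1}}\otimes_{U(\mathbb{L}_{n+1})}(V\otimes K_{0})$, namely $X\cdot(i\otimes v)=\theta_{\mathrm{id}\oplus g}(X)(i)\otimes v+i\otimes Xv$. Since $\mathcal{I}_{\mathbb{L}_{n}}$ is a free left $U(\mathbb{L}_{n})$-module on $x_{1},\ldots,x_{n}$, we have $\mathfrak{LM}(V)\cong\bigoplus_{i}x_{i}\otimes V$ as $K$-spaces. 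I define
\[
\Phi_{V}\colon \mathfrak{LM}(V)\to\mathfrak{LM}^{\mathrm{df}}_{0}(V),\qquad x_{i}\otimes v\mapsto [x_{i},x_{n+1}]\otimes v .
\]
Up to a sign convention, this is $\theta(A_{i,n+1})$-adjoint to $x_{i}$. Because both sides have dimension $n\dim V$ and $\Phi_{V}$ carries a basis-indexed decomposition to another, $\Phi_{V}$ is a $K$-linear isomorphism. Naturality in $V$ is immediate, since $\Phi_{V}$ is the identity on the $V$-factor.

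\textbf{Equivariance.} This is the main step. Take $X\in\mathfrak{P}_{n}$ and write $\theta(X)(x_{i})=\sum_{j}u_{ij}x_{j}$ with $u_{ij}\in U(\mathbb{L}_{n})$ in the free-module decomposition. In $\mathfrak{LM}(V)$ we then get $X\cdot(x_{i}\otimes v)=\sum_{j}x_{j}\otimes u'_{ij}v+x_{i}\otimes Xv$, after rewriting with right multiplication; the precise side convention has to be handled carefully. On the other side, $\theta_{\mathrm{id}\oplus g}(X)$ acts on $\mathbb{L}_{n+1}$ as the inclusion $\mathfrak{P}_{n}\subset\mathfrak{P}_{n+1}$ followed by the adjoint action on $\mathbb{L}(A_{1,n+2},\ldots)$. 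I would use Lemma~\ref{lem:minpurebraid} (with indices shifted) to identify its action on $x_{1},\ldots,x_{n}$ with $\theta(X)$. I would also use $\theta(X)(x_{n+1})=0$: this holds because $X\in\mathfrak{P}_{n}$ commutes with $A_{n+1,n+2}$ only up to sums, and more precisely via $\theta(\sigma)(x_{1}+\cdots+x_{n+1})=0$ together with the explicit generator formulas, in which $A_{i,j}$ with $i,j\le n$ kills $x_{n+1}$. Hence
\[
\theta_{\mathrm{id}\oplus g}(X)([x_{i},x_{n+1}])=[\theta(X)(x_{i}),x_{n+1}] .
\]

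\textbf{Reducing brackets.} The remaining task is to express $[w,x_{n+1}]\otimes v$, for $w\in\mathbb{L}_{n}$, in the basis $[x_{j},x_{n+1}]\otimes(\cdot)$ of $H_{1}$. I would rewrite $[w,x_{n+1}]=wx_{n+1}-x_{n+1}w$ in $\mathcal{I}_{\mathbb{L}_{n+1}}$ and use $x_{n+1}v=0$ (since $\lambda=0$), giving $[w,x_{n+1}]\otimes v=-x_{n+1}w\otimes v$. Expanding $w=\sum_{j}w_{j}x_{j}$ as a right-combination with $w_{j}\in U(\mathbb{L}_{n})$, this becomes $-x_{n+1}\otimes\sum_{j}w_{j}x_{j}v$ type expressions. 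I would check that they match the image under $\Phi_{V}$ of $w\otimes v\in\mathcal{I}_{\mathbb{L}_{n}}\otimes V$.

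Concretely, the cleanest route is to identify both sides with $\mathcal{I}_{\mathbb{L}_{n}}\otimes_{U(\mathbb{L}_{n})}V$ via $w\otimes v\mapsto -x_{n+1}\otimes w v$, or equivalently $[w,x_{n+1}]\otimes v$, and to show this map is well defined and $U(\mathbb{L}_{n})$-balanced. Balancedness reduces to $[wl,x_{n+1}]\otimes v\equiv[w,x_{n+1}]\otimes lv$, which follows from $x_{n+1}$ acting by zero on $V$. With this identification, equivariance becomes the tautology that $\theta(X)$ acts as a derivation on the first factor in both constructions. I expect the main obstacle to be bookkeeping of left and right module conventions and of signs in this balancedness check, and in particular confirming that $H_{1}$ equals the span of the $[w,x_{n+1}]\otimes v$ in a way that makes the map injective. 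Injectivity, however, follows from the dimension count already established.
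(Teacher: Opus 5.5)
\textbf{Same skeleton as the paper, but one step fails at $\lambda=0$.} Your skeleton is the paper's own: the basis map $x_i\otimes v\leftrightarrow[x_i,x_{n+1}]\otimes v$ (the paper writes the inverse), with equivariance coming from $\theta(X)(x_{n+1})=0$, $x_{n+1}v=0$ and the derivation rule. The gap is in how you propose to carry out the equivariance and injectivity checks. You treat $\mathfrak{LM}^{\mathrm{df}}_{0}(V)$ as a subspace of the genuine $\mathcal{I}_{\mathbb{L}_{n+1}}\otimes_{U(\mathbb{L}_{n+1})}(V\otimes_{K}K_{0})$, and at $\lambda=0$ it is not one. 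There $[w,x_{n+1}]\otimes v=w\otimes x_{n+1}v-x_{n+1}\otimes wv=-x_{n+1}\otimes wv$. So every such vector lies in the $(\dim V)$-dimensional subspace $x_{n+1}\otimes V$, and vanishes when $wv=0$. By contrast, $H_{1}(\mathbb{L}_{n+1},V\otimes_{K}K_{0})$ has dimension $n\dim V+\dim(V/\mathbb{L}_{n}V)$. The count $n\dim V$ in the paper used $H_{0}=0$, i.e.\ $\lambda\neq0$, so it does not apply here. Consequently:
your map $w\otimes v\mapsto -x_{n+1}\otimes wv$ factors through $w\otimes v\mapsto wv$, so it is not injective once $n\ge2$ and $V\neq0$;
the claim that ``$H_1$ equals the span of the $[w,x_{n+1}]\otimes v$'' is false;
and expanding $[w,x_{n+1}]\otimes v$ in the ``basis'' $[x_{j},x_{n+1}]\otimes(\cdot)$ inside that space has no unique answer.
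For the same reason, the $\mathfrak{P}_{n}$-action on $\mathfrak{LM}^{\mathrm{df}}_{0}(V)$ is not induced from that tensor product. By definition $\mathfrak{LM}^{\mathrm{df}}_{0}(V)$ is the formal direct sum, with the $\lambda\neq0$ formulas specialized to $\lambda=0$.

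\textbf{What replaces your last paragraph.} You need the following identity, valid in the genuine tensor product for every $\lambda$:
\[
[u,x_{n+1}]\otimes v=\sum_{k=1}^{n}[x_{k},x_{n+1}]\otimes u_{k}v\qquad\Bigl(u=\sum_{k=1}^{n}x_{k}u_{k}\in\mathcal{I}_{\mathbb{L}_{n}},\ u_{k}\in U(\mathbb{L}_{n})\Bigr).
\]
It holds because both sides equal $\lambda\,u\otimes v-x_{n+1}\otimes uv$. For $\lambda\neq0$ the right-hand side is the unique expansion in the basis of $H_{1}$, and it does not involve $\lambda$. Write $\theta(X)(x_{i})=\sum_{k}x_{k}u_{ik}$, with $x_{k}$ on the left because $\mathcal{I}$ enters the tensor product as a right module; your $\sum_{j}u_{ij}x_{j}$ has the coefficients on the wrong side. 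Then the action on $\mathfrak{LM}^{\mathrm{df}}_{0}(V)$ is $X\cdot([x_{i},x_{n+1}]\otimes v)=\sum_{k}[x_{k},x_{n+1}]\otimes u_{ik}v+[x_{i},x_{n+1}]\otimes Xv$. This is verbatim the action $X\cdot(x_{i}\otimes v)=\sum_{k}x_{k}\otimes u_{ik}v+x_{i}\otimes Xv$ on $\mathcal{I}_{\mathbb{L}_{n}}\otimes_{U(\mathbb{L}_{n})}V$. So $\Phi_{V}$ is a bijection of formal spaces and is equivariant, with no injectivity argument needed. The paper's displayed computation also passes through $-x_{n+1}\otimes X\cdot(x_{i}\cdot v)$. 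Its final step $\Phi([\theta(X)(x_{i}),x_{n+1}]\otimes v)=\theta(X)(x_{i})\otimes v$ tacitly relies on exactly this expansion, and that is how it should be read.
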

\begin{proof}
Since $\mathcal{I}_{\mathbb{L}(x_{1},\ldots,x_{n+1})}$
is a free $U(\mathbb{L}(x_{1},\ldots,x_{n+1}))$-module generated by $x_{1},\ldots,x_{n+1}$, 
we have the following isomorphism of $K$-vector spaces:
\[
	\mathfrak{LM}(V)=\mathcal{I}_{\mathbb{L}(x_{1},\ldots,x_{n})}\otimes_{U(\mathbb{L}(x_{1},\ldots,x_{n}))} V\cong \bigoplus_{i=1}^{n} K\cdot x_{i} \otimes_{K} V.
\] 
Thus it suffices to show that 
the map   
\[
	\Phi\colon \bigoplus_{i=1}^{n} K\cdot [x_{i},x_{n+1}] \otimes_{K} V\rightarrow 
	\bigoplus_{i=1}^{n} K\cdot x_{i} \otimes_{K} V;\quad
	[x_i,x_{n+1}]\otimes v\mapsto x_{i}\otimes v,
\]
is a $U(\mathfrak{P}_{n})$-module isomorphism, which is obviously a $K$-isomorphism.
For $X\in \mathfrak{P}_{n}$,
we have 
\begin{align*}
	X\cdot ([x_i,x_{n+1}]\otimes v)&=X\cdot (x_{i}\otimes x_{n+1}\cdot v)-X\cdot (x_{n+1}\otimes x_{i}\cdot v)\\
	&=-x_{n+1}\otimes X\cdot (x_{i}\cdot v)\\
	&=-x_{n+1}\otimes (\theta(X)(x_{i})\cdot v+x_{i}\cdot(X\cdot v))\\
	&=[\theta(X)(x_{i}),x_{n+1}]\otimes v+[x_{i},x_{n+1}]\otimes (X\cdot v).
\end{align*}
Here we used equations 
$\theta(X)(x_{n+1})=0$ and $x_{n+1}\cdot v=0$.
This shows that 
\begin{align*}
	\Phi(X\cdot ([x_i,x_{n+1}]\otimes v))&=
	\Phi([\theta(X)(x_{i}),x_{n+1}]\otimes v)+\Phi([x_{i},x_{n+1}]\otimes (X\cdot v))\\
	&=\theta(X)(x_{i})\otimes v+x_{i}\otimes (X\cdot v)\\
	&=X\cdot (x_{i}\otimes v)\\
	&=X\cdot \Phi([x_i,x_{n+1}]\otimes v),
\end{align*}
as desired.
\end{proof}
\section{Middle convolution for Lie algebra representations}\label{sec:midconv}
Throughout this section, we fix an infinitesimal braid action $(\mathfrak{g},g)$ on $\mathbb{L}_{n}$.
In this section, we introduce the middle convolution functor for $U(\mathbb{L}_{n}\oplus_{\theta_{g}} \mathfrak{g})\text{-}\mathbf{mod}$, 
as a modification of the deformed Long-Moody functor $\mathfrak{LM}^{\mathrm{df}}_{\lambda}$.
Then we show that the middle convolution functor is a generalization of the Dettweiler-Reiter additive 
middle convolution functor, which appears as the special case $\mathfrak{g}=0$.
Also we show that as well as the classical case of Dettweiler-Reiter, 
our middle convolution functor satisfies the composition law, which is a fundamental property of the middle convolution functor. 

\subsection{Middle convolution functor}
In the previous section, we defined the deformed Long-Moody functor $\mathfrak{LM}^{\mathrm{df}}_{\lambda}$ for $\lambda\in K\backslash\{0\}$
as the 1st homology group $H_1(\mathbb{L}_{n+1}, V\otimes_{K} K_{\lambda})$.
The  inclusion map $\mathbb{L}(x_{i})\hookrightarrow \mathbb{L}_{n+1}=\mathbb{L}(x_{1},\ldots,x_{n+1})$
for each $i=0,1,\ldots,n+1$,
where we set $x_{0}:=-\sum_{i=1}^{n+1} x_{i}$, 
induces the map
\[
	\mathcal{I}_{\mathbb{L}(x_{i})}\otimes_{U(\mathbb{L}(x_{i}))} (V\otimes_{K} K_{\lambda})\to \mathcal{I}_{\mathbb{L}_{n+1}}\otimes_{U(\mathbb{L}_{n+1}))} (V\otimes_{K} K_{\lambda}),
\]
which
is injective since we have the commutative diagram
\[
	\begin{tikzcd}
	\mathcal{I}_{\mathbb{L}(x_{i})}\otimes_{U(\mathbb{L}(x_{i}))} (V\otimes_{K} K_{\lambda})
	\arrow[r]\arrow[d,"\cong"]& \mathcal{I}_{\mathbb{L}_{n+1}}\otimes_{U(\mathbb{L}_{n+1}))} (V\otimes_{K} K_{\lambda})\arrow[d,"\cong"]\\
	K\cdot x_{i}\otimes_{K} (V\otimes_{K} K_{\lambda}) \arrow[r,hookrightarrow] & \bigoplus_{j=1}^{n+1} K\cdot x_{j}\otimes_{K} (V\otimes_{K} K_{\lambda})
	\end{tikzcd}
\]
as $K$-vector spaces.
Then 
 the following commutative diagram 
\small
\begin{center}
	\begin{tikzcd}
	0\arrow[r] & H_1(\mathbb{L}(x_{i}), V\otimes_{K} K_{\lambda}) \arrow[r]  & \mathcal{I}_{\mathbb{L}(x_{i})}\otimes_{U(\mathbb{L}(x_{i}))} (V\otimes_{K} K_{\lambda}) \arrow[r] \arrow[d,hookrightarrow] & V\otimes_{K} K_{\lambda} \arrow[r] \arrow[d, equal] & 0\\
0\arrow[r] & H_1(\mathbb{L}_{n+1}, V\otimes_{K} K_{\lambda}) \arrow[r] & \mathcal{I}_{\mathbb{L}_{n+1}}\otimes_{U(\mathbb{L}_{n+1})} (V\otimes_{K} K_{\lambda}) \arrow[r] & V\otimes_{K} K_{\lambda} \arrow[r] & 0
	\end{tikzcd}.
\end{center}
\normalsize
with exact rows induces the injective map
\[
	H_1(\mathbb{L}(x_{i}), V\otimes_{K} K_{\lambda})\hookrightarrow H_1(\mathbb{L}_{n+1}, V\otimes_{K} K_{\lambda}),\quad 	
\]
for each $i=0,1,\ldots,n+1$. 
Let us recall the isomorphism 
\[
	H_{1}(\mathbb{L}(x),M)\cong K\cdot x\otimes_{K} M^{x}
\]
for $\mathbb{L}(x)$-module $M$, where $M^{x}$
denotes $x$-invariant part of $M$, namely, it consists of all elements $m\in M$ such that $x\cdot m=0$.
Now we note that $H_1(\mathbb{L}(x_{i}), V\otimes_{K} K_{\lambda})=0$ for $i=n+1$ since $x_{n+1}$ 
acts on $V\otimes_{K} K_{\lambda}$ as the scalar multiplication by $\lambda\neq 0$
and thus $(V\otimes_{K} K_{\lambda})^{x_{n+1}}=0$.
\begin{prop}\label{prop:klsubmodule}
For a finite dimensional $U(\mathbb{L}_{n}\oplus_{\theta_{g}}\mathfrak{g})$-module 
$V$,
\[
	\mathrm{Im}\left(
		H_1(\mathbb{L}(x_{i}), V\otimes_{K} K_{\lambda})\to H_1(\mathbb{L}_{n+1}, V\otimes_{K} K_{\lambda})
	\right)
\] 
is closed under the $U(\mathbb{L}_{n}\oplus_{\theta_{g}}\mathfrak{g})$-action for each $i=0,1,\ldots,n$.
\end{prop}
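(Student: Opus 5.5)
The plan is to identify the image explicitly and then check closure under the two kinds of generators separately. Under the identification $H_{1}(\mathbb{L}(x_{i}),V\otimes_{K}K_{\lambda})\cong K\cdot x_{i}\otimes_{K}(V\otimes_{K}K_{\lambda})^{x_{i}}$, the image in $\mathcal{I}_{\mathbb{L}_{n+1}}\otimes_{U(\mathbb{L}_{n+1})}(V\otimes_{K}K_{\lambda})$ is
\[
\mathcal{K}_{i}:=\{x_{i}\otimes v\mid v\in V\otimes_{K}K_{\lambda},\ x_{i}\cdot v=0\}.
\]
For $i=0$ we interpret $x_{0}\otimes v$ as $-\sum_{j=1}^{n+1}x_{j}\otimes v$. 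Since $\mathbb{L}_{n}\oplus_{\theta_{g}}\mathfrak{g}$ is generated by $x_{1},\ldots,x_{n}$ together with $\mathfrak{g}$, it suffices to show $y\cdot(x_{i}\otimes v)\in\mathcal{K}_{i}$ for $y=x_{k}$ ($1\le k\le n$) and for $y=X\in\mathfrak{g}$.

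\emph{Action of $\mathfrak{g}$.} By definition,
\[
X\cdot(x_{i}\otimes v)=\theta_{\mathrm{id}\oplus g}(X)(x_{i})\otimes v+x_{i}\otimes(X\cdot v).
\]
\begin{itemize}
\item For $1\le i\le n$: Lemma \ref{lem:purebraid} (applied to $g(X)\in\mathfrak{P}_{n}$) gives $\theta_{g}(X)(x_{i})=[x_{i},w]$ for some $w\in\mathbb{L}_{n}$. Then
\[
[x_{i},w]\otimes v=x_{i}\otimes w\cdot v-w\otimes x_{i}\cdot v=x_{i}\otimes w\cdot v,
\]
since $x_{i}\cdot v=0$. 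Hence $X\cdot(x_{i}\otimes v)=x_{i}\otimes(w\cdot v+X\cdot v)$.
\item For $i=0$: $\theta(\sigma)(x_{1}+\cdots+x_{n})=0$, and $\theta(\sigma)(x_{n+1})=0$ for $\sigma\in\mathfrak{P}_{n}\subset\mathfrak{P}_{n+1}$. So $\theta_{\mathrm{id}\oplus g}(X)(x_{0})=0$ and $X\cdot(x_{0}\otimes v)=x_{0}\otimes X\cdot v$.
\end{itemize}
In both cases the result has the form $x_{i}\otimes u$. We still need $x_{i}\cdot u=0$. Rather than verify this directly, we note that the $\mathfrak{g}$-action preserves $H_{1}$, i.e.\ the kernel of $i\otimes v\mapsto i\cdot v$. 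An element $x_{i}\otimes u$ lies in that kernel exactly when $x_{i}\cdot u=0$, so it lies in $\mathcal{K}_{i}$.

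\emph{Action of $\mathbb{L}_{n}$.} By definition, $x_{k}\cdot(x_{i}\otimes v)=\theta(A_{k,n+1})(x_{i})\otimes v$.
\begin{itemize}
\item For $1\le i\le n$ with $i\neq k$, this vanishes.
\item For $i=k$, it equals $[x_{k},x_{n+1}]\otimes v=x_{k}\otimes\lambda v-x_{n+1}\otimes x_{k}\cdot v=x_{k}\otimes\lambda v$, which lies in $\mathcal{K}_{k}$ because $x_{k}\cdot v=0$.
\item For $i=0$: $\theta(A_{k,n+1})$ kills $x_{1}+\cdots+x_{n+1}$, since $\theta(A_{k,n+1})(x_{k})+\theta(A_{k,n+1})(x_{n+1})=[x_{k},x_{n+1}]+[x_{n+1},x_{k}]=0$. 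Hence $x_{k}\cdot(x_{0}\otimes v)=0$.
\end{itemize}

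The main obstacle is making sure the $\mathbb{L}_{n}$-action formula on $\mathcal{I}_{\mathbb{L}_{n+1}}\otimes(V\otimes K_{\lambda})$ is used correctly, in particular that $\theta_{\mathrm{id}\oplus g}(x_{k})$ is $\theta(A_{k,n+1})$ with the sign convention of Definition \ref{Infbraidaction}. The sign of the case $i=k$ does not affect membership in $\mathcal{K}_{k}$, so a sign error there would be harmless. The case $i=0$ is where the fact that $\theta$ kills $x_{1}+\cdots+x_{n+1}$ is essential.
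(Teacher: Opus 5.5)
Your proposal is correct and follows essentially the same route as the paper: you identify the image with $K\cdot x_i\otimes_K(V\otimes_K K_\lambda)^{x_i}$, compute the action of each $x_k$ through $\theta(A_{k,n+1})$ (handling $x_0$ by the vanishing of $\theta$ on $x_1+\cdots+x_{n+1}$), and compute the action of $X\in\mathfrak{g}$ through Lemma \ref{lem:purebraid}. The one small difference is how you show $x_i\cdot u=0$ for $X\cdot(x_i\otimes v)=x_i\otimes u$: you deduce it from the already-established $\mathfrak{g}$-stability of $H_1$ (the kernel of $i\otimes v\mapsto i\cdot v$), whereas the paper checks it by the direct commutator computation $x_i\cdot((w+X)\cdot v)=0$; both arguments are valid.
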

\begin{proof}
Under the isomorphism
\[
	H_{1}(\mathbb{L}(x_{i}), V\otimes_{K} K_{\lambda})\cong K\cdot x_{i}\otimes_{K} (V\otimes_{K} K_{\lambda})^{x_{i}},
\]
let us take $x_{i}\otimes v$ for $v\in (V\otimes_{K} K_{\lambda})^{x_{i}}$,
and show that
\[
	X\cdot (x_{i}\otimes v)\in K\cdot x_{i}\otimes_{K} (V\otimes_{K} K_{\lambda})^{x_{i}}
\]
for $X\in \mathbb{L}_{n}\oplus_{\theta_{g}}\mathfrak{g}$.
Here $\mathbb{L}_{n}\oplus_{\theta_{g}}\mathfrak{g}$-action 
is defined by regarding  
 $x_{i}\otimes v
\in \mathcal{I}_{U(\mathbb{L}_{n+1})}\otimes_{U(\mathbb{L}_{n+1})}(V\otimes_{K} K_{\lambda})$
through the injection 
$\mathcal{I}_{\mathbb{L}(x_{i})}\otimes_{U(\mathbb{L}(x_{i}))}(V\otimes_{K} K_{\lambda}) \to 
\mathcal{I}_{\mathbb{L}_{n+1}}\otimes_{U(\mathbb{L}_{n+1})}(V\otimes_{K} K_{\lambda})$.

For $i,j=1,\ldots,n$,
we have 
\begin{align*}
x_{j}\cdot (x_{i}\otimes v)&=\theta_{\mathrm{id}\oplus g}(x_{j})(x_{i})\otimes v
=\begin{cases}
[x_{i},x_{n+1}]\otimes v& i= j\\
0& i\neq j
\end{cases}
=\begin{cases}
x_{i}\otimes \lambda v& i=j\\
0& i\neq j
\end{cases}.
\end{align*}
Also for $i=0$, $\theta_{\mathrm{id}\oplus g}(x_{j})(x_{0})
=-\theta_{\mathrm{id}\oplus g}(x_{j})(x_{1}+\cdots x_{n+1})=0$
as we noted in below Definition \ref{Infbraidaction}.
Therefore in all cases, we have 
$x_{j}\cdot (x_{i}\otimes v)\in K\cdot x_{i}\otimes (V\otimes_{K}K_{\lambda})^{x_{i}}$
for $j=1,\ldots,n$, which
implies that 
$\mathbb{L}_{n}$
preserves $K\cdot x_{i}\otimes_{K} (V\otimes_{K}K_{\lambda})^{x_{i}}$ for $i=0,1,\ldots,n$.

Next, take $X\in \mathfrak{g}$. 
Lemma \ref{lem:purebraid} gives us $w\in \mathbb{L}_{n+1}$
for each $x_{i}$, $i=0,1,\ldots,n$
such that $\theta_{\mathrm{id}\oplus g}(X)(x_{i})=[x_{i},w]$.
This is also true for $i=0$ since $\theta_{\mathrm{id}\oplus g}(X)(x_{0})=-\theta_{\mathrm{id}\oplus g}(X)(x_{1}+\cdots+x_{n+1})=0=[x_{0},0]$.
Then we have  
\begin{align*}
	X\cdot (x_{i}\otimes v)&=[x_{i},w]\otimes v+x_{i}\otimes X\cdot v=x_{i}\otimes (w+X)\cdot v.
\end{align*}
Let us check $(w+X)\cdot v$ is $x_{i}$-invariant.
Indeed, we have 
\begin{align*}
	x_{i}\cdot ((w+X)\cdot v)&=((w+X)\cdot x_{i}+([x_{i},w]-\theta_{\mathrm{id}\oplus g}(X)(x_{i})))\cdot v\\
	&=((w+X)\cdot x_{i}+([x_{i},w]-[x_{i},w]))\cdot v=0.
\end{align*}
Thus we have 
$X\cdot (x_{i}\otimes v)\in K\cdot x_{i}\otimes_{K}(V\otimes_{K}K_{\lambda})^{x_{i}}$.
\end{proof}
This proposition allows us to regard  
\[
	\mathfrak{mc}_{\lambda}(V):=\mathrm{Coker}\left(
		\bigoplus_{i=0}^{n} H_1(\mathbb{L}(x_{i}), V\otimes_{K} K_{\lambda})\to H_1(\mathbb{L}(x_{1},\ldots,x_{n+1}), V\otimes_{K} K_{\lambda})	
	\right)
\]
as an $\mathbb{L}_{n}\oplus_{\theta_{g}}\mathfrak{g}$-module.

Also for $\lambda=0$, we can define $\mathfrak{mc}_{0}(V)$ as follows. 
For this purpose, let us give characterizations of the images of the maps
\[
	H_1(\mathbb{L}(x_{i}), V\otimes_{K} K_{\lambda})\to H_1(\mathbb{L}_{n+1}, V\otimes_{K} K_{\lambda})
\]
for $i=0,1,\ldots,n$.
\begin{lem}\label{lem:Linvariant}
	For $\lambda\in K\backslash\{0\}$, we have 
	\[
	\mathrm{Im}\left(
		H_1(\mathbb{L}(x_{0}), V\otimes_{K} K_{\lambda})\to H_1(\mathbb{L}_{n+1}, V\otimes_{K} K_{\lambda})
	\right)
	=
	H_1(\mathbb{L}_{n+1}, V\otimes_{K} K_{\lambda})	
	^{\mathbb{L}_{n}}.
	\]
	Here $M^{\mathbb{L}_{n}}$ denotes the submodule of an $\mathbb{L}_{n}$-module $M$ consisting of all elements $m\in M$ such that $x\cdot m=0$ for all $x\in \mathbb{L}_{n}$.
\end{lem}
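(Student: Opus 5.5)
We prove the two inclusions separately, working throughout in the explicit model of $H_1(\mathbb{L}_{n+1},V\otimes_K K_\lambda)$ inside $\mathcal{I}_{\mathbb{L}_{n+1}}\otimes_{U(\mathbb{L}_{n+1})}(V\otimes_K K_\lambda)\cong\bigoplus_{j=1}^{n+1}K\cdot x_j\otimes_K V$.

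An element of this tensor product is written uniquely as $\sum_{j=1}^{n+1}x_j\otimes v_j$. It lies in $H_1$ exactly when $\sum_{j}x_j\cdot v_j=0$, where $x_{n+1}$ acts by $\lambda$. Hence $H_1$ is parametrized by $(v_1,\dots,v_n)\in V^{n}$ with $v_{n+1}=-\lambda^{-1}\sum_{j\le n}x_jv_j$.

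For the inclusion $\subseteq$, take the image of $x_0\otimes v$ with $v\in (V\otimes K_\lambda)^{x_0}$. In the proof of Proposition \ref{prop:klsubmodule} we saw that $\theta_{\mathrm{id}\oplus g}(x_j)(x_0)=0$ for $j=1,\dots,n$, because the infinitesimal braid action kills $x_1+\cdots+x_{n+1}$. By the definition of the $\mathbb{L}_n$-action this gives $x_j\cdot(x_0\otimes v)=0$. Since the $x_j$ generate $\mathbb{L}_n$, the element is $\mathbb{L}_n$-invariant.

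For the inclusion $\supseteq$, compute the $\mathbb{L}_n$-action on a general element $\xi=\sum_{j=1}^{n+1}x_j\otimes v_j\in H_1$.
\begin{itemize}
\item Using the formula from the proof of Proposition \ref{prop:klsubmodule}, we have $x_k\cdot(x_j\otimes v)=\delta_{jk}\,[x_j,x_{n+1}]\otimes v$ for $j\le n$.
\item For $j=n+1$, $\theta(A_{k,n+1})(x_{n+1})=[x_{n+1},x_k]$, so $x_k\cdot(x_{n+1}\otimes v)=[x_{n+1},x_k]\otimes v$.
\item Therefore $x_k\cdot\xi=[x_k,x_{n+1}]\otimes(v_k-v_{n+1})$.
\end{itemize}
By the preceding Proposition, the elements $[x_i,x_{n+1}]\otimes v$ for $i=1,\dots,n$ span $H_1$ and give a basis-type decomposition $\bigoplus_{i=1}^n K[x_i,x_{n+1}]\otimes V$, since the dimension count is $n\dim V$. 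Hence $x_k\cdot\xi=0$ for all $k$ forces $v_1=\cdots=v_n=v_{n+1}=:u$, so $\xi=(x_1+\cdots+x_{n+1})\otimes u=-x_0\otimes u$. The condition $\xi\in H_1$ then reads $(\sum_j x_j)\cdot u=0$, i.e.\ $u\in(V\otimes K_\lambda)^{x_0}$. So $\xi$ lies in the image of $H_1(\mathbb{L}(x_0),V\otimes K_\lambda)\cong K x_0\otimes(V\otimes K_\lambda)^{x_0}$.

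The main obstacle is the justification that $[x_k,x_{n+1}]\otimes w=0$ implies $w=0$, that is, the injectivity of $w\mapsto [x_k,x_{n+1}]\otimes w$ together with the independence of the summands for different $k$. The plan is to check it directly: expanding $[x_k,x_{n+1}]\otimes w=x_k\otimes\lambda w-x_{n+1}\otimes x_kw$ in the free basis, the $x_k$-coefficient is $\lambda w$, which is nonzero when $w\neq 0$ because $\lambda\neq0$. Here the hypothesis $\lambda\ne0$ is used essentially. This direct coefficient comparison in the free module also settles the independence across different $k$.
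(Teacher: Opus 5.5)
Your proof is correct. It has the same skeleton as the paper's. You get $\subseteq$ from $\theta_{\mathrm{id}\oplus g}(x_j)(x_0)=0$. You get $\supseteq$ by computing the $x_k$-action on a general element, using $\lambda\neq 0$ to force the components to coincide, and then reading off $x_0$-invariance from the kernel condition.

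The genuine difference is the choice of coordinates. The paper writes an element of $H_1$ as $\sum_{i=1}^{n}[x_i,x_{n+1}]\otimes w_i$, relying on the preceding spanning proposition. It then applies \eqref{eq:DRformula} to get $x_j\cdot\bigl(\sum_i[x_i,x_{n+1}]\otimes w_i\bigr)=[x_j,x_{n+1}]\otimes\bigl(\sum_i x_iw_i+\lambda w_j\bigr)$. You instead work in the free coordinates $\sum_{j=1}^{n+1}x_j\otimes v_j$ of the ambient module, where only the values of $\theta(A_{k,n+1})$ on generators are needed. The two parametrizations are related by $v_i=\lambda w_i$ for $i\le n$ and $v_{n+1}=-\sum_i x_iw_i$. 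Under this change, your condition $v_k=v_{n+1}$ is literally the paper's condition $\sum_i x_iw_i=-\lambda w_k$.

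Your route is more self-contained: it needs neither \eqref{eq:DRformula} nor the dimension count. Since each $x_k\cdot\xi$ is the single term $[x_k,x_{n+1}]\otimes(v_k-v_{n+1})$, your one coefficient comparison (the $x_k$-coefficient $\lambda w$) is all that is required. The independence of the summands for different $k$, which you list as part of the main obstacle, is never actually used.

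The paper's route has its own advantages. It yields the invariance condition directly in the form $\sum_i x_iw_i+\lambda w_j=0$, which is exactly the description of the Dettweiler--Reiter subspace $\mathfrak{l}$ under the isomorphism of Proposition \ref{prop:compconv}. Its computation is also reused verbatim in Lemma \ref{lem:Linvariant2} for $\lambda=0$. There $\mathfrak{LM}^{\mathrm{df}}_0(V)$ is only a formal space, so your free-module coordinates are unavailable, and their decisive coefficient $\lambda w$ would vanish anyway.
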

\begin{proof}
It suffices to show that 
\[
	K\cdot x_{0}\otimes_{K} (V\otimes_{K} K_{\lambda})^{x_{0}}
	= \left(
	\bigoplus_{i=1}^{n}K\cdot [x_{i},x_{n+1}]\otimes_{K}(V\otimes_{K} K_{\lambda})
	\right)^{\mathbb{L}_{n}}.
\]

Since $\theta_{\mathrm{id}\oplus g}(X)(x_{0})=0$ for all $X\in \mathbb{L}_{n}\oplus_{\theta_{g}}\mathfrak{g}$, we have 
the inclusion $\subset$ is obvious.

Take an element $\sum_{i=1}^{n} [x_i, x_{n+1}]\otimes v_i
$ from the right hand side of our desired equation.
Then since
\begin{equation}\label{eq:DRformula}
\begin{aligned}
	&\theta_{\mathrm{id}\oplus \theta_{g}}(x_{j})([x_{i},x_{n+1}])\\
	&=[\theta_{\mathrm{id}\oplus \theta_{g}}(x_{j})(x_{i}),x_{n+1}]+[x_{j},\theta_{\mathrm{id}\oplus \theta_{g}}(x_{j})(x_{n+1})]\\
	&=\begin{cases}
	[[x_{j},x_{n+1}],x_{n+1}]+[x_{j},[x_{n+1},x_{i}]]=[[x_{j},x_{n+1}],x_{j}+x_{n+1}]&i=j\\
	[x_{i},[x_{n+1},x_{j}]]=[[x_{j},x_{n+1}],x_{i}]&i\neq j
	\end{cases}\\
	&=[[x_{j},x_{n+1}],x_{i}+\delta_{i,j}x_{n+1}],
\end{aligned}
\end{equation}
we have 
\begin{align*}
	0=x_{j}\cdot \left(\sum_{i=1}^{n} [x_i, x_{n+1}]\otimes v_i\right)
	&=\sum_{i=1}^{n} [x_j, x_{n+1}]\otimes (x_{i}+\delta_{i,j}x_{n+1})v_i\\
	&=[x_{j},x_{n+1}]\otimes \left(\sum_{i=1}^{n} (x_{i}+\delta_{i,j}x_{n+1})v_i\right)
\end{align*}
for $j=1,\ldots,n$.
Namely, we have 
$
	\sum_{i=1}^{n} (x_{i}+\delta_{i,j}x_{n+1})v_i=0
$
or equivalently, 
\[
	\sum_{i=1}^{n} x_{i}v_i=-x_{n+1}v_j=-\lambda v_j
\]
for all $j=1,\ldots,n$.

Since $\lambda\neq 0$, 
the above equation implies that $v_1=v_2=\cdots=v_n$, and
also implies that   
$v:=v_1=v_2=\cdots=v_n$ satisfies the equation $\sum_{i=1}^{n}x_{i}v=-x_{n+1}v$
which is equivalent to
\(
	\sum_{i=1}^{n+1} x_{i}v=0,
\)
i.e., 
\[
	v\in (V\otimes_{K}K_{\lambda})^{x_{0}}.
\]
Therefore we have
\[
	\sum_{i=1}^{n} [x_i, x_{n+1}]\otimes v_i
	=\sum_{i=1}^{n} [x_i, x_{n+1}]\otimes v
	=-[x_{0}, x_{n+1}]\otimes v
	=-x_{0}\otimes \lambda v.
\]
This shows the opposite inclusion.
\end{proof}
\begin{lem}\label{lem:K}
For $\lambda\in K\backslash\{0\}$ and $i=1,\ldots,n$, we have
\[
	\mathrm{Im}\left(
		H_1(\mathbb{L}(x_{i}), V\otimes_{K} K_{\lambda})\to H_1(\mathbb{L}_{n+1}, V\otimes_{K} K_{\lambda})
	\right)
	\cong K\cdot [x_{i},x_{n+1}]\otimes_{K} (V\otimes_{K} K_{\lambda})^{x_{i}}.
\]
\end{lem}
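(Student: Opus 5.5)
The plan is to compute the image explicitly. We use the isomorphism $H_{1}(\mathbb{L}(x_{i}),M)\cong K\cdot x_{i}\otimes_{K}M^{x_{i}}$ together with the injection into $\mathcal{I}_{\mathbb{L}_{n+1}}\otimes_{U(\mathbb{L}_{n+1})}(V\otimes_{K}K_{\lambda})\cong\bigoplus_{j=1}^{n+1}K\cdot x_{j}\otimes_{K}(V\otimes_{K}K_{\lambda})$. Under these identifications, the image of $H_{1}(\mathbb{L}(x_{i}),V\otimes_{K}K_{\lambda})$ is exactly the subspace
\[
	\{x_{i}\otimes v\mid v\in (V\otimes_{K}K_{\lambda})^{x_{i}}\}\subset H_1(\mathbb{L}_{n+1}, V\otimes_{K} K_{\lambda}).
\]
So the task is to rewrite such elements in the standard basis $[x_{j},x_{n+1}]\otimes w$ of $H_1(\mathbb{L}_{n+1}, V\otimes_{K} K_{\lambda})$ given by the proposition on generators, and then to read off the resulting subspace.

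The key computation is already implicit in the proof of Proposition \ref{prop:klsubmodule}. In $\mathcal{I}_{\mathbb{L}_{n+1}}\otimes_{U(\mathbb{L}_{n+1})}(V\otimes_{K}K_{\lambda})$ we have, for $v\in (V\otimes_{K}K_{\lambda})^{x_{i}}$,
\[
	[x_{i},x_{n+1}]\otimes v = x_{i}\otimes x_{n+1}v - x_{n+1}\otimes x_{i}v = x_{i}\otimes \lambda v ,
\]
because $x_{i}v=0$ and $x_{n+1}$ acts by $\lambda$. Since $\lambda\neq 0$ and $(V\otimes_{K}K_{\lambda})^{x_{i}}$ is a subspace, the substitution $v\mapsto \lambda^{-1}v$ gives
\[
	x_{i}\otimes (V\otimes_{K}K_{\lambda})^{x_{i}} = [x_{i},x_{n+1}]\otimes (V\otimes_{K}K_{\lambda})^{x_{i}} .
\]
This equality of subspaces is precisely the asserted isomorphism. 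In fact it is an equality inside $\bigoplus_{j=1}^{n}K\cdot[x_{j},x_{n+1}]\otimes_{K}(V\otimes_{K}K_{\lambda})$.

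For the final identification with $K\cdot[x_{i},x_{n+1}]\otimes_{K}(V\otimes_{K}K_{\lambda})^{x_{i}}$ as an abstract vector space, note that the map $w\mapsto [x_{i},x_{n+1}]\otimes w$ is injective. This follows from the dimension count in the generating proposition: the elements $[x_{j},x_{n+1}]\otimes w$, with $w$ running over a basis of $V\otimes_{K}K_{\lambda}$, are linearly independent, since they span a space of dimension $n\cdot\dim V$.

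The only subtle point is the bookkeeping of identifications: we must check that the map induced on $H_1$ agrees with the inclusion of $x_{i}\otimes(\cdot)$ into the middle term. This is immediate from the commutative diagram displayed before Proposition \ref{prop:klsubmodule}, so no real obstacle remains. We should also note explicitly where $\lambda\neq0$ is used: it is needed to divide by $\lambda$ in the rescaling step.
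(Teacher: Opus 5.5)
Your proposal is correct and follows the paper's proof. Both identify the image with $K\cdot x_i\otimes_K(V\otimes_K K_\lambda)^{x_i}$ and then use the identity $[x_i,x_{n+1}]\otimes v=x_i\otimes\lambda v$ for $x_i$-invariant $v$, rescaling by $\lambda^{-1}$; your injectivity remark for $w\mapsto[x_i,x_{n+1}]\otimes w$ is a harmless addition that the paper leaves implicit.
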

\begin{proof}
It suffices to show the equality 
\[
	K\cdot x_{i}\otimes (V\otimes_{K} K_{\lambda})^{x_{i}}=K\cdot [x_{i},x_{n+1}]\otimes (V\otimes_{K} K_{\lambda})^{x_{i}}.
\]
in $\mathcal{I}_{\mathbb{L}_{n+1}}\otimes_{U(\mathbb{L}_{n+1})}(V\otimes_{K} K_{\lambda})$.
This follows  easily from the equation
\begin{align*}
x_{i}\otimes v&=x_{i}\otimes (x_{n+1}\cdot \lambda^{-1}v)-x_{n+1}\otimes ((x_{i}\cdot \lambda^{-1}v))=
[x_{i},x_{n+1}]\otimes \lambda^{-1}v
\end{align*}
for $v\in (V\otimes_{K} K_{\lambda})^{x_{i}}$.
\end{proof}

Then for $\lambda=0$,
as a quotient space of 
\[
	\mathfrak{LM}_{0}^{\mathrm{df}}= 
	\bigoplus_{i=1}^{n} K\cdot [x_{i},x_{n+1}]\otimes_{K} V,
\]
let us define  
\[
	\mathfrak{mc}_{0}(V):=\mathfrak{LM}^{\mathrm{df}}_{0}(V)/
	\left(\sum_{i=1}^{n} \left(K\cdot [x_{i},x_{n+1}]\otimes_{K} V^{x_{i}}
	\right)+\mathfrak{LM}^{\mathrm{df}}_{0}(V)^{\mathbb{L}_{n}}\right).
\]
Here the invariant subspace $\mathfrak{LM}^{\mathrm{df}}_{0}(V)^{\mathbb{L}_{n}}$
also has the following characterization.
\begin{lem}\label{lem:Linvariant2}
We have the equality
\[
	\mathfrak{LM}^{\mathrm{df}}_{0}(V)^{\mathbb{L}_{n}}
	=
	\left\{
		\sum_{i=1}^{n} [x_{i},x_{n+1}]\otimes v_i\in \mathfrak{LM}^{\mathrm{df}}_{0}(V)\,\middle|\,
		\sum_{i=1}^{n} x_{i}v_i=0
	\right\}.
\]
\end{lem}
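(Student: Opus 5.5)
The plan is to compute the $\mathbb{L}_{n}$-action on $\mathfrak{LM}^{\mathrm{df}}_{0}(V)$ explicitly on the basis elements $[x_{i},x_{n+1}]\otimes v$. Then we read off the invariance condition, exactly as in the proof of Lemma \ref{lem:Linvariant} but now with $x_{n+1}$ acting by $\lambda=0$.

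The first step is to recall that the action on $\mathfrak{LM}^{\mathrm{df}}_{0}(V)$ is defined by the same formula as for $\lambda\neq 0$, namely $x_{j}\cdot(i\otimes v)=\theta_{\mathrm{id}\oplus g}(x_{j})(i)\otimes v$. By the computation \eqref{eq:DRformula}, which is purely Lie-theoretic and independent of $\lambda$, we have
\[
\theta_{\mathrm{id}\oplus g}(x_{j})([x_{i},x_{n+1}])=[[x_{j},x_{n+1}],x_{i}+\delta_{i,j}x_{n+1}].
\]
Next we rewrite this element in the form $[x_{j},x_{n+1}]\otimes(\cdots)$. Inside $\mathcal{I}_{\mathbb{L}_{n+1}}\otimes_{U(\mathbb{L}_{n+1})}V$, the bracket expands as $[x_{j},x_{n+1}]y-y[x_{j},x_{n+1}]$ with $y=x_{i}+\delta_{i,j}x_{n+1}$. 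The first term gives $[x_{j},x_{n+1}]\otimes y v$. For the second term I will argue that $y[x_{j},x_{n+1}]\otimes v = y\otimes [x_{j},x_{n+1}]v$, which vanishes because $[x_{j},x_{n+1}]$ acts by zero: $x_{n+1}$ acts by the scalar $0$, and in general by a scalar. Since $x_{n+1}v=0$, this yields
\[
x_{j}\cdot\Bigl(\sum_{i}[x_{i},x_{n+1}]\otimes v_{i}\Bigr)=[x_{j},x_{n+1}]\otimes\sum_{i=1}^{n}x_{i}v_{i}.
\]
This step is the main subtlety. One must make sure the identification $\mathfrak{LM}^{\mathrm{df}}_{0}(V)=\bigoplus_{i}K[x_{i},x_{n+1}]\otimes V$ is compatible with these manipulations in the tensor product; I would verify it directly for $\lambda=0$, or obtain it by continuity from the $\lambda\neq0$ formula $[x_{j},x_{n+1}]\otimes\sum_{i}(x_{i}+\delta_{i,j}x_{n+1})v_{i}$ with $x_{n+1}$ replaced by $\lambda=0$.

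Finally, because the elements $[x_{j},x_{n+1}]\otimes w$ for $j=1,\ldots,n$ span a direct sum, the element $\sum_{i}[x_{i},x_{n+1}]\otimes v_{i}$ is killed by every $x_{j}$ if and only if $\sum_{i}x_{i}v_{i}=0$. Invariance under all of $\mathbb{L}_{n}$ is equivalent to invariance under the generators $x_{1},\ldots,x_{n}$, since the annihilator of an element is closed under brackets. This gives both inclusions of the claimed equality.
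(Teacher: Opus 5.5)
Your proposal is correct and is essentially the paper's proof. The paper also reuses the computation from Lemma~\ref{lem:Linvariant} (via \eqref{eq:DRformula}) to get $x_j\cdot\sum_i[x_i,x_{n+1}]\otimes v_i=[x_j,x_{n+1}]\otimes\sum_i x_iv_i$, and then reads off the condition from the directness of the sum. Of the two ways you offer to handle the $\lambda=0$ subtlety, take the second. $\mathfrak{LM}^{\mathrm{df}}_0(V)$ is defined as the formal direct sum with the $\lambda\neq0$ formula specialized to $\lambda=0$, so the formula holds by definition. Computing inside the actual tensor product at $\lambda=0$ would not suffice: there every $[x_i,x_{n+1}]\otimes v$ equals $-x_{n+1}\otimes x_iv$, so those elements do not span a direct sum.
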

\begin{proof}
As we saw in the proof of Lemma \ref{lem:Linvariant}, 
$\sum_{i=1}^{n} [x_{i},x_{n+1}]\otimes v_i\in \mathfrak{LM}^{\mathrm{df}}_{0}(V)^{\mathbb{L}_{n}}$
if and only if 
\[
	[x_{j},x_{n+1}]\otimes \left(\sum_{i=1}^{n} x_{i}v_i\right)=0
\]
for all $j=1,\ldots,n$, which is equivalent to $\sum_{i=1}^{n} x_{i}v_i=0$.
\end{proof}

Now we are ready to define the middle convolution functor.
\begin{df}\label{df:midconv}
Let $(\mathfrak{g},g)$ be a infinitesimal braid action 
on $\mathbb{L}_{n}$, and $\lambda\in K$.
Let us define an endofunctor of $U(\mathbb{L}_{n}\oplus_{\theta_{g}}\mathfrak{g})$-$\mathbf{mod}$
by 
\[
	\begin{array}{cccc}
		\mathfrak{mc}_{\lambda}\colon &
		U(\mathbb{L}_{n}\oplus_{\theta_{g}}\mathfrak{g})\text{-}\mathbf{mod}
		&\longrightarrow &U(\mathbb{L}_{n}\oplus_{\theta_{g}}\mathfrak{g})\text{-}\mathbf{mod}\\
		&V&\longmapsto&
		\mathfrak{mc}_{\lambda}(V)
	\end{array},
\]
which is called the \emph{middle convolution functor} with respect to $\lambda$.
\end{df}
The functor $\mathfrak{mc}_{\lambda}$ is no longer exact, but it preserves injectivity and surjectivity.
\begin{prop}
For $\lambda\in K\backslash\{0\}$, 
the middle convolution functor $\mathfrak{mc}_{\lambda}$ preserves injectivity and surjectivity, i.e., 
for a morphism $f\colon V\to W$ in $U(\mathbb{L}_{n}\oplus_{\theta_{g}}\mathfrak{g})\text{-}\mathbf{mod}$, if $f$ is injective (resp. surjective), 
then $\mathfrak{mc}_{\lambda}(f)\colon \mathfrak{mc}_{\lambda}(V)\to \mathfrak{mc}_{\lambda}(W)$ is also injective (resp. surjective).
\end{prop}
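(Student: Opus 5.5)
Surjectivity is the easy half. By Proposition \ref{prop:exact}, $\mathfrak{LM}^{\mathrm{df}}_{\lambda}(f)$ is surjective whenever $f$ is. The functor $\mathfrak{mc}_{\lambda}$ is a quotient of $\mathfrak{LM}^{\mathrm{df}}_{\lambda}$, and the map $\mathfrak{mc}_{\lambda}(f)$ is the one induced on the quotients, so a surjection $\mathfrak{LM}^{\mathrm{df}}_{\lambda}(V)\to\mathfrak{LM}^{\mathrm{df}}_{\lambda}(W)\to\mathfrak{mc}_{\lambda}(W)$ factors through $\mathfrak{mc}_{\lambda}(V)$. Hence $\mathfrak{mc}_{\lambda}(f)$ is surjective.

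For injectivity, I will write
\[
\mathcal{K}(V):=\sum_{i=1}^{n}K\cdot[x_{i},x_{n+1}]\otimes_{K}(V\otimes_K K_\lambda)^{x_{i}}+H_{1}(\mathbb{L}_{n+1},V\otimes_K K_\lambda)^{\mathbb{L}_{n}}.
\]
By Lemmas \ref{lem:Linvariant} and \ref{lem:K}, $\mathcal{K}(V)$ is exactly the image of $\bigoplus_{i=0}^n H_1(\mathbb{L}(x_i),V\otimes_K K_\lambda)$, so $\mathfrak{mc}_\lambda(V)=\mathfrak{LM}^{\mathrm{df}}_\lambda(V)/\mathcal{K}(V)$. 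Let $f$ be injective and identify $V$ with a submodule of $W$. Since $\mathfrak{LM}^{\mathrm{df}}_\lambda(f)$ is injective, again by Proposition \ref{prop:exact}, it suffices to prove
\[
\mathcal{K}(W)\cap \mathfrak{LM}^{\mathrm{df}}_\lambda(V)=\mathcal{K}(V).
\]
Here I use the basis description $\mathfrak{LM}^{\mathrm{df}}_\lambda(V)=\bigoplus_{i=1}^n K[x_i,x_{n+1}]\otimes V$. An element of this space is then simply an $n$-tuple $(v_1,\dots,v_n)$ of vectors, with $v_i\in V$.

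The key step is to describe $\mathcal{K}(W)$ coordinatewise and test membership. By the proof of Lemma \ref{lem:Linvariant}, the $\mathbb{L}_n$-invariants are the tuples $(u,u,\dots,u)$ with $u\in W^{x_0}$. So $\mathcal{K}(W)$ consists of the tuples $(w_1+u,\dots,w_n+u)$ with $w_i\in W^{x_i}$ and $u\in W^{x_0}$. Suppose such a tuple lies in $V^n$. I must then rewrite it as $(w'_i+u')$ with $w'_i\in V^{x_i}$ and $u'\in V^{x_0}$. The natural approach is to apply $x_i$ to the $i$-th coordinate: this gives $x_i(v_i)=x_i u$ for each $i$, with $x_iu\in V$. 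Summing, and using $x_0u=0$ together with $x_{n+1}=\lambda$, gives
\[
\lambda u=-\sum_{i=1}^n x_iu=-\sum_{i=1}^n x_iv_i\in V.
\]
Since $\lambda\neq0$, this forces $u\in V$. Then $u\in V^{x_0}$, and each $w_i=v_i-u$ lies in $V\cap W^{x_i}=V^{x_i}$. This shows $\mathcal{K}(W)\cap\mathfrak{LM}^{\mathrm{df}}_\lambda(V)\subset\mathcal{K}(V)$; the reverse inclusion is trivial.

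The main obstacle is this recovery of $u\in V$: the decomposition of a tuple into its $x_i$-invariant parts and its diagonal invariant part is not unique, so one cannot just project. The trick of summing the relations $x_iv_i=x_iu$ and dividing by $\lambda$ is where the hypothesis $\lambda\neq0$ is essential. I would double-check the sign conventions in the identification of $H_1(\mathbb{L}_{n+1},V\otimes_K K_\lambda)^{\mathbb{L}_n}$, namely $\sum_i[x_i,x_{n+1}]\otimes v=-x_0\otimes\lambda v$, but signs do not affect the argument.
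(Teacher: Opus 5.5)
Your proof is correct, and the surjectivity half is the same as the paper's. For injectivity you take a more elementary route.

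The paper argues diagrammatically. It writes $\mathfrak{mc}_\lambda(V)$ and $\mathfrak{mc}_\lambda(W)$ as cokernels in a diagram with exact rows. The snake lemma then reduces the claim to injectivity of $\mathrm{Coker}(\phi)\to\mathrm{Coker}(\Phi)$. The paper proves this by embedding $\mathrm{Coker}(\phi)$ into $\bigoplus_{i=0}^{n}H_1(\mathbb{L}(x_i),(W/V)\otimes_K K_\lambda)$, using left exactness of $M\mapsto M^{x_i}$. It then embeds this into $H_1(\mathbb{L}_{n+1},(W/V)\otimes_K K_\lambda)\cong\mathrm{Coker}(\Phi)$.

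That last embedding says the images of the $H_1(\mathbb{L}(x_i),-)$ form a direct sum. This is exactly where $\lambda\neq 0$ is needed, and the paper uses it without spelling it out. Your step $\lambda u=-\sum_i x_i v_i\in V$ is the same fact seen elementwise. Reducing your tuple modulo $V$ gives $\bar u=-\bar w_i\in (W/V)^{x_i}$ for every $i$ and $\bar u\in (W/V)^{x_0}$. Hence $0=x_0\bar u=-\lambda\bar u$, which is your identity read in $W/V$.

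Your version gains self-containedness and transparency. You only work with sums of images, so you never need the directness claim, and the role of $\lambda\neq0$ is visible in one line. It does rely on the explicit coordinates $\mathfrak{LM}^{\mathrm{df}}_\lambda(V)\cong V^{n}$, compatible with $f$, and on the tuple description of the $\mathbb{L}_n$-invariants. The paper's diagrammatic argument is more formal and functorial, and would carry over wherever the rows are known to be exact. However, it leaves implicit the one substantive input that your computation makes explicit.
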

\begin{proof}
Let us consider the following commutative diagram with exact rows:
\small
\begin{center}
	\begin{tikzcd}
	0\arrow[r] & \bigoplus_{i=0}^{n} H_1(\mathbb{L}(x_{i}), V\otimes_{K} K_{\lambda}) \arrow[r]  \arrow[d,"\phi"] & H_1(\mathbb{L}_{n+1}, V\otimes_{K} K_{\lambda}) \arrow[r] \arrow[d,"\Phi"] & \mathfrak{mc}_{\lambda}(V) \arrow[r] \arrow[d,"\overline{\Phi}"] & 0\\
	0\arrow[r] & \bigoplus_{i=0}^{n} H_1(\mathbb{L}(x_{i}), W\otimes_{K} K_{\lambda}) \arrow[r]  & H_1(\mathbb{L}_{n+1}, W\otimes_{K} K_{\lambda}) \arrow[r] & \mathfrak{mc}_{\lambda}(W) \arrow[r] & 0
	\end{tikzcd}.
\end{center}
\normalsize
Here the vertical maps $\phi$, $\Phi$ 
are induced map from $f\colon V\to W$
and the left square commutes, 
and thus $\overline{\Phi}$
is induced by the universality of the cokernel from the commutative diagram of exact rows, which also makes the right square commutative.
Since the deformed Long-Moody functor $\mathfrak{LM}^{\mathrm{df}}_{\lambda}$ is exact, the middle vertical map $\Phi$ is injective (resp. surjective) if $f$ is injective (resp. surjective).
Therefore, if $f$ is surjective, then the right vertical map is obviously surjective as well.

Next we suppose that $f$ is injective and show that 
the induced map 
$\mathrm{Coker}(\phi)\rightarrow \mathrm{Coker}(\Phi)$
is injective, which implies that the right vertical map is also injective by the snake lemma.

Let us consider the
short exact sequence
\[
0\to V\to W\to W/V\to 0.
\]
By the long exact sequence of homology groups, we have the following exact sequence:
\[
	0\to \bigoplus_{i=0}^{n} H_1(\mathbb{L}(x_{i}), V\otimes_{K} K_{\lambda}) \xrightarrow{\phi} \bigoplus_{i=0}^{n} H_1(\mathbb{L}(x_{i}), W\otimes_{K} K_{\lambda}) \to \bigoplus_{i=0}^{n} H_1(\mathbb{L}(x_{i}), (W/V)\otimes_{K} K_{\lambda}),	
\]
which induces the injective map 
\[
	\mathrm{Coker}(\phi)
	\hookrightarrow \bigoplus_{i=0}^{n} H_1(\mathbb{L}(x_{i}), (W/V)\otimes_{K} K_{\lambda}).
\]
By combining with the injection $\bigoplus_{i=0}^{n} H_1(\mathbb{L}(x_{i}), (W/V)\otimes_{K} K_{\lambda})
\hookrightarrow H_{1}(\mathbb{L}_{n+1}, (W/V)\otimes_{K} K_{\lambda})$,
we obtain the injective map
\[
	\mathrm{Coker}(\phi)
	\hookrightarrow H_{1}(\mathbb{L}_{n+1}, (W/V)\otimes_{K} K_{\lambda}).
\]
Moreover, this injection makes the following diagram commutative:
\small
\begin{center}
	\begin{tikzcd}
	 \bigoplus_{i=0}^{n} H_1(\mathbb{L}(x_{i}), V\otimes_{K} K_{\lambda}) \arrow[r]  \arrow[d,"\phi"] & H_1(\mathbb{L}_{n+1}, V\otimes_{K} K_{\lambda})  \arrow[d,"\Phi"]\\
	 \bigoplus_{i=0}^{n} H_1(\mathbb{L}(x_{i}), W\otimes_{K} K_{\lambda}) \arrow[r]  \arrow[d]& H_1(\mathbb{L}_{n+1}, W\otimes_{K} K_{\lambda}) \arrow[d]\\
	\mathrm{Coker}(\phi) \arrow[r]  & H_{1}(\mathbb{L}_{n+1}, (W/V)\otimes_{K} K_{\lambda}) 
	\end{tikzcd}
\end{center}
\normalsize
by the construction.
Therefore, since $H_{1}(\mathbb{L}_{n+1}, (W/V)\otimes_{K} K_{\lambda})\cong \mathrm{Coker}(\Phi)$
by the exacteness of the deformed Long-Moody functor,
the universality of the cokernel implies that
this injective map $\mathrm{Coker}(\phi)
	\hookrightarrow H_{1}(\mathbb{L}_{n+1}, (W/V)\otimes_{K} K_{\lambda})$
coincides with the induced map $\mathrm{Coker}(\phi)\to \mathrm{Coker}(\Phi)$
from the commutative diagram of exact rows, which is thus injective.
\end{proof}

\subsection{Comparison with the Dettweiler-Reiter additive middle convolution}
Let us recall the additive middle convolution 
introduced by Dettweiler and Reiter in \cite{DR00,DR07}. 
Let us take a finite dimensional 
$U(\mathbb{L}_{n})$-module $V$ or equivalently 
a Lie algebra 
representation $\rho\colon \mathbb{L}_{n}\rightarrow \mathrm{End}_{K}(V)$.
Then for $\lambda\in K$,
Dettweiler and Reiter defined the representation 
$c^{\mathrm{DR}}_{\lambda}(\rho)\colon \mathbb{L}_{n}\rightarrow \mathrm{End}_{K}(V\otimes_{K}K^{n})$,
called the \emph{additive convolution of $\rho$}, by setting 
\[
	c^{\mathrm{DR}}_{\lambda}(\rho)(x_{i}):=\sum_{j=1}^{n}
	\left(
		(\rho(x_{j})+\lambda\delta_{i,j}\mathrm{id}_{V})\otimes I_{i,j}
	\right)
\]
for $i=1,\ldots,j$. Here $I_{i,j}\in \mathrm{End}_{K}(K^{n})=M_{n}(K)$
denotes the $(i,j)$-matrix unit for each $1\le i,j\le n$.
This correspondence is functorial as it is easy to be checked,
and defines a functor of 
the category $\mathbb{L}_{n}\text{-\textbf{rep}}$
of finite dimensional $\mathbb{L}_{n}$-representations.

\begin{df}[Dettweiler-Reiter additive convolution]
	For $\lambda\in K$, let us define an endofunctor 
	of $\mathbb{L}_{n}\text{-}\mathbf{rep}$ by 
	\[
		\begin{array}{cccc}
			c^{\mathrm{DR}}_{\lambda}\colon &\mathbb{L}_{n}\text{-}\mathbf{rep}&
			\longrightarrow &\mathbb{L}_{n}\text{-}\mathbf{rep}\\
			&\rho&\longmapsto&c^{\mathrm{DR}}_{\lambda}(\rho),
		\end{array}
	\]
	which is called the \emph{Dettweiler-Reiter additive convolution}.
\end{df}
Through the standard isomorphism 
$\mathbb{L}_{n}\text{-}\mathbf{rep}\cong 
U(\mathbb{L}_{n})\text{-}\mathbf{mod}$,
we identify them.
Let us denote the restriction functor with respect to the inclusion $\mathbb{L}_{n}\hookrightarrow \mathbb{L}_{n}\oplus_{\theta_{g}}\mathfrak{g}$ 
by 
\[
	\mathrm{Res}_{\mathbb{L}_{n}}^{\mathbb{L}_{n}\oplus_{\theta_{g}}\mathfrak{g}}
	\colon U(\mathbb{L}_{n}\oplus_{\theta_{g}}\mathfrak{g})\text{-}\mathbf{mod}
	\longrightarrow U(\mathbb{L}_{n})\text{-}\mathbf{mod}.
\]
Then we can relate the deformed Long-Moody functor $\mathfrak{LM}^{\mathrm{df}}_{\lambda}$ and the Dettweiler-Reiter additive convolution $c^{\mathrm{DR}}_{\lambda}$ as follows.
\begin{prop}\label{prop:compconv}
For $\lambda\in K$, there exists a natural isomorphism of functors
\[
	\mathrm{Res}_{\mathbb{L}_{n}}^{\mathbb{L}_{n}\oplus_{\theta_{g}}\mathfrak{g}}\circ \mathfrak{LM}^{\mathrm{df}}_{\lambda}
	\cong c^{\mathrm{DR}}_{\lambda}\circ \mathrm{Res}_{\mathbb{L}_{n}}^{\mathbb{L}_{n}\oplus_{\theta_{g}}\mathfrak{g}}.
\]
\end{prop}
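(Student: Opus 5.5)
We use the explicit basis of $H_1(\mathbb{L}_{n+1},V\otimes_K K_\lambda)$ from the proposition on the standard generators, together with formula \eqref{eq:DRformula} computed in the proof of Lemma \ref{lem:Linvariant}. For every $\lambda\in K$, including $\lambda=0$ by the definition of $\mathfrak{LM}^{\mathrm{df}}_0$, we have a $K$-linear identification
\[
\Psi_V\colon \mathfrak{LM}^{\mathrm{df}}_{\lambda}(V)=\bigoplus_{i=1}^{n}K\cdot[x_i,x_{n+1}]\otimes_K (V\otimes_K K_\lambda)\longrightarrow V\otimes_K K^{n},\qquad \sum_{i}[x_i,x_{n+1}]\otimes v_i\mapsto \sum_i v_i\otimes e_i .
\]
Here $e_1,\dots,e_n$ is the standard basis of $K^n$. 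The plan is to show that $\Psi_V$ intertwines the restricted $\mathbb{L}_n$-action on the left with $c^{\mathrm{DR}}_\lambda(\rho)$ on the right, and that $\Psi$ is natural in $V$.

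For the intertwining, it suffices to check it on the generators $x_j$, $j=1,\dots,n$, since both sides are $\mathbb{L}_n$-modules. By definition $x_j\cdot([x_i,x_{n+1}]\otimes v)=\theta_{\mathrm{id}\oplus g}(x_j)([x_i,x_{n+1}])\otimes v$. By \eqref{eq:DRformula} this equals $[[x_j,x_{n+1}],x_i+\delta_{i,j}x_{n+1}]\otimes v$. Moving the right factor across the tensor product over $U(\mathbb{L}_{n+1})$ rewrites it as $[x_j,x_{n+1}]\otimes (x_i+\delta_{i,j}x_{n+1})v$. Note that $[[x_j,x_{n+1}],y]=[x_j,x_{n+1}]y-y[x_j,x_{n+1}]$ in $U(\mathbb{L}_{n+1})$, and the second term vanishes after tensoring because $[x_j,x_{n+1}]$ kills $V\otimes K_\lambda$; one should double-check the sign convention here. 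Since $x_{n+1}$ acts by $\lambda$, we obtain
\[
x_j\cdot\sum_i[x_i,x_{n+1}]\otimes v_i=[x_j,x_{n+1}]\otimes\Big(\sum_{i}\rho(x_i)v_i+\lambda v_j\Big).
\]
Under $\Psi_V$ the right-hand side corresponds to the vector whose $j$-th component is $\sum_i(\rho(x_i)+\lambda\delta_{j,i})v_i$ and whose other components are zero. This is precisely the block-matrix action $\sum_i(\rho(x_i)+\lambda\delta_{j,i}\mathrm{id}_V)\otimes I_{j,i}$, i.e.\ $c^{\mathrm{DR}}_\lambda(\rho)(x_j)$ with only the $j$-th block row nonzero. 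One must match the index convention in the definition of $c^{\mathrm{DR}}_\lambda$ (which index labels the generator). If it is transposed relative to the above, we either relabel or use the transpose identification. This index bookkeeping is the step I expect to need the most care.

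For $\lambda=0$ the same computation applies verbatim, because $\mathfrak{LM}^{\mathrm{df}}_0$ was defined by the same formulas with $\lambda=0$.

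Naturality is immediate. For a morphism $f\colon V\to W$, the induced map on homology sends $[x_i,x_{n+1}]\otimes v\mapsto[x_i,x_{n+1}]\otimes f(v)$, and $c^{\mathrm{DR}}_\lambda(f)=f\otimes\mathrm{id}_{K^n}$. Hence $\Psi_W\circ \mathfrak{LM}^{\mathrm{df}}_\lambda(f)=c^{\mathrm{DR}}_\lambda(f)\circ\Psi_V$, so $\Psi$ is a natural isomorphism of functors.
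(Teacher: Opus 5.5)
Your proposal is correct and follows the paper's proof: the same $K$-isomorphism $[x_i,x_{n+1}]\otimes v\mapsto v\otimes e_i$, checked on the generators $x_j$ via \eqref{eq:DRformula}. Neither of the points you flagged is a problem. The block row you obtain is exactly $c^{\mathrm{DR}}_\lambda(\rho)(x_j)=\sum_i(\rho(x_i)+\lambda\delta_{j,i}\mathrm{id}_V)\otimes I_{j,i}$ in the paper's convention, so no transpose is needed. Your sign is also right, since $y[x_j,x_{n+1}]\otimes v=y\otimes[x_j,x_{n+1}]v=0$.
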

\begin{proof}
Let us take $V$ to be a finite dimensional $\mathbb{L}_{n}\oplus_{\theta_{g}}\mathfrak{g}$-module
and show that there exists a natural isomorphism of $\mathbb{L}_{n}$-modules
\[
	\mathrm{Res}_{\mathbb{L}_{n}}^{\mathbb{L}_{n}\oplus_{\theta_{g}}\mathfrak{g}}(\mathfrak{LM}^{\mathrm{df}}_{\lambda}(V))
	\cong c^{\mathrm{DR}}_{\lambda}(\mathrm{Res}_{\mathbb{L}_{n}}^{\mathbb{L}_{n}\oplus_{\theta_{g}}\mathfrak{g}}(V)).
\]

Recall the $K$-isomorphism
\[
	\mathfrak{LM}_{\lambda}^{\mathrm{df}}(V)\cong \bigoplus_{i=1}^{n} K\cdot [x_{i},x_{n+1}]\otimes_{K} (V\otimes_{K}K_{\lambda}),
\]
and define a $K$-isomorphism
\begin{align*}
	\Phi\colon \mathfrak{LM}_{\lambda}^{\mathrm{df}}(V)= 
	\bigoplus_{i=1}^{n} K\cdot [x_{i},x_{n+1}]\otimes_{K} (V\otimes_{K}K_{\lambda})
	&\longrightarrow V\otimes_{K} K^{n};\\
	[x_{i},x_{n+1}]\otimes v&\longmapsto v\otimes e_{i},
\end{align*}
where $e_{i}\in K^{n}$ is the $i$-th standard basis vector for $i=1,\ldots,n$.
Then by regarding  the right hand side as the $\mathbb{L}_{n}$-module under the identification
$c^{\mathrm{DR}}_{\lambda}(V)=V\otimes_{K}K^{n}$,
we need to do is showing  that $\Phi$ interwines the $\mathbb{L}_{n}$-module structures.

By recalling the equation \eqref{eq:DRformula},
we have
\begin{align*}
	\Phi(x_{i}\cdot ([x_{j},x_{n+1}]\otimes v))&=
	\Phi(\theta_{\mathrm{id}\oplus \theta_{g}}(x_{i})([x_{j},x_{n+1}])\otimes v)\\
	&=\Phi([[x_{i},x_{n+1}],x_{j}+\delta_{i,j}x_{n+1}]\otimes v)\\
	&=\Phi([x_{i},x_{n+1}]\otimes (x_{j}+\delta_{i,j}\lambda)\cdot v)\\
	&=((x_{j}+\delta_{i,j}\lambda)\cdot v)\otimes e_{i}\\
	&=\sum_{k=1}^{n} ((x_{k}+\delta_{i,k}\lambda)\cdot v)\otimes I_{i,k}(e_{j})\\
	&=c^{\mathrm{DR}}_{\lambda}(x_{i})(v\otimes e_{j})\\
	&=x_{i}\cdot \Phi([x_{j},x_{n+1}]\otimes v),
\end{align*}
as desired.	
\end{proof}

Dettweiler and Reiter further defined the middle convolution functor $\mathrm{mc}_{\lambda}^{\mathrm{DR}}$ as follows.
For a finite dimensional $\mathbb{L}_{n}$-module $V$, 
let us consider  the following $\mathbb{L}_{n}$-submodule of $c^{\mathrm{DR}}_{\lambda}(V)=
V\otimes_{K}K^{n}$:
\begin{align*}
	\mathfrak{k}&:=\bigoplus_{i=1}^{n} V^{x_{i}}\otimes _{K}K\cdot e_{i},&
	\mathfrak{l}&:=c^{\mathrm{DR}}_{\lambda}(V)^{\mathbb{L}_{n}}.
\end{align*}
And then the quotient module is defined:
\[
	\mathrm{mc}^{\mathrm{DR}}_{\lambda}(V):=c^{\mathrm{DR}}_{\lambda}(V)/(\mathfrak{k}+\mathfrak{l}).
\]
\begin{df}[Dettweiler-Reiter additive middle convolution]
	For $\lambda\in K$, let us define an endofunctor 
	of $U(\mathbb{L}_{n})\text{-\textbf{mod}}$ by 
	\[
		\begin{array}{cccc}
			\mathrm{mc}^{\mathrm{DR}}_{\lambda}\colon &U(\mathbb{L}_{n})\text{-\textbf{mod}}&
			\longrightarrow &U(\mathbb{L}_{n})\text{-\textbf{mod}}\\
			&V&\longmapsto&\mathrm{mc}^{\mathrm{DR}}_{\lambda}(V),
		\end{array}
	\]
	which is called the \emph{Dettweiler-Reiter additive middle convolution}.
\end{df}
\begin{thm}\label{thm:compmc}
For $\lambda\in K$, there exists a natural isomorphism of functors
\[
	\mathrm{Res}_{\mathbb{L}_{n}}^{\mathbb{L}_{n}\oplus_{\theta_{g}}\mathfrak{g}}\circ \mathfrak{mc}_{\lambda}
	\cong \mathrm{mc}^{\mathrm{DR}}_{\lambda}\circ \mathrm{Res}_{\mathbb{L}_{n}}^{\mathbb{L}_{n}\oplus_{\theta_{g}}\mathfrak{g}}.
\]
\end{thm}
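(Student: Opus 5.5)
The plan is to use the natural isomorphism $\Phi$ of Proposition~\ref{prop:compconv}, which identifies $\mathrm{Res}\,\mathfrak{LM}^{\mathrm{df}}_{\lambda}(V)$ with $c^{\mathrm{DR}}_{\lambda}(\mathrm{Res}\,V)$ through $[x_{i},x_{n+1}]\otimes v\mapsto v\otimes e_{i}$. Both $\mathfrak{mc}_{\lambda}(V)$ and $\mathrm{mc}^{\mathrm{DR}}_{\lambda}(V)$ are quotients of these two modules. So it suffices to show that $\Phi$ carries the subspace we quotient by in $\mathfrak{LM}^{\mathrm{df}}_{\lambda}(V)$ onto $\mathfrak{k}+\mathfrak{l}$. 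Then $\Phi$ descends to an isomorphism of $\mathbb{L}_{n}$-modules between the quotients. Naturality in $V$ is inherited from the naturality of $\Phi$, since morphisms $f\colon V\to W$ act componentwise on both sides, and the induced maps on quotients are compatible by the universal property of cokernels.

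For $\lambda\neq 0$, the subspace is the sum of the images of $H_{1}(\mathbb{L}(x_{i}),V\otimes_{K}K_{\lambda})$ for $i=0,1,\ldots,n$. By Lemma~\ref{lem:K}, for $i=1,\ldots,n$ the image is $K\cdot[x_{i},x_{n+1}]\otimes(V\otimes_{K}K_{\lambda})^{x_{i}}$. Since $x_{i}$ acts on $V\otimes_{K}K_{\lambda}$ exactly as on $V$ (through $\mathrm{pr}_{1,\ldots,n}$, with $x_{n+1}$ acting only on $K_{\lambda}$), this invariant space is $V^{x_{i}}$. Therefore $\Phi$ maps the sum over $i=1,\ldots,n$ onto $\bigoplus_{i}V^{x_{i}}\otimes e_{i}=\mathfrak{k}$. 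By Lemma~\ref{lem:Linvariant}, the image for $i=0$ is $\mathfrak{LM}^{\mathrm{df}}_{\lambda}(V)^{\mathbb{L}_{n}}$. Because $\Phi$ is an $\mathbb{L}_{n}$-isomorphism, it carries $\mathbb{L}_{n}$-invariants onto $c^{\mathrm{DR}}_{\lambda}(V)^{\mathbb{L}_{n}}=\mathfrak{l}$. Hence the image of the whole sum is $\mathfrak{k}+\mathfrak{l}$.

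For $\lambda=0$, the definition of $\mathfrak{mc}_{0}(V)$ already quotients by $\sum_{i}K\cdot[x_{i},x_{n+1}]\otimes V^{x_{i}}+\mathfrak{LM}^{\mathrm{df}}_{0}(V)^{\mathbb{L}_{n}}$. The same two observations apply: the first summand maps onto $\mathfrak{k}$, and the invariants map onto $\mathfrak{l}$ because $\Phi$ is $\mathbb{L}_{n}$-equivariant. Proposition~\ref{prop:compconv} covers all $\lambda\in K$, so $\lambda=0$ is included there.

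The only real point to check is that $\Phi$ is genuinely $\mathbb{L}_{n}$-equivariant, which is the content of Proposition~\ref{prop:compconv}, and that the identification in Lemma~\ref{lem:K} matches the coordinates used by $\Phi$. The latter is immediate, since both are expressed in the basis $[x_{i},x_{n+1}]\otimes v$. I expect no serious obstacle beyond this bookkeeping.
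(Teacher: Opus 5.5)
Your proposal is correct and is essentially the paper's proof: you transport $\sum_{i}K\cdot[x_{i},x_{n+1}]\otimes V^{x_{i}}$ and $\mathfrak{LM}^{\mathrm{df}}_{\lambda}(V)^{\mathbb{L}_{n}}$ along the $\mathbb{L}_{n}$-isomorphism $\Phi$ of Proposition~\ref{prop:compconv} onto $\mathfrak{k}$ and $\mathfrak{l}$, and you use Lemmas~\ref{lem:Linvariant} and~\ref{lem:K} to identify the image of $\bigoplus_{i=0}^{n}H_{1}(\mathbb{L}(x_{i}),V\otimes_{K}K_{\lambda})$. Your separate treatment of $\lambda=0$ (where those lemmas do not apply but $\mathfrak{mc}_{0}$ is defined directly as that quotient) and your remark that descending an isomorphism to quotients needs no exactness argument are small clarifications of the paper's terse proof.
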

\begin{proof}
	We can check that the isomorphism $\Phi$ in the proof of Proposition \ref{prop:compconv} induces the isomorphisms
	\begin{align*}
		&\Phi\colon \bigoplus_{i=1}^{n} K[x_{i},x_{n+1}]\otimes_{K}(V\otimes_{K}K_{\lambda})^{x_{i}}\to \bigoplus_{i=1}^{n} V^{x_{i}}\otimes_{K} K\cdot e_{i},\\
		&\Phi\colon \mathfrak{LM}^{\mathrm{df}}_{\lambda}(V)^{\mathbb{L}_{n}}\to c^{\mathrm{DR}}_{\lambda}(V)^{\mathbb{L}_{n}},
	\end{align*}
	as $\mathbb{L}_{n}$-modules.
	Then since the functors $\mathfrak{LM}^{\mathrm{df}}_{\lambda}$ and $c^{\mathrm{DR}}_{\lambda}$ are exact, 
	the natural isomorphism in  Proposition \ref{prop:compconv}
	induces the desired natural isomorphism of functors
	by Lemmas \ref{lem:Linvariant} and \ref{lem:K}.
\end{proof}
\subsection{Composition law of middle convolution functors}
In \cite{DR00,DR07}, Dettweiler and Reiter showed the following remarkable property
as an analogue of the composition law of Katz's middle convolution functors for local systems \cite{Katz}.
A detailed treatment of this property is also found in \cite{Harbook}.
\begin{thm}[Dettweiler-Reiter \cite{DR00,DR07}]\label{thm:compmcDR}
Let us take $\lambda,\mu\in K$, and an $U(\mathbb{L}_{n})$-module $V$.
Suppose that $V$ satisfies the following conditions:
\begin{itemize}
	\item[$(*)$] 
	\[
		V^{x_{i}-c}\cap \bigcap_{j\neq i}^{n} V^{x_{j}}=0,\quad \text{for any }c\in K\text{ and }i=1,2,\ldots,n,
	\]
	\item[$(**)$] 
	\[
		\left((x_{i}-c)\cdot V+\sum_{j\neq i}^{n} x_{j}\cdot V\right)= V,\quad \text{for any }c\in K\text{ and }i=1,2,\ldots,n.
	\]
\end{itemize}
Then there exists isomorphisms of $U(\mathbb{L}_{n})$-modules
\[
	\mathrm{mc}^{\mathrm{DR}}_{\lambda}\circ \mathrm{mc}^{\mathrm{DR}}_{\mu}(V)\cong \mathrm{mc}^{\mathrm{DR}}_{\lambda+\mu}(V),\quad \mathrm{mc}^{\mathrm{DR}}_{0}(V)\cong V.
\]
\end{thm}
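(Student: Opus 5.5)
The plan is to prove the composition law for the Dettweiler--Reiter functor directly in its linear-algebra form $c^{\mathrm{DR}}_{\lambda}(V)=V\otimes_{K}K^{n}$, following the strategy of Dettweiler--Reiter and Haraoka (Section 7.5 of \cite{Harbook}). First I treat $\mathrm{mc}^{\mathrm{DR}}_{0}(V)\cong V$. For $\lambda=0$ the generators act by $c^{\mathrm{DR}}_{0}(x_i)(v\otimes e_j)=\delta_{ij}\sum_k x_kv\otimes e_k$ (up to the index convention), and $\mathfrak{l}=\{\sum v_j\otimes e_j:\sum_j x_jv_j=0\}$, as in Lemma \ref{lem:Linvariant2}. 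Consider the map
\[
\pi\colon V\otimes K^{n}\to V,\qquad \sum_j v_j\otimes e_j\mapsto \sum_j x_jv_j .
\]
One checks that $\pi$ intertwines $c^{\mathrm{DR}}_{0}(x_i)$ with $x_i$, that its kernel is exactly $\mathfrak{l}$, and that $\mathfrak{k}\subset\mathfrak{l}$. The image of $\pi$ is $\sum_j x_jV$, which is all of $V$ by condition $(**)$ with $c=0$. Hence $\mathrm{mc}^{\mathrm{DR}}_{0}(V)=c_0(V)/\mathfrak{l}\cong V$.

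For the composition law I would follow the standard route. Set $W=\mathrm{mc}^{\mathrm{DR}}_{\mu}(V)$, and first verify that $W$ again satisfies $(*)$ and $(**)$, which is needed to iterate. Then compare $c_{\lambda}(c_{\mu}(V))=V\otimes K^{n}\otimes K^{n}$ with $c_{\lambda+\mu}(V)$ via an explicit map. Guided by the structure $x_i\cdot(v\otimes e_j)=(x_j+\delta_{ij}\lambda)v\otimes e_i$, the natural candidate is
\[
\Psi\colon c_{\lambda}(c_{\mu}(V))\to c_{\lambda+\mu}(V),\qquad v\otimes e_j\otimes e_i\mapsto (x_j+\delta_{ij}\mu)v\otimes e_i,
\]
possibly with a $\lambda$-dependent correction. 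I would check that $\Psi$ is $\mathbb{L}_n$-equivariant and that it descends to the middle quotients: it should send $\mathfrak{k}+\mathfrak{l}$ for $c_{\lambda}(W)$, lifted, into $\mathfrak{k}+\mathfrak{l}$ for $c_{\lambda+\mu}(V)$.

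To get an isomorphism rather than just a map, I would argue by dimensions. Under $(*)$ and $(**)$ one has
\[
\dim \mathrm{mc}_{\lambda}(V)=\sum_i \operatorname{rk}(x_i)+\operatorname{rk}\Bigl(\sum_i x_i+\lambda\Bigr)-\dim V,
\]
with the second term interpreted correctly when $\lambda=0$. The rank and eigen-data of the $x_i$ on $\mathrm{mc}_{\mu}(V)$ are known explicitly: the nonzero-eigenvalue Jordan structure of $x_i$ shifts by $\mu$, and $\sum x_i$ shifts by $-\mu$. Combining these, the dimension of the iterated quotient equals that of $\mathrm{mc}_{\lambda+\mu}(V)$. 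Equality of dimensions together with surjectivity of the induced map, which comes from $(**)$, gives the isomorphism.

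The main obstacle is the well-definedness and surjectivity of the descended map, in particular controlling the preimage of $\mathfrak{k}_W+\mathfrak{l}_W$ inside $c_\lambda(c_\mu(V))$ before passing to the quotient by $\mathfrak{k}+\mathfrak{l}$ of $c_\mu(V)$. The degenerate cases $\lambda=0$, $\mu=0$ or $\lambda+\mu=0$ need separate handling. My first attempt would be to reduce these cases to the invertibility statement $\mathrm{mc}_{-\mu}\mathrm{mc}_{\mu}(V)\cong V$, proven via the explicit map $\pi$ above. Alternatively one could use the exactness-type properties established earlier: preservation of injections and surjections under $\mathfrak{mc}_\lambda$, together with Theorem \ref{thm:compmc}.
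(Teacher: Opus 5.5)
Your proof of $\mathrm{mc}^{\mathrm{DR}}_0(V)\cong V$ is correct once the action is written in the paper's convention, $x_i\cdot(v\otimes e_j)=(x_j+\delta_{ij}\lambda)v\otimes e_i$. The formula you display is its transpose, and for that formula $\pi$ is not equivariant.

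Your $\Psi$ is $\sum_k w_k\otimes e_k\mapsto\sum_k c^{\mathrm{DR}}_\mu(x_k)w_k$, which is exactly the paper's $\phi_{\lambda,\mu}$, just as $\pi$ is its $\phi_V$. It is equivariant with no correction needed. It is onto, because $c^{\mathrm{DR}}_\mu(x_k)$ has image $((x_k+\mu)V+\sum_{j\neq k}x_jV)\otimes e_k=V\otimes e_k$ by $(**)$ with $c=-\mu$. Descent is not the real difficulty: $(*)$ gives $(\mathfrak{k}+\mathfrak{l}_\mu)\cap(V\otimes e_k)=V^{x_k}\otimes e_k$, and it forces $\mathrm{mc}^{\mathrm{DR}}_{\lambda+\mu}(V)$ to have no nonzero $\mathbb{L}_n$-invariants.

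The paper does not prove this theorem. It cites Dettweiler--Reiter, and in Theorems \ref{thm:mc0} and \ref{thm:compmc2} it builds these same two maps and again attributes their bijectivity to Dettweiler--Reiter. So the whole content is injectivity, and that is where your proposal has a genuine gap.

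\textbf{The gap.} The dimension count is asserted rather than derived, and the data you quote are partly wrong. Write $S=\sum_i x_i$, $T(\sum_j v_j\otimes e_j)=\sum_j x_jv_j$ and $\Delta(v)=\sum_j v\otimes e_j$. Then $S=\Delta T+\mu$ on $c^{\mathrm{DR}}_\mu(V)$ and $T\Delta=S|_V$. So the eigenvalues of $\sum_i x_i$ move by $+\mu$; it is $x_0=-\sum_i x_i$ that moves by $-\mu$. With your sign the count would return $\dim\mathrm{mc}^{\mathrm{DR}}_{\lambda-\mu}(V)$.

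More seriously, your treatment of $\lambda+\mu=0$ is circular. The map $\pi$ only proves $\mathrm{mc}^{\mathrm{DR}}_0(V)\cong V$. The statement $\mathrm{mc}^{\mathrm{DR}}_{-\mu}\mathrm{mc}^{\mathrm{DR}}_\mu(V)\cong V$ is exactly the instance $\lambda=-\mu$ of what you are trying to prove. In that case the count needs the eigenvalue-$\mu$ part of $S$ on $W=\mathrm{mc}^{\mathrm{DR}}_\mu(V)$. That part comes from the eigenvalue-$0$ part on $V$ and interacts with $\mathfrak{k}$, which is precisely what a ``nonzero eigenvalues shift'' rule does not cover.

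Your fallback via preservation of injections and surjections together with Theorem \ref{thm:compmc} cannot close this either. It only gives lower bounds on $\dim\mathrm{mc}_\lambda(\mathrm{mc}_\mu(V))$, and the missing inequality is the upper bound.

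\textbf{How to close it.} Everything needed is elementary linear algebra.

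For $\nu\neq0$ and any module $U$, one has $\mathfrak{k}\cap\mathfrak{l}=0$ and $\mathfrak{l}=\Delta(\ker(S|_U+\nu))$. Hence $\dim\mathrm{mc}_\nu(U)=\sum_k\mathrm{rk}(x_k|_U)-\dim\ker(S|_U+\nu)$. Now let $\mu\neq0$.

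(a) $\mathrm{rk}(x_k|_W)=\mathrm{rk}(x_k|_V)$. This holds because $x_k$ maps $c^{\mathrm{DR}}_\mu(V)$ onto $V\otimes e_k$ and $(V\otimes e_k)\cap(\mathfrak{k}+\mathfrak{l}_\mu)=V^{x_k}\otimes e_k$, using $(**)$ and $(*)$ at $c=-\mu$.

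(b) $\mathfrak{l}_\mu=\ker S$ and $\mathfrak{k}\subset\ker(S-\mu)$. So for $\lambda\notin\{0,-\mu\}$ the generalized $(-\lambda)$-eigenspace of $S$ passes unchanged to $W$. The $AB$/$BA$ argument then gives $\dim\ker(S|_W+\lambda)=\dim\ker(S|_V+\lambda+\mu)$.

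(c) For $\lambda=-\mu$: $\mathrm{Im}(S-\mu)=\Delta(\sum_j x_jV)=\Delta(V)$ by $(**)$ at $c=0$, and $\Delta(V)\cap(\mathfrak{k}+\mathfrak{l}_\mu)=\Delta(\ker(S|_V+\mu))$ by $(*)$ at $c=0$. Hence $\mathrm{rk}(S|_W-\mu)=\mathrm{rk}(S|_V+\mu)$, which gives $\dim\mathrm{mc}_{-\mu}(W)=\dim V$.

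With (a)--(c) the dimensions agree whenever $\lambda,\mu\neq0$, and surjectivity finishes the proof. The case $\mu=0$ is immediate from $W\cong V$. The case $\lambda=0$ follows from $\sum_k x_kW=W$, again by $(**)$ for $V$. In particular you never need $(*)$ and $(**)$ for $W$ itself.
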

\begin{rem}
The above conditions $(*)$ and $(**)$ are obviously satisfied if $V$ is irreducible
and $\mathrm{dim}_{K}V\ge 1$, or $\mathrm{dim}_{K}V=1$ with 
at least two $x_i$ acting on $V$ non-trivially.
\end{rem}
A remarkable consequence of the above theorem is that
the middle convolution functors preseve irreducibility.
\begin{cor}[Dettweiler-Reiter \cite{DR00}]\label{cor:compmcDR}
	For $V\in U(\mathbb{L}_{n})\text{-}{\mathrm{\bf{mod}}}$ satisfying the conditions $(*)$ and $(**)$ in Theorem \ref{thm:compmcDR},
	 and $\lambda\in K$, 
	$V$ is irreducible if and only if $\mathrm{mc}^{\mathrm{DR}}_{\lambda}(V)$ is irreducible.
\end{cor}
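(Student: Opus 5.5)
We use the composition law of Theorem \ref{thm:compmcDR}, applied with $\mu=\lambda$ and $\lambda$ replaced by $-\lambda$. Together with the fact that $\mathrm{mc}^{\mathrm{DR}}_{\lambda}$ preserves injections and surjections, this shows that $\mathrm{mc}^{\mathrm{DR}}_{\lambda}$ is "invertible" on the class of modules satisfying $(*)$ and $(**)$. Since the statement is symmetric, it suffices to prove that irreducibility of $V$ implies irreducibility of $W:=\mathrm{mc}^{\mathrm{DR}}_{\lambda}(V)$. For the converse, we apply the same argument to $W$ with $-\lambda$ and use $\mathrm{mc}^{\mathrm{DR}}_{-\lambda}(W)\cong\mathrm{mc}^{\mathrm{DR}}_{0}(V)\cong V$. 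For that we must check that $W$ again satisfies $(*)$ and $(**)$. I expect this to be the main technical point. It can be handled by the standard Dettweiler--Reiter argument: an element of $W^{x_i-c}\cap\bigcap_{j\neq i}W^{x_j}$ produces a one-dimensional submodule (or quotient) of $W$, and $(**)$ dually produces a one-dimensional quotient. Applying $\mathrm{mc}^{\mathrm{DR}}_{-\lambda}$ and the composition law, such a piece would map to a submodule of $V$ supported on at most one generator, and this contradicts $(*)$ or $(**)$ for $V$. In the worst case one can instead prove preservation of $(*)$ and $(**)$ directly from the explicit description $c^{\mathrm{DR}}_{\lambda}(V)=V\otimes K^n$ modulo $\mathfrak{k}+\mathfrak{l}$.

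For the main direction, assume $V$ is irreducible and let $0\neq U\subsetneq W$ be a submodule. First, use the fact that $\mathrm{mc}^{\mathrm{DR}}_{-\lambda}$ preserves injectivity and surjectivity; for $\mathrm{mc}^{\mathrm{DR}}$ this follows from Theorem \ref{thm:compmc} and the preceding proposition, or from the same snake-lemma argument for $\lambda=0$. Applying it to $0\to U\to W\to W/U\to0$ gives an injection $\mathrm{mc}^{\mathrm{DR}}_{-\lambda}(U)\hookrightarrow\mathrm{mc}^{\mathrm{DR}}_{-\lambda}(W)\cong V$ and a surjection $V\twoheadrightarrow\mathrm{mc}^{\mathrm{DR}}_{-\lambda}(W/U)$. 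Since $V$ is irreducible, either $\mathrm{mc}^{\mathrm{DR}}_{-\lambda}(U)=0$ or $\mathrm{mc}^{\mathrm{DR}}_{-\lambda}(U)=V$. In the latter case, a dimension count shows $\mathrm{mc}^{\mathrm{DR}}_{-\lambda}(W/U)=0$, because the composite is exact up to the middle term.

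Finally, we show that a nonzero module $U$ with $\mathrm{mc}^{\mathrm{DR}}_{-\lambda}(U)=0$ cannot occur as a submodule of $W$, and dually that no nonzero quotient of $W$ is killed by $\mathrm{mc}^{\mathrm{DR}}_{-\lambda}$. Modules killed by middle convolution are small: every composition factor is one-dimensional with at most one $x_i$ acting nontrivially, or $\lambda$ is special. Any such submodule of $W$, respectively quotient of $W$, violates $(*)$, respectively $(**)$, for $W$, which we established in the first step. Hence $U=0$ or $U=W$, so $W$ is irreducible.
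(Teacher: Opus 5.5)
\textbf{The gap} is at the step you yourself flag as the main technical point: that $W=\mathrm{mc}^{\mathrm{DR}}_{\lambda}(V)$ again satisfies $(*)$ and $(**)$. For reference, the paper gives no proof of this corollary; it quotes it from Dettweiler--Reiter.

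Your functorial argument takes a one-dimensional $S\subset W$ on which $x_i$ acts by $c$ and every other $x_j$ acts by $0$. It then embeds $\mathrm{mc}^{\mathrm{DR}}_{-\lambda}(S)$ into $\mathrm{mc}^{\mathrm{DR}}_{-\lambda}(W)\cong V$. A direct computation shows that $\mathrm{mc}^{\mathrm{DR}}_{-\lambda}(S)$ is one-dimensional, with $x_i$ acting by $c-\lambda$, only when $c\notin\{0,\lambda\}$. When $c=0$ or $c=\lambda$ it is $0$, so nothing lands in $V$ and there is no contradiction with $(*)$. The same happens for one-dimensional quotients and $(**)$.

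These are exactly the cases your last paragraph needs. By the description you invoke, a nonzero module killed by $\mathrm{mc}^{\mathrm{DR}}_{-\lambda}$ has socle and top made of one-dimensional modules of precisely this type with $c\in\{0,\lambda\}$. So the argument proves $(*)$ and $(**)$ for $W$ only for values of $c$ that are never used, and the direction ``$V$ irreducible $\Rightarrow$ $W$ irreducible'' is left unproved. Changing the parameter does not help: the trivial one-dimensional module is killed by every $\mathrm{mc}^{\mathrm{DR}}_{\mu}$. Hence $\mathbb{L}_n$-invariant vectors in $W$ cannot be detected by any argument built only from functoriality and the composition law.

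The converse direction, by contrast, needs nothing about $W$. If $0\neq U\subsetneq V$, then $\mathrm{mc}^{\mathrm{DR}}_{\lambda}(U)=0$ or $\mathrm{mc}^{\mathrm{DR}}_{\lambda}(V/U)=0$. The socle of $U$ then violates $(*)$ for $V$, or the top of $V/U$ violates $(**)$ for $V$.

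\textbf{What is missing} is the direct computation you postpone to ``the worst case''; it is short. For $\lambda\neq0$ write $c^{\mathrm{DR}}_{\lambda}(x_k)\bigl(\sum_j v_j\otimes e_j\bigr)=(s+\lambda v_k)\otimes e_k$ with $s=\sum_j x_jv_j$.

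First, the image of $c^{\mathrm{DR}}_{\lambda}(x_k)$ is $\bigl((x_k+\lambda)V+\sum_{j\neq k}x_jV\bigr)\otimes e_k=V\otimes e_k$, by $(**)$ for $V$ at $c=-\lambda$. Hence $\sum_k x_kW=W$. Since $(x_i-\lambda)w\equiv s\otimes e_i$ modulo $\bigoplus_{j\neq i}V\otimes e_j$, the identity $\sum_k x_kV=V$ then gives $(x_i-\lambda)W+\sum_{j\neq i}x_jW=W$.

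Dually, $(*)$ for $V$ at $c=-\lambda$ shows that the elements of $\mathfrak{k}+\mathfrak{l}$ supported in the $k$-th slot are exactly $V^{x_k}\otimes e_k$. So an invariant class $\bar w$ forces $x_k(s+\lambda v_k)=0$ for all $k$. Hence $(\sum_k x_k+\lambda)s=0$, and $w=-\lambda^{-1}s\otimes(e_1+\cdots+e_n)+\sum_k(v_k+\lambda^{-1}s)\otimes e_k$ lies in $\mathfrak{l}+\mathfrak{k}$. A class with $x_i\bar w=\lambda\bar w$ and $x_j\bar w=0$ for $j\neq i$ is eliminated the same way, using in addition $\bigcap_k V^{x_k}=0$. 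With these four facts your final paragraph goes through.

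Two smaller repairs are also needed. First, when $\mathrm{mc}^{\mathrm{DR}}_{-\lambda}(U)\to V$ is onto, $\mathrm{mc}^{\mathrm{DR}}_{-\lambda}(W/U)=0$ follows because the composite $\mathrm{mc}^{\mathrm{DR}}_{-\lambda}(U)\to V\to\mathrm{mc}^{\mathrm{DR}}_{-\lambda}(W/U)$ vanishes. It does not follow from a dimension count, since $\mathrm{mc}^{\mathrm{DR}}$ is not exact in the middle. Second, your description of modules killed by $\mathrm{mc}^{\mathrm{DR}}_{-\lambda}$ needs its short proof: subquotients of killed modules are killed, and a simple $S$ satisfying $(*)$ and $(**)$ has $\mathrm{mc}^{\mathrm{DR}}_{\lambda}\mathrm{mc}^{\mathrm{DR}}_{-\lambda}(S)\cong S\neq0$.
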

If $\mathbb{L}_{n}\oplus_{\theta_{g}}\mathfrak{g}$-module 
$V$ is irreducible as an $\mathbb{L}_{n}$-module, then obviously
irreducible as an $\mathbb{L}_{n}\oplus_{\theta_{g}}\mathfrak{g}$-module as well.
This corollary implies that the middle convolution functor
$\mathfrak{mc}_{\lambda}$ produces irreducible 
$U(\mathbb{L}_{n}\oplus_{\theta_{g}}\mathfrak{g})\text{-}\mathbf{mod}$ objects.
\begin{cor}\label{cor:irred}
	Take $V\in U(\mathbb{L}_{n}\oplus_{\theta_{g}}\mathfrak{g})\text{-}{\mathrm{\bf{mod}}}$
	satisfying the conditions $(*)$ and $(**)$ in Theorem \ref{thm:compmcDR},
	 and $\lambda\in K$.
	If $V$ is irreducible as a $U(\mathbb{L}_{n})$-module, then $\mathfrak{mc}_{\lambda}(V)$ is irreducible as a 
	$U(\mathbb{L}_{n})$-module again and hence also irreducible as a
	$U(\mathbb{L}_{n}\oplus_{\theta_{g}}\mathfrak{g})$-module.
	Conversely, if $\mathfrak{mc}_{\lambda}(V)$ is irreducible as a $U(\mathbb{L}_{n})$-module, 
	then $V$ is irreducible as a 
	$U(\mathbb{L}_{n})$-module again and hence also irreducible as a
	$U(\mathbb{L}_{n}\oplus_{\theta_{g}}\mathfrak{g})$-module.
\end{cor}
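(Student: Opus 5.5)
The plan is to reduce everything to the classical statement by restriction. The key input is Theorem \ref{thm:compmc}: there is a natural isomorphism of $U(\mathbb{L}_{n})$-modules
\[
\mathrm{Res}_{\mathbb{L}_{n}}^{\mathbb{L}_{n}\oplus_{\theta_{g}}\mathfrak{g}}(\mathfrak{mc}_{\lambda}(V))\cong \mathrm{mc}^{\mathrm{DR}}_{\lambda}\bigl(\mathrm{Res}_{\mathbb{L}_{n}}^{\mathbb{L}_{n}\oplus_{\theta_{g}}\mathfrak{g}}(V)\bigr).
\]
The conditions $(*)$ and $(**)$ only involve the action of $x_{1},\ldots,x_{n}$, so they are conditions on $\mathrm{Res}(V)$. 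Hence Corollary \ref{cor:compmcDR} applies directly to $\mathrm{Res}(V)$.

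For the forward direction, suppose $V$ is irreducible as a $U(\mathbb{L}_{n})$-module. Corollary \ref{cor:compmcDR} then shows that $\mathrm{mc}^{\mathrm{DR}}_{\lambda}(\mathrm{Res}(V))$ is irreducible. By the isomorphism above, $\mathfrak{mc}_{\lambda}(V)$ is irreducible as a $U(\mathbb{L}_{n})$-module.

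For the converse, suppose $\mathfrak{mc}_{\lambda}(V)$ is irreducible as a $U(\mathbb{L}_{n})$-module. Then $\mathrm{mc}^{\mathrm{DR}}_{\lambda}(\mathrm{Res}(V))$ is irreducible, and the "if" part of Corollary \ref{cor:compmcDR} gives that $\mathrm{Res}(V)$ is irreducible.

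It remains to pass from irreducibility over $U(\mathbb{L}_{n})$ to irreducibility over $U(\mathbb{L}_{n}\oplus_{\theta_{g}}\mathfrak{g})$. Any $U(\mathbb{L}_{n}\oplus_{\theta_{g}}\mathfrak{g})$-submodule is in particular a $U(\mathbb{L}_{n})$-submodule, since $\mathbb{L}_{n}$ is a subalgebra. So irreducibility over $U(\mathbb{L}_{n})$ forces the submodule to be $0$ or the whole module. This argument applies to both $V$ and $\mathfrak{mc}_{\lambda}(V)$.

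The only step needing care is checking that the isomorphism of Theorem \ref{thm:compmc} is an isomorphism of $\mathbb{L}_{n}$-modules, not merely of vector spaces. The case $\lambda=0$ must also be covered, where $\mathfrak{mc}_{0}$ is defined via $\mathfrak{LM}^{\mathrm{df}}_{0}$ and the quotient by $\sum_i K[x_i,x_{n+1}]\otimes V^{x_i}+\mathfrak{LM}^{\mathrm{df}}_0(V)^{\mathbb{L}_n}$. Theorem \ref{thm:compmc} is stated for all $\lambda\in K$ and its proof applies the map $\Phi$ of Proposition \ref{prop:compconv} with Lemma \ref{lem:Linvariant2} in place of Lemma \ref{lem:Linvariant}, so I would simply invoke it. This is where I expect the only (mild) obstacle.
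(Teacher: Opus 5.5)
Your proposal is correct and matches the paper's proof: you transport irreducibility through the $U(\mathbb{L}_{n})$-module isomorphism of Theorem \ref{thm:compmc}, apply Corollary \ref{cor:compmcDR} to $\mathrm{Res}(V)$ in both directions, and note that every $U(\mathbb{L}_{n}\oplus_{\theta_{g}}\mathfrak{g})$-submodule is in particular a $U(\mathbb{L}_{n})$-submodule. Your explicit remarks that $(*)$ and $(**)$ only involve $x_{1},\ldots,x_{n}$, and that Theorem \ref{thm:compmc} also covers $\lambda=0$, are points the paper leaves implicit.
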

\begin{proof}
Suppose that $V$ is irreducible as a $U(\mathbb{L}_{n})$-module.
Then by Theorem \ref{thm:compmc} and Corollary \ref{cor:compmcDR}, 
$\mathfrak{mc}_{\lambda}(V)$ is irreducible as a $U(\mathbb{L}_{n})$-module, which implies that $\mathfrak{mc}_{\lambda}(V)$ is irreducible as a $U(\mathbb{L}_{n}\oplus_{\theta_{g}}\mathfrak{g})$-module. 
The converse is also shown by the same argument.
\end{proof}

In this section, 
we explain that this composition law is 
also valid for the middle convolution functor $\mathfrak{mc}_{\lambda}$ of $U(\mathbb{L}_{n}\oplus_{\theta_{g}}\mathfrak{g})\text{-}\mathbf{mod}$
as well.

Let us  
consider a $K$-linear map defined as follows, 
which was introduce by Dettweiler and Reiter in \cite{DR00} to 
establish the composition law, and 
see also \cite{Harbook} for a detailed property of this map.
Combining the projection map  
\[
	\mathrm{pr}_{x_{n+1}}\colon \mathcal{I}_{\mathbb{L}_{n+1}}\otimes_{U(\mathbb{L}_{n+1})} M
	\cong \bigoplus_{i=1}^{n+1} K\cdot x_{i}\otimes_{K} M
	\longrightarrow K\cdot x_{n+1}\otimes_{K} M
\]
with the  $K$-isomorphism $K\cdot x_{n+1}\otimes_{K} M\cong M
:\ x_{n+1}\otimes m\to m$,
we obtain the $K$-linear map
\[
	\phi_{M} \colon \mathcal{I}_{\mathbb{L}_{n+1}}\otimes_{U(\mathbb{L}_{n+1})} M
	\rightarrow M.
\]
Here a remarkable point is that 
\[
	\phi_{V\otimes_{K}K_{\lambda}}\colon     
\mathcal{I}_{\mathbb{L}_{n+1}}\otimes_{U(\mathbb{L}_{n+1})} (V\otimes_{K}K_{\lambda})
\rightarrow V\otimes_{K}K_{\lambda}
\]
is a $\mathfrak{g}$-module homomorphism.
Indeed, for $X\in \mathfrak{g}$ and 
$\sum_{i=1}^{n+1} x_i\otimes v_{i}\in \mathcal{I}_{\mathbb{L}_{n+1}}\otimes_{U(\mathbb{L}_{n+1})} (V\otimes_{K}K_{\lambda})$
with $v_{i}\in V\otimes_{K}K_{\lambda}$, we have
\begin{align*}
	\phi_{V\otimes_{K}K_{\lambda}}(X\cdot (\sum_{i=1}^{n+1} x_i\otimes v_{i}))
	&=\phi_{V\otimes_{K}K_{\lambda}}(\sum_{i=1}^{n+1} \theta_{\mathrm{id}\oplus \theta_{g}}(X)(x_i)\otimes v_{i}+\sum_{i=1}^{n+1} x_i\otimes X\cdot v_{i})\\
	&= X\cdot v_{n+1}\\
	&=X\cdot \phi_{V\otimes_{K}K_{\lambda}}(\sum_{i=1}^{n+1} x_i\otimes v_{i}),
\end{align*}
as desired.
Here for the second equation, we used the fact that
\[
	\theta_{\mathrm{id}\oplus \theta_{g}}(X)(x_{n+1})=0\quad \text{and}\quad \theta_{\mathrm{id}\oplus \theta_{g}}(X)(x_{i})\in \mathbb{L}(x_{1},\ldots,x_{n})
	\text{ for }i=1,\ldots,n,
\]
which follow from the infinitesimal braid relation and Lemma \ref{lem:minpurebraid}.

\begin{thm}\label{thm:mc0}
	Suppose $\lambda=0$.
	Then $\phi_{V}$ induces an $\mathbb{L}_{n}\oplus_{\theta_{g}}\mathfrak{g}$-module homomorphism
	\[
		\mathfrak{mc}_{0}(V)\to V,
	\]
	which is an isomorphism 
	under the assumptions $(*),\, (**)$ in Theorem \ref{thm:compmcDR}.
\end{thm}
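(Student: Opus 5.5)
The plan is to restrict $\phi_{V}$ to $\mathfrak{LM}^{\mathrm{df}}_{0}(V)=\bigoplus_{i=1}^{n}K\cdot[x_i,x_{n+1}]\otimes_K V$ and compute it explicitly. With $\lambda=0$ we have $x_{n+1}\cdot v=0$, so
\[
[x_i,x_{n+1}]\otimes v=x_i\otimes x_{n+1}v-x_{n+1}\otimes x_iv=-x_{n+1}\otimes x_i v .
\]
Hence $\phi_V\bigl(\sum_i[x_i,x_{n+1}]\otimes v_i\bigr)=-\sum_{i=1}^n x_iv_i$. Up to the sign, this is the Dettweiler--Reiter map $V\otimes K^n\to V$, $(v_i)\mapsto\sum_i x_iv_i$, transported along the isomorphism $\Phi$ of Proposition \ref{prop:compconv}.

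The steps are as follows.

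First, $\phi_V$ kills the submodule defining $\mathfrak{mc}_0(V)$. If $v\in V^{x_i}$, then $[x_i,x_{n+1}]\otimes v\mapsto -x_iv=0$. By Lemma \ref{lem:Linvariant2}, $\mathfrak{LM}^{\mathrm{df}}_0(V)^{\mathbb{L}_n}$ is exactly the set where $\sum_i x_iv_i=0$, which is precisely the kernel of $\phi_V$ restricted to $\mathfrak{LM}^{\mathrm{df}}_0(V)$. So $\phi_V$ descends to a map $\mathfrak{mc}_0(V)\to V$. This also shows that the induced map is injective with no hypotheses, since the kernel is already contained in $\mathfrak{l}$.

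Second, equivariance. For $\mathfrak{g}$ this was checked just before the theorem; restricting to the $\mathfrak{g}$-stable subspace $\mathfrak{LM}^{\mathrm{df}}_0(V)$ is harmless. For $x_j\in\mathbb{L}_n$, use \eqref{eq:DRformula} with $\lambda=0$:
\[
x_j\cdot\Bigl(\sum_i[x_i,x_{n+1}]\otimes v_i\Bigr)=[x_j,x_{n+1}]\otimes \sum_i x_iv_i .
\]
Its image under $\phi_V$ is $-x_j\sum_ix_iv_i=x_j\cdot\phi_V(\cdot)$. So the induced map is an $\mathbb{L}_n\oplus_{\theta_g}\mathfrak{g}$-homomorphism. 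I expect it suffices to check the $\mathbb{L}_n$-part on generators $x_j$.

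Third, surjectivity under $(*)$ and $(**)$. The image is $\sum_i x_i\cdot V$. Condition $(**)$ with $c=0$ gives $x_1V+\sum_{j\ne1}x_jV=V$, so the map is onto. Thus condition $(**)$ alone gives bijectivity, and $(*)$ is needed only to match the Dettweiler--Reiter statement. Alternatively, one can cite Theorem \ref{thm:compmc} together with $\mathrm{mc}^{\mathrm{DR}}_0(V)\cong V$ from Theorem \ref{thm:compmcDR} as a consistency check on dimensions.

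The main obstacle is being careful with the identification of $\phi_V$ on $\mathfrak{LM}^{\mathrm{df}}_0(V)$, which was defined formally by setting $\lambda=0$. I will verify that the rewriting $[x_i,x_{n+1}]\otimes v=-x_{n+1}\otimes x_iv$ holds in $\mathcal{I}_{\mathbb{L}_{n+1}}\otimes_{U(\mathbb{L}_{n+1})}V$ with $x_{n+1}$ acting by zero. I will also check that the $\mathfrak{g}$-equivariance computation indeed applies on this subspace.
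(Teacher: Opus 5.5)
Your proof is correct. The descent step and the $\mathbb{L}_n$-equivariance via \eqref{eq:DRformula} are exactly what the paper does. In the descent step, $\phi_V$ kills $\sum_i K\cdot[x_i,x_{n+1}]\otimes_K V^{x_i}$, and Lemma \ref{lem:Linvariant2} identifies $\mathfrak{LM}^{\mathrm{df}}_0(V)^{\mathbb{L}_n}$ with the kernel of $\phi_V$.

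You genuinely diverge from the paper at bijectivity. The paper has already recorded that $\mathfrak{LM}^{\mathrm{df}}_0(V)^{\mathbb{L}_n}$ is the kernel, yet it cites Dettweiler--Reiter (their Proposition 3.2, or Section 7.5 of Haraoka's book) for the isomorphism under $(*)$ and $(**)$. You finish by hand instead. The subspace divided out is exactly the kernel, so the induced map is always injective. Hence $\mathfrak{mc}_0(V)\cong\sum_i x_iV$ as $\mathbb{L}_n\oplus_{\theta_g}\mathfrak{g}$-modules for every $V$, and surjectivity is literally $(**)$ at $c=0$. Your route is self-contained, identifies the image in general, and shows that $(*)$ is superfluous here. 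The citation only buys uniformity with Theorem \ref{thm:compmc2}, where the Dettweiler--Reiter input is genuinely needed.

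Two minor remarks.

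First, your sign $\phi_V([x_i,x_{n+1}]\otimes v)=-x_iv$ is the one forced by the definition of $\phi_M$. The paper's $x_i\cdot v$ drops it, harmlessly.

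Second, when $\lambda=0$ your own rewriting sends every $[x_i,x_{n+1}]\otimes v$ into $x_{n+1}\otimes V$. So the formally defined $\mathfrak{LM}^{\mathrm{df}}_0(V)$ is not a subspace of $\mathcal{I}_{\mathbb{L}_{n+1}}\otimes_{U(\mathbb{L}_{n+1})}V$, and ``restricting $\phi_V$'' really means composing with the natural, non-injective map. The cleanest way to get $\mathfrak{g}$-equivariance is then a direct check. Write $\theta_g(X)(x_i)=\sum_k x_ku_k$ and use $[X,x_i]=\theta_g(X)(x_i)$ in the semidirect sum. Then $\phi_V\bigl(X\cdot([x_i,x_{n+1}]\otimes v)\bigr)=-\theta_g(X)(x_i)v-x_iXv=-Xx_iv$, which is $X\cdot\phi_V([x_i,x_{n+1}]\otimes v)$.
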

\begin{proof}
By restricting $\phi_{V}$ to $\mathfrak{LM}_{0}^{\mathrm{df}}(V)$, we obtain the $\mathfrak{g}$-module homomorphism
\begin{equation}\label{eq:phiV}
	\begin{array}{cccc}	
\phi_{V}|_{\mathfrak{LM}_{0}^{\mathrm{df}}(V)}\colon &\mathfrak{LM}_{0}^{\mathrm{df}}(V)=\bigoplus_{i=1}^{n} K\cdot [x_{i},x_{n+1}]\otimes_{K} V&\longmapsto &V\\
&[x_{i},x_{n+1}]\otimes v&\longmapsto &x_{i}\cdot v
	\end{array}.
\end{equation}
This is an $\mathbb{L}_{n}$-module homomorphism as well. Indeed, by the equation $\eqref{eq:DRformula}$, we have
\[
	x_{i}\cdot ([x_{j},x_{n+1}]\otimes v)=[[x_{i},x_{n+1}],x_{j}+\delta_{i,j}x_{n+1}]\otimes v=[x_{i},x_{n+1}]\otimes (x_{j}\cdot v).
\]
Then we have 
\begin{align*}
	\phi_{V}(x_{i}\cdot ([x_{j},x_{n+1}]\otimes v))&=\phi_{V}([x_{i},x_{n+1}]\otimes (x_{j}\cdot v))=x_{i}\cdot (x_{j}\cdot v)=x_{i}\cdot \phi_{V}([x_{j},x_{n+1}]\otimes v)
\end{align*}
as desired. 
Thus $\phi_{V}|_{\mathfrak{LM}_{0}^{\mathrm{df}}(V)}$ is an $\mathbb{L}_{n}\oplus_{\theta_{g}}\mathfrak{g}$-module homomorphism.

Next we see that 
$\phi_{V}|_{\mathfrak{LM}_{0}^{\mathrm{df}}(V)}$
sends $\sum_{i=1}^{n} K\cdot [x_{i},x_{n+1}]\otimes_{K} V^{x_{i}}$ to $0$,
which is obvious from the definition $\eqref{eq:phiV}$ of $\phi_{V}|_{\mathfrak{LM}_{0}^{\mathrm{df}}(V)}$.
Also Lemma \ref{lem:Linvariant2} implies that $\mathfrak{LM}_{0}^{\mathrm{df}}(V)^{\mathbb{L}_{n}}$ 
is the kernel of $\phi_{V}|_{\mathfrak{LM}_{0}^{\mathrm{df}}(V)}$.
Therefore $\phi_{V}|_{\mathfrak{LM}_{0}^{\mathrm{df}}(V)}$ factors through the $\mathbb{L}_{n}\oplus_{\theta_{g}}\mathfrak{g}$-module homomorphism
\[
	\overline{\phi}_{V}\colon \mathfrak{mc}_{0}(V)
	=\mathfrak{LM}_{0}^{\mathrm{df}}(V)/\left(\sum_{i=1}^{n} K\cdot [x_{i},x_{n+1}]\otimes_{K} V^{x_{i}}
	+\mathfrak{LM}_{0}^{\mathrm{df}}(V)^{\mathbb{L}_{n}}\right)
	\longrightarrow V.
\]

Moreover, under the conditions $(*),\,(**)$, Dettweiler and Reiter \cite{DR00} showed that 
the induced map $\overline{\phi}_{V}$ is
bijective (see Proposition 3.2 in \cite{DR00}, and also see Section 7.5 in \cite{Harbook}).
\end{proof}

For $\lambda,\mu\in K$, we further define the following $\mathfrak{g}$-module homomorphism
\begin{multline*}
	\phi_{\lambda,\mu}\colon
	\mathcal{I}_{\mathbb{L}_{n+1}}\otimes_{U(\mathbb{L}_{n+1})} ((\mathcal{I}_{\mathbb{L}_{n+1}}\otimes_{U(\mathbb{L}_{n+1})} (V\otimes_{K}K_{\mu}))\otimes_{K} K_{\lambda})	
	\\
	\longrightarrow
	\mathcal{I}_{\mathbb{L}_{n+1}}\otimes_{U(\mathbb{L}_{n+1})} (V\otimes_{K}K_{\lambda+\mu}),
\end{multline*}
as the combination of $\phi_{(\mathcal{I}_{\mathbb{L}_{n+1}}\otimes_{U(\mathbb{L}_{n+1})} (V\otimes_{K}K_{\mu}))\otimes_{K} K_{\lambda}}$
and the natural $K$-isomorphism
\[
	\mathcal{I}_{\mathbb{L}_{n+1}}\otimes_{U(\mathbb{L}_{n+1})} ((V\otimes_{K}K_{\mu})
	\otimes_{K} K_{\lambda})
	\cong
	\mathcal{I}_{\mathbb{L}_{n+1}}\otimes_{U(\mathbb{L}_{n+1})} (V\otimes_{K}K_{\lambda+\mu}),
\]
which is moreover $\mathfrak{g}$-isomorphism since $\mathfrak{g}$ kills $x_{n+1}$
and preserves $\mathbb{L}_{n}(x_{1},\ldots,x_{n})$, see Lemma \ref{lem:minpurebraid}.
Therefore this $\phi_{\lambda,\mu}$ is a $\mathfrak{g}$-homomorpism as well.

\begin{thm}\label{thm:compmc2}
The above $\phi_{\mu,\lambda}$ 
induces an $\mathbb{L}_{n}\oplus_{\theta_{g}}\mathfrak{g}$-homomorphism 
\[
	\mathfrak{mc}_{\lambda}(
	\mathfrak{mc}_{\mu}(V))
	\rightarrow 
	\mathfrak{mc}_{\lambda+\mu}(V),
\]
which is an isomorphism under the assumptions $(*),\,(**)$ in Theorem \ref{thm:compmcDR}.
\end{thm}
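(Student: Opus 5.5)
The plan mirrors the proof of Theorem \ref{thm:mc0}: first check that $\phi_{\lambda,\mu}$ is compatible with the full $\mathbb{L}_{n}\oplus_{\theta_{g}}\mathfrak{g}$-structure on the relevant subquotients. Then identify the resulting map, after restriction to $\mathbb{L}_{n}$, with the Dettweiler--Reiter composition map. Bijectivity will follow from Theorem \ref{thm:compmcDR}. Since a bijective module homomorphism is an isomorphism, it suffices to (i) construct the induced map as an $\mathbb{L}_{n}\oplus_{\theta_{g}}\mathfrak{g}$-homomorphism and (ii) show it is bijective as a $K$-linear map.

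\emph{Step 1 (the map on convolutions).} We already know $\phi_{\lambda,\mu}$ is a $\mathfrak{g}$-homomorphism. We restrict it to $\mathfrak{LM}^{\mathrm{df}}_{\lambda}(\mathfrak{LM}^{\mathrm{df}}_{\mu}(V))$, written in the basis $[x_i,x_{n+1}]\otimes([x_j,x_{n+1}]\otimes v)$ supplied by the dimension proposition. On this element it gives $x_i\cdot([x_j,x_{n+1}]\otimes v)$, and by \eqref{eq:DRformula} this equals $[x_i,x_{n+1}]\otimes(x_j+\delta_{i,j}\mu)v$. This lands in $\mathfrak{LM}^{\mathrm{df}}_{\lambda+\mu}(V)$ once we use $x_{n+1}$ acting by $\lambda+\mu$ in the target identification. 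The same computation as in the proof of Theorem \ref{thm:mc0} shows $\mathbb{L}_{n}$-equivariance: $x_k$ acts on both sides via \eqref{eq:DRformula}, and the $\delta$-terms match. Transported through the isomorphism $\Phi$ of Proposition \ref{prop:compconv}, the map becomes exactly the $K$-linear map $V\otimes K^{n}\otimes K^{n}\to V\otimes K^{n}$ used by Dettweiler--Reiter (equivalently, Section 7.5 of \cite{Harbook}).

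\emph{Step 2 (descending to middle convolutions).} Using the identifications of Lemmas \ref{lem:Linvariant} and \ref{lem:K}, and Lemma \ref{lem:Linvariant2} when a parameter is $0$, the subspaces $\mathfrak{k},\mathfrak{l}$ correspond under $\Phi$ to the images of the $H_1(\mathbb{L}(x_i),-)$. We first pass from $\mathfrak{LM}^{\mathrm{df}}_{\lambda}(\mathfrak{LM}^{\mathrm{df}}_{\mu}(V))$ to $\mathfrak{LM}^{\mathrm{df}}_{\lambda}(\mathfrak{mc}_{\mu}(V))$; this is a quotient map because $\mathfrak{LM}^{\mathrm{df}}_{\lambda}$ is exact by Proposition \ref{prop:exact}, and in the case $\lambda=0$ by the explicit description. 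We then pass to $\mathfrak{mc}_{\lambda}(\mathfrak{mc}_{\mu}(V))$. At each stage we must check that the kernel being quotiented maps into $\mathfrak{k}+\mathfrak{l}\subset\mathfrak{LM}^{\mathrm{df}}_{\lambda+\mu}(V)$. After applying $\Phi$, this is precisely the well-definedness statement in Dettweiler--Reiter's proof, and we cite it.

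\emph{Main obstacle.} The delicate point is Step 2: the first quotient, by $\mathfrak{k}_\mu+\mathfrak{l}_\mu$ inside the inner factor, interacts with the outer $\mathfrak{LM}^{\mathrm{df}}_\lambda$. One must also handle the degenerate cases $\lambda=0$, $\mu=0$ or $\lambda+\mu=0$, where the homological description is replaced by the explicit one. I would first try to reduce everything to the $\mathbb{L}_{n}$-level via Theorem \ref{thm:compmc}. The natural isomorphism there is given by the same $\Phi$, hence compatible with $\phi_{\lambda,\mu}$. This leaves only the verification, via Step 1, that the induced map is also $\mathfrak{g}$-linear; that is automatic because it is a quotient of the $\mathfrak{g}$-homomorphism $\phi_{\lambda,\mu}$. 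Bijectivity under $(*)$ and $(**)$ then follows from Theorem \ref{thm:compmcDR}. To apply that theorem to the inner module, one also needs that $\mathfrak{mc}_{\mu}(V)$ satisfies the conditions; this is part of the Dettweiler--Reiter argument in \cite{DR00}.
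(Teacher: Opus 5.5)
Your Step 1 is the first half of the paper's proof, and your $\mathbb{L}_n$-equivariance check is correct: both sides give $[x_k,x_{n+1}]\otimes(x_i+\delta_{k,i}(\lambda+\mu))(x_j+\delta_{i,j}\mu)v$. Deferring bijectivity to Dettweiler--Reiter is also what the paper does.

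\textbf{The descent step.} This is where you differ from the paper. Write $\mathfrak{k}_\nu,\mathfrak{l}_\nu$ for the two subspaces divided out of $\mathfrak{LM}^{\mathrm{df}}_\nu(\cdot)$. The paper works by hand in two stages:
\begin{itemize}
\item It shows that $\phi_{\lambda,\mu}$ maps $\mathfrak{LM}^{\mathrm{df}}_\lambda(\mathfrak{k}_\mu+\mathfrak{l}_\mu)$ into $\mathfrak{k}_{\lambda+\mu}$. This holds unconditionally, and by exactness it gives $\widetilde{\phi}_{\lambda,\mu}$ on $\mathfrak{LM}^{\mathrm{df}}_\lambda(\mathfrak{mc}_\mu(V))$.
\item It then checks that $\widetilde{\phi}_{\lambda,\mu}$ kills $\mathfrak{k}_\lambda$ and $\mathfrak{l}_\lambda$.
\end{itemize}
You replace both stages by a citation. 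The second stage is where the actual content lies, and it can fail when $(*)$ fails, so whatever you cite must be a statement made under $(*)$.

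Here is why. Take an $x_i$-invariant class $\bar m\in\mathfrak{mc}_\mu(V)$ with lift $m$. You only know $x_i m\in\mathfrak{k}_\mu+\mathfrak{l}_\mu$, and $\mathfrak{l}_\mu\neq\mathfrak{l}_{\lambda+\mu}$. Since $x_i m$ lies in the $i$-th slot, one finds $x_i m=[x_i,x_{n+1}]\otimes(k+v)$ with $k\in V^{x_i}$ and $v\in V^{x_i+\mu}\cap\bigcap_{j\neq i}V^{x_j}$. Only $(*)$ with $c=-\mu$ forces $v=0$. Likewise, killing $\mathfrak{l}_\lambda$ needs $\mathfrak{mc}_{\lambda+\mu}(V)^{\mathbb{L}_n}=0$, which follows from $(*)$ but not from Lemma \ref{lem:Linvariant}.

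A concrete failure: take $n=2$, $\mathfrak{g}=0$, $V=K^2$ with $x_1\mapsto\begin{pmatrix}-\mu&1\\0&-\mu\end{pmatrix}$, $x_2\mapsto 0$, and $\lambda,\mu,\lambda+\mu\neq0$.
\begin{itemize}
\item $\mathfrak{mc}_\mu(V)$ is the trivial one-dimensional module, so $\mathfrak{mc}_\lambda(\mathfrak{mc}_\mu(V))=0$.
\item Yet $\phi_{\lambda,\mu}$ sends $[x_1,x_3]\otimes([x_1,x_3]\otimes e_2)$ to $\pm[x_1,x_3]\otimes e_1$.
\item That element is nonzero in the two-dimensional space $\mathfrak{mc}_{\lambda+\mu}(V)=\mathfrak{LM}^{\mathrm{df}}_{\lambda+\mu}(V)/K[x_2,x_3]\otimes V$.
\end{itemize}
So the ``induces'' clause, and the paper's one-line arguments for it, are valid only with $(*)$ in force. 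These are ``$\phi_{\lambda,\mu}$ factors through $\mathrm{pr}_{x_{n+1}}$'' and ``$\mathfrak{mc}_{\lambda+\mu}(V)^{\mathbb{L}_n}=0$ by Lemma \ref{lem:Linvariant}''. State your Step 2 under $(*)$. With the slot computation above it is short enough to prove directly rather than cite.

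\textbf{Bijectivity.} Theorem \ref{thm:compmcDR} as stated gives only an abstract isomorphism, so it does not by itself show that your particular map is bijective. You can either rely on Dettweiler--Reiter's proof that this specific map is the isomorphism, which is what the paper does, or finish directly:
\begin{itemize}
\item By $(**)$ with $c=-\mu$, the $i$-th slot of the image of $\phi_{\lambda,\mu}$ is $[x_i,x_{n+1}]\otimes\bigl((x_i+\mu)V+\sum_{j\neq i}x_jV\bigr)=[x_i,x_{n+1}]\otimes V$. Hence the induced map is surjective.
\item Theorems \ref{thm:compmc} and \ref{thm:compmcDR} give $\dim\mathfrak{mc}_\lambda(\mathfrak{mc}_\mu(V))=\dim\mathfrak{mc}_{\lambda+\mu}(V)$, so the surjection is bijective.
\end{itemize}
Since Theorem \ref{thm:compmc} holds for every module, you do not need $(*)$ and $(**)$ for $\mathfrak{mc}_\mu(V)$, contrary to your last remark.
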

\begin{proof}
By restricting $\phi_{(\mathcal{I}_{\mathbb{L}_{n+1}}\otimes_{U(\mathbb{L}_{n+1})} (V\otimes_{K}K_{\mu}))\otimes_{K} K_{\lambda}}$
to $\mathfrak{LM}_{\lambda}^{\mathrm{df}}(
\mathfrak{LM}_{\mu}^{\mathrm{df}}(V))$,
we obtain the $\mathfrak{g}$-module homomorphism 
\[
	\mathfrak{LM}_{\lambda}^{\mathrm{df}}(
	\mathfrak{LM}_{\mu}^{\mathrm{df}}(V))
	\rightarrow 
	(\mathcal{I}_{\mathbb{L}_{n+1}}\otimes_{U(\mathbb{L}_{n+1})}
	(V\otimes_{K} K_{\mu}))\otimes_{K}K_{\lambda}
\]
given by 
\[
	[x_{i},x_{n+1}]\otimes
	([x_{j},x_{n+1}]\otimes (v\otimes 1_{\mu}))\otimes 1_{\lambda}
	\mapsto 
	x_{i}\cdot (([x_{j},x_{n+1}]\otimes (v\otimes 1_{\mu}))\otimes 1_{\lambda}).
\]
By recalling the equation $\eqref{eq:DRformula}$,
we have 
\begin{align*}
&x_{k}\cdot([x_{i},x_{n+1}]\otimes
	([x_{j},x_{n+1}]\otimes v\otimes 1_{\mu})\otimes 1_{\lambda})\\
	&=[x_{k},x_{n+1}]\otimes 
	\begin{cases}
	(x_{i}+\lambda)\cdot([x_{j},x_{n+1}]\otimes v\otimes 1_{\mu})\otimes 1_{\lambda}
	&(k=i)\\
	x_{i}\cdot ([x_{j},x_{n+1}]\otimes v\otimes 1_{\mu})\otimes 1_{\lambda}
	&(k\neq i)
	\end{cases}\\
	&=[x_{k},x_{n+1}]\otimes\\
	&\quad \begin{cases}
	[x_{i},x_{n+1}]\otimes (x_{i}+\lambda+\mu)v \otimes1_{\mu}\otimes 1_{\lambda}
	&(k=i=j)\\
	[x_{j},x_{n+1}]\otimes \lambda v\otimes 1_{\mu}\otimes 1_{\lambda}+
	[x_{i},x_{n+1}]\otimes x_{j}v\otimes 1_{\mu}\otimes 1_{\lambda}
	&(k=i, i\neq j)\\
	x_{i}\cdot ([x_{j},x_{n+1}]\otimes v\otimes 1_{\mu})\otimes 1_{\lambda}
	&(k\neq i)
	\end{cases}.
\end{align*}
Therefore $x_{k}\cdot([x_{i},x_{n+1}]\otimes
	([x_{j},x_{n+1}]\otimes v\otimes 1_{\mu})\otimes 1_{\lambda})$
	is sent to the following element
	by $\phi_{(\mathcal{I}_{\mathbb{L}_{n+1}}\otimes_{U(\mathbb{L}_{n+1})} (V\otimes_{K}K_{\mu}))\otimes_{K} K_{\lambda}}$:
\[
\begin{cases}
	[x_{k},x_{n+1}]\otimes(x_{i}+\lambda+\mu)^{2}v\otimes 1_{\mu}\otimes 1_{\lambda}
	&(k=i=j)\\
	[x_{k},x_{n+1}]\otimes (x_{i}+\lambda+\mu)x_{j}v\otimes 1_{\mu}\otimes 1_{\lambda}
	&(k=i, i\neq j)\\
	x_{k}x_{i}\cdot ([x_{j},x_{n+1}]\otimes v\otimes 1_{\mu})\otimes 1_{\lambda}
	&(k\neq i)
\end{cases}.
\]
Namely, $\phi_{\lambda,\mu}$ send $x_{k}\cdot ([x_{i},x_{n+1}]\otimes
	([x_{j},x_{n+1}]\otimes v\otimes 1_{\mu})\otimes 1_{\lambda})$ to  
\begin{equation*}
\begin{cases}
	[x_{k},x_{n+1}]\otimes (x_{i}+\lambda+\mu)^{2}v\otimes 1_{\lambda+\mu}
	&(k=i=j)\\
	[x_{k},x_{n+1}]\otimes (x_{i}+\lambda+\mu)x_{j}v\otimes 1_{\lambda+\mu}
	&(k=i, i\neq j)\\
	x_{k}x_{i}\cdot ([x_{j},x_{n+1}]\otimes v\otimes 1_{\lambda+\mu})
	&(k\neq i)
\end{cases}
=x_{k}x_{i}\cdot ([x_{j},x_{n+1}]\otimes v\otimes 1_{\lambda+\mu}).
\end{equation*}	
As a consequence, we have 
\begin{equation*}
\phi_{\lambda,\mu}(x_{k}\cdot ([x_{i},x_{n+1}]\otimes
	([x_{j},x_{n+1}]\otimes v\otimes 1_{\mu})\otimes 1_{\lambda}))=\\
	x_{k}\cdot \phi_{\lambda,\mu}([x_{i},x_{n+1}]\otimes
	([x_{j},x_{n+1}]\otimes v\otimes 1_{\mu})\otimes 1_{\lambda}),
\end{equation*}
i.e., $\phi_{\lambda,\mu}$ is an $\mathbb{L}_{n}$-homomorphism,
and we moreover have 
$\mathrm{Im}\phi_{\lambda,\mu}\subset \mathfrak{LM}^{\mathrm{df}}_{\lambda+\mu}$.
Therefore $\phi_{\lambda,\mu}$ restricts to the $\mathbb{L}_{n}\oplus_{\theta_{g}}\mathfrak{g}$-homomorphism
\[
	\phi_{\lambda,\mu}|_{\mathfrak{LM}_{\lambda}^{\mathrm{df}}(\mathfrak{LM}_{\mu}^{\mathrm{df}}(V))}\colon \mathfrak{LM}_{\lambda}^{\mathrm{df}}(
	\mathfrak{LM}_{\mu}^{\mathrm{df}}(V))
	\rightarrow 
	\mathfrak{LM}_{\lambda+\mu}^{\mathrm{df}}(V).
\]

Next we check that this homomorphism induces $\mathfrak{mc}_{\lambda}(\mathfrak{mc}_{\mu}(V))
\rightarrow \mathfrak{mc}_{\lambda+\mu}(V)$.
As we saw in Proposition \ref{prop:klsubmodule},
$K\cdot [x_{i},x_{n+1}]\otimes_{K} (V\otimes_{K} K_{\mu})^{x_{i}}$
are $\mathbb{L}_{n}\oplus_{\theta_{g}}\mathfrak{g}$-submodules of $\mathfrak{LM}_{\mu}^{\mathrm{df}}(V)$
for $i=1,\ldots,n$.
Therefore we can see that 
\[
	\phi_{\lambda,\mu}\left(
	\mathfrak{LM}_{\lambda}^{\mathrm{df}}\left(
		\sum_{i=1}^{n} K\cdot [x_{i},x_{n+1}]\otimes_{K} (V\otimes_{K} K_{\mu})^{x_{i}}
		\right)
	\right)
	\subset \sum_{i=1}^{n} K\cdot [x_{i},x_{n+1}]\otimes_{K} (V\otimes_{K} K_{\lambda+\mu})^{x_{i}}.
\]
Also we have
\[
	\phi_{\lambda,\mu}\left(
	\mathfrak{LM}_{\lambda}^{\mathrm{df}}\left(
		\mathfrak{LM}_{\mu}^{\mathrm{df}}(V)^{\mathbb{L}_{n}}
		\right)
	\right)
	=0.
\] 
Thus Lemmas \ref{lem:Linvariant} and \ref{lem:K} imply that
the following diagram is commutative:
\scriptsize
\[
\begin{tikzcd}[column sep=small]
	0 \arrow[r] & \mathfrak{LM}_{\lambda}^{\mathrm{df}}\left(
		\sum_{i=1}^{n} K\cdot [x_{i},x_{n+1}]\otimes_{K} (V\otimes_{K} K_{\mu})^{x_{i}}
		+\mathfrak{LM}_{\mu}^{\mathrm{df}}(V)^{\mathbb{L}_{n}}
		\right) \arrow[r] \arrow[d,"\phi_{\lambda,\mu}"] & \mathfrak{LM}_{\lambda}^{\mathrm{df}}(\mathfrak{LM}_{\mu}^{\mathrm{df}}(V)) \arrow[r] \arrow[d, "\phi_{\lambda,\mu}"] 
		& \mathfrak{LM}^{\mathrm{df}}_{\lambda}(\mathfrak{mc}_{\mu}(V)) \arrow[r]  & 0\\
	0 \arrow[r] & \sum_{i=1}^{n} K\cdot [x_{i},x_{n+1}]\otimes_{K} (V\otimes_{K} K_{\lambda+\mu})^{x_{i}}
	+\mathfrak{LM}_{\lambda+\mu}^{\mathrm{df}}(V)^{\mathbb{L}_{n}} \arrow[r] & \mathfrak{LM}_{\lambda+\mu}^{\mathrm{df}}(V) \arrow[r] & \mathfrak{mc}_{\lambda+\mu}(V) \arrow[r] & 0	
\end{tikzcd}.
\]
\normalsize
Here horizontal arrows are exact since $\mathfrak{LM}_{\lambda}^{\mathrm{df}}$ is exact.
Thus we obtain the induced $\mathbb{L}_{n}\oplus_{\theta_{g}}\mathfrak{g}$-homomorphism
\[
	\widetilde{\phi}_{\lambda,\mu}\colon 
	\mathfrak{LM}_{\lambda}^{\mathrm{df}}(
	\mathfrak{mc}_{\mu}(V))
	\rightarrow 
	\mathfrak{mc}_{\lambda+\mu}(V).
\]
Let us check that $\widetilde{\phi}_{\lambda,\mu}$  factors through $\mathfrak{mc}_{\lambda}(\mathfrak{mc}_{\mu}(V))$,
i.e., we need to show  
\[
	\widetilde{\phi}_{\lambda,\mu}\left(
K\cdot [x_{i},x_{n+1}]\otimes_{K} (\mathfrak{mc}_{\mu}(V)\otimes_{K} K_{\lambda})^{x_{i}}
	\right)=0
\]
for $i=1,\ldots,n$, and
\[
	\widetilde{\phi}_{\lambda,\mu}\left(
		\mathfrak{LM}_{\lambda}^{\mathrm{df}}\left(\mathfrak{mc}_{\mu}(V)\right)^{\mathbb{L}_{n}}
	\right)=0.
\] 
Since $\phi_{\lambda,\mu}$ factors through the projection $\mathrm{pr}_{x_{n+1}}$, 
the first equations hold.
Moreover, since $\phi_{\lambda,\mu}$ is an $\mathbb{L}_{n}$-homomorphism,
$\mathbb{L}_{n}$-invariant subspace 
is sent to $\mathbb{L}_{n}$-invariant subspace by $\phi_{\lambda,\mu}$,
and $\mathbb{L}_{n}$-invariant subspace 
$\mathfrak{mc}_{\lambda+\mu}(V)^{\mathbb{L}_{n}}$ of $\mathfrak{mc}_{\lambda+\mu}(V)$ is zero by Lemma \ref{lem:Linvariant}. 
This shows the second equation as well.

Finally, under the conditions $(*),\,(**)$, Dettweiler and Reiter showed that the above induced map
$\mathfrak{mc}_{\lambda}(\mathfrak{mc}_{\mu}(V)) \to \mathfrak{mc}_{\lambda+\mu}(V)$
is bijective (see Theorem 3.5 
in \cite{DR00}, and also see Section 7.5 in \cite{Harbook}).
\end{proof}

\section{Holonomy Lie algebras of complements of hyperplane arrangements}\label{sec:holonomy}
We recall the notion of the holonomy Lie algebra of a complement of a hyperplane arrangement, and explain
the fact that the holonomy Lie algebra of 
a strictly linearly fibered arrangement is isomorphic to the semidirect sum of the holonomy Lie algebra of the base arrangement and the free Lie algebra generated by the fiber arrangement,
following \cite{Koh83} and \cite{CCX03}.
From this section, we fix $K$ to be the field of the complex numbers $K=\mathbb{C}$.
\subsection{Holonomy Lie algebras and associated graded Lie algebras}
Let us recall the definition of the holonomy Lie algebra introduced by Chen \cite{Ch73}.
\begin{df}[Holonomy Lie algebra]
	Let $X$ be a connected $CW$-complex with 2-skelton. 
	We denote the dual of the cup product map $H^{1}(X,\mathbb{C})\wedge H^{1}(X,\mathbb{C})\to H^{2}(X,\mathbb{C})$ by 
	$\delta\colon H_{2}(X,\mathbb{C})\to H_{1}(X,\mathbb{C})\wedge H_{1}(X,\mathbb{C})$.
	Then by regarding $H_{1}(X,\mathbb{C})\wedge H_{1}(X,\mathbb{C})$ as a subspace of $\mathbb{L}(H_{1}(X,\mathbb{C}))$, we define the \emph{holonomy Lie algebra} $\mathfrak{h}(X)$ of $X$ by
	\[
	\mathfrak{h}(X) := \mathbb{L}(H_{1}(X,\mathbb{C}))/\langle \delta(H_{2}(X,\mathbb{C})) \rangle,
	\]
	where $\langle \delta(H_{2}(X,\mathbb{C})) \rangle$ is the ideal of $\mathbb{L}(H_{1}(X,\mathbb{C}))$ generated by $\delta(H_{2}(X,\mathbb{C}))$.
\end{df}

We now recall Kohno's description of the holonomy Lie algebra of the complement of a hyperplane arrangement. 
Let $\mathcal{A}=\{H_{1},\ldots,H_{n}\}$ be a finite affine  complex hyperplane arrangement in $\C^{l}$,
i.e., a finite set of affine complex hyperplanes in $\C^{l}$.
We denote the complement of $\mathcal{A}$ by 
$M(\mathcal{A}):=\C^{l}\setminus\bigcup_{i=1}^{n}H_{i}$.
\begin{thm}[Kohno \cite{Koh83}]\label{kohnohollie}
	Let $\mathcal{A}=\{H_{1},\ldots,H_{n}\}$ be a finite affine complex hyperplane arrangement in $\C^{l}$ and $M(\mathcal{A})$ be the complement of $\mathcal{A}$.
	By regarding the hyperplanes $H_{1},\ldots,H_{n}$ as indeterminate variables,  we define
	the free Lie algebra $\mathbb{L}(H_{1},\ldots,H_{n})$ generated by $H_{1},\ldots,H_{n}$ and 
	its ideal $\mathfrak{a}$ generated by all the following elements:
	\[
		[H_{i_{j}},H_{i_{1}}+\cdots+H_{i_{m}}],\quad 1\le j<m,
	\]
	where
	$\{H_{i_{1}},\ldots,H_{i_{m}}\}$ is a maximal 
	family of hyperplanes in $\mathcal{A}$ such that
	$\mathrm{codim}(H_{i_{1}}\cap\cdots\cap H_{i_{m}})=2$.
	Then there exists an isomorphism of Lie algebras
	\[
	\mathfrak{h}(M(\mathcal{A})) \simeq \mathbb{L}(H_{1},\ldots,H_{n})/\mathfrak{a}.
	\]
\end{thm}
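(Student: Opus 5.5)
The plan is to identify both sides with a presentation of $\pi_{1}$-nilpotent data. First I compute the cohomology ring in low degrees via Brieskorn and Orlik--Solomon: $H^{1}(M(\mathcal{A}),\mathbb{C})$ has basis $\omega_{i}=\frac{1}{2\pi i}d\log \alpha_{i}$, where $\alpha_{i}$ is a defining affine function of $H_{i}$. Dually, $H_{1}(M(\mathcal{A}),\mathbb{C})$ has the basis $H_{1},\ldots,H_{n}$ given by meridian loops, so $\mathbb{L}(H_{1}(M(\mathcal{A}),\mathbb{C}))=\mathbb{L}(H_{1},\ldots,H_{n})$. The group $H^{2}$ is the degree-two part of the Orlik--Solomon algebra. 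The cup product $\wedge^{2}H^{1}\to H^{2}$ is surjective, and its kernel $R$ is spanned by the Orlik--Solomon relations $\partial(\omega_{i}\omega_{j}\omega_{k})$ for dependent triples, together with $\omega_{i}\omega_{j}=0$ when $H_{i}\cap H_{j}=\emptyset$ (parallel hyperplanes). The image $\delta(H_{2})\subset\wedge^{2}H_{1}$ is the annihilator $R^{\perp}$, because $\delta$ is the dual of a surjection.

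The key step is to compute $R^{\perp}$ locally. The space $\wedge^{2}H^{1}$ decomposes over pairs $\{i,j\}$. I group the pairs according to $X\in L_{2}(\mathcal{A})$ with $X=H_{i}\cap H_{j}$, plus a group of parallel pairs. Since the OS relations only involve triples lying in a common codimension-two flat, the decomposition $\wedge^{2}H^{1}=\bigoplus_{X}\wedge^{2}\langle\omega_{i}:H_{i}\supset X\rangle\oplus(\text{parallel pairs})$ is compatible with $R$. Hence $R^{\perp}$ splits into a sum of local pieces, and it suffices to treat one flat $X$ with $\mathcal{A}_{X}=\{H_{i_{1}},\ldots,H_{i_{m}}\}$. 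Parallel pairs contribute $[H_{i},H_{j}]$ to $\delta(H_{2})$. I would remark that the theorem's list also covers this case if a maximal family with empty intersection counts as $m=2$; I will interpret the statement that way, or else note that these commutators are also forced.

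For the local piece, $H^{2}(M(\mathcal{A}_{X}))$ is $(m-1)$-dimensional. This follows because $M(\mathcal{A}_{X})$ is $\mathbb{C}^{l-2}\times(\mathbb{C}^{\times}\times(\mathbb{C}\setminus\{m-1\text{ pts}\}))$ up to homotopy, via a central arrangement of $m$ lines in $\mathbb{C}^{2}$. So $R_{X}$ has dimension $\binom{m}{2}-(m-1)$, and $R_{X}^{\perp}$ has dimension $m-1$. Next I verify that the elements $\sum_{k}H_{i_{j}}\wedge H_{i_{k}}$ for $1\le j<m$ pair to zero with each OS relation $\omega_{a}\omega_{b}-\omega_{a}\omega_{c}+\omega_{b}\omega_{c}$; this is a direct sign check. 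These elements are also linearly independent, since in the $j$-th one the term $H_{i_{j}}\wedge H_{i_{m}}$ appears only there. By the dimension count they span $R_{X}^{\perp}$. Under $\wedge^{2}H_{1}\hookrightarrow\mathbb{L}$, $a\wedge b\mapsto[a,b]$, they become $[H_{i_{j}},H_{i_{1}}+\cdots+H_{i_{m}}]$, which identifies $\langle\delta(H_{2})\rangle$ with $\mathfrak{a}$.

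I expect the main obstacle to be making the identification of $H^{2}$ and of the cup product precise, together with the dimension count of the local pieces; that is, justifying that the global kernel $R$ is exactly the sum of the local OS relations. For this I would simply invoke the Orlik--Solomon theorem (Brieskorn's lemma, which says $H^{2}(M(\mathcal{A}))\cong\bigoplus_{X\in L_{2}}H^{2}(M(\mathcal{A}_{X}))$) rather than reprove it. This local-to-global decomposition makes everything else a finite linear-algebra verification.
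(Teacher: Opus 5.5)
The part of your argument that handles each flat $X\in L_2(\mathcal{A})$ is correct. The pairing check against $\omega_a\omega_b-\omega_a\omega_c+\omega_b\omega_c$, the independence argument, and the count $\dim R_X^\perp=m-1$ all hold. This is the standard argument: the paper only cites \cite{Koh83}, and its proof of Proposition~\ref{prop:integrability} uses the same Orlik--Solomon basis $\{\omega_{i_j}\wedge\omega_{i_m}\}$ of $H^2$.

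The genuine error is the parallel block. You yourself put $\omega_i\omega_j$ into $R=\ker(\cup)$ whenever $H_i\cap H_j=\emptyset$. So that entire block of $\wedge^2H^1$ lies in $R$, and its annihilator is zero. Every element of $R^\perp=\delta(H_2)$ has coefficient $0$ on $H_i\wedge H_j$ for parallel $H_i,H_j$. Hence parallel pairs contribute nothing to $\delta(H_2)$; they do not contribute $[H_i,H_j]$. The Brieskorn decomposition $H^2\cong\bigoplus_{X\in L_2}H^2(M(\mathcal{A}_X))$ that you quote says the same thing, since no parallel pair lies over any $X\in L_2(\mathcal{A})$.

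Consequently, both remedies you propose would prove a false statement. Counting a disjoint pair as a family with $m=2$ adds $[H_i,H_j]$ to $\mathfrak{a}$, and these commutators are not forced. Take the two parallel lines $x=0$ and $x=1$ in $\mathbb{C}^2$. Then $M(\mathcal{A})\simeq S^1\vee S^1$ and $H_2(M(\mathcal{A}),\mathbb{C})=0$, so $\mathfrak{h}(M(\mathcal{A}))=\mathbb{L}(H_1,H_2)$ is free, not abelian. Likewise, for $l=1$ all hyperplanes are pairwise disjoint, and the answer must be the free Lie algebra $\mathbb{L}_n$ of $\mathbb{C}\setminus\{n\text{ points}\}$, which is the Dettweiler--Reiter setting of the paper. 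Once you dualize the parallel block correctly, no reinterpretation is needed. The theorem as stated, where $\mathrm{codim}=2$ presupposes a nonempty intersection, is exactly what your local computation yields, and your proof is then complete.
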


Let us next introduce a graded Lie algebra associated with the lower central 
series of a group $G$.
For subgroups $A, B$ of $G$, $(A,B)$ denote the subgroup of $G$
generated by the commutators $(a,b):=aba^{-1}b^{-1}$ for $a\in A, b\in B$.
Then we can consider the lower central series 
$\{\Gamma_{n}G\}_{n\in \mathbb{Z}_{\ge 1}}$ defined inductively by 
$\Gamma_{n+1}G:=(G,\Gamma_{n}G)$ and $\Gamma_{1}G:=G$.
Since quotient spaces $\Gamma_{n}G/\Gamma_{n+1}G$ are abelian 
groups, i.e., $\mathbb{Z}$-modules, the associated graded group 
\[
	\mathrm{gr}(G):=\bigoplus_{n\in \mathbb{Z}_{\ge 1}}\Gamma_{n}G/\Gamma_{n+1}G
\]
becomes $\mathbb{Z}$-module as well.
Moreover for $x\in \Gamma_{k}G, y\in \Gamma_{l}G$, 
we have $(x+\Gamma_{k+1}G, y+\Gamma_{l+1}G)
=(x,y)+\Gamma_{k+l+1}G$. Therefore the commutator $(x,y)$
induces the well-defined (graded) Lie algebra structure on $\mathrm{gr}(G)$.
This Lie algebra $\mathrm{gr}(G)$ is called the \emph{associated graded Lie algebra} of $G$.
We also consider the Lie algebra 
$\mathrm{gr}(G,\mathbb{C}):=\mathrm{gr}(G)\otimes_{\mathbb{Z}}\mathbb{C}$
over $\mathbb{C}$ as well.

For example, let us consider the free group $F_{n}$
with the generators $x_{1},\ldots,x_{n}$. Then it is well known that 
Magnus \cite{Mag37} showed that the associated graded Lie algebra $\mathrm{gr}(F_{n},\mathbb{C})$
is isomorphic to the free Lie algebra $\mathbb{L}(x_{1},\ldots,x_{n})$. 

Then, owing to the works of Sullivan \cite{Sul77} and Chen \cite{Ch73}, 
Kohno showed that holonomy Lie algebras of complements of hyperplane arrangements are isomorphic to the associated graded Lie algebras of their fundamental groups as follows. 
\begin{thm}[Kohno \cite{Koh83}]\label{kohnochen}
Let $\mathcal{A}$ be an arrangement of affine hyperplanes 
in $\mathbb{C}^{l}$. Then there exists an isomorphism of Lie algebras 
\[
	\mathrm{gr}(\pi_{1}(M(\mathcal{A})),\mathbb{C})\cong \mathfrak{h}(M(\mathcal{A})).
\]
\end{thm}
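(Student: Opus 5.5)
The statement is Kohno's theorem $\mathrm{gr}(\pi_{1}(M(\mathcal{A})),\mathbb{C})\cong \mathfrak{h}(M(\mathcal{A}))$. I would prove it by combining a general theorem of Chen and Sullivan with the formality of arrangement complements (Brieskorn/Orlik--Solomon).

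\textbf{Step 1: general comparison map.} For any connected CW-complex $X$ with finitely generated $\pi=\pi_{1}(X)$, we have $\mathrm{gr}_{1}(\pi,\mathbb{C})=H_{1}(X,\mathbb{C})$. Hence there is a surjection
\[
\mathbb{L}(H_{1}(X,\mathbb{C}))\twoheadrightarrow \mathrm{gr}(\pi,\mathbb{C}),
\]
since $\mathrm{gr}(\pi,\mathbb{C})$ is generated in degree one. Consider the degree-two kernel. By Sullivan's description of $\Gamma_{2}/\Gamma_{3}$ via Hopf's formula (equivalently, the five-term exact sequence for $1\to\Gamma_{2}\pi\to\pi\to H_{1}\to 0$), this kernel is exactly the image of the dual of the cup product $H_{2}(\pi,\mathbb{C})\to \wedge^{2}H_{1}$. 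Moreover, $H_{2}(X)\to H_{2}(\pi)$ is surjective, so this image coincides with $\delta(H_{2}(X,\mathbb{C}))$. It follows that the surjection factors as $\mathfrak{h}(X)\twoheadrightarrow \mathrm{gr}(\pi,\mathbb{C})$, and that this map is an isomorphism in degrees $\le 2$.

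\textbf{Step 2: formality gives quadratic presentation.} Next I would invoke Chen's iterated integrals, or Sullivan's minimal model theory. For a space whose $1$-minimal model is generated by $H^{1}$ with differential dual to the cup product (that is, a $1$-formal space), the Malcev Lie algebra of $\pi$ is the completion of $\mathfrak{h}(X)$ with respect to its degree filtration. Taking associated graded objects then yields $\mathrm{gr}(\pi,\mathbb{C})\cong \mathfrak{h}(X)$ in all degrees, and this isomorphism is compatible with the map from Step 1.

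\textbf{Step 3: formality of $M(\mathcal{A})$.} To apply Step 2 to arrangement complements, I would use Brieskorn's theorem. The forms $\frac{1}{2\pi i}d\log f_{H}$ generate a subalgebra of closed forms mapping isomorphically onto $H^{*}(M(\mathcal{A}),\mathbb{C})$, namely the Orlik--Solomon algebra. This gives a quasi-isomorphism from cohomology (with zero differential) into the de Rham algebra, so $M(\mathcal{A})$ is formal, and in particular $1$-formal. Combining this with Step 2 gives the theorem.

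\textbf{Expected obstacle.} The delicate part is Step 2: identifying the Malcev completion with the degree completion of $\mathfrak{h}(X)$ requires the full Chen/Sullivan machinery. In particular one needs that the holonomy connection $\omega=\sum_{H}\frac{1}{2\pi i}d\log f_{H}\otimes H$ is flat in $\widehat{U}(\mathfrak{h})$, which follows from Theorem~\ref{kohnohollie}. One also needs that its monodromy $\pi\to \exp\widehat{\mathfrak{h}}$ induces an isomorphism of Malcev completions. Rather than reprove this, I would use flatness of $\omega$, which is exactly the Kohno relations, to construct the monodromy map. I would then show that on $\mathrm{gr}$ it is inverse to the surjection of Step 1: it is the identity in degree one, and the domain is generated in degree one.
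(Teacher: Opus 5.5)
Your proposal is correct and follows the route the paper points to; the paper gives no proof of its own and only cites Kohno, "owing to the works of Sullivan and Chen." You identify $\mathfrak{h}$ and $\mathrm{gr}$ through degree two via Hopf's formula, then use Brieskorn's theorem to get formality of $M(\mathcal{A})$, so that the Malcev Lie algebra is the degree completion of $\mathfrak{h}(M(\mathcal{A}))$. Your fallback in the last paragraph is also a complete argument on its own. The monodromy of the flat connection $\sum_{H}\frac{1}{2\pi i}\,d\log f_{H}\otimes H$ induces a graded map $\mathrm{gr}(\pi_{1}(M(\mathcal{A})),\mathbb{C})\to\mathfrak{h}(M(\mathcal{A}))$ that is the identity in degree one. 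Since both sides are generated in degree one, its composites with the surjection of Step 1 are identities, so no Malcev-completion comparison is needed.
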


This theorem shows us the explicit structure of 
$\mathrm{gr}(P_{n},\mathbb{C})$, the associated graded 
Lie algebra of the pure braid group $P_{n}$ of $n$-strands as follows.
Let $\mathcal{A}$ be the \emph{braid arrangement}
in $\mathbb{C}^{n}$. Namely, $\mathcal{A}$
consists of hyperplanes 
$H_{i,j}$ defined as the 
locus of the linear forms $z_{i}-z_{j}$
for $1\le i<j\le n$. 
Then the fundamental group of the complement 
\[
	M(\mathcal{A})=\{(z_{1},\ldots,z_{n})\in \mathbb{C}^{n}\mid z_{i}\neq z_{j}, i\neq j\},
\]
is known to be isomorphic to the 
pure braid group $P_{n}$.
Therefore, under the isomorphism in Theorem \ref{kohnochen},
we see that the associated graded Lie algebra 
$\mathrm{gr}(P_{n},\mathbb{C})$ is generated by 
elements $A_{i,j}$, $1\le i\neq j\le n$ with the following relation, 
which is nothing but the infinitesimal braid relation:
\begin{align*}
	A_{i,j}-A_{j,i}&=0, \\
	[A_{i,k},A_{i,j}+A_{j,k}]&=0,\\
	[A_{i,j},A_{k,l}]&=0, 
\end{align*}
for mutually distinct $i,j,k,l$.
Namely, we obtain the isomorphism
\[
	\mathrm{gr}(P_{n},\mathbb{C})\cong \mathfrak{P}_{n}.
\]

\subsection{Semidirect sum decomposition of holonomy Lie algebras of strictly linearly fibered arrangements}\label{sec:CCX}
Let $\mathcal{A}$ be an 
affine hyperplane arrangement in $\C^{l}$ and
$Y$ an affine line in $\C^{l}$ 
passing through the origin, i.e.,
$Y$ is a one-dimensional linear subspace of $\C^{l}$.
Let us consider the subarrangement of $\mathcal{A}$ consisting of hyperplanes parallel to $Y$,
\[
	\mathcal{A}_{\parallel Y}:=\{H\in \mathcal{A}\mid H\text{ is parallel to }Y\}
\] 
and its complement in $\mathcal{A}$,
\[
	\mathcal{A}_{\parallel Y}^{c}:=\mathcal{A}\setminus \mathcal{A}_{\parallel Y}.
\]

The 
projection $\pi_{Y}\colon \C^{l}\to \C^{l}/Y$
defines the arrangement of affine hyperplanes 
\[
	p\mathcal{A}_{\parallel Y}:=\{\pi_{Y}(H)\mid H\in \mathcal{A}_{\parallel Y}\}
\] 
in the base space $\C^{l}/Y$,
and then by restricting $\pi_{Y}$ to $M(\mathcal{A})$, we also obtain 
the projection map 
\[
	\pi_{Y}\colon M(\mathcal{A})\to M(p\mathcal{A}_{\parallel Y}).
\]
We say that 
the line $Y$ is \emph{good} if
the  projection 
$\pi_{Y}\colon M(\mathcal{A})\to M(p\mathcal{A}_{\parallel Y})$
is a fiber bundle with the fiber $\C\setminus\{n\text{-points}\}$ for $n=|\mathcal{A}_{\parallel Y}^{c}|$,
see Theorem 2.9 in \cite{Tera1}.
\begin{df}[Strictly linearly fibered arrangement]
	An affine hyperplane arrangement $\mathcal{A}$ in $\C^{l}$ is called \emph{strictly linearly fibered}
	 if there exists a good line $Y$.
\end{df}

The braid arrangement 
$\{H_{i,j}\mid 1\le i<j\le n\}$ 
defined previously
is a typical example of strictly linearly fibered arrangements,
where every $z_{i}$-axis is a good line.
As we have seen, the associated graded Lie algebra $\mathrm{gr}(P_{n+1},\mathbb{C})$ of the pure braid group $P_{n+1}$ has the semidirect sum decomposition
\[	\mathrm{gr}(P_{n+1},\mathbb{C})
	=\mathbb{L}(A_{1,n+1},\ldots,A_{n,n+1})\oplus \mathrm{gr}(P_{n},\mathbb{C}),
\]
with respect to the infinitesimal braid action $\theta\colon \mathrm{gr}(P_{n},\mathbb{C})\to \mathrm{Der}(\mathbb{L}(A_{1,n+1},\ldots,A_{n,n+1}))$.

As a generalization of this fact,
Cohen, Cohen and Xicot\'encatl \cite{CCX03} showed 
that for any strictly linearly fibered arrangement $\mathcal{A}$,
the associated graded Lie algebra $\mathrm{gr}(\pi_{1}(M(\mathcal{A})),\mathbb{C})$
has the following semidirect sum decomposition.
\begin{thm}[Cohen, Cohen and Xicot\'encatl, Theorem 4.5 in \cite{CCX03}]\label{thm:CCX}
Let $\mathcal{A}$ be a strictly linearly fibered arrangement in $\C^{l}$ and $Y$ a good line for $\mathcal{A}$.
Then the associated graded Lie algebra $\mathrm{gr}(\pi_{1}(M(\mathcal{A})),\mathbb{C})$ of the fundamental group $\pi_{1}(M(\mathcal{A}))$ of the complement $M(\mathcal{A})$ has the semidirect sum decomposition
\[
	\mathrm{gr}(\pi_{1}(M(\mathcal{A})),\mathbb{C})=
	\mathbb{L}(\mathcal{A}_{\parallel Y}^{c})\oplus \mathrm{gr}(\pi_{1}(M(p\mathcal{A}_{\parallel Y})),\mathbb{C}),
\]
with respect to the Lie algebra homomorphism $\Theta \colon \mathrm{gr}(\pi_{1}(M(p\mathcal{A}_{\parallel Y})),\mathbb{C})\to \mathrm{Der}(\mathbb{L}(\mathcal{A}_{\parallel Y}^{c}))$.
Here $\Theta$ factors 
through the infinitesimal braid action 
$\theta\colon \mathrm{gr}(P_{n},\mathbb{C})\to \mathrm{Der}(\mathbb{L}(\mathcal{A}_{\parallel Y}^{c}))$
defined in Definition \ref{Infbraidaction} for $n=|\mathcal{A}_{\parallel Y}^{c}|$.
Namely, there exists a Lie algebra homomorphism $g\colon \mathrm{gr}(\pi_{1}(M(p\mathcal{A}_{\parallel Y})),\mathbb{C})\to \mathrm{gr}(P_{n},\mathbb{C})$ such that $\Theta=\theta\circ g$.
\end{thm}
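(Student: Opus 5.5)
The plan is to derive Theorem~\ref{thm:CCX} from the topology of the fiber bundle $\pi_{Y}\colon M(\mathcal{A})\to M(p\mathcal{A}_{\parallel Y})$. I would pass to lower central series and then identify the action. Write $G=\pi_{1}(M(\mathcal{A}))$, $B=\pi_{1}(M(p\mathcal{A}_{\parallel Y}))$ and $F=\pi_{1}(\C\setminus\{n\text{ points}\})\cong F_{n}$, with free generators given by meridians around the points $H\cap \pi_{Y}^{-1}(b)$ for $H\in\mathcal{A}_{\parallel Y}^{c}$.

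The first step is the group-level split extension. The fiber is aspherical and $\pi_{2}$ of the base vanishes (fiber-type arrangements are $K(\pi,1)$ by induction). Hence the long exact homotopy sequence gives $1\to F\to G\to B\to 1$. This sequence splits, because a linear fibration has a section: choose a point far out along $Y$, beyond all the hyperplanes in $\mathcal{A}_{\parallel Y}^{c}$ after a linear change of coordinates. So $G=F\rtimes_{\rho}B$, with monodromy $\rho\colon B\to \mathrm{Aut}(F_{n})$.

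The second step is to show that $\rho$ factors through the pure braid group. The $n$ points in the fiber over $b$ move without colliding as $b$ varies in $M(p\mathcal{A}_{\parallel Y})$, so they give a classifying map $M(p\mathcal{A}_{\parallel Y})\to \mathrm{Conf}_{n}(\C)$. The bundle is the pullback of the standard Fadell--Neuwirth bundle along this map, with the points labelled by $\mathcal{A}_{\parallel Y}^{c}$. Therefore $\rho$ factors as $B\xrightarrow{\bar g} P_{n}\xrightarrow{\theta_{\mathrm{Art}}}\mathrm{Aut}(F_{n})$. Since pure braids act on each generator by conjugation, $\rho$ acts trivially on $H_{1}(F)$.

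The third step is to pass to graded Lie algebras. I would invoke the Falk--Randell theorem: for a split extension $F\rtimes B$ in which $B$ acts trivially on $H_{1}(F)$, the lower central series splits, giving $\Gamma_{k}G=\Gamma_{k}F\rtimes\Gamma_{k}B$. Consequently $\mathrm{gr}(G)=\mathrm{gr}(F)\oplus\mathrm{gr}(B)$ as a semidirect sum, with $\mathrm{gr}(B)$ acting on $\mathrm{gr}(F)$ by the derivation $\Theta$ induced from $\rho$. Tensoring with $\C$ and applying Magnus' theorem gives $\mathrm{gr}(F,\C)\cong\mathbb{L}(\mathcal{A}_{\parallel Y}^{c})$.

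The fourth step is the factorization $\Theta=\theta\circ g$. I would take $g=\mathrm{gr}(\bar g)\otimes\C\colon \mathrm{gr}(B,\C)\to\mathrm{gr}(P_{n},\C)\cong\mathfrak{P}_{n}$. By functoriality of the Falk--Randell construction, $\Theta$ equals $g$ composed with the derivation action of $\mathrm{gr}(P_{n},\C)$ on $\mathrm{gr}(F_{n},\C)$ coming from $P_{n+1}=F_{n}\rtimes P_{n}$. That action is the adjoint action inside $\mathfrak{P}_{n+1}=\mathbb{L}(A_{1,n+1},\dots,A_{n,n+1})\oplus\mathfrak{P}_{n}$, which is exactly $\theta$ of Definition~\ref{Infbraidaction}. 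To check this, I would verify on degree-one generators that the Artin action of the standard generator $A_{ij}$ sends $x_{k}$ to $x_{k}$ times a commutator $[x_{i},x_{j}]$ or $[x_{j},x_{i}]$, up to sign conventions.

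The main obstacle is the third step: the splitting of the lower central series. If Falk--Randell could not simply be cited, I would prove by induction on $k$ that $\Gamma_{k}G\cap F=\Gamma_{k}F$. This uses the almost-direct-product property: since $[b,f]\in\Gamma_{2}F$ whenever $f\in F$, one gets $[B,\Gamma_{k}F]\subset\Gamma_{k+1}F$. It also requires tracking the consistency of the sign and orientation conventions in the last identification.
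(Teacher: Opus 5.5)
The paper gives no proof of this statement; it quotes Theorem 4.5 of Cohen--Cohen--Xicot\'encatl. Your outline is the standard argument behind that cited result, and it is sound in structure. Its ingredients are: a split fibration with a section, monodromy factoring through $P_{n}$ via the Fadell--Neuwirth pullback, the Falk--Randell splitting $\Gamma_{k}G=\Gamma_{k}F\rtimes\Gamma_{k}B$ for almost-direct products, Magnus' theorem, and functoriality to get $\Theta=\theta\circ g$. One justification in it is wrong, though.

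\textbf{The wrong justification.} You argue that $\pi_{1}(F)\to\pi_{1}(M(\mathcal{A}))$ is injective because the base is aspherical, as for fiber-type arrangements. The hypothesis is only that $\mathcal{A}$ is strictly linearly fibered, not fiber-type, so $M(p\mathcal{A}_{\parallel Y})$ need not be a $K(\pi,1)$.

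For a counterexample, take three hyperplanes parallel to $Y$ whose images form a generic arrangement of three lines in $\C^{2}$, together with one hyperplane transverse to $Y$. Then $Y$ is good. The base is the complement of the three generic lines, with fundamental group $\mathbb{Z}^{3}$ and Euler characteristic $1$, whereas a $K(\mathbb{Z}^{3},1)$ has Euler characteristic $0$.

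This matters here, because the paper applies the theorem to $\overline{\mathcal{A}}^{Y}$ for arbitrary $\mathcal{A}$, where the base arrangement is essentially arbitrary. The repair is already in your next sentence. A section makes $\pi_{2}(M(\mathcal{A}))\to\pi_{2}(M(p\mathcal{A}_{\parallel Y}))$ surjective, so the connecting map from $\pi_{2}$ of the base to $\pi_{1}(F)$ vanishes. The sequence is then short exact and split, with no asphericity needed.

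\textbf{Two smaller precisions.} First, the graded action $\Theta$ depends on the chosen section: replacing $s(b)$ by $c(b)s(b)$ with $c(b)\in F$ changes $\Theta$ by inner derivations of $\mathrm{gr}(F)$. For the functoriality step you therefore need a morphism of split extensions onto the standard $F_{n}\rtimes P_{n}\subset P_{n+1}$. So take the section to be the pullback of the Fadell--Neuwirth section, for example $b\mapsto 1+\sum_{H}|\varphi_{H}(b)|$ in a coordinate along $Y$, where $\varphi_{H}(b)$ is the point $H\cap(b+Y)$. Your ``point far out along $Y$'' is of this kind.

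Second, if you reprove Falk--Randell rather than cite it, $[B,\Gamma_{k}F]\subset\Gamma_{k+1}F$ alone is not enough to get $\Gamma_{k}G\subset\Gamma_{k}F\cdot\Gamma_{k}B$. The induction needs $[\Gamma_{j}F,\Gamma_{m}B]\subset\Gamma_{j+m}F$ for all $j,m$, which follows from the three subgroups lemma.
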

This theorem implies that the holonomy Lie algebra $\mathfrak{h}(M(\mathcal{A}))$ of a strictly linearly fibered arrangement $\mathcal{A}$ has the semidirect sum decomposition
\[	
	\mathfrak{h}(M(\mathcal{A}))=
	\mathbb{L}(\mathcal{A}_{\parallel Y}^{c})\oplus \mathfrak{h}(M(p\mathcal{A}_{\parallel Y})),
\]
and our middle convolution functor $\mathfrak{mc}_{\lambda}$ 
works on the category of $U(\mathfrak{h}(M(\mathcal{A})))$-modules.
\subsection{$Y$-closure of a hyperplane arrangement}\label{sec:Yclosure}
In \cite{Tera1}, Terao gave a characterization of strictly linearly fibered arrangements in terms of intersection lattices of a hyperplane arrangement.
We now recall Terao's characterization of good lines.
Let $L(\mathcal{A})$ be the intersection poset of $\mathcal{A}$, i.e., the set of all nonempty intersections of hyperplanes in $\mathcal{A}$ ordered by reverse inclusion.
Let $L_{k}(\mathcal{A})$ be the set of all elements in $L(\mathcal{A})$ of codimension $k$.
\begin{thm}[Terao, Theorem 2.9 in \cite{Tera1}. See also  Theorem 1.8 in Adachi-Hiroe \cite{AdaHir}]\label{thm:goodline}
Let $\mathcal{A}$ be an affine hyperplane arrangement in $\C^{l}$ and $Y$ a line in $\C^{l}$ passing through the origin.
Then the line $Y$ is good if and only if 
\[
	X+Y\in L(\mathcal{A})\text{ for all }X\in L_{2}(\mathcal{A}).
\]
\end{thm}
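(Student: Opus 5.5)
We translate "good" into a statement about the points where the fibres of $\pi_{Y}$ meet the hyperplanes of $\mathcal{A}_{\parallel Y}^{c}$, and then observe that the only obstruction is two such points colliding over the base.

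\textbf{Setup.} Fix a complement $W$ of $Y$ in $\C^{l}$, so that $\C^{l}\cong W\times Y$ and $\C^{l}/Y\cong W$. Every $H\in \mathcal{A}_{\parallel Y}^{c}$ is transversal to $Y$. Hence for each $b\in W$ the line $b+Y$ meets $H$ in exactly one point $b+t_{H}(b)y$, where $t_{H}$ is an affine function on $W$. The fibre of $\pi_{Y}\colon M(\mathcal{A})\to M(p\mathcal{A}_{\parallel Y})$ over $b$ is therefore $\C$ minus the set $\{t_{H}(b)\mid H\in \mathcal{A}_{\parallel Y}^{c}\}$. Hyperplanes parallel to $Y$ contribute only the removed base locus.

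\textbf{Step 1: good $\iff$ no collisions.} We show that $Y$ is good if and only if, for every $b\in M(p\mathcal{A}_{\parallel Y})$, the $n$ values $t_{H}(b)$ are pairwise distinct.
\begin{itemize}
\item[($\Leftarrow$)] If the values are distinct, then $b\mapsto \{t_{H}(b)\}_{H}$ is an algebraic map from $M(p\mathcal{A}_{\parallel Y})$ to the ordered configuration space $\mathrm{Conf}_{n}(\C)$. Then $M(\mathcal{A})$ is the pullback of the tautological bundle $\{(c,t)\mid t\neq c_{i}\}\to \mathrm{Conf}_{n}(\C)$, which is a fibre bundle with fibre $\C\setminus\{n\text{ points}\}$ by the Fadell--Neuwirth theorem. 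So $\pi_{Y}$ is a fibre bundle with the required fibre.
\item[($\Rightarrow$)] If $\pi_{Y}$ is a fibre bundle with fibre $\C\setminus\{n\text{ points}\}$, then every fibre has Euler characteristic $1-n$. A collision at some $b$ would leave fewer than $n$ distinct punctures in that fibre, giving a different Euler characteristic, which is impossible.
\end{itemize}

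\textbf{Step 2: locating the collision loci.} Take $H\neq H'$ in $\mathcal{A}_{\parallel Y}^{c}$ and set $X=H\cap H'$.
\begin{itemize}
\item If $X=\emptyset$, then $H$ and $H'$ are parallel, so $t_{H}-t_{H'}$ is a nonzero constant and they never collide.
\item Otherwise $X\in L_{2}(\mathcal{A})$, and $t_{H}(b)=t_{H'}(b)$ holds exactly when $b+Y$ meets $X$, that is, when $b\in \pi_{Y}(X)$.
\end{itemize}
Since $H$ is transversal to $Y$, the direction of $X$ does not contain $Y$. Hence $X+Y$ is an affine hyperplane, and $\pi_{Y}(X)=\pi_{Y}(X+Y)$ is a hyperplane in $W$. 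So the condition of Step 1 says that each such hyperplane $\pi_{Y}(X+Y)$ lies in $\bigcup_{H''\in\mathcal{A}_{\parallel Y}}\pi_{Y}(H'')$. By irreducibility this means $\pi_{Y}(X+Y)=\pi_{Y}(H'')$ for some $H''\in \mathcal{A}_{\parallel Y}$. Since both $X+Y$ and $H''$ contain $Y$, this is equivalent to $X+Y=H''\in\mathcal{A}\subset L(\mathcal{A})$.

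\textbf{Step 3: the remaining elements of $L_{2}(\mathcal{A})$.} An element $X\in L_{2}(\mathcal{A})$ not arising as in Step 2 lies in some $H\in\mathcal{A}_{\parallel Y}$.
\begin{itemize}
\item If $X$ lies in two distinct hyperplanes of $\mathcal{A}_{\parallel Y}$, then $X$ itself is parallel to $Y$, so $X+Y=X\in L(\mathcal{A})$.
\item If $X$ lies in exactly one $H\in\mathcal{A}_{\parallel Y}$, then $X+Y=H\in L(\mathcal{A})$.
\end{itemize}
Either way the condition holds automatically. Moreover, an $X\in L_{2}(\mathcal{A})$ may be written as $H\cap H'$ in several ways, but each way gives the same condition on $X$. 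Combining the three steps, the condition ``$X+Y\in L(\mathcal{A})$ for all $X\in L_{2}(\mathcal{A})$'' is equivalent to goodness.

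\textbf{Main obstacle.} The delicate point is the topological input in Step 1: that constant puncture count gives local triviality, and conversely. We handle it by identifying $\pi_{Y}$ with a pullback of the Fadell--Neuwirth fibration $\mathrm{Conf}_{n+1}(\C)\to\mathrm{Conf}_{n}(\C)$, rather than invoking Ehresmann's theorem directly, since $M(\mathcal{A})\to M(p\mathcal{A}_{\parallel Y})$ is not proper.
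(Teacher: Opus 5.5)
Your proof is correct, and it takes a different route from the paper. The paper gives no argument for this statement: it imports the result from Terao, who works lattice-theoretically with modular elements of $L(\mathcal{A})$, and from Adachi--Hiroe for the affine formulation.

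Your route is direct and geometric:
\begin{itemize}
\item You encode the punctures of the fibre over $b$ by the affine functions $t_{H}$, $H\in\mathcal{A}_{\parallel Y}^{c}$.
\item You show that goodness is equivalent to these values never colliding over $M(p\mathcal{A}_{\parallel Y})$. Sufficiency comes from pulling back the Fadell--Neuwirth fibration $\mathrm{Conf}_{n+1}(\C)\to\mathrm{Conf}_{n}(\C)$ along $b\mapsto (t_{H}(b))_{H}$. Necessity comes from counting punctures.
\item You identify the collision locus of $H,H'$ with the base hyperplane $\pi_{Y}(H\cap H'+Y)$.
\end{itemize}

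The steps you leave implicit are harmless.
\begin{itemize}
\item For the reverse implication in Step 2, you use that a codimension-one member of $L(\mathcal{A})$ is a hyperplane of $\mathcal{A}$. Such a hyperplane lies in $\mathcal{A}_{\parallel Y}$ because it contains the direction of $Y$.
\item In the last case of Step 3, the inclusion $X+Y\subseteq H$ already forces $X+Y\in\{X,H\}\subset L(\mathcal{A})$.
\end{itemize}

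What your approach buys is a self-contained proof that works directly in the affine setting and exhibits the fibration explicitly as a pullback of the configuration-space fibration. The classifying map $M(p\mathcal{A}_{\parallel Y})\to\mathrm{Conf}_{n}(\C)$ makes the monodromy of $\pi_{Y}$ factor through the pure braid group $P_{n}$. This is the group-level counterpart of the homomorphism $g$ in Theorem~\ref{thm:CCX}, i.e.\ of the infinitesimal braid action on which $\mathfrak{mc}_{\lambda}$ for $\mathfrak{h}(M(\overline{\mathcal{A}}^{Y}))$ rests. The lattice-theoretic route instead places the criterion within the theory of modular elements and supersolvable (fiber-type) arrangements, which is the natural setting when the fibration is iterated.
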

By following Oshima \cite{Oshi3}, we call arramgements satisfying the above condition as follows.
\begin{df}[Oshima \cite{Oshi3}, $Y$-closed arrangement]
	Let $\mathcal{A}$ be an affine hyperplane arrangement in $\C^{l}$ and $Y$ a line in $\C^{l}$ passing through the origin.
	We say that $\mathcal{A}$ is \emph{$Y$-closed} if $X+Y\in L(\mathcal{A})$ for all $X\in L_{2}(\mathcal{A})$.
\end{df}

We can easily see that for any affine arrangement $\mathcal{A}$ in $\C^{l}$ and any line $Y$ in $\C^{l}$ passing through the origin, there exists a unique minimal $Y$-closed arrangement.
\begin{df}[$Y$-closure of a hyperplane arrangement]
	Let $\mathcal{A}$ be an affine hyperplane arrangement in $\C^{l}$ and $Y$ a line in $\C^{l}$ passing through the origin.
	We call the unique minimal $Y$-closed arrangement containing $\mathcal{A}$ the \emph{$Y$-closure} of $\mathcal{A}$, and denote this by $\overline{\mathcal{A}}^{Y}$.
\end{df}
This is constructed as follows:
\begin{equation*}
	\overline{\mathcal{A}}^{Y}=\mathcal{A}\cup \{X+Y\mid X\in L_{2}(\mathcal{A})\}. 
\end{equation*}

The inclusion map $M(\overline{\mathcal{A}}^{Y})\hookrightarrow M(\mathcal{A})$ induces 
the Lie algebra homomorphism of holonomy Lie algebras $\mathfrak{h}(M(\overline{\mathcal{A}}^{Y}))\to \mathfrak{h}(M(\mathcal{A}))$,
which is explicitly described as follows:
\[
	\mathfrak{h}(M(\overline{\mathcal{A}}^{Y}))\to \mathfrak{h}(M(\mathcal{A}));\ \
	\overline{\mathcal{A}}^{Y}\ni H\mapsto \begin{cases}
	H & (H\in \mathcal{A})\\
	0 & (H\notin \mathcal{A})
\end{cases}.
\]

\section{Middle convolution for logarithmic Pfaffian systems on complements of hyperplane arrangements}\label{sec:mcPf}
As a generalization of the Dettweiler-Reiter additive middle convolution,
Haraoka \cite{Har1} defined the middle convolution for logarithmic Pfaffian systems on complements of hyperplane arrangements.
In this section, we show the equivalence
between the category of logarithmic Pfaffian systems on the complement of a hyperplane arrangement 
and the category of modules over the holonomy Lie algebra of the complement of a hyperplane arrangement.
Then we moreover show that under this equivalence of cathegories,
the middle convolution functor $\mathfrak{mc}_{\lambda}$ for modules over the holonomy Lie algebra
recovers Haraoka's middle convolution for logarithmic Pfaffian systems.
\subsection{Middle convolution for logarithmic Pfaffian systems}\label{sec:logPf}
Let us consider an affine hyperplane arrangement $\mathcal{A}$ in $\C^{l}$.
For an affine hyperplane $H$ in $\mathbb{C}^{l}$, let $f_{H}(x)$ denote a defining affine linear form of $H$.

We now introduce the category 
\[
	\mathrm{Pf}(\mathrm{log}(\mathcal{A}))
\]
of logarithmic Pfaffian systems with poles along $\mathcal{A}$. 
The objects of $\mathrm{Pf}(\mathrm{log}(\mathcal{A}))$ 
consist of 
pairs $(E,\nabla_{A})$
of a finite dimensional $\mathbb{C}$-vector space $E$ and a flat connection $\nabla_{A}$ on the trivial vector bundle $\mathbb{C}^{l}\times E$ over $\mathbb{C}^{l}$, 
where $\nabla_{A}=d-\Omega_{A}$
with the following coefficient $1$-form $\Omega_{A}$: 
\[
	\Omega_{A}=\sum_{H\in \mathcal{A}}A_{H}\frac{df_{H}}{f_{H}}\quad (A_{H}\in \mathrm{End}_{\mathbb{C}}(E))
\]
satisfying the integrability condition
\[	
	\Omega_{A}\wedge \Omega_{A}=0.
\]
For two such objects $(E_1, \nabla_{A_1})$ and $(E_2, \nabla_{A_2})$
a morphism 
is defined to be a $\mathbb{C}$-linear map $\phi\colon E_{1}\rightarrow E_{2}$
such that the following diagram commutes,
\[
\begin{tikzcd}
\mathcal{O}_{\mathbb{C}^{l}}\otimes E_{1}\arrow[r,"\nabla_{A_{1}}"]\arrow[d,"\mathrm{id}\otimes \phi"]&
\varOmega_{\mathbb{C}^{l}}^{1}(*\mathcal{A})\otimes E_{1}\arrow[d,"\mathrm{id}\otimes \phi"]\\
\mathcal{O}_{\mathbb{C}^{l}}\otimes E_{2}\arrow[r,"\nabla_{A_{2}}"]&
\varOmega_{\mathbb{C}^{l}}^{1}(*\mathcal{A})\otimes E_{2}
\end{tikzcd}.
\]
Here $\mathcal{O}_{\mathbb{C}^{l}}$ is the sheaf of holomorphic functions on $\mathbb{C}^{l}$, $\varOmega_{\mathbb{C}^{l}}^{1}(*\mathcal{A})$ is the sheaf of meromorphic $1$-forms on $\mathbb{C}^{l}$ with poles along $\mathcal{A}$.

Haraoka \cite{Har1} defined the middle convolution for logarithmic Pfaffian systems
as a generalization of the Dettweiler-Reiter additive middle convolution.
Let us recall the definition of the middle convolution for logarithmic Pfaffian systems.

Let us take an affine line $Y$ in $\mathbb{C}^{l}$ passing through the origin,
and then consider the vector space generated by the hyperplanes in $\mathcal{A}_{\parallel Y}^{c}$, i.e., 
\[
	\mathbb{C}\cdot \mathcal{A}_{\parallel Y}^{c}=\bigoplus_{H\in \mathcal{A}_{\parallel Y}^{c}}\mathbb{C}\cdot e_{H}.
\]
Take an object $(E,\nabla_{A})\in \mathrm{Pf}(\mathrm{log}(\mathcal{A}))$
with the coefficient $1$-form  
\[
	\Omega_{A}=
	\sum_{H\in \mathcal{A}}A_{H}\frac{df_{H}}{f_{H}}\quad (A_{H}\in \mathrm{End}_{\mathbb{C}}(E)),
\]
and also take a parameter $\lambda\in \mathbb{C}$.
Haraoka constructed a logarithmic Pfaffian system
\[	
	(E\otimes_{\mathbb{C}}\mathbb{C}\cdot \mathcal{A}_{\parallel Y}^{c},\nabla_{c_{\lambda}(A)})\in \mathrm{Pf}(\mathrm{log}(\overline{\mathcal{A}}^{Y}))	
\]
with the coefficient $1$-form
\[
	\Omega_{c_{\lambda}(A)}=
	\sum_{H\in \overline{\mathcal{A}}^{Y}}c_{\lambda}(A)_{H}\frac{df_{H}}{f_{H}}
\]
defined as follows.
Let $I_{H,H'}\in \mathrm{End}_{\mathbb{C}}(\mathbb{C}\cdot \mathcal{A}_{\parallel Y}^{c})$
be the $(H,H')$-matrix units for $H,H'\in \mathcal{A}_{\parallel Y}^{c}$,
i.e., the endomorphism defined by 
\[
I_{H,H'}e_{H''}=\delta_{H',H''}e_{H}
\]
for $H,H',H''
\in \mathcal{A}_{\parallel Y}^{c}$.
Then for $H\in \mathcal{A}_{\parallel Y}^{c}$, we set  
\[
	c_{\lambda}(A)_{H}:=\sum_{H'\in \mathcal{A}_{\parallel Y}^{c}}(A_{H'}+\lambda\delta_{H,H'}\mathrm{Id}_{E})\otimes I_{H,H'}.
\]
Also for $H\in \overline{\mathcal{A}}^{Y}\setminus \mathcal{A}_{\parallel Y}^{c}$,
let us take 
the maximal family $\{H_{i_{1}},\ldots,H_{i_{k}}\}\subset \mathcal{A}_{\parallel Y}^{c}$ such that 
\[
	\mathrm{codim}(H\cap H_{i_{1}}\cap\cdots\cap H_{i_{k}})=2.
\]
Then we define $c_{\lambda}(A)_{H}$ as follows:
\[
	c_{\lambda}(A)_{H}:=A_{H}\otimes \mathrm{Id}_{\mathbb{C}\cdot \mathcal{A}_{\parallel Y}^{c}}+
	\sum_{j=1}^{k}\left(A_{H_{i_{j}}}\otimes \left(\sum_{h=1}^{k}I_{H_{i_{h}},H_{i_{h}}}-I_{H_{i_{h}},H_{i_{j}}}\right)\right).
\]
The integrability condition for $\Omega_{c_{\lambda}(A)}$
follows from that for $\Omega_{A}$.
It is easily checked that the above construction is functorial.
\begin{rem}
We may suppose that $Y$ is the $z_{l}$-axis 
of $\mathbb{C}^{l}=\{(z_{1},\ldots,z_{l})\in \mathbb{C}^{l}\}$
by applying a suitable linear transformation to $\mathbb{C}^{l}$.
Then 
Haraoka constructed this  $d-\Omega_{c_{\lambda}(A)}$
as the flat connection which has 
the following function as its horizontal sections: 
\[
	\left(\int_{\Delta}U(z_{1},\ldots,z_{l-1},t)(t-z_{l})^{\lambda}\left(\frac{\partial}{\partial t}\log f_{H}(z_{1},\ldots,z_{l-1},t)\right)dt\right)_{H\in \mathcal{A}_{\parallel Y}^{c}}.
\]
This is  the convolution product 
of the horizontal sections $U(z_{1},\ldots,z_{l})$ 
of the connection $d-\Omega_{A}$ 
with the power function $t^{\lambda}$ along the $z_{l}$-axis.
Here $\Delta$ is a suitable integral contour in the $t$-plane.
See Proposition 2.1 in \cite{Har1} for details.
\end{rem}
\begin{df}[Haraoka \cite{Har1}]
For $\lambda\in \mathbb{C}$, we call the resulting functor
\[
	\begin{array}{cccc}
	c_{\lambda}\colon & \mathrm{Pf}(\mathrm{log}(\mathcal{A}) & \longrightarrow & \mathrm{Pf}(\mathrm{log}(\overline{\mathcal{A}}^{Y}))\\
& (E,\nabla_{A}) & \mapsto & (E\otimes_{\mathbb{C}}\mathbb{C}\cdot \mathcal{A}_{\parallel Y}^{c},\nabla_{c_{\lambda}(A)})
	\end{array}
\]
the \emph{convolution functor along the line $Y$ with parameter $\lambda$}.
\end{df}
As well as the Dettweiler-Reiter additive middle convolution, Haraoka moreover
considered the following sub-connections of $\nabla_{c_{\lambda}(A)}$:
\begin{align*}
	&\nabla_{c_{\lambda}(A)}^{K}\colon
	\mathcal{O}_{\mathbb{C}^{l}}\otimes K
	\longrightarrow
	\varOmega_{\mathbb{C}^{l}}^{1}(*\overline{\mathcal{A}}^{Y}))\otimes K,\\
	&\nabla_{c_{\lambda}(A)}^{L}\colon
	\mathcal{O}_{\mathbb{C}^{l}}\otimes L
	\longrightarrow
	\varOmega_{\mathbb{C}^{l}}^{1}(*\overline{\mathcal{A}}^{Y})\otimes L,
\end{align*}
with the subspaces $K$ and $L$ of $E\otimes_{\mathbb{C}}\mathbb{C}\cdot \mathcal{A}_{\parallel Y}^{c}$ defined by
\[
	K:=\bigoplus_{H\in \mathcal{A}_{\parallel Y}^{c}}\left(\mathrm{Ker\,}A_{H}\otimes \mathbb{C}e_{H}\right),\quad\quad
	L:=\bigcap_{H\in \mathcal{A}_{\parallel Y}^{c}}\mathrm{Ker\,}c_{\lambda}(A)_{H}.
\]
Then we further obtain the quotient-connection
\[
	\nabla_{\mathrm{mc}_{\lambda}(A)}\colon
	\mathcal{O}_{\mathbb{C}^{l}}\otimes (E\otimes_{\mathbb{C}}\mathbb{C}\cdot \mathcal{A}_{\parallel Y}^{c}/(K\oplus L))
	\longrightarrow
	\varOmega_{\mathbb{C}^{l}}^{1}(*\overline{\mathcal{A}}^{Y})
	\otimes (E\otimes_{\mathbb{C}}\mathbb{C}\cdot \mathcal{A}_{\parallel Y}^{c}/(K\oplus L)).
\]
\begin{df}[Haraoka \cite{Har1}]
We call the resulting functor
\[
	\mathrm{mc}_{\lambda}\colon \mathrm{Pf}(\mathrm{log}(\mathcal{A}))
	\longrightarrow 
	\mathrm{Pf}(\mathrm{log}(\overline{\mathcal{A}}^{Y}));\quad
	\nabla_{A}\mapsto \nabla_{\mathrm{mc}_{\lambda}(A)},
\]
the {\em middle convolution functor along the line $Y$ with parameter $\lambda\in \mathbb{C}$}.
\end{df}
As well as the Dettweiler-Reiter additive middle convolution,
this middle convolution functor also satisfies the composition law under the following assumptions:
\begin{equation}\label{eq:star1}
	\bigcap_{\substack{H'\in \mathcal{A}_{\parallel Y}^{c},\\H'\neq H}}\mathrm{Ker\,}A_{H'}\cap \mathrm{Ker\,} (A_{H}+\tau \mathrm{Id}_{E})=\{0\}
	\quad \text{ for any }H\in \mathcal{A}_{\parallel Y}^{c}\text{ and }\tau\in \mathbb{C},
\end{equation}
\begin{equation}\label{eq:star2}
	\sum_{\substack{H'\in \mathcal{A}_{\parallel Y}^{c},\\H'\neq H}}\mathrm{Im\,}A_{H} + \mathrm{Im\,}(A_{H}+\tau \mathrm{Id}_{E})=E
	\quad \text{ for any }H\in \mathcal{A}_{\parallel Y}^{c}\text{ and }\tau\in \mathbb{C}.
\end{equation}
\begin{thm}[Theorem 3.1 in \cite{Har1}]\label{thm:MCcompos}
Suppose $\nabla_{A}\in \mathrm{Pf}(\mathrm{log}(\mathcal{A}))$ satisfies the assumptions \eqref{eq:star1} and \eqref{eq:star2}.
Then
the following holds for $\lambda,\mu\in \mathbb{C}$,
\begin{align*}
	\mathrm{mc}_{\lambda}\circ \mathrm{mc}_{\mu}(\nabla_{A})
	&\cong \mathrm{mc}_{\lambda+\mu}(\nabla_{A}),\\
	\mathrm{mc}_{-\lambda}\circ \mathrm{mc}_{\lambda}(\nabla_{A})
	&\cong \nabla_{A}.
\end{align*}
\end{thm}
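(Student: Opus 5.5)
The plan is to move Theorem \ref{thm:MCcompos} into the Lie algebra setting and derive it from the composition law already proved there (Theorems \ref{thm:mc0} and \ref{thm:compmc2}). The hard part is that $\mathrm{mc}_{\mu}(\nabla_{A})$ lives on $\overline{\mathcal{A}}^{Y}$, not on $\mathcal{A}$. So I first reduce to the case where $\mathcal{A}$ is $Y$-closed. Any $\nabla_{A}\in\mathrm{Pf}(\mathrm{log}(\mathcal{A}))$ can be regarded as an object of $\mathrm{Pf}(\mathrm{log}(\overline{\mathcal{A}}^{Y}))$ by setting $A_{H}=0$ for $H\in\overline{\mathcal{A}}^{Y}\setminus\mathcal{A}$; this is the pullback along $\mathfrak{h}(M(\overline{\mathcal{A}}^{Y}))\to\mathfrak{h}(M(\mathcal{A}))$. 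I then check directly from Haraoka's formulas that $c_{\lambda}$ commutes with this extension. For a new hyperplane $H$, the formula for $c_{\lambda}(A)_{H}$ only involves $A_{H}$ and the $A_{H_{i_j}}$ with $H_{i_j}\in\mathcal{A}_{\parallel Y}^{c}$, and $\mathcal{A}_{\parallel Y}^{c}=(\overline{\mathcal{A}}^{Y})_{\parallel Y}^{c}$ because every added hyperplane $X+Y$ is parallel to $Y$. Since $\overline{\overline{\mathcal{A}}^{Y}}^{Y}=\overline{\mathcal{A}}^{Y}$ and the subspaces $K$, $L$ are unchanged, it is enough to prove the theorem for a $Y$-closed $\mathcal{A}$.

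Now assume $\mathcal{A}$ is $Y$-closed, so $Y$ is good by Theorem \ref{thm:goodline}. Theorem \ref{thm:CCX} then gives $\mathfrak{h}(M(\mathcal{A}))=\mathbb{L}(\mathcal{A}_{\parallel Y}^{c})\oplus_{\theta_{g}}\mathfrak{h}(M(p\mathcal{A}_{\parallel Y}))$. Kohno's presentation (Theorem \ref{kohnohollie}) gives the equivalence $\Xi_{\mathcal{A}}$, which sends $(E,\nabla_{A})$ to $E$ with $H$ acting by $A_{H}$. The integrability condition $\Omega_{A}\wedge\Omega_{A}=0$ is equivalent to the relations $[A_{H_{i_j}},\sum_{h}A_{H_{i_h}}]=0$, summed over each codimension-two flat. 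The central step is to show $\Xi_{\mathcal{A}}\circ\mathrm{mc}_{\lambda}\cong\mathfrak{mc}_{\lambda}\circ\Xi_{\mathcal{A}}$. I use the isomorphism $\Phi\colon[x_{i},x_{n+1}]\otimes v\mapsto v\otimes e_{i}$ from Proposition \ref{prop:compconv}, with $x_{i}\leftrightarrow H_{i}\in\mathcal{A}_{\parallel Y}^{c}$.
\begin{itemize}
\item On the free part, $c_{\lambda}(A)_{H}$ is exactly $c^{\mathrm{DR}}_{\lambda}$.
\item For $H\in\mathcal{A}_{\parallel Y}$, the element $H$ lies in the base factor $\mathfrak{g}$. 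Using the formula $X\cdot([x_{i},x_{n+1}]\otimes v)=[\theta_{g}(X)(x_{i}),x_{n+1}]\otimes v+[x_{i},x_{n+1}]\otimes Xv$, I compute $\theta_{g}(H)(x_{i_j})=[x_{i_j},\sum_{h}x_{i_h}]$ for the hyperplanes $H_{i_1},\dots,H_{i_k}$ meeting $H$ in codimension two, and $0$ for the others.
\item Pushing this through $\Phi$ should reproduce $A_{H}\otimes\mathrm{Id}+\sum_{j}A_{H_{i_j}}\otimes(\sum_{h}I_{H_{i_h},H_{i_h}}-I_{H_{i_h},H_{i_j}})$, possibly after fixing the sign convention of $\theta$.
\end{itemize}
Under $\Phi$, the subspaces $K$ and $L$ then correspond to $\mathfrak{k}$ and $\mathfrak{l}$, exactly as in the proof of Theorem \ref{thm:compmc}.

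With this intertwining in hand, conditions \eqref{eq:star1} and \eqref{eq:star2} are precisely $(*)$ and $(**)$ of Theorem \ref{thm:compmcDR} for $\mathrm{Res}_{\mathbb{L}_{n}}\Xi_{\mathcal{A}}(E)$. Theorem \ref{thm:compmc2} therefore gives $\mathfrak{mc}_{\lambda}\mathfrak{mc}_{\mu}\cong\mathfrak{mc}_{\lambda+\mu}$ as $\mathfrak{h}(M(\mathcal{A}))$-modules, and Theorem \ref{thm:mc0} gives $\mathfrak{mc}_{0}\cong\mathrm{id}$. The second identity follows by taking $\mu=\lambda$ and $\lambda\mapsto-\lambda$. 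One subtlety is the second application of $\mathfrak{mc}$: Theorem \ref{thm:compmc2} is stated for $V$ itself, so its proof, through Dettweiler--Reiter's bijectivity, only uses the conditions on $V$, and I do not need to re-verify them for $\mathfrak{mc}_{\mu}(V)$.

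I expect the main obstacle to be the identification of the $\mathcal{A}_{\parallel Y}$-components. This means computing $\theta_{g}(H)$ explicitly from the Cohen--Cohen--Xicot\'encatl splitting and matching it with Haraoka's matrix, including the correct sign and the correct choice of lift of the base generators in the semidirect decomposition. I would first test this on the braid arrangement $\mathfrak{P}_{n+1}$, where $\theta$ is known explicitly (Definition \ref{Infbraidaction}), and then use local genericity at each codimension-two flat $X+Y$ to reduce the general case to a rank-two computation.
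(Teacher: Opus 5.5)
Your argument is correct, but it does not follow the paper, because the paper gives no proof of Theorem~\ref{thm:MCcompos}: it simply imports it from \cite{Har1}. You instead derive it as a corollary of the paper's own results.

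\begin{itemize}
\item Your reduction to the $Y$-closed case is the same one made just before Theorem~\ref{thm:commPfmod}.
\item Your ``central step'' $\Xi_{\mathcal{A}}\circ\mathrm{mc}_{\lambda}\cong\mathfrak{mc}_{\lambda}\circ\Xi_{\mathcal{A}}$ is Theorem~\ref{thm:commPfmod} itself, so you can cite it rather than redo it.
\item Theorems~\ref{thm:mc0} and \ref{thm:compmc2}, transported back through the equivalence $\Xi_{\overline{\mathcal{A}}^{Y}}$, then give both identities. Here \eqref{eq:star1} is $(*)$ with $c=-\tau$, and \eqref{eq:star2}, read with $\mathrm{Im}\,A_{H'}$ in the sum, is $(**)$.
\end{itemize}

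This is not circular. None of these results uses Haraoka's composition law; they rest only on Dettweiler--Reiter's one-variable bijectivity.

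What your route buys is the observation that Haraoka's theorem adds nothing beyond two ingredients. The first is the Dettweiler--Reiter theorem for the restriction to $\mathbb{L}(\mathcal{A}^{c}_{\parallel Y})$. The second is the fact that the comparison maps are equivariant for the base $\mathfrak{h}(M(p\mathcal{A}_{\parallel Y}))$. These maps are $w\otimes e_{i}\mapsto x_{i}\cdot w$ from $c^{\mathrm{DR}}_{\lambda}(c^{\mathrm{DR}}_{\mu}(V))$ to $c^{\mathrm{DR}}_{\lambda+\mu}(V)$, and $v\otimes e_{i}\mapsto x_{i}v$ from $c^{\mathrm{DR}}_{0}(V)$ to $V$.

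The price is that you depend on Theorem~\ref{thm:compmc2} in the case $\lambda+\mu=0$, which your second identity needs. There the written argument is only formal: Lemmas~\ref{lem:Linvariant} and \ref{lem:K} require a nonzero parameter, and $\mathfrak{LM}^{\mathrm{df}}_{0}$ is an abstract direct sum, not a subspace of $\mathcal{I}_{\mathbb{L}_{n+1}}\otimes V$. The citation to \cite{Har1} is independent of that.

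The obstacle you anticipate in the central step does not actually arise.
\begin{itemize}
\item Suppose $H,H'\in\mathcal{A}_{\parallel Y}$ meet in codimension two. Then $H\cap H'$ is stable under translation by $Y$, so no hyperplane of $\mathcal{A}^{c}_{\parallel Y}$ contains it. Hence the generators $H\in\mathcal{A}_{\parallel Y}$ satisfy exactly the relations of $\mathfrak{h}(M(p\mathcal{A}_{\parallel Y}))$, and $\pi_{Y}(H)\mapsto H$ is a splitting. There is no lift or sign ambiguity.
\item The action is read off directly from Kohno's relation at the flat $H\cap H_{i_0}$: $[H,H_{i_0}]=[H_{i_0},H_{i_1}+\cdots+H_{i_k}]$, where $H_{i_1},\dots,H_{i_k}$ are the other members of $\mathcal{A}^{c}_{\parallel Y}$ through that flat. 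One Jacobi identity then gives Haraoka's matrix, as in the proof of the proposition preceding Theorem~\ref{thm:commPfmod}. No braid-arrangement test or rank-two reduction is needed.
\end{itemize}

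One correction to your description: every $H_{i}\in\mathcal{A}^{c}_{\parallel Y}$ meets $H$ in codimension two, so there is no ``$0$ for the others''. Instead, $\mathcal{A}^{c}_{\parallel Y}$ is partitioned by the codimension-two flats lying in $H$. For each $H_i$, $\theta_{g}(H)(x_{i})$ is given by the formula above for the flat through $H_{i}$. Accordingly, $c_{\lambda}(A)_{H}$ must be read as the sum of the displayed single-family expression over all these flats.
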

\subsection{Logarithmic Pfaffian systems and modules over holonomy Lie algebras}
Let us take $(E,\nabla_{A})\in \mathrm{Pf}(\mathrm{log}(\mathcal{A}))$
with the coefficient $1$-form
\[	
	\Omega_{A}=
	\sum_{H\in \mathcal{A}}A_{H}\frac{df_{H}}{f_{H}}.
\]
Then the integrability condition $\Omega_{A}\wedge \Omega_{A}=0$ has the following explicit description.
Let us write $\mathcal{A}=\{H_1,\ldots,H_n\}$ and 
set 
\[
	S:=\left\{
		\{H_{i_{1}},\ldots,H_{i_{m}}\}\subset \mathcal{A}\,\middle|\,
		\begin{array}{l}
			\{H_{i_{1}},\ldots,H_{i_{m}}\}\text{ is maximal such that }\\
			\mathrm{codim}(H_{i_{1}}\cap\cdots \cap H_{i_{m}})=2, \\
			i_{1}<i_{2}	\cdots <i_{m}
		\end{array}
	\right\}.
\]
\begin{prop}[Kohno \cite{Koh83}, Proposition 2.1]\label{prop:integrability}
	The integrability condition $\Omega_{A}\wedge \Omega_{A}=0$ is equivalent to the following system of equations:
	\[
	[A_{H_{i_{j}}},A_{H_{i_{1}}}+\cdots +A_{H_{i_{m}}}]=0\quad  \text{ for } j=1,\ldots,m-1\text{ and }\{H_{i_{1}},\ldots,H_{i_{m}}\}\in S.
	\]
\end{prop}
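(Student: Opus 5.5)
The plan is to compute $\Omega_A\wedge\Omega_A$ explicitly and decompose it along the codimension-two strata of $\mathcal{A}$. Expanding gives
\[
\Omega_A\wedge\Omega_A=\sum_{H<H'}[A_H,A_{H'}]\,\frac{df_H}{f_H}\wedge\frac{df_{H'}}{f_{H'}},
\]
where we used $\omega_H\wedge\omega_H=0$ and the antisymmetry $\omega_H\wedge\omega_{H'}=-\omega_{H'}\wedge\omega_H$ for $\omega_H:=df_H/f_H$. If $H\cap H'=\emptyset$ (parallel hyperplanes), then $df_H$ and $df_{H'}$ are proportional, so $\omega_H\wedge\omega_{H'}=0$ and those terms drop out. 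Every remaining pair $\{H,H'\}$ lies in exactly one family $\{H_{i_1},\ldots,H_{i_m}\}\in S$, namely the set of all hyperplanes containing the codimension-two space $X=H\cap H'$. The form therefore splits as $\sum_{X\in S}\Omega_X$, where $\Omega_X$ collects only the pairs from the family attached to $X$.

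The key step is the structure of the space of 2-forms in the Orlik--Solomon algebra. First, the sums over distinct $X$ are linearly independent, because the Brieskorn decomposition makes the degree-two part of the algebra generated by the $\omega_H$ a direct sum over $X\in L_2(\mathcal{A})$. Within one $X$ we may choose coordinates so that $f_{H_{i_j}}$ depend only on two variables, giving essentially the arrangement of $m$ lines through the origin in $\C^2$. There the only relations among the $\omega_{H_{i_a}}\wedge\omega_{H_{i_b}}$ are the Arnold relations, and the space they span has dimension $m-1$ with basis $\omega_{H_{i_j}}\wedge\omega_{H_{i_m}}$ for $j=1,\ldots,m-1$. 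Concretely, every product can be rewritten as
\[
\omega_{H_{i_a}}\wedge\omega_{H_{i_b}}=\omega_{H_{i_a}}\wedge\omega_{H_{i_m}}-\omega_{H_{i_b}}\wedge\omega_{H_{i_m}},
\]
which is a consequence of the Arnold relation.

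Substituting this expression into $\Omega_X$ and collecting the coefficient of the basis element $\omega_{H_{i_j}}\wedge\omega_{H_{i_m}}$ should give
\[
\sum_{b\ne j}[A_{H_{i_j}},A_{H_{i_b}}]=[A_{H_{i_j}},A_{H_{i_1}}+\cdots+A_{H_{i_m}}].
\]
This requires checking the signs when the pairs with $a<b$ are reordered, which is routine bookkeeping. Since the basis elements are linearly independent, $\Omega_A\wedge\Omega_A=0$ holds if and only if each of these coefficients vanishes for $j=1,\ldots,m-1$ and every $X\in S$. The case $j=m$ follows automatically by summing the other $m-1$ relations, which is why only $j\le m-1$ appears in the statement.

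The main obstacle is the independence claim, both across different $X$ and within a single $X$. For this I would either invoke Brieskorn's lemma together with the known Orlik--Solomon presentation, or argue directly. The direct argument localizes near a generic point of $X$, where only the hyperplanes containing $X$ have poles, and then takes residues along each $H_{i_j}$ to isolate the coefficients.
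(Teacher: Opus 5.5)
Your proposal is correct and follows essentially the same route as the paper: expand $\Omega_A\wedge\Omega_A$, rewrite it in the Orlik--Solomon basis $\{\omega_{i_j}\wedge\omega_{i_m}\}_{1\le j<m}$ indexed by $S$, and read off the coefficients $[A_{H_{i_j}},A_{H_{i_1}}+\cdots+A_{H_{i_m}}]$. The paper simply cites Orlik--Solomon and Kohno for that basis. You are more explicit: you justify the basis via the Brieskorn decomposition and the Arnold relation, dispose of parallel pairs (which the paper's stated relations omit in the affine case), and carry out the coefficient bookkeeping, which checks out.
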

\begin{proof}
This fact follows essentially from 
Proposition 2.1 in \cite{Koh83},  however we give a direct proof of it
for the completeness of the paper.

Let us denote by $\omega_{i}$ the logarithmic $1$-form $\frac{df_{H_{i}}}{f_{H_{i}}}$ for $i=1,\ldots,n$.
Then Orlik and Solomon \cite{OrSol80} showed that the graded $\mathbb{C}$-algebra 
$A$ of holomorphic differential forms on $M(\mathcal{A})$ generated by $\omega_{1},\ldots,\omega_{n}$ is isomorphic to the Orlik-Solomon algebra of $\mathcal{A}$, 
which is the quotient of the exterior algebra $\bigwedge\mathbb{C}\text{-span}\{\omega_{1},\ldots,\omega_{n}\}$ 
by the ideal generated by the following elements:
\begin{equation}\label{eq:OSrel}
	\sum_{k=1}^{m}(-1)^{k-1}\wedge \omega_{i_{1}}\wedge \cdots \wedge \widehat{\omega_{i_{k}}}\wedge \cdots \wedge \omega_{i_{m}}
\end{equation}
for all subsets $\{H_{i_{1}},\ldots,H_{i_{m}}\}\subset \mathcal{A}$ such that 
\(
	\mathrm{codim}(H_{i_{1}}\cap \cdots \cap H_{i_{m}})<m.
\)
Here $\widehat{\omega_{i_{k}}}$ means that $\omega_{i_{k}}$ is omitted.
This implies that the dgree $2$ part $A^{2}$ of $A$ has the following basis:
\[
	\bigsqcup_{\{H_{i_{1}},\ldots,H_{i_{m}}\}\in S}\left\{\omega_{i_{j}}\wedge \omega_{i_{m}}\,\middle|\,
		1\le j<m
	\right\},
\]
see Proposition 2.1 in \cite{Koh83}.

On the other hand, the relation $\eqref{eq:OSrel}$ leads to the following expansion of $\Omega_{A}\wedge \Omega_{A}$:
\[
		\Omega_{A}\wedge \Omega_{A}=
		\sum_{\{H_{i_{1}},\ldots,H_{i_{m}}\}\in S}\sum_{j=1}^{m-1}[A_{H_{i_{j}}},A_{H_{i_{1}}}+\cdots +A_{H_{i_{m}}}]\,\omega_{i_{j}}\wedge \omega_{i_{m}}.
\]
Then since $\{\omega_{i_{j}}\wedge \omega_{i_{m}}\}$ forms a basis of $A^{2}$, 
we have $\Omega_{A}\wedge \Omega_{A}=0$ if and only if 
\[
[A_{H_{i_{j}}},A_{H_{i_{1}}}+\cdots +A_{H_{i_{m}}}]=0
\] for all $j=1,\ldots,m-1$ and $\{H_{i_{1}},\ldots,H_{i_{m}}\}\in S$.
\end{proof}

This proposition leads to the following equivalence of categories.
\begin{prop}\label{prop:equivPfmod}
	There exists an equivalence of categories
	\[
		\Xi_{\mathcal{A}}\colon \mathrm{Pf}(\mathrm{log}(\mathcal{A}))\xrightarrow{\sim} U(\mathfrak{h}(M(\mathcal{A})))\text{-}\mathbf{mod}.
	\]
\end{prop}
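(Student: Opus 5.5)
The plan is to build $\Xi_{\mathcal{A}}$ explicitly and show it is an isomorphism of categories, which in particular is an equivalence. Given $(E,\nabla_{A})\in \mathrm{Pf}(\mathrm{log}(\mathcal{A}))$ with $\Omega_{A}=\sum_{H\in\mathcal{A}}A_{H}\,df_{H}/f_{H}$, I would first define a Lie algebra homomorphism $\rho_{A}\colon \mathbb{L}(H_{1},\ldots,H_{n})\to \mathrm{End}_{\mathbb{C}}(E)$ by $H_{i}\mapsto A_{H_{i}}$. This uses only the universal property of the free Lie algebra.

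By Proposition \ref{prop:integrability}, integrability $\Omega_{A}\wedge\Omega_{A}=0$ is equivalent to $\rho_{A}$ killing every generator $[H_{i_{j}},H_{i_{1}}+\cdots+H_{i_{m}}]$ of the ideal $\mathfrak{a}$. There is one bookkeeping point to check. Kohno's ideal in Theorem \ref{kohnohollie} uses the relations for $1\le j<m$, and so does Proposition \ref{prop:integrability}. The missing index $j=m$ follows automatically, since summing over $j$ gives $[\sum H_{i_j},\sum H_{i_j}]=0$. So the two sets of relations generate the same ideal. Hence $\rho_{A}$ factors through $\mathbb{L}(H_{1},\ldots,H_{n})/\mathfrak{a}\cong \mathfrak{h}(M(\mathcal{A}))$ (Theorem \ref{kohnohollie}), and $E$ becomes a finite dimensional $U(\mathfrak{h}(M(\mathcal{A})))$-module. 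I set $\Xi_{\mathcal{A}}(E,\nabla_{A}):=E$ with this module structure.

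On morphisms, note that $\mathrm{id}\otimes\phi$ commutes with $d$, and the forms $df_{H}/f_{H}$ for distinct $H$ are linearly independent over $\mathcal{O}$ (already over $\mathbb{C}$ as meromorphic forms). So the commutativity of the square defining morphisms is equivalent to $\phi A^{(1)}_{H}=A^{(2)}_{H}\phi$ for all $H\in\mathcal{A}$. This is exactly the condition that $\phi$ intertwines the actions of the generators $H$, hence of all of $\mathfrak{h}(M(\mathcal{A}))$. So I put $\Xi_{\mathcal{A}}(\phi)=\phi$, which makes $\Xi_{\mathcal{A}}$ fully faithful; it is also clearly faithful on underlying maps.

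For essential surjectivity (in fact bijectivity on objects), take a finite dimensional module $E$ and set $A_{H}$ to be the action of the image of $H$. The relations of $\mathfrak{a}$ then hold in $\mathrm{End}(E)$, so by Proposition \ref{prop:integrability} the form $\sum A_{H}\,df_{H}/f_{H}$ is integrable and gives an object mapping to $E$.

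The only step needing care is the independence of the $df_{H}/f_{H}$ used in the morphism comparison, together with the matching of relation sets noted above. Both are routine, so I expect no real obstacle beyond invoking Proposition \ref{prop:integrability} and Theorem \ref{kohnohollie}.
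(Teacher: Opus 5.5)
Your proposal is correct and is essentially the paper's proof: the paper likewise sets $H\cdot v:=A_Hv$, uses Proposition \ref{prop:integrability} together with Kohno's presentation to identify integrability with the defining relations of $\mathfrak{h}(M(\mathcal{A}))$, and inverts via $A_H:=\rho(H)$; it leaves the morphism comparison as ``easily checked'', and you supply that check. One small correction there: the forms $df_H/f_H$ are \emph{not} linearly independent over $\mathcal{O}$ (on $\mathbb{C}$, $z\cdot\frac{dz}{z}-(z-1)\cdot\frac{dz}{z-1}=0$); however, the $A^{(2)}_H\phi-\phi A^{(1)}_H$ are constant, so testing the square on constant sections shows that $\mathbb{C}$-linear independence (clear by taking residues along each $H$) is all you need.
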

\begin{proof}
For $(E,\nabla_{A})\in \mathrm{Pf}(\mathrm{log}(\mathcal{A}))$
with the coefficient $1$-form
\[	\Omega_{A}=
	\sum_{H\in \mathcal{A}}A_{H}\frac{df_{H}}{f_{H}}
\]
satisfying the integrability condition $\Omega_{A}\wedge \Omega_{A}=0$, we can equip the vector space $E$ with the structure of a $\mathfrak{h}(M(\mathcal{A}))$-module by setting 
\[
	H\cdot v:=A_{H}v\quad \text{ for }H\in \mathcal{A}\text{ and }v\in E.
\]
This is well-defined since the integrability condition $\Omega_{A}\wedge \Omega_{A}=0$ is equivalent to the defining relation of $\mathfrak{h}(M(\mathcal{A}))$
by the previous proposition.
This construction gives a functor from $\mathrm{Pf}(\mathrm{log}(\mathcal{A}))$ to $U(\mathfrak{h}(M(\mathcal{A})))\text{-}\mathbf{mod}$.
Conversely, for a $\mathfrak{h}(M(\mathcal{A}))$-module $E$, let us consider the the correpsonding reprsentation $\rho \colon \mathfrak{h}(M(\mathcal{A})) \to \mathrm{End}_{\mathbb{C}}(E)$.
Then we can construct an object $(E,\nabla_{A})\in \mathrm{Pf}(\mathrm{log}(\mathcal{A}))$ by setting $A_{H}:=\rho(H)$ for 
$H\in \mathcal{A}$. The above proposition implies that the integrability condition $\Omega_{A}\wedge \Omega_{A}=0$ is satisfied.
This gives a functor from $U(\mathfrak{h}(M(\mathcal{A})))\text{-}\mathbf{mod}$ to $\mathrm{Pf}(\mathrm{log}(\mathcal{A}))$.
It is easily checked that these two functors are quasi-inverse to each other.
\end{proof}

Let $Y$ be a line in $\C^{l}$ passing through the origin and $\overline{\mathcal{A}}^{Y}$ the $Y$-closure of $\mathcal{A}$.
Then the inclusion map $\iota_{Y}\colon M(\overline{\mathcal{A}}^{Y})\hookrightarrow M(\mathcal{A})$ induces the embedding functor
\[
	\iota_{Y}^{-1}\colon \mathrm{Pf}(\mathrm{log}(\mathcal{A}))\to \mathrm{Pf}(\mathrm{log}(\overline{\mathcal{A}}^{Y}));\quad
	(E,\nabla_{A})\mapsto (E,\nabla_{A}).
\]
Then we can check that the embedding functor $\iota_{Y}^{-1}$ corresponds to the
restriction functor $\mathrm{Res}_{\mathfrak{h}(M(\overline{\mathcal{A}}^{Y}))}^{\mathfrak{h}(M(\mathcal{A}))}$
under the above equivalence of categories, i.e., the following diagram commutes:
\begin{equation}\label{eq:commPfmod}
\begin{tikzcd}[column sep=large]
\mathrm{Pf}(\mathrm{log}(\mathcal{A}))\arrow[r,"\iota_{Y}^{-1}"]\arrow[d,"\simeq"',"\Xi_{\mathcal{A}}"]&
\mathrm{Pf}(\mathrm{log}(\overline{\mathcal{A}}^{Y}))\arrow[d,"\simeq"', "\Xi_{\overline{\mathcal{A}}^{Y}}"]\\
U(\mathfrak{h}(M(\mathcal{A})))\text{-}\mathrm{mod}\arrow[r,"\mathrm{Res}_{\mathfrak{h}(M(\overline{\mathcal{A}}^{Y}))}^{\mathfrak{h}(M(\mathcal{A}))}"]&
U(\mathfrak{h}(M(\overline{\mathcal{A}}^{Y})))\text{-}\mathrm{mod}
\end{tikzcd}.
\end{equation}
\subsection{Middle convolution functors $\mathrm{mc}_{\lambda}$ and $\mathfrak{mc}_{\lambda}$}
This section is devoted to the proof of the following theorem, which states that 
the middle convolution functor $\mathfrak{mc}_{\lambda}$ for modules over the holonomy Lie algebra recovers the middle convolution functor $\mathrm{mc}_{\lambda}$ for logarithmic Pfaffian systems.
\begin{thm}\label{thm:commPfmod}
Under the equivalence of categories $\Xi_{\mathcal{A}}\colon \mathrm{Pf}(\mathrm{log}(\mathcal{A}))\xrightarrow{\sim} U(\mathfrak{h}(M(\mathcal{A})))\text{-}\mathbf{mod}$
in Proposition \ref{prop:equivPfmod}, we have the natural equivalence of functors
\[
	\Xi_{\overline{\mathcal{A}}^{Y}}\circ \mathrm{mc}_{\lambda}\simeq \mathfrak{mc}_{\lambda}\circ \mathrm{Res}_{\mathfrak{h}(M(\overline{\mathcal{A}}^{Y}))}^{\mathfrak{h}(M(\mathcal{A}))}\circ \Xi_{\mathcal{A}}.
\]
Namely, we have the following commutative diagram
up to natural isomorphisms of functors:
\[\begin{tikzcd}[column sep=4cm]
\mathrm{Pf}(\mathrm{log}(\mathcal{A}))\arrow[r,"\mathrm{mc}_{\lambda}"]\arrow[d,"\simeq"',"\Xi_{\mathcal{A}}"]&
\mathrm{Pf}(\mathrm{log}(\overline{\mathcal{A}}^{Y}))\arrow[d,"\simeq"', "\Xi_{\overline{\mathcal{A}}^{Y}}"]\\
U(\mathfrak{h}(M(\mathcal{A})))\text{-}\mathrm{mod}\arrow[r,"\mathfrak{mc}_{\lambda}\circ \mathrm{Res}_{\mathfrak{h}(M(\overline{\mathcal{A}}^{Y}))}^{\mathfrak{h}(M(\mathcal{A}))}"]&
U(\mathfrak{h}(M(\overline{\mathcal{A}}^{Y})))\text{-}\mathrm{mod}
\end{tikzcd}.
\]
\end{thm}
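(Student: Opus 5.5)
The plan is to compare the two constructions componentwise, on the common underlying vector space. Take $(E,\nabla_A)\in\mathrm{Pf}(\mathrm{log}(\mathcal{A}))$ and let $V=\Xi_{\mathcal{A}}(E,\nabla_A)$. Write $\mathcal{A}_{\parallel Y}^{c}=\{H_1,\ldots,H_n\}$ and put $x_i:=H_i$.

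By Theorem \ref{thm:goodline}, the $Y$-closure $\overline{\mathcal{A}}^{Y}$ is $Y$-closed, so $Y$ is good for it. Theorem \ref{thm:CCX} then gives
\[
\mathfrak{h}(M(\overline{\mathcal{A}}^{Y}))=\mathbb{L}(x_1,\ldots,x_n)\oplus_{\theta_g}\mathfrak{g},\qquad \mathfrak{g}=\mathfrak{h}(M(p\overline{\mathcal{A}}^{Y}_{\parallel Y})).
\]
One point to check: $(\overline{\mathcal{A}}^{Y})_{\parallel Y}^{c}=\mathcal{A}_{\parallel Y}^{c}$, because every added hyperplane $X+Y$ is parallel to $Y$. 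I would also record how $\mathfrak{g}$ is generated as a subalgebra of $\mathfrak{h}(M(\overline{\mathcal{A}}^{Y}))$. I expect the generator attached to $H\in\overline{\mathcal{A}}^{Y}_{\parallel Y}$ to be $H$ itself, corrected by the element $\sum_{j<h}[\text{pairs}]$ that makes it act on $\mathbb{L}_n$ through $\theta$. Concretely, if $H\cap H_{i_1}\cap\cdots\cap H_{i_k}$ has codimension $2$, the image in $\mathfrak{P}_n$ should be $g(H)=\sum_{j<h}A_{i_j,i_h}$. This is exactly the pattern in Haraoka's formula for $c_\lambda(A)_H$, since $\sum_{h}I_{H_{i_h},H_{i_h}}-I_{H_{i_h},H_{i_j}}$ is the DR-type matrix for $\sum_{j<h}A_{i_j,i_h}$ acting on the span of the $[x_i,x_{n+1}]$.

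Next I would use the $K$-isomorphism $\Phi\colon\mathfrak{LM}^{\mathrm{df}}_\lambda(V)\to V\otimes\mathbb{C}^n$, $[x_i,x_{n+1}]\otimes v\mapsto v\otimes e_{H_i}$, from Proposition \ref{prop:compconv}. For $H\in\mathcal{A}_{\parallel Y}^{c}$, Proposition \ref{prop:compconv} already shows that $x_i$ acts by Haraoka's $c_\lambda(A)_{H_i}$. It remains to compute the action of $H\in\overline{\mathcal{A}}^{Y}\setminus\mathcal{A}_{\parallel Y}^{c}$ on $[x_j,x_{n+1}]\otimes v$. The formula is $H\cdot([x_j,x_{n+1}]\otimes v)=\theta_{\mathrm{id}\oplus g}(H)([x_j,x_{n+1}])\otimes v+[x_j,x_{n+1}]\otimes H v$, where $Hv=A_Hv$ (this is zero if $H\notin\mathcal{A}$, matching the restriction functor).

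For the first term, take $H$ with $g(H)=\sum_{j<h}A_{i_j,i_h}$. The derivation $\theta(A_{a,b})$ sends $[x_j,x_{n+1}]$ to $[[x_a,x_b],x_{n+1}]$ when $j=a$, to its negative when $j=b$, and to $0$ otherwise. Using $x_{n+1}\cdot v=\lambda v$ and the Jacobi identity, one gets $[[x_a,x_b],x_{n+1}]\otimes v=[x_a,x_{n+1}]\otimes x_bv-[x_b,x_{n+1}]\otimes x_av$. Here the $\lambda$-contributions cancel, and the rewriting is the same computation as in \eqref{eq:DRformula}. Summing these terms should reproduce $\sum_j A_{H_{i_j}}\otimes(\sum_h I_{H_{i_h},H_{i_h}}-I_{H_{i_h},H_{i_j}})$. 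Together with the term $A_H\otimes\mathrm{Id}$, this shows that $\Phi$ identifies $\Xi_{\overline{\mathcal{A}}^{Y}}(c_\lambda(\nabla_A))$ with $\mathfrak{LM}^{\mathrm{df}}_\lambda(\mathrm{Res}\,V)$, naturally in $V$.

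Finally, I would pass to the quotients. Under $\Phi$, Haraoka's $K$ corresponds to $\bigoplus_i[x_i,x_{n+1}]\otimes V^{x_i}$, which by Lemma \ref{lem:K} is the image of $\bigoplus_{i\ge1}H_1(\mathbb{L}(x_i),-)$. Haraoka's $L=\bigcap_i\mathrm{Ker}\,c_\lambda(A)_{H_i}$ is the $\mathbb{L}_n$-invariant part, which by Lemma \ref{lem:Linvariant} is the image of $H_1(\mathbb{L}(x_0),-)$ when $\lambda\neq0$; when $\lambda=0$ the definition of $\mathfrak{mc}_0$ uses this invariant part directly. Hence $\Phi$ descends to the middle convolutions, as in Theorem \ref{thm:compmc}. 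The main obstacle is the step identifying $g(H)$ for the added and parallel hyperplanes. That identification depends on the explicit form of the CCX decomposition in Theorem \ref{thm:CCX}, so I would first verify it directly from the Kohno relations of Theorem \ref{kohnohollie} at each codimension-two flat $H\cap H_{i_1}\cap\cdots\cap H_{i_k}$.
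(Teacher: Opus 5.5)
Your proposal is correct and follows essentially the same route as the paper. The paper identifies $c_\lambda$ with $\mathfrak{LM}^{\mathrm{df}}_\lambda\circ\mathrm{Res}$ through $\Phi$: for $H\in\mathcal{A}_{\parallel Y}^{c}$ it uses Proposition~\ref{prop:compconv}, and for $H\parallel Y$ it uses the Kohno relation $[H,x_{i_0}]=[x_{i_0},x_{i_1}+\cdots+x_{i_k}]$ at the flat $H\cap H_{i_0}$ plus the Jacobi identity, which is exactly what your $\theta(g(H))$ computation amounts to. It then matches $K$ and $L$ with the images of $H_1(\mathbb{L}(x_i),-)$ via Lemmas~\ref{lem:K} and~\ref{lem:Linvariant} and passes to quotients by exactness; your inline handling of $\mathrm{Res}$ just replaces the paper's preliminary reduction to the $Y$-closed case.

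Two small precisions. First, no ``correction'' of $H$ is needed. The map $\pi_Y(H)\mapsto H$ already respects the base relations, because every hyperplane through a codimension-two flat parallel to $Y$ is itself parallel to $Y$. Hence $H\in\mathfrak{g}$ with $\theta_g(H)=\mathrm{ad}(H)|_{\mathbb{L}_n}$. Second, in general $g(H)$, and correspondingly $c_\lambda(A)_H$, is a sum of your pair-sums over all flats $H\cap H_{i_0}$ with $H_{i_0}\in\mathcal{A}_{\parallel Y}^{c}$, not a single one. This is harmless, since $H$ acting on $[x_{i_0},x_{n+1}]\otimes v$ only sees the flat through $H_{i_0}$, exactly as in the paper's computation.
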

First we note that the following diagram commutes:
\[
	\begin{tikzcd}
	\mathrm{Pf}(\mathrm{log}(\mathcal{A}))\arrow[d,"\iota_{Y}^{-1}"]\arrow[r,"\mathrm{mc}_{\lambda}"]&\mathrm{Pf}(\mathrm{log}(\overline{\mathcal{A}}^{Y}))\\
	\mathrm{Pf}(\mathrm{log}(\overline{\mathcal{A}}^{Y}))\arrow[ur,"\mathrm{mc}_{\lambda}"]&\\
	\end{tikzcd},
\]
since the pullback functor $\iota_{Y}^{-1}$ dones not change the coefficient $1$-form of the connection.
Thus combining this diagram with the diagram $\eqref{eq:commPfmod}$,
we may assume that $\mathcal{A}$ is $Y$-closed, i.e., $\mathcal{A}=\overline{\mathcal{A}}^{Y}$
and it suffices to show that the following diagram commutes:
\[\begin{tikzcd}
\mathrm{Pf}(\mathrm{log}(\mathcal{A}))\arrow[r,"\mathrm{mc}_{\lambda}"]\arrow[d,"\simeq"',"\Xi_{\mathcal{A}}"]&
\mathrm{Pf}(\mathrm{log}(\mathcal{A}))\arrow[d,"\simeq"', "\Xi_{\mathcal{A}}"]\\
U(\mathfrak{h}(M(\mathcal{A})))\text{-}\mathrm{mod}\arrow[r,"\mathfrak{mc}_{\lambda}"]&
U(\mathfrak{h}(M(\mathcal{A})))\text{-}\mathrm{mod}
\end{tikzcd}.
\]
\begin{prop}
	Let $\mathcal{A}$ be a $Y$-closed affine hyperplane arrangement in $\C^{l}$.
	Then we have the following commutative diagram up to natural isomorphisms of functors:
	\[
		\begin{tikzcd}
		\mathrm{Pf}(\mathrm{log}(\mathcal{A}))\arrow[r,"c_{\lambda}"]\arrow[d,"\simeq"',"\Xi_{\mathcal{A}}"]&
		\mathrm{Pf}(\mathrm{log}(\mathcal{A}))\arrow[d,"\simeq"', "\Xi_{\mathcal{A}}"]\\
		U(\mathfrak{h}(M(\mathcal{A})))\text{-}\mathrm{mod}\arrow[r,"\mathfrak{LM}^{\mathrm{df}}_{\lambda}"]&
		U(\mathfrak{h}(M(\mathcal{A})))\text{-}\mathrm{mod}
		\end{tikzcd}.
	\]
\end{prop}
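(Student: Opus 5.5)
We use the same linear identification as in Proposition \ref{prop:compconv}. Since $\mathcal{A}$ is $Y$-closed, Theorem \ref{thm:goodline} makes $Y$ a good line, and Theorem \ref{thm:CCX} (with Theorem \ref{kohnochen}) gives $\mathfrak{h}(M(\mathcal{A}))=\mathbb{L}(\mathcal{A}_{\parallel Y}^{c})\oplus_{\theta_{g}}\mathfrak{h}(M(p\mathcal{A}_{\parallel Y}))$. We enumerate $\mathcal{A}_{\parallel Y}^{c}=\{H_{1},\ldots,H_{n}\}$ as $x_{1},\ldots,x_{n}$. For $(E,\nabla_{A})$ with $V=\Xi_{\mathcal{A}}(E,\nabla_A)$, we define
\[
\Phi\colon \mathfrak{LM}^{\mathrm{df}}_{\lambda}(V)=\bigoplus_{i=1}^{n}K\cdot[x_{i},x_{n+1}]\otimes_{K}(V\otimes_K K_\lambda)\to E\otimes_{\mathbb{C}}\mathbb{C}\cdot\mathcal{A}_{\parallel Y}^{c},\qquad [x_{i},x_{n+1}]\otimes v\mapsto v\otimes e_{H_{i}}.
\]
This is a $\mathbb{C}$-isomorphism and clearly natural in $V$. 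It then suffices to check that $\Phi$ intertwines the action of every generator $H\in\mathcal{A}$ of $\mathfrak{h}(M(\mathcal{A}))$ on the left with $c_{\lambda}(A)_{H}$ on the right. Well-definedness of the right side as an $\mathfrak{h}$-module then comes for free, since the left side is a module.

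For $H=H_{i}\in\mathcal{A}_{\parallel Y}^{c}$, the computation is exactly the one in the proof of Proposition \ref{prop:compconv}, using \eqref{eq:DRformula}. It gives $x_{i}\cdot([x_j,x_{n+1}]\otimes v)=[x_i,x_{n+1}]\otimes(x_j+\delta_{ij}\lambda)v$, which matches $c_{\lambda}(A)_{H_i}=\sum_{j}(A_{H_j}+\lambda\delta_{ij})\otimes I_{H_i,H_j}$.

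For $H\in\mathcal{A}_{\parallel Y}$, the element $H$ lies in the base part, and the action is $H\cdot([x_j,x_{n+1}]\otimes v)=\theta_{\mathrm{id}\oplus g}(H)([x_j,x_{n+1}])\otimes v+[x_j,x_{n+1}]\otimes Hv$. Since $\theta(\cdot)(x_{n+1})=0$, the first term is $[\theta_g(H)(x_j),x_{n+1}]\otimes v$. The key input is an explicit formula for $\theta_g(H)$, which I extract from the CCX semidirect structure, i.e.\ from Kohno's relations in Theorem \ref{kohnohollie}. Let $\{H,H_{i_1},\ldots,H_{i_k}\}$ be the maximal codimension-two family, with $H_{i_1},\ldots,H_{i_k}\in\mathcal{A}_{\parallel Y}^{c}$ (together possibly with further members of $\mathcal{A}_{\parallel Y}$ through the same codimension-two flat; since $Y$-closedness forces $X+Y$ to be the unique hyperplane parallel to $Y$ containing $X$, there are none). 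The relations $[H_{i_j},H+\sum_h H_{i_h}]=0$ give
\[
\theta_g(H)(x_{i_j})=[H,x_{i_j}]=\Bigl[x_{i_j},\textstyle\sum_{h}x_{i_h}\Bigr].
\]
We also have $\theta_g(H)(x_m)=0$ for $H_m$ not in the family, because $H$ and $H_m$ meet in a codimension-two flat containing only those two hyperplanes, so they commute. Then
\[
[\theta_g(H)(x_{i_j}),x_{n+1}]\otimes v=\sum_h[[x_{i_j},x_{i_h}],x_{n+1}]\otimes v,
\]
and by Jacobi together with $[x_a,x_{n+1}]\otimes w=x_a\otimes\lambda w-x_{n+1}\otimes x_aw$ this rewrites as $\sum_h\bigl([x_{i_h},x_{n+1}]\otimes x_{i_j}v-[x_{i_j},x_{n+1}]\otimes x_{i_h}v\bigr)$ up to sign. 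Under $\Phi$ this is precisely $\sum_{j}A_{H_{i_j}}\otimes(\sum_h I_{H_{i_h},H_{i_h}}-I_{H_{i_h},H_{i_j}})$ applied to $v\otimes e_{H_{i_j}}$. Together with the $A_H\otimes\mathrm{Id}$ term, this gives Haraoka's $c_\lambda(A)_H$.

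The main obstacle is this last step. One has to pin down $\theta_g(H)$ carefully from the CCX decomposition and fix the sign conventions so that the Jacobi manipulation in $\mathcal{I}_{\mathbb{L}_{n+1}}\otimes_{U(\mathbb{L}_{n+1})}(V\otimes K_\lambda)$ reproduces Haraoka's matrix exactly, including the diagonal terms. I would first carry out the case $k=1$ as a check, where $\theta_g(H)(x_{i_1})=0$ and $c_\lambda(A)_H=A_H\otimes\mathrm{Id}$, and then the general case. Naturality in morphisms $\phi\colon E_1\to E_2$ is immediate, since $\Phi$ is built from $\mathrm{id}\otimes\phi$ on both sides.
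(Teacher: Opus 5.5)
Your strategy is the paper's own: the identification $[x_i,x_{n+1}]\otimes v\mapsto v\otimes e_{H_i}$, a check generator by generator, and the same two cases. For $H\in\mathcal{A}_{\parallel Y}$ you use Kohno's relation $[H,x_{i_j}]=[x_{i_j},\sum_h x_{i_h}]$ followed by the Jacobi identity, as the paper does. However, one step is false as stated. You claim that $\theta_g(H)(x_m)=0$ for every $H_m\in\mathcal{A}_{\parallel Y}^{c}$ outside ``the'' family, because $H\cap H_m$ contains only $H$ and $H_m$.

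For $H\in\mathcal{A}_{\parallel Y}$ there is in general no single family. Every $H_m\in\mathcal{A}_{\parallel Y}^{c}$ meets $H$ in a codimension-two flat $H\cap H_m$ that is not parallel to $Y$, and $(H\cap H_m)+Y=H$. Different $m$ can give different flats, and each flat may contain further members of $\mathcal{A}_{\parallel Y}^{c}$. For example, take $\C^2$ with $Y$ the $z_2$-axis. Let $H_1,\ldots,H_4$ be the lines $z_2=0$, $z_2=z_1$, $z_2=1$, $z_2=z_1+1$, and add the vertical lines $z_1=0,\pm1$. This arrangement is $Y$-closed. The line $H=\{z_1=0\}$ contains the points $(0,0)\in H_1\cap H_2$ and $(0,1)\in H_3\cap H_4$. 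Hence $[H,x_3]=[x_3,x_3+x_4]=[x_3,x_4]\neq0$, even though $H_3$ is not in the family of $(0,0)$.

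The repair is to let the family depend on the basis vector, which is exactly what the paper does. For $[x_{i_0},x_{n+1}]\otimes v$ it takes the maximal family $\{H,H_{i_0},\ldots,H_{i_k}\}$ through $H\cap H_{i_0}$. These families partition $\mathcal{A}_{\parallel Y}^{c}$. Accordingly, $c_\lambda(A)_H$ must be read as $A_H\otimes\mathrm{Id}$ plus the sum of Haraoka's correction terms over all such flats in $H$. On $v\otimes e_{H_{i_0}}$ only the term of its own flat contributes, so your computation then goes through per basis vector.

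There is also no ``up to sign''. The correct identity is $[[x_a,x_b],x_{n+1}]\otimes v=[x_a,x_{n+1}]\otimes x_bv-[x_b,x_{n+1}]\otimes x_av$. It gives $H\cdot([x_{i_j},x_{n+1}]\otimes v)=\sum_h\bigl([x_{i_j},x_{n+1}]\otimes x_{i_h}v-[x_{i_h},x_{n+1}]\otimes x_{i_j}v\bigr)+[x_{i_j},x_{n+1}]\otimes Hv$. This is the negative of the correction sum you wrote, and it matches $\sum_{j'}A_{H_{i_{j'}}}\otimes\sum_h(I_{H_{i_h},H_{i_h}}-I_{H_{i_h},H_{i_{j'}}})$ exactly.
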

\begin{proof}
Let us take a $U(\mathfrak{h}(M(\mathcal{A})))$-module $E$ and 
the corresponding object in $\mathrm{Pf}(\mathrm{log}(\mathcal{A}))$ denoted by $(E, \nabla_A)$
with the coefficent $1$-form
$\Omega_{A}=\sum_{H\in \mathcal{A}}A_{H}\frac{d f_{H}}{f_{H}}$.

By following Theorem \ref{thm:CCX}, we have the semidirect sum decomposition of $\mathfrak{h}(M(\mathcal{A}))$,
\[
	\mathfrak{h}(M(\mathcal{A}))=
	\mathbb{L}(\mathcal{A}_{\parallel Y}^{c})\oplus \mathfrak{h}(M(p\mathcal{A}_{\parallel Y})).
\]
Let us write $\mathcal{A}_{\parallel Y}^{c}=\{H_{1},\ldots,H_{n}\}$
and identify $\mathbb{L}(\mathcal{A}_{\parallel Y}^{c})$ with the free Lie algebra generated by $x_{1},\ldots,x_{n}$, where $x_{i}$ corresponds to $H_{i}$ for $i=1,\ldots,n$.
Then we can identify
\[
	\mathfrak{LM}^{\mathrm{df}}_{\lambda}(E)=\bigoplus_{i=1}^{n}\mathbb{C}\cdot [x_{i},x_{n+1}]\otimes_{\mathbb{C}}(E\otimes_{\mathbb{C}}\mathbb{C}_{\lambda}).
\]
As we saw in Proposition \ref{prop:compconv}, we have 
\[
	x_{i}\cdot ([x_{j},x_{n+1}]\otimes v)=[x_{i},x_{n+1}]\otimes (x_{j}+\delta_{i,j}x_{n+1})v\\
\]
for $i,j=1,\ldots,n$ and $v\in E\otimes_{\mathbb{C}}\mathbb{C}_{\lambda}$.
Therefore, under the identifications
$\bigoplus_{i=1}^{n}\mathbb{C}\cdot [x_{i},x_{n+1}]=\oplus_{i=1}^{n}\mathbb{C}\cdot e_{H_{i}}=\mathbb{C}\cdot \mathcal{A}_{\parallel Y}^{c}$
and $E\otimes_{\mathbb{C}}\mathbb{C}_{\lambda}=E$,
the above equation shows that the following diagram commutes:
\[
	\begin{tikzcd}
	\mathfrak{LM}^{\mathrm{df}}_{\lambda}(E)\arrow[r,"\simeq"]\arrow[d,"H\cdot"]&
	E\otimes_{\mathbb{C}}\mathbb{C}\cdot \mathcal{A}_{\parallel Y}^{c}\arrow[d,"c_{\lambda}(A)_{H}\cdot"]\\
	\mathfrak{LM}^{\mathrm{df}}_{\lambda}(E)\arrow[r,"\cong"]&
	E\otimes_{\mathbb{C}}\mathbb{C}\cdot \mathcal{A}_{\parallel Y}^{c}
	\end{tikzcd}
\]
for $H\in \mathcal{A}_{\parallel Y}^{c}$.
Here the left vertical arrows are the actions of $H$ and $c_{\lambda}(A)_{H}$ 
on $\mathfrak{LM}^{\mathrm{df}}_{\lambda}(E)$ and 
 $E\otimes_{\mathbb{C}}\mathbb{C}\cdot \mathcal{A}_{\parallel Y}^{c}$ respectively.

Next let us take $H\in \mathcal{A}_{\parallel Y}$ and 
$[x_{i_{0}},x_{n+1}]\otimes v\in \mathfrak{LM}^{\mathrm{df}}_{\lambda}(E)$
for $i_{0}=1,\ldots,n$ and $v\in E\otimes_{\mathbb{C}}\mathbb{C}_{\lambda}$.
Then we can take the maximal family of hyperplanes $\{H,H_{i_{0}},\cdots, H_{i_{k}}\}$ such that 
$\mathrm{codim}(H\cap H_{i_{0}}\cap\ldots \cap H_{i_{k}})=2$
and $H_{i_{0}},\ldots,H_{i_{k}}\in \mathcal{A}_{\parallel Y}^{c}$.
Here we note that for any $H'\in \mathcal{A}_{\parallel Y}$
with $\mathrm{codim}(H\cap H')=2$,
we have 
 $\mathrm{codim}(H\cap H'\cap H_{i})<2$
for any $H_{i}\in \mathcal{A}_{\parallel Y}^{c}$.
Therefore $\{H,H_{i_{0}},\cdots, H_{i_{k}}\}$
is maximal family in the whole $\mathcal{A}$
such that $\mathrm{codim}(H\cap H_{i_{0}}\cap\ldots \cap H_{i_{k}})=2$ as well.

Then we have 
\begin{align*}
	&H\cdot ([x_{i_{0}},x_{n+1}]\otimes v)\\
	&=[[H,x_{i_{0}}],x_{n+1}]\otimes v+[x_{i_{0}},x_{n+1}]\otimes H\cdot v\\
	&=[[x_{i_{0}},x_{i_{1}}+\cdots +x_{i_{k}}],x_{n+1}]\otimes v+[x_{i_{0}},x_{n+1}]\otimes H\cdot v\\
	&=\sum_{j=1}^{k}[[x_{i_{0}},x_{i_{j}}],x_{n+1}]\otimes v+[x_{i_{0}},x_{n+1}]\otimes H\cdot v\\
	&=\sum_{j=1}^{k}
	\left(
		[[x_{i_{0}},x_{i_{n+1}}],x_{i_{j}}]-[[x_{i_{j}},x_{n+1}],x_{i_{0}}]
	\right)\otimes v+[x_{i_{0}},x_{n+1}]\otimes H\cdot v\\
	&=
	\sum_{j=1}^{k}
	\left(
		[x_{i_{0}},x_{i_{n+1}}]\otimes H_{i_{j}}\cdot v
		-[x_{i_{j}},x_{n+1}]\otimes H_{i_{0}}\cdot v
	\right)+[x_{i_{0}},x_{n+1}]\otimes H\cdot v.
\end{align*}
Here in the second equality, we used the relation $[H,x_{i_{0}}]=[x_{i_{0}},x_{i_{1}}+\cdots +x_{i_{k}}]$ 
and in the third equality, we used the Jacobi identity.
This shows that the following diagram commutes:
\[
	\begin{tikzcd}
	\mathfrak{LM}^{\mathrm{df}}_{\lambda}(E)\arrow[r,"\cong"]\arrow[d,"H\cdot"]&
	E\otimes_{\mathbb{C}}\mathbb{C}\cdot \mathcal{A}_{\parallel Y}^{c}\arrow[d,"c_{\lambda}(A)_{H}\cdot"]\\
	\mathfrak{LM}^{\mathrm{df}}_{\lambda}(E)\arrow[r,"\cong"]&
	E\otimes_{\mathbb{C}}\mathbb{C}\cdot \mathcal{A}_{\parallel Y}^{c}
	\end{tikzcd}
\]
even for $H\in \mathcal{A}_{\parallel Y}$.
This completes the proof of the proposition.
\end{proof}
Now we are ready to show the commutativity of the diagram in the theorem.
\begin{proof}[Proof of Theorem \ref{thm:commPfmod}]
Let us take a $U(\mathfrak{h}(M(\mathcal{A})))$-module $E$ and 
the corresponding object in $\mathrm{Pf}(\mathrm{log}(\mathcal{A}))$ by $(E, \nabla_A)$
with the coefficent $1$-form
$\Omega_{A}=\sum_{H\in \mathcal{A}}A_{H}\frac{d f_{H}}{f_{H}}$.
Then under the identifications as in the proof of the previous proposition, 
$\mathfrak{LM}_{\lambda}^{\mathrm{df}}(E)=\bigoplus_{i=1}^{n}\mathbb{C}\cdot [x_{i},x_{n+1}]\otimes_{\mathbb{C}}(E\otimes_{\mathbb{C}}\mathbb{C}_{\lambda}),
	\bigoplus_{i=1}^{n}\mathbb{C}\cdot [x_{i},x_{n+1}]=\mathbb{C}\cdot \mathcal{A}_{\parallel Y}^{c},
	E\otimes_{\mathbb{C}}\mathbb{C}_{\lambda}=E,
$
the submodules 
\[
	\bigoplus_{i=1}^{n}\mathbb{C}\cdot [x_{i},x_{n+1}]\otimes (E\otimes_{\mathbb{C}}\mathbb{C}_{\lambda})^{x_{i}},\quad
	\text{ and }\quad
	\mathfrak{LM}_{\lambda}^{\mathrm{df}}(E)^{\mathbb{L}(\mathcal{A}_{\parallel Y}^{c})}
\]
correspond to the subspaces $K$ and $L$ of $E\otimes_{\mathbb{C}}\mathbb{C}\cdot \mathcal{A}_{\parallel Y}^{c}$ respectively
under the equivalence in Proposition \ref{prop:equivPfmod}.
Therefore the exactness of $\mathfrak{LM}_{\lambda}^{\mathrm{df}}(E)$ and $c_{\lambda}$
gives us the desired commutative diagram in the theorem.
\end{proof}	
\section{Middle convolution for local systems}\label{sec:RH}
The middle convolution functor is originally introduced by Katz \cite{Katz} as an endofunctor on the category of local systems on the Riemann sphere with a finite number of punctures.
In \cite{AdaHir}, a natural generalization of the middle convolution functor for local systems over $M(\mathcal{A})$ was given.
In this section, 
we recall the definition of the middle convolution functor for local systems on $M(\mathcal{A})$ and
show that there is a natural compatibility between the middle convolution 
for local systems over $M(\mathcal{A})$ and the middle convolution $\mathfrak{mc}_{\lambda}$ 
for modules over the holonomy Lie algebra of $M(\mathcal{A})$.
\subsection{Middle convolution for local systems on complements of affine hyperplane arrangements}\label{sec:MCforLoc}
Let us recall the defintion of the middle convolution functor for local systems
by following \cite{Katz} and its subseqent work \cite{AdaHir}. 

Let $\mathcal{A}$ be an affine hyperplane arrangement in $\C^{l}$ and $Y$ a line in $\C^{l}$ passing through the origin.
Let $\mathrm{Loc}(M(\mathcal{A}),\mathbb{C})$ denote the 
the category of finite dimensional $\mathbb{C}$-local systems over $M(\mathcal{A})$, i.e., the category of
locally constant sheaves of  finite dimensional $\mathbb{C}$-vector spaces over $M(\mathcal{A})$.

We now assume that $\mathcal{A}$ is $Y$-closed. Then Theorem \ref{thm:goodline} ensures that the projection 
$\pi_{Y}\colon M(\mathcal{A})\rightarrow M(p\mathcal{A}_{\parallel Y})$
is a locally trivial fibration with fiber $\mathbb{C}\setminus \{n\text{ points}\}$, where $n$ is the number of hyperplanes in $\mathcal{A}_{\parallel Y}^{c}$.
Let us take the pullback of this fibration $\pi_{Y}$ by itself,
\[
	\begin{tikzcd}
		M(\mathcal{A})\times_{\pi_{Y}}M(\mathcal{A})\arrow[r,"\mathrm{pr}_{2}"]\arrow[d,"\mathrm{pr}_{1}"]&
		M(\mathcal{A})\arrow[d,"\pi_{Y}"]\\
		M(\mathcal{A})\arrow[r,"\pi_{Y}"]&M(p\mathcal{A}_{\parallel Y})
	\end{tikzcd}.
\]
Then, since $\pi_{Y}$ is a locally trivial, 
projections $\mathrm{pr}_{i}$, $i=1,2$, are locally trivial fibrations as well.
Moreover, 
after removing the diagonal $\Delta_{M(\mathcal{A})}$ from $M(\mathcal{A})\times_{\pi_{Y}}M(\mathcal{A})$, i.e., 
for 
\[
	M(\mathcal{A})\times_{\pi_{Y}}M(\mathcal{A})\setminus \Delta_{M(\mathcal{A})}=\{(x,y)\in M(\mathcal{A})\times M(\mathcal{A})\,|\,\pi_{Y}(x)=\pi_{Y}(y), x\neq y\},
\]
restrictions of $\mathrm{pr}_{i}$, $i=1,2$, 
\[
	\mathrm{pr}_{i}\colon M(\mathcal{A})\times_{\pi_{Y}}M(\mathcal{A})\setminus \Delta_{M(\mathcal{A})}\longrightarrow M(\mathcal{A})
\]
are again locally trivial fibrations with fiber $\mathbb{C}\setminus \{n+1\text{ points}\}$,
see Proposition 2.2 in \cite{AdaHir}.
Let us fix an identification $\mathbb{C}^{l}=Y\oplus (\mathbb{C}^{l}/Y)$
as vector spaces,
and 
consider the projection $p^{Y}\colon \mathbb{C}^{l}\rightarrow Y=\mathbb{C}$
along this decomposition.
Then we also have another projecton,
\[
	p^{Y}\colon M(\mathcal{A})\times_{\pi_{Y}}M(\mathcal{A})\setminus \Delta_{M(\mathcal{A})}\longrightarrow \mathbb{C}^{\times};\quad
	(x,y)\mapsto p^{Y}(x)-p^{Y}(y).
\]

Let us take a nontrivial multiplicative character $\chi\colon \mathbb{Z}\rightarrow \mathbb{C}^{\times}$ and 
consider the associated rank $1$ local system $\mathcal{K}_{\chi}$
on $\mathbb{C}^{\times}$.
Then for a local system $\mathcal{L}\in \mathrm{Loc}(M(\mathcal{A}),\mathbb{C})$ on $M(\mathcal{A})$,
we can define the external tensor product of them through the projection maps,
$\mathrm{pr}_{1}\colon M(\mathcal{A})\times_{\pi_{Y}}M(\mathcal{A})\backslash 
\Delta\rightarrow M(\mathcal{A})$ and $p^{Y}\colon M(\mathcal{A})\times_{\pi_{Y}}M(\mathcal{A})\setminus \Delta_{M(\mathcal{A})}\longrightarrow \mathbb{C}^{\times}$.
Namely,
\[
	\mathcal{L}\boxtimes \chi:=
	\mathrm{pr}_{1}^{*}\mathcal{L}\otimes (p^{Y})^{*}\mathcal{K}_{\chi}.
\]
Then, since $\mathrm{pr}_{2}\colon M(\mathcal{A})\times_{\pi_{Y}}M(\mathcal{A})\backslash 
\Delta\rightarrow M(\mathcal{A})$
is a locally trivial fibration, 
the right derived functors 
$R^{1}\mathrm{pr}_{2\,*}, R^{1}\mathrm{pr}_{2\,!}$
preserve the category of local systems on $M(\mathcal{A})$
(cf.\,Theorem 1.9.5 in \cite{Ach}).
Thus we can consequently define the local system by
\[
	\mathrm{MC}_{\chi}(\mathcal{L}):=\mathrm{Im}(R^{1}\mathrm{pr}_{2\,!}(\mathcal{L}\boxtimes \chi)
	\rightarrow R^{1}\mathrm{pr}_{2\,*}(\mathcal{L}\boxtimes \chi))
	\in \mathrm{Loc}(M(\mathcal{A}),\mathbb{C}).
\] 
\begin{df}[Middle convolution]\label{df:MC}
	Take a nontrivial multiplicative character $\chi\colon \mathbb{Z}\rightarrow \mathbb{C}^{\times}$.
	Then the {\em middle convolution functor} with respect to $\chi$ is defined by
	\[
		\mathrm{MC}_{\chi}\colon \mathrm{Loc}(M(\mathcal{A}),\mathbb{C})
		\rightarrow \mathrm{Loc}(M(\mathcal{A}),\mathbb{C});\quad
		\mathcal{L}\mapsto \mathrm{MC}_{\chi}(\mathcal{L}).			
	\]
\end{df}

If $\mathcal{A}$ is not $Y$-closed, 
we take the $Y$-closure $\overline{\mathcal{A}}^{Y}$ of $\mathcal{A}$ and 
the inclusion $\iota_{Y}\colon M(\overline{\mathcal{A}}^{Y})\hookrightarrow M(\mathcal{A})$
induces the inverse image functor
\[
	\iota_{Y}^{*}\colon \mathrm{Loc}(M(\mathcal{A}),\mathbb{C})\longrightarrow \mathrm{Loc}(M(\overline{\mathcal{A}}^{Y}),\mathbb{C}).
\]
Then we can also consider 
the middle convolution functor $\mathrm{MC}_{\chi}$ for local systems on $M(\mathcal{A})$ as the composition
\[
\mathrm{Loc}(M(\mathcal{A}),\mathbb{C})\xrightarrow{\iota_{Y}^*}\mathrm{Loc}(M(\overline{\mathcal{A}}^{Y}),\mathbb{C})\xrightarrow{\mathrm{MC}_{\chi}}\mathrm{Loc}(M(\overline{\mathcal{A}}^{Y}),\mathbb{C}).
\]
\subsection{De Rham functor for logarithmic Pfaffian systems}
Let us denote the category of holomorphic flat connections on holomorphic vector bundles on $M(\mathcal{A})$
by $\mathrm{Conn}(M(\mathcal{A}))$.
Namely, it consists of pairs $(\mathcal{E},\nabla)$ of holomorphic vector bundles $\mathcal{E}$ on $M(\mathcal{A})$
and flat connections $\nabla\colon \mathcal{E}\rightarrow \mathcal{E}\otimes \varOmega_{M(\mathcal{A})}^{1}$,
with usual bundle morphisms preserving the connections as morphisms.

Then the {\em de Rham functor} is defined by 
\[
	\mathrm{DR}\colon \mathrm{Conn}(M(\mathcal{A}))
	\longrightarrow \mathrm{Loc}(M(\mathcal{A}),\mathbb{C});\quad 
	(\mathcal{E},\nabla)
	\longmapsto
	\mathrm{Ker}(\nabla),
\]
which gives an equivalence of categories (see Theorem 2.17 in \cite{DelEDR}).

The open embedding $j_{\mathcal{A}}\colon M(\mathcal{A})\hookrightarrow \mathbb{C}^{l}$
defines the pullback functor
\[
	j_{\mathcal{A}}^{\dagger}\colon \mathrm{Pf}(\mathrm{log}(\mathcal{A}))
	\longrightarrow 
	\mathrm{Conn}(M(\mathcal{A}));\quad 
	(E,\nabla_{A})
	\longmapsto
	(j_{\mathcal{A}}^{-1}(\mathcal{O}_{\mathbb{C}^{l}}\otimes E),\nabla_{A}).
\]
Here $j_{\mathcal{A}}^{-1}(\mathcal{O}_{\mathbb{C}^{l}}\otimes E)$
denotes the pullback bundle of 
$\mathcal{O}_{\mathbb{C}^{l}}\otimes E$ by $j_{\mathcal{A}}$.

Then by composing these functors,
we define the {\em de Rham functor for logarithmic Pfaffian systems with constant coefficients} by
\[
	\mathrm{DR}_{\mathrm{Pf}}:=\mathrm{DR}\circ j^{\dagger}_{\mathcal{A}}\colon \mathrm{Pf}(\mathrm{log}(\mathcal{A}))
	\longrightarrow 
	\mathrm{Loc}(M(\mathcal{A}),\mathbb{C}).
\]

Let us consider the inclusion $\iota_{Y}\colon M(\overline{\mathcal{A}}^{Y})\hookrightarrow M(\mathcal{A})$
and 
the corresponding pullback functor of connections (see Section 2.1 in \cite{DelEDR}),
\[
	\iota_{Y}^{-1}\colon \mathrm{Conn}(M(\mathcal{A}),\mathbb{C})\longrightarrow \mathrm{Conn}(M(\overline{\mathcal{A}}^{Y}),\mathbb{C}).
\]
Then since 
$j_{\overline{\mathcal{A}}^{Y}}=j_{\mathcal{A}}\circ \iota_{Y}$, 
we can check that the following diagram commutes:
\[\begin{tikzcd}
\mathrm{Pf}(\mathrm{log}(\mathcal{A}))\arrow[r,"j_{\mathcal{A}}^{\dagger}"]\arrow[d,"\iota_{Y}^{-1}"]&
\mathrm{Conn}(M(\mathcal{A}),\mathbb{C})\arrow[d,"\iota_{Y}^{-1}"]\\
\mathrm{Pf}(\mathrm{log}(\overline{\mathcal{A}}^{Y}))\arrow[r,"j_{\overline{\mathcal{A}}^{Y}}^{\dagger}"]&
\mathrm{Conn}(M(\overline{\mathcal{A}}^{Y}),\mathbb{C})
\end{tikzcd}.
\]
Here the left vertical arrow is the embedding functor $\iota_{Y}^{-1}$ defined in the previous section.
Therefore, the following diagram commutes as well:
\[\begin{tikzcd}
\mathrm{Pf}(\mathrm{log}(\mathcal{A}))\arrow[r,"j_{\mathcal{A}}^{\dagger}"]\arrow[d,"\iota_{Y}^{-1}"]&
\mathrm{Conn}(M(\mathcal{A}),\mathbb{C})\arrow[d,"\iota_{Y}^{-1}"]\arrow[r,"\mathrm{DR}"]&
\mathrm{Loc}(M(\mathcal{A}),\mathbb{C})\arrow[d,"\iota_{Y}^{*}"]\\
\mathrm{Pf}(\mathrm{log}(\overline{\mathcal{A}}^{Y}))\arrow[r,"j_{\overline{\mathcal{A}}^{Y}}^{\dagger}"]&
\mathrm{Conn}(M(\overline{\mathcal{A}}^{Y}),\mathbb{C})\arrow[r,"\mathrm{DR}"]&
\mathrm{Loc}(M(\overline{\mathcal{A}}^{Y}),\mathbb{C})
\end{tikzcd},
\]
namely, the de Rham functor $\mathrm{DR}_{\mathrm{Pf}}$ commutes with the functors $\iota_{Y}^{-1}$ and $\iota_{Y}^{*}$.
\subsection{Riemann-Hilbert correspondence between $\mathrm{MC}_{\chi}$ and $\mathfrak{mc}_{\lambda}$}
Combining the de Rham functor $\mathrm{DR}_{\mathrm{Pf}}\colon \mathrm{Pf}(\mathrm{log}(\mathcal{A})) \to \mathrm{Loc}(M(\mathcal{A}),\mathbb{C})$ 
with the equivalence of categories $\Xi_{\mathcal{A}}^{-1}\colon U(\mathfrak{h}(M(\mathcal{A})))\text{-}\mathbf{mod}\xrightarrow{\sim} \mathrm{Pf}(\mathrm{log}(\mathcal{A}))$,
we obtain the functor
\[
	\mathfrak{DR}_{\mathcal{A}} := \mathrm{DR}_{\mathrm{Pf}} \circ \Xi_{\mathcal{A}}^{-1}\colon U(\mathfrak{h}(M(\mathcal{A})))\text{-}\mathbf{mod} \longrightarrow \mathrm{Loc}(M(\mathcal{A}),\mathbb{C}).
\]
Then for generic objects in $U(\mathfrak{h}(M(\mathcal{A})))\text{-}\mathbf{mod}$, the functor $\mathfrak{DR}_{\mathcal{A}}$ relates 
the middle convolution $\mathfrak{mc}_{\lambda}$ to the middle convolution $\mathrm{MC}_{\chi}$.
\begin{thm}\label{thm:RHmc}
	Let $\mathcal{A}$ be an affine hyperplane arrangement in $\C^{l}$ and $Y$ a line in $\C^{l}$ passing through the origin.
	Let us take $\lambda\in K\setminus\{0\}$ and let $\chi\colon \mathbb{Z}\to \mathbb{C}^{\times}$ be the character defined by $\chi(1)=\exp(2\pi i\lambda)$. 
	Let $V$ be a $U(\mathfrak{h}(M(\mathcal{A})))$-module satisfying the following conditions:
	\begin{itemize}
		\item For any $H\in \mathcal{A}_{\parallel Y}^{c}$, the eigenvalues of $H$ on $V$ are not in $\mathbb{Z}\setminus\{0\}$.
		\item The eigenvalues of $\left(\sum_{H\in \mathcal{A}_{\parallel Y}^{c}}H\right)+\lambda$ on $V$ are not in $\mathbb{Z}\setminus\{0\}$. 
	\end{itemize}
Then there exists an isomorphism of local systems on $M(\mathcal{A})$,
\[
	\mathfrak{DR}_{\overline{\mathcal{A}}^{Y}}\circ \mathfrak{mc}_{\lambda}\circ \mathrm{Res}_{\mathfrak{h}(M(\overline{\mathcal{A}}^{Y}))}^{\mathfrak{h}(M(\mathcal{A}))}(V)\simeq \mathrm{MC}_{\chi}\circ \iota_{Y}^{*}\circ\mathfrak{DR}_{\mathcal{A}}(V).
\]
\end{thm}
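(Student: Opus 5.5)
The plan is to reduce the statement to a comparison already available for logarithmic Pfaffian systems, and then to invoke a Riemann--Hilbert type compatibility between Haraoka's $\mathrm{mc}_{\lambda}$ and the topological $\mathrm{MC}_{\chi}$.

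\emph{Step 1: reduce to Pfaffian systems.} By Theorem \ref{thm:commPfmod}, the module $\mathfrak{mc}_{\lambda}\circ \mathrm{Res}(V)$ corresponds under $\Xi_{\overline{\mathcal{A}}^{Y}}$ to $\mathrm{mc}_{\lambda}(\nabla_{A})$, where $\nabla_{A}=\Xi_{\mathcal{A}}^{-1}(V)$. Hence the left-hand side of the theorem equals $\mathrm{DR}_{\mathrm{Pf}}(\mathrm{mc}_{\lambda}(\nabla_{A}))$. On the right-hand side, the commutative diagram relating $\mathrm{DR}_{\mathrm{Pf}}$, $\iota_{Y}^{-1}$ and $\iota_{Y}^{*}$ gives
\[
\iota_{Y}^{*}\mathfrak{DR}_{\mathcal{A}}(V)=\mathrm{DR}_{\mathrm{Pf}}(\iota_{Y}^{-1}\nabla_{A}).
\]
Moreover $\mathrm{mc}_{\lambda}$ factors through $\iota_{Y}^{-1}$. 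We may therefore assume that $\mathcal{A}$ is $Y$-closed. It then remains to prove
\[
\mathrm{DR}_{\mathrm{Pf}}(\mathrm{mc}_{\lambda}(\nabla_{A}))\cong \mathrm{MC}_{\chi}(\mathrm{DR}_{\mathrm{Pf}}(\nabla_{A})).
\]
The eigenvalue hypotheses are unchanged by this reduction, because the residues along $\mathcal{A}_{\parallel Y}^{c}$ are unchanged.

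\emph{Step 2: fiberwise comparison via twisted de Rham cohomology.} Since $\pi_{Y}$ and $\mathrm{pr}_{2}$ are locally trivial, it suffices to construct a comparison that is natural in the base point and compatible with parallel transport. On a fiber $\mathbb{C}\setminus\{a_{1},\ldots,a_{n},z\}$, the stalk of $R^{1}\mathrm{pr}_{2*}(\mathcal{L}\boxtimes\chi)$ is the de Rham cohomology $H^{1}$ of the rank-$\dim E$ logarithmic connection
\[
d-\sum_{i} A_{H_{i}}\frac{dt}{t-a_{i}}-\lambda\frac{dt}{t-z}
\]
on $\mathbb{P}^{1}$. The eigenvalue conditions are exactly the non-resonance conditions at $a_{i}$ and at $\infty$ (the residue at $\infty$ is $-(\sum_{i} A_{H_{i}}+\lambda)$); at $z$ the residue is $\lambda\neq 0$. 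Under these conditions the following hold:
\begin{itemize}
\item The Deligne lattice argument lets one compute both $H^{1}_{c}$ and $H^{1}$ by logarithmic complexes.
\item The image of $H^{1}_{c}$ in $H^{1}$ is represented by the forms $v\,\frac{dt}{t-a_{i}}$ modulo the obvious relations.
\item The cohomology $H^{1}$ has basis $v\otimes\frac{dt}{t-a_{i}}$. These forms correspond to the entries $\frac{\partial}{\partial t}\log f_{H}$ in Haraoka's integral representation, that is, to the summand $\mathbb{C}\cdot[x_{i},x_{n+1}]\otimes E$ of $\mathfrak{LM}^{\mathrm{df}}_{\lambda}(E)$.
\item The kernel of the map from $H^{1}$ to the image of $H^{1}_{c}$ consists precisely of $K$ (the classes supported near $a_{i}$ that are invariant, i.e.\ $\mathrm{Ker}\,A_{H_{i}}$) plus $L$ (the classes coming from $\infty$).
\end{itemize}
This is the classical Dettweiler--Reiter/Katz fiber computation. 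I would cite its one-variable form together with the relative version in \cite{AdaHir,Har1}.

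\emph{Step 3: match the connections.} The Gauss--Manin connection on the relative twisted de Rham cohomology, written in the basis from Step 2, is the Pfaffian system $\nabla_{c_{\lambda}(A)}$; this is Haraoka's Proposition 2.1 in \cite{Har1}, obtained by differentiating the integrals. Passing to the middle quotient yields $\mathrm{mc}_{\lambda}(\nabla_{A})$. Taking horizontal sections then gives the desired isomorphism of local systems, via the relative de Rham comparison between $R^{1}\mathrm{pr}_{2*}$ and the Gauss--Manin system.

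The main obstacle is Step 2: proving rigorously that, under only the stated eigenvalue conditions, the image of $R^{1}\mathrm{pr}_{2!}\to R^{1}\mathrm{pr}_{2*}$ coincides with the quotient of the logarithmic cohomology by $K+L$, uniformly in families. I would first establish this fiberwise by comparing dimensions, checking that both sides have rank $\sum_{i}\mathrm{rk}(A_{H_{i}})+\mathrm{rk}(\sum_{i} A_{H_{i}}+\lambda)-\dim E$. I would then use a morphism between the two local systems that is an isomorphism on stalks.
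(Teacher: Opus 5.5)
Your Step 1 is exactly the paper's argument. Via Theorem~\ref{thm:commPfmod}, the factorization of $\mathrm{mc}_{\lambda}$ through $\iota_{Y}^{-1}$, and the compatibility of $\mathrm{DR}_{\mathrm{Pf}}$ with $\iota_{Y}^{-1}$ and $\iota_{Y}^{*}$, the paper reduces to $\mathrm{DR}_{\mathrm{Pf}}\circ\mathrm{mc}_{\lambda}\cong\mathrm{MC}_{\chi}\circ\mathrm{DR}_{\mathrm{Pf}}$ for $\iota_{Y}^{-1}\Xi_{\mathcal{A}}^{-1}(V)$. It then simply quotes Theorem~0.3 of \cite{AdaHir}, whose hypotheses are the two eigenvalue conditions. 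With that citation your proof would be complete. Your Steps 2--3, which try to reprove that theorem, contain a genuine error: you attach $\nabla_{c_{\lambda}(A)}$ to the wrong side of the duality between $R^{1}\mathrm{pr}_{2!}$ and $R^{1}\mathrm{pr}_{2*}$.

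\textbf{The error.} The stalk of $R^{1}\mathrm{pr}_{2*}(\mathcal{L}\boxtimes\chi)$ is indeed $H^{1}_{dR}$ of $\nabla=d-\sum_{i}A_{H_i}\frac{dt}{t-a_i}-\lambda\frac{dt}{t-z}$, with basis $v\,\frac{dt}{t-a_i}$. However, the Gauss--Manin connection in this basis is not $\nabla_{c_{\lambda}(A)}$. Use $\nabla^{\mathrm{GM}}_{\partial_z}[f\,dt]=[(\partial_z f+\frac{\lambda}{t-z}f)\,dt]$ together with $\lambda v\frac{dt}{t-z}\equiv-\sum_{j}A_{H_j}v\frac{dt}{t-a_j}$. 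The residue at $z=a_i$ then comes out as $\sum_{j}(A_{H_j}+\lambda\delta_{ij})\otimes I_{H_j,H_i}$. This is the block transpose of $c_{\lambda}(A)_{H_i}=\sum_{j}(A_{H_j}+\lambda\delta_{ij})\otimes I_{H_i,H_j}$, and the two need not be isomorphic when $K+L\neq0$.

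For example, take $n=2$, $E=\mathbb{C}$, $A_{H_1}=0$ and $A_{H_2}=\alpha$ generic. The only invariant line of $c_{\lambda}(A)$ is $K=\mathbb{C}e_{H_1}$, whereas the only invariant line of the transposed system is $\mathbb{C}e_{H_2}$. So in the $R^{1}\mathrm{pr}_{2*}$ model $K$ need not be invariant, and the middle part is a \emph{sub}object. ``The quotient of the logarithmic cohomology by $K+L$'' is therefore meaningless there, and there is no map ``from $H^{1}$ to the image of $H^{1}_{c}$''.

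\textbf{What Haraoka's result actually gives.} His Proposition~2.1 differentiates periods $\int_{\gamma}U(t)(t-z)^{\lambda}\frac{dt}{t-a_i}$ over twisted cycles. Under the hypotheses this identifies $\mathrm{DR}(c_{\lambda}(A))$ with twisted homology $H_{1}(F,\mathcal{L}\otimes\mathcal{K})\cong H^{1}_{c}(F,\mathcal{L}\otimes\mathcal{K})$, where $F=\mathbb{C}\setminus\{a_1,\ldots,a_n,z\}$. That is, $\mathrm{DR}(c_{\lambda}(A))\cong R^{1}\mathrm{pr}_{2!}(\mathcal{L}\boxtimes\chi)$, consistent with the paper's homological definition of $\mathfrak{LM}^{\mathrm{df}}_{\lambda}$.

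\textbf{Repair.} Compose with the canonical map $R^{1}\mathrm{pr}_{2!}\to R^{1}\mathrm{pr}_{2*}$, whose image is $\mathrm{MC}_{\chi}$ by definition. Check that $\mathrm{DR}(K\oplus L)$ lies in the kernel; it corresponds to monodromy-invariant loops around the $a_i$ and around $\infty$. Then conclude with your rank count, which is correct. Alternatively, run your Gauss--Manin computation for the dual connection and dualize. Note also that at $z$ you need $\lambda\notin\mathbb{Z}$, not just $\lambda\neq0$. This is implicit in the statement, since $\mathrm{MC}_{\chi}$ is only defined for nontrivial $\chi$.
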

\begin{proof}
Under the assumption on $V$, $\iota_{Y}^{-1}\circ \Xi^{-1}_{\mathcal{A}}(V)\in \mathrm{Pf}(\mathrm{log}(\overline{\mathcal{A}}^{Y}))$
satisfies the assumption in Theorem 0.3 in \cite{AdaHir}, which implies that there exists an isomorphism of local systems on $M(\overline{\mathcal{A}}^{Y})$,
\begin{equation}\label{eq:MCchi}
	\mathrm{DR}_{\mathrm{Pf}}\circ \mathrm{mc}_{\lambda}\circ \iota_{Y}^{-1}\circ \Xi^{-1}_{\mathcal{A}}(V)\simeq \mathrm{MC}_{\chi}\circ \mathrm{DR}_{\mathrm{Pf}}\circ \iota_{Y}^{-1}\circ \Xi^{-1}_{\mathcal{A}}(V).
\end{equation}
As we saw in Section 4.3, in the left hand side, we have 
\[
	\mathrm{DR}_{\mathrm{Pf}}\circ \mathrm{mc}_{\lambda}\circ \iota_{Y}^{-1}\circ \Xi^{-1}_{\mathcal{A}}(V)\simeq \mathrm{DR}_{\mathrm{Pf}}\circ \mathrm{mc}_{\lambda}\circ \Xi^{-1}_{\mathcal{A}}(V).
\]
Thus by Theorem \ref{thm:commPfmod}, we moreover have 
\begin{align*}
\mathrm{DR}_{\mathrm{Pf}}\circ \mathrm{mc}_{\lambda}\circ \iota_{Y}^{-1}\circ \Xi^{-1}_{\mathcal{A}}(V)&\simeq \mathrm{DR}_{\mathrm{Pf}}\circ \mathrm{mc}_{\lambda}\circ \Xi^{-1}_{\mathcal{A}}(V)\\
&\simeq \mathrm{DR}_{\mathrm{Pf}}\circ \Xi_{\overline{\mathcal{A}}^{Y}}^{-1}\circ \Xi_{\overline{\mathcal{A}}^{Y}}\circ  \mathrm{mc}_{\lambda}\circ \Xi^{-1}_{\mathcal{A}}(V)\\
&\simeq \mathfrak{DR}_{\overline{\mathcal{A}}^{Y}}\circ \mathfrak{mc}_{\lambda}\circ \mathrm{Res}_{\mathfrak{h}(M(\overline{\mathcal{A}}^{Y}))}^{\mathfrak{h}(M(\mathcal{A}))}(V).
\end{align*}
On the other hand, in the right hand side of $\eqref{eq:MCchi}$, we have
\begin{align*}
\mathrm{MC}_{\chi}\circ \mathrm{DR}_{\mathrm{Pf}}\circ \iota_{Y}^{-1}\circ \Xi^{-1}_{\mathcal{A}}(V)
&\simeq \mathrm{MC}_{\chi}\circ \iota_{Y}^{*}\circ \mathrm{DR}_{\mathrm{Pf}}\circ \Xi^{-1}_{\mathcal{A}}(V)\\
&\simeq \mathrm{MC}_{\chi}\circ \iota_{Y}^{*}\circ \mathfrak{DR}_{\mathcal{A}}(V),
\end{align*}
since we have already seen that the de Rham functor $\mathrm{DR}_{\mathrm{Pf}}$ commutes with the functors $\iota_{Y}^{-1}$ and $\iota_{Y}^{*}$.
This completes the proof of the theorem.
\end{proof}	
\bibliography{hre} 

\begin{thebibliography}{10}

\bibitem{Ach}
Pramod~N. Achar.
\newblock {\em Perverse sheaves and applications to representation theory}, volume 258 of {\em Mathematical Surveys and Monographs}.
\newblock American Mathematical Society, Providence, RI, 2021.

\bibitem{Ada25}
Shunya Adachi.
\newblock Middle laplace transform and middle convolution for linear pfaffian systems with irregular singularities.
\newblock {\em arXiv preprint}, 2502.01263, 2025.

\bibitem{AdaHir}
Shunya Adachi and Kazuki Hiroe.
\newblock On the riemann-hilbert problem for hyperplane arrangements with a good line.
\newblock {\em arXiv preprint}, 2601.00544, 2026.

\bibitem{Ari10}
D.~Arinkin.
\newblock Rigid irregular connections on {$\mathbb{P}^1$}.
\newblock {\em Compos. Math.}, 146(5):1323--1338, 2010.

\bibitem{Ch73}
Kuo-tsai Chen.
\newblock Iterated integrals of differential forms and loop space homology.
\newblock {\em Ann. of Math. (2)}, 97:217--246, 1973.

\bibitem{CCX03}
Daniel~C. Cohen, Frederick~R. Cohen, and Miguel Xicot\'encatl.
\newblock Lie algebras associated to fiber-type arrangements.
\newblock {\em Int. Math. Res. Not.}, (29):1591--1621, 2003.

\bibitem{CB03}
William Crawley-Boevey.
\newblock On matrices in prescribed conjugacy classes with no common invariant subspace and sum zero.
\newblock {\em Duke Math. J.}, 118(2):339--352, 2003.

\bibitem{DelEDR}
Pierre Deligne.
\newblock {\em \'Equations diff\'erentielles \`a{} points singuliers r\'eguliers}, volume Vol. 163 of {\em Lecture Notes in Mathematics}.
\newblock Springer-Verlag, Berlin-New York, 1970.

\bibitem{DR00}
Michael Dettweiler and Stefan Reiter.
\newblock An algorithm of {K}atz and its application to the inverse {G}alois problem.
\newblock volume~30, pages 761--798. 2000.
\newblock Algorithmic methods in Galois theory.

\bibitem{DR07}
Michael Dettweiler and Stefan Reiter.
\newblock Middle convolution of {F}uchsian systems and the construction of rigid differential systems.
\newblock {\em J. Algebra}, 318(1):1--24, 2007.

\bibitem{Har1}
Yoshishige Haraoka.
\newblock Middle convolution for completely integrable systems with logarithmic singularities along hyperplane arrangements.
\newblock In {\em Arrangements of hyperplanes---{S}apporo 2009}, volume~62 of {\em Adv. Stud. Pure Math.}, pages 109--136. Math. Soc. Japan, Tokyo, 2012.

\bibitem{Harbook}
Yoshishige Haraoka.
\newblock {\em Linear differential equations in the complex domain---from classical theory to forefront}, volume 2271 of {\em Lecture Notes in Mathematics}.
\newblock Springer, Cham, [2020] \copyright 2020.

\bibitem{Katz}
Nicholas~M. Katz.
\newblock {\em Rigid local systems}, volume 139 of {\em Annals of Mathematics Studies}.
\newblock Princeton University Press, Princeton, NJ, 1996.

\bibitem{Kaw10}
Hiroshi Kawakami.
\newblock Generalized {O}kubo systems and the middle convolution.
\newblock {\em Int. Math. Res. Not. IMRN}, (17):3394--3421, 2010.

\bibitem{Koh83}
Toshitake Kohno.
\newblock On the holonomy {L}ie algebra and the nilpotent completion of the fundamental group of the complement of hypersurfaces.
\newblock {\em Nagoya Math. J.}, 92:21--37, 1983.

\bibitem{Lo94}
D.~D. Long.
\newblock Constructing representations of braid groups.
\newblock {\em Comm. Anal. Geom.}, 2(2):217--238, 1994.

\bibitem{Mag37}
Wilhelm Magnus.
\newblock \"uber {B}eziehungen zwischen h\"oheren {K}ommutatoren.
\newblock {\em J. Reine Angew. Math.}, 177:105--115, 1937.

\bibitem{OrSol80}
Peter Orlik and Louis Solomon.
\newblock Combinatorics and topology of complements of hyperplanes.
\newblock {\em Invent. Math.}, 56(2):167--189, 1980.

\bibitem{Oshi3}
Toshio Oshima.
\newblock Stable hyperplane arrangements.
\newblock {\em arXiv preprint}, 2510.11099, 2025.

\bibitem{Sou19}
Arthur Souli\'e.
\newblock The {L}ong-{M}oody construction and polynomial functors.
\newblock {\em Ann. Inst. Fourier (Grenoble)}, 69(4):1799--1856, 2019.

\bibitem{Sul77}
Dennis Sullivan.
\newblock Infinitesimal computations in topology.
\newblock {\em Inst. Hautes \'Etudes Sci. Publ. Math.}, (47):269--331, 1977.

\bibitem{Tak11}
Kouichi Takemura.
\newblock Introduction to middle convolution for differential equations with irregular singularities.
\newblock In {\em New trends in quantum integrable systems}, pages 393--420. World Sci. Publ., Hackensack, NJ, 2011.

\bibitem{Tera1}
Hiroaki Terao.
\newblock Modular elements of lattices and topological fibration.
\newblock {\em Adv. in Math.}, 62(2):135--154, 1986.

\bibitem{Wei94}
Charles~A. Weibel.
\newblock {\em An introduction to homological algebra}, volume~38 of {\em Cambridge Studies in Advanced Mathematics}.
\newblock Cambridge University Press, Cambridge, 1994.

\bibitem{Yam11}
Daisuke Yamakawa.
\newblock Middle convolution and {H}arnad duality.
\newblock {\em Math. Ann.}, 349(1):215--262, 2011.

\end{thebibliography}
\bibliographystyle{plain}
\end{document}